\title[Boundedness and moduli]{On boundedness and moduli spaces of K-stable Calabi-Yau fibrations over curves}
\author{Kenta Hashizume and Masafumi Hattori}
\date{\today}
\keywords{klt-trivial fibration, K-stability, coarse moduli space}
\subjclass[2020]{Primary 14J10; Secondary: 14J17, 14J27, 14J40}
\address{Kenta Hashizume \\ Department of 
Mathematics, Faculty of Science, Niigata University, Niigata 950-2181, Japan}
\address{Institute for Research Administration, Niigata University, Niigata 950-2181, Japan}
\email{hkenta@math.sc.niigata-u.ac.jp}
\address{Masafumi Hattori \\ Department of 
Mathematics, Graduate School of Science, 
Kyoto University, Kyoto 606-8502, Japan}
\email{hattori.masafumi.47z@st.kyoto-u.ac.jp}
\newtheorem{thm}{Theorem}[section]
\newtheorem{lem}[thm]{Lemma}
\newtheorem{cor}[thm]{Corollary}
\newtheorem{prop}[thm]{Proposition}
\newtheorem{conj}[thm]{Conjecture}
\newtheorem{claim}{Claim}
\theoremstyle{definition}
\newtheorem{defn}[thm]{Definition}
\newtheorem{ex}[thm]{Example}
\newtheorem{rem}[thm]{Remark}
\newtheorem{note}[thm]{Notation}
\newtheorem{step}{Step}
\newtheorem*{claim*}{Claim}
\begin{document}

\maketitle

\begin{abstract}
    We show boundedness of polarized Calabi--Yau fibrations over curves only with fixed volumes of general fibers and Iitaka volumes.
    As its application, we construct a separated coarse moduli space of K-stable Calabi-Yau fibrations over curves in an adiabatic sense \cite{Hat} and show that all members (resp.~smooth members) of the moduli are simultaneously uniformly K-stable (resp.~have cscK metrics) for a certain choice of polarizations.
\end{abstract}


\section{Introduction}\label{intro}
\subsection{Moduli problem}
Classification of higher-dimensional algebraic varieties by their geometries is one of the most important problems in algebraic geometry. 
Moduli spaces, that are parameter spaces of specific classes of varieties, are effective tools to classify varieties. 

The moduli spaces of stable curves were constructed by Deligne--Mumford \cite{deligne-mumford} as Deligne-Mumford stacks, and they are compactifications of the moduli spaces of smooth curves of general type of fixed genus (cf.~\cite{GIT}). 
After \cite{deligne-mumford}, the moduli spaces of stable curves and the moduli spaces of canonically polarized surfaces with only canonical singularities have been constructed by Mumford \cite{M} and Gieseker \cite{gieseker}, respectively. 
For the construction, Mumford's geometric invariant theory (GIT, for short, see \cite{GIT}) was used, and the GIT-stability of those varieties was studied to apply the GIT. 
However, it is very difficult to detect the GIT-stability (more precisely, the asymptotic Chow stability, cf.~\cite{M}) of other kinds of polarized varieties. 
As an other strategy, Koll\'{a}r and Shepherd-Barron \cite{KSB} (see also \cite{kollar-moduli-stable-surface-proj} by Koll\'ar and \cite{ale94} by Alexeev) used the minimal model theory to construct the moduli spaces of stable surfaces. 
By their works and \cite{ale96}, semi log canonical models turned out to be a suitable higher-dimensional analog of stable curves to construct the moduli space. 
Their moduli spaces, called KSBA-moduli, have been completed as a full generalization of the moduli of stable curves.  
For details, see \cite{kollar-moduli} by Koll\'ar and the references therein. 
The recent developments of the minimal model theory (\cite{BCHM}, \cite{HX}, \cite{hmx-boundgentype}) are indispensable for the theory of KSBA-moduli.
The construction in \cite{gieseker} by using GIT does not work for the KSBA-moduli because there is a klt variety with the ample canonical divisor that is asymptotically Chow unstable (see \cite{O} for example). 

We need a polarization when we discuss moduli theory of varieties whose canonical divisor is neither ample nor anti-ample. 
See \cite{S} by Seiler and \cite{viehweg95} by Viehweg for the study of the moduli theory of good minimal models with polarizations. 
In general, the moduli theory for non-canonically polarized varieties is much more complicated.
For example, we cannot directly apply the theories as above to construct separated moduli of all polarized rational elliptic surfaces.
The GIT-stability of rational Weierstrass fibrations (\cite{Mi}) and Halphen pencils (\cite{Mi2}, \cite{Zan}, \cite{hz}) was investigated to consider the moduli of rational elliptic surfaces from the viewpoint of GIT.
Seiler \cite{S} constructed the moduli space of some polarized elliptic surfaces by applying the GIT. 
He treated not only elliptic surfaces with nef canonical divisors but also rational elliptic surfaces whose fibers are reduced or of $_mI_n$-type. 
However, Seiler did not study the Chow stability of all polarized rational elliptic surfaces. 
By \cite[Corollary 5.7]{Hat},  the moduli constructed by Seiler does not contain a polarized smooth rational elliptic surface $(X,L)$ of index two with a unique constant scalar curvature K\"{a}hler (cscK, for short) metric. 
This $(X,L)$ is asymptotically Chow stable (cf.~\cite{Dn}). 
Thus, we naturally expect the existence of a moduli space parametrizing more polarized rational elliptic surfaces.

\subsection{K-stability and K-moduli}\label{sec-K-stk-mod}
K-stability was introduced by Tian \cite{T} and Donaldson \cite{Dn2} in the context of the K\"{a}hler geometry to detect the existence of cscK metrics, and the notion is closely related to the GIT. 
In \cite{O}, Odaka found a relationship between the K-stability and the minimal model theory, and he proved that semi log canonical models are K-stable. 
This implies that the KSBA-moduli is a kind of moduli of K-polystable varieties. 
A moduli space parametrizing all K-polystable varieties is called a {\it K-moduli}. 
Odaka proposed the following conjecture. 

\begin{conj}[K-moduli conjecture, {cf.~\cite[Conjecture 5.2]{O2}}]\label{conj--K-moduli}
    There exists a quasi-projective moduli scheme parametrizing all polarized K-polystable varieties with fixed some numerical data (e.g. genera of curves, or volumes of polarizations).
\end{conj}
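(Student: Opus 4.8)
The plan is to establish Conjecture~\ref{conj--K-moduli} by assembling the five-step machinery that has already succeeded for anticanonically polarized (Fano) varieties and, through the minimal model program, for canonically polarized varieties, but now for an arbitrary polarization. Fix once and for all the numerical data: the dimension $n$ and the Hilbert polynomial $m \mapsto \chi(X, mL)$, equivalently the volume $L^n$ together with the intersection numbers $L^{n-i}\cdot K_X^i$. Let $\mathcal{M}^{\mathrm{Kss}}$ denote the moduli stack of K-semistable polarized varieties $(X,L)$ carrying this data. The first and hardest step is boundedness: one must show that all such $(X,L)$ form a bounded family. K-semistability forces the canonically associated boundary structure on $X$ to be klt (or at worst log canonical), and, combined with the fixed volume $L^n$, this should confine the singularities and the positivity of $L$ enough to embed every member uniformly in a fixed projective space --- for Fano this is Jiang's theorem via lower bounds on the $\delta$-invariant and the normalized volume, and in the fibered situation treated in this paper it is precisely the boundedness theorem for polarized Calabi--Yau fibrations over curves, which I would take as the decisive new input.

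With boundedness in hand, all members live in a single Hilbert scheme $\mathrm{Hilb}$. I would carve out the locally closed locus of klt (or slc) fibers with the prescribed numerical data, and then apply openness of K-semistability in families --- the lower semicontinuity of the stability threshold established by Blum--Liu--Xu and Xu--Zhuang --- to obtain an open $\mathrm{PGL}$-invariant subscheme $U \subseteq \mathrm{Hilb}$, so that $\mathcal{M}^{\mathrm{Kss}} = [U/\mathrm{PGL}]$ is a finite-type algebraic stack. Next I would produce a separated good moduli space $M$ of $\mathcal{M}^{\mathrm{Kss}}$ by verifying the $\Theta$-reductivity and S-completeness criteria of Alper--Halpern-Leistner--Heinloth; concretely this amounts to showing that every K-semistable $(X,L)$ admits a special degeneration to a K-polystable one and that such a polystable degeneration is unique up to isomorphism, the analogue of the Blum--Xu theorem. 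For a general polarization this requires the finite generation of the graded ring attached to the optimal destabilizing filtration in the Boucksom--Jonsson non-Archimedean framework, which is where one must work hardest beyond the Fano case.

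It then remains to prove properness of $M$ and to exhibit an ample line bundle on it. Properness follows from a valuative criterion: over a punctured curve, a family of K-semistable members should, after a finite base change and semistable reduction, acquire a unique K-semistable (indeed polystable) limit, using the separatedness from the previous step. For the polarization I would use a suitable twist of the descended Chow--Mumford line bundle; its ampleness on $M$ reduces to its positivity along test configurations, the content of the Codogni--Patakfalvi and Xu--Zhuang positivity results in the Fano case. For an arbitrary polarization such positivity is known only under hypotheses controlling how $L$ relates to $K_X$, and the adiabatic regime exploited in the body of the paper is exactly a setting in which these hypotheses can be met; quasi-projectivity of $M$ then follows.

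The main obstacle is twofold and lies outside the scope of current technology in full generality: there is no boundedness theorem for the class of all polarized K-semistable varieties, and no proof of CM-positivity at that level of generality, whereas the two settings where the conjecture is a theorem --- Fano and canonically polarized --- each rely on structural features ($\delta$/normalized-volume minimization, respectively the MMP for $K_X$) that are unavailable for a general polarization. Progress on the full conjecture thus proceeds by isolating tractable classes; the case of Calabi--Yau fibrations over curves, where fixed fiber volume and Iitaka volume force boundedness and where the fibration structure controls the CM line bundle adiabatically, is the instance this paper settles.
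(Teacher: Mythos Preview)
The statement you are attempting to prove is Conjecture~\ref{conj--K-moduli}, and the paper does not prove it: it is stated as an open conjecture, attributed to Odaka, and serves only as motivation. There is therefore no proof in the paper against which to compare your proposal.

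Your proposal is not a proof either, and you say so yourself in the final paragraph: the two load-bearing steps --- boundedness of all K-semistable polarized varieties with fixed numerical data, and positivity of the CM line bundle for an arbitrary polarization --- are, as you correctly note, ``outside the scope of current technology in full generality.'' What you have written is a program, not an argument: Steps~1 (boundedness), 3 (existence and uniqueness of polystable degenerations via finite generation of the optimal filtration), and 5 (CM-ampleness) all rely on inputs that are known only for Fano or canonically polarized varieties and are genuinely open otherwise. The paper's contribution is precisely to carry out a variant of this program in the restricted setting of uniformly adiabatically K-stable klt-trivial fibrations over curves (Theorems~\ref{quesmain}, \ref{modulimain}, \ref{mainbound}), where boundedness is established directly and separatedness/openness are proved by hand; even there the paper obtains a separated Deligne--Mumford stack with a coarse moduli space, not quasi-projectivity. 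So your outline accurately situates the paper's results within the broader conjectural landscape, but it should not be presented as a proof of the conjecture.
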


This conjecture was motivated by the work of Fujiki--Schumacher \cite{FS} on the construction and a partial projectivity of moduli spaces of some projective manifolds with unique cscK metrics. 
On the other hand, Dervan--Naumann \cite{DN} constructed the moduli spaces of projective manifolds admitting cscK metrics and non-discrete automorphism groups. 
As the Yau-Tian-Donaldson conjecture predicts that the K-polystability is equivalent to the existence of cscK metrics, the K-moduli can be thought of an algebro-geometric generalization of the moduli in \cite{FS} and \cite{DN}.

For the K-stability of log Fano pairs, algebraic geometers and differential geometers have made remarkable progress and they constructed the projective moduli space of all K-polystable log Fano pairs (cf.~\cite{LWX}, \cite{ABHLX}, \cite{XZ1}, \cite{LXZ}). 
Moreover, quasi projective moduli schemes of polarized K-polystable Calabi-Yau varieties have already been constructed in several cases (cf.~\cite{O4}). Thus, it seems that we can make use of the K-stability to construct moduli spaces of polarized varieties. 
The following problems are keys to construct the desired moduli spaces as Deligne-Mumford stacks. 

\begin{enumerate}[(I)]
    \item\label{intro-(2)} 
    Boundedness: Are polarized K-stable varieties parametrized by a scheme of finite type over $\mathbb{C}$?
    \item\label{intro-(3)}
     Openness: Do polarized K-stable varieties form an open subset of the Hilbert scheme?
    \item\label{intro-(4)}
     Separatedness: Let $C$ be a smooth curve, $0\in C$ and $(\mathcal{X}^\circ,\mathcal{L}^\circ)\to C^\circ$ be a family of polarized K-stable varieties over $C^{\circ}=C\setminus\{0\}$. 
    Let $(\mathcal{X},\mathcal{L})\to C$ and $(\mathcal{X}',\mathcal{L}')\to C$ be two extensions of this family over $C$. If $(\mathcal{X}_0,\mathcal{L}_0)$ and $(\mathcal{X}'_0,\mathcal{L}_0')$ are K-stable, then $(\mathcal{X},\mathcal{L})\cong(\mathcal{X}',\mathcal{L}')$?
\end{enumerate}
In the case of (uniformly) K-stable $\mathbb{Q}$-Fano varieties, these problems had been settled. 
More precisely, (\ref{intro-(2)}) was solved by Jiang \cite{Ji} (see also \cite{XZ}), (\ref{intro-(3)}) was solved by Blum--Liu \cite{BL}, and (\ref{intro-(4)}) was solved by Blum--Xu \cite{BX}. 
Their proofs are based on the work of Blum--Jonsson \cite{BlJ}, which shows that the $\delta$-invariant introduced by Fujita--Odaka \cite{FO} completely detects uniform K-stability of log Fano pairs.
However, there are few criteria of the K-stability for other kinds of polarized varieties, and we do not know whether (\ref{intro-(2)})--(\ref{intro-(4)}) hold or not.

\subsection{Adiabatic K-stability and moduli}\label{subsec-intro-adkmod}
Adiabatic K-stability was introduced by the second author \cite{fibst} and it was inspired by the works of Fine \cite{Fine}, \cite{Fine2} and Dervan-Sektnan \cite{DS1}, \cite{DS2} on the existence problem of cscK metrics of fibrations.
Frankly speaking, uniform adiabatic K-stability (\cite[Definition 2.6]{Hat}) is designed to be ``uniform" K-stability of fiber spaces when their polarizations are very close to ample line bundles on the base spaces. Such K-stability and cscK metrics on fiber spaces when their polarizations are very close to ample line bundles on the base are studied in \cite{Fine}, \cite{Fine2}, \cite{DS1}, \cite{DS2}. On the other hand, Dervan--Ross \cite{DR2} points out there is a relationship between adiabatic K-stability and ``K-stability" of the base. More precisely, they show that adiabatic K-semistability implies twisted K-semistability of the base. Recently, by replacing log twisted K-stability with twisted K-stability, the second author \cite{Hat} showed for klt--trivial fibrations over curves that uniform adiabatic K-stability are equivalent to log-twisted K-stability of the base. Moreover, he showed the existence of cscK metrics corresponding to  the uniform adiabatic K-stability for klt--trivial fibrations over curves. By using this criterion, 
the uniform adiabatic K-stability of elliptic surfaces is closely related to the GIT-stability of rational Weierstrass fibrations and Halphen pencils (cf.~\cite[\S5]{Hat}, \cite[Remark 4.3]{hz}).
Moreover, elliptic surfaces treated by Seiler are uniformly adiabatically K-stable, and the result in \cite{Hat} (cf.~Definition \ref{unifdef}) gave a useful characterization of the uniform adiabatic K-stability for klt-trivial fibrations over curves. 
Quite recently, (\ref{intro-(4)}) was also proved by the second author \cite{CM} over $\mathbb{C}$, thus we may expect a variant of Conjecture \ref{conj--K-moduli} for the uniform adiabatic K-stability in an appropriate formulation. 

The main purpose of this paper is to prove the following result. 
\begin{thm}\label{thm--main-rough-statement}
There exists a moduli space parametrizing uniformly adiabatically K-stable klt-trivial fibrations over curves as a separated algebraic space of finite type.
\end{thm}
To state the result more precisely, we prepare some notations. 
Let $d$ be a positive integer, $v$ a positive rational number, and $u$ a rational number. 
We set
$$\mathfrak{Z}_{d, v,u}:=\left\{
\begin{array}{l}
f\colon (X,\Delta=0,A) \to C
\end{array}
\;\middle|
\begin{array}{rl}
(i)&\text{$f$ is a uniformly adiabatically}\\
&\text{K-stable polarized klt-trivial} \\
&\text{fibration over a curve $C$,}\\
(ii)&\text{${\rm dim}X=d$,}\\
(iii)&\text{$K_X\equiv uf^*H$ for some line bundle} \\
&\text{$H$ on $C$ such that $\mathrm{deg}\,H=1$,} \\
(iv)&\text{$A$ is an $f$-ample line bundle on}\\
&\text{$X$ such that $(H\cdot A^{d-1})=v$.}
\end{array}\right\}.$$ 
When $u\ne0$, the boundedness result by Birkar (\cite{birkar-geometry-moduli}) implies the effectivity of the klt-trivial fibrations (see Lemma \ref{lem--Cartierindex}).
More precisely, there exists a positive integer $r$, depending only on $d$, $u$, and $v$, such that for any element $f\colon (X,0,A) \to C$ of $\mathfrak{Z}_{d, v,u}$, $erK_{X}$ is a base point free Cartier divisor and the linear system $|erK_{X}|$ defines $f$, where $e:=\frac{u}{|u|}$. 
We can write the precise statement of Theorem \ref{thm--main-rough-statement} with these notations. 

\begin{thm}\label{quesmain}
We fix $d\in\mathbb{Z}_{>0}$, $u\in\mathbb{Q}_{<0}$, $v\in\mathbb{Q}_{>0}$, and $r \in\mathbb{Z}_{>0}$ as above. 
For any locally Noetherian scheme $S$ over $\mathbb{C}$, we define $\mathscr{M}_{d,v,u,r}(S)$ to be a groupoid whose objects are 
 $$\left\{
 \vcenter{
 \xymatrix@C=12pt{
(\mathcal{X},\mathscr{A})\ar[rr]^-{f}\ar[dr]_{\pi_{\mathcal{X}}}&& \mathcal{C} \ar[dl]\\
&S
}
}
\;\middle|
\begin{array}{rl}
(i)&\text{$\pi_{\mathcal{X}}$ is a flat projective morphism and $\mathcal{X}$ is a scheme,}\\
(ii)&\text{$\mathscr{A}\in\mathrm{Pic}_{\mathcal{X}/S}(S)$ (see Subsection \ref{subsec-Hilb}) such that $\mathscr{A}_{\bar{s}}$ is}\\
&\text{$f_{\bar{s}}$-ample for any geometric point $\bar{s}\in S$,}\\
(iii)&\text{$\omega_{\mathcal{X}/S}^{[r]}$ (see Definition \ref{defn-q-gor}) exists as a line bundle,}\\
(iv)&\text{$\pi_{\mathcal{X}*}\omega_{\mathcal{X}/S}^{[-lr]}$ is locally free and it generates}\\
&\text{$H^0(\mathcal{X}_{s}, \mathcal{O}_{\mathcal{X}_{s}}(-lrK_{\mathcal{X}_{s}}))$ for any point $s\in S$ and any}\\
&\text{$l\in\mathbb{Z}_{>0}$,}\\
(v)&\text{$f$ is the ample model of $\omega_{\mathcal{X}/S}^{[-r]}$ over $S$ and}\\
&\text{$(\mathcal{X}_{\overline{s}},0,\mathscr{A}_{\overline{s}})\to \mathcal{C}_{\overline{s}} \in \mathfrak{Z}_{d, v,u}$ for any geometric}\\
&\text{point $\overline{s}\in S$}
\end{array}\right\}.$$
Here, we define an isomorphism $\alpha\colon (f \colon (\mathcal{X},\mathscr{A})\to\mathcal{C})\to (f' \colon (\mathcal{X}',\mathscr{A}')\to\mathcal{C}')$ of any two objects of $\mathscr{M}_{d,v,u,r}(S)$ to be an $S$-isomorphism $\alpha\colon\mathcal{X}\to\mathcal{X}'$ such that there exists $\mathscr{B}\in\mathrm{Pic}_{\mathcal{C}/S}(S)$ satisfying that $\alpha^*\mathscr{A}'=\mathscr{A}\otimes f^*\mathscr{B}$ as elements of $\mathrm{Pic}_{\mathcal{X}/S}(S)$.

Then $\mathscr{M}_{d,v,u,r}$ is a separated Deligne-Mumford stack of finite type over $\mathbb{C}$ with a coarse moduli space.
\end{thm}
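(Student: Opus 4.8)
The plan is to verify the three standard properties of a Deligne--Mumford stack---namely that $\mathscr{M}_{d,v,u,r}$ is an algebraic stack, that it is of finite type and separated, and that it has unramified diagonal (equivalently, finite automorphism groups), so that by the Keel--Mori theorem it admits a coarse moduli space. First I would establish \emph{boundedness}: by the main boundedness theorem of the paper (fixing $d$, $u$, $v$), the set of fibrations in $\mathfrak{Z}_{d,v,u}$ forms a bounded family, and using the integer $r$ coming from Birkar's effectivity result (Lemma~\ref{lem--Cartierindex}), every such $(X,0,A)$ can be embedded, via a fixed multiple of $-rK_X$ on the base side and a fixed multiple of $A$ on the fiber side, into a fixed projective space. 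This realizes $\mathscr{M}_{d,v,u,r}$ as a quotient stack of a locally closed subscheme $\mathcal{H}$ of a suitable relative Hilbert scheme (or a product of Hilbert schemes for $\mathcal{X}$, $\mathcal{C}$ and the morphism $f$) by the action of a product of projective linear groups, which immediately gives that the stack is algebraic and of finite type over $\mathbb{C}$; the quotient is taken only up to pullbacks of line bundles from $\mathcal{C}$, which corresponds exactly to quotienting out the extra $\mathrm{PGL}$-factor acting on the base polarization, matching the definition of isomorphism in the statement.

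Next I would check that the conditions (i)--(v) defining the groupoid are \emph{open} (and the relevant loci locally closed) in families, so that $\mathcal{H}$ is indeed locally closed in the Hilbert scheme: flatness and projectivity of $\pi_\mathcal{X}$, existence of $\omega_{\mathcal{X}/S}^{[r]}$ as a line bundle with the stated compatibility, local freeness of the pushforwards $\pi_{\mathcal{X}*}\omega_{\mathcal{X}/S}^{[-lr]}$ together with their generation of the fiberwise sections (this is a Cohen--Macaulay/base-change argument using that the general fiber is a klt Calabi--Yau fibration and that $-rK_X$ is base point free), the property that $f$ is the ample model of $\omega_{\mathcal{X}/S}^{[-r]}$ over $S$, and finally the fiberwise condition $(\mathcal{X}_{\bar s},0,\mathscr{A}_{\bar s})\to\mathcal{C}_{\bar s}\in\mathfrak{Z}_{d,v,u}$. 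The last of these decomposes into openness of ``klt-trivial fibration'' (inversion of adjunction / invariance of log plurigenera), openness of the numerical conditions (ii)--(iv) (constancy of Hilbert polynomials in flat families), and openness of uniform adiabatic K-stability---here I would invoke the characterization of uniform adiabatic K-stability for klt-trivial fibrations over curves from \cite{Hat} (Definition~\ref{unifdef}), which reduces the condition to a statement about the moduli b-divisor / a finite list of log canonical thresholds and positivity of the CM-type line bundle, all of which behave well in bounded families.

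The crucial point is \emph{separatedness}, which by the valuative criterion is precisely property (\ref{intro-(4)}) from the introduction restricted to our class: given a punctured-disk family of uniformly adiabatically K-stable polarized klt-trivial fibrations over curves and two fillings whose central fibers are again such, the two fillings must be isomorphic up to a pullback of a line bundle from the base. This is exactly the content of the second author's separatedness result \cite{CM} over $\mathbb{C}$; I would quote it and then do the bookkeeping to check that the ``isomorphism up to pullback from $\mathcal{C}$'' ambiguity is handled consistently with our notion of isomorphism of objects. This is the step I expect to be the main obstacle to a self-contained account, since the uniqueness of the K-stable filling is a substantial MMP-plus-K-stability argument; in the present paper it is a black box. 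For the \emph{Deligne--Mumford} property it remains to see that automorphism groups are finite and reduced: finiteness follows because an automorphism of $(X,0,A)\to C$ fixing the polarization acts faithfully on the finite-dimensional space $H^0(X,\mathcal{O}_X(mrA-m'rK_X))$ for suitable $m,m'$ and is an automorphism of a polarized variety of general type on the base together with an automorphism of a polarized Calabi--Yau fibration, both of which have finite (indeed reduced, in characteristic zero) automorphism groups; unramifiedness of the diagonal is automatic over $\mathbb{C}$ once the automorphism group schemes are finite and reduced.

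Putting these together, $\mathscr{M}_{d,v,u,r}$ is a separated Deligne--Mumford stack of finite type over $\mathbb{C}$, and the Keel--Mori theorem then produces the coarse moduli space as a separated algebraic space of finite type over $\mathbb{C}$, completing the proof.
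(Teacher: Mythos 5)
There is a genuine gap at the heart of your construction, precisely where the statement differs from Theorem \ref{modulimain}. In $\mathfrak{Z}_{d,v,u}$ the polarization $A$ is only $f$-ample, its volume is unbounded, and it is only fixed up to twisting by pullbacks from the base: replacing $A$ by $A-Nf^*H$ for arbitrarily large $N$ gives another admissible object. Consequently your claim that ``every such $(X,0,A)$ can be embedded, via a fixed multiple of $-rK_X$ on the base side and a fixed multiple of $A$ on the fiber side, into a fixed projective space'' fails: no fixed multiple of $A$ (even corrected by a fixed multiple of $-rK_X$, which is a pullback from $C$) is ample uniformly over the class, so you cannot realize $\mathscr{M}_{d,v,u,r}$ directly as a quotient of one locally closed subscheme of a Hilbert scheme. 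Likewise, the assertion that quotienting ``by pullbacks of line bundles from $\mathcal{C}$'' is the same as quotienting by ``the extra $\mathrm{PGL}$-factor acting on the base polarization'' is not correct: the $\mathrm{PGL}$-action on the base projective space only accounts for changes of basis of sections, whereas the equivalence in $\mathscr{M}_{d,v,u,r}$ identifies objects whose polarizations differ by $f^*\mathcal{O}_{\mathbb{P}^1}(m)$ for arbitrary $m\in\mathbb{Z}$, i.e.\ objects whose volumes differ and which therefore live in different Hilbert schemes; a $\mathbb{Z}$-worth of twists cannot be absorbed into the group action in your setup.

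The paper closes exactly this gap by a normalization-of-polarization argument rather than by enlarging the group: Proposition \ref{prop--nefthreshold-volume} shows every $f\colon(X,0,A)\to C$ admits an integer twist $A+t_AF$ that is ample with $\mathrm{vol}\le w'$, and since volumes of line bundles are integers one can choose finitely many values $w_1,\dots,w_k\in(w',w'+dv]$ and prove that $\mathscr{M}_{d,v,u,r}\cong\mathscr{M}_{w_1}\sqcup\cdots\sqcup\mathscr{M}_{w_k}$, an open and closed substack of $\mathscr{M}_{d,v,u,w'+dv,r}$ from Theorem \ref{mod2}. The nontrivial points you would still need are: the existence of $\mathscr{L}\in\mathrm{Pic}_{\mathcal{C}/S}(S)$ restricting to $\mathcal{O}_{\mathbb{P}^1}(1)$ on fibers (this uses $u<0$, so $\mathcal{C}\to S$ is an \'etale-locally trivial $\mathbb{P}^1$-fibration); full faithfulness, where the window of width $<dv$ forces the twist $\mathscr{B}$ to be fiberwise trivial and hence zero in $\mathrm{Pic}_{\mathcal{C}/S}(S)$; and essential surjectivity, where local constancy of $\mathrm{vol}(\mathscr{A}_{\bar t}\otimes f_{\bar t}^*\mathscr{L}_{\bar t}^{\otimes q})$ lets one choose local twists $q_\lambda$ that automatically agree on overlaps and glue to a global normalized polarization. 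Without this reduction your finite-type and quotient-stack claims do not go through. Two smaller inaccuracies: the paper does not merely quote \cite{CM} for separatedness but proves Theorem \ref{sep2} (and deduces finiteness of automorphisms, Corollary \ref{fin2}, from it); and your finiteness argument via ``a polarized variety of general type on the base'' is off for $u<0$, where the base is $(\mathbb{P}^1,\mathcal{O}(1))$ and finiteness genuinely relies on uniform adiabatic K-stability through the separatedness theorem.
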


We emphasize that $\mathscr{A}_{\bar{s}}$ are not assumed to be ample or the volumes of $\mathscr{A}_{\bar{s}}$ in $\mathscr{M}_{d,v,u,r}(S)$ are not bounded from above. 
Theorem \ref{quesmain} is the combination of the conditions (\ref{intro-(2)})--(\ref{intro-(4)}) for the uniform adiabatic K-stability (\cite[Theorem B]{Hat}, Corollary \ref{cor--hilbertpolynomial}, Theorem \ref{op}, and Theorem \ref{sep2}) and Theorems \ref{modulimain} and \ref{mainbound} below, which are also key ingredients.

The first ingredient (Theorem \ref{modulimain} below) is the existence of a separated coarse moduli space that parametrizes $f\colon(X,0,A)\to C\in\mathfrak{Z}_{d, v,u}$ such that $f$ is uniformly adiabatically K-stable and $A$ is an ample line bundle whose volume is bounded from above.
We set
$$\mathfrak{Z}_{d, v,u,w}:=\left\{
\begin{array}{l}
f\colon (X,0,A) \to C\in\mathfrak{Z}_{d, v,u}\!
\end{array}
\;\middle|
\begin{array}{l}
\text{$A$ is (globally) ample and $\mathrm{vol}(A)\le w$}
\end{array}\right\}$$ 
for any positive rational number $w$. 
Then the following holds.

\begin{thm}[see Theorem \ref{mod2}]\label{modulimain}
We fix $d\in\mathbb{Z}_{>0}$, $u\in\mathbb{Q}_{\ne0}$ with $e:=\frac{u}{|u|}$, $v\in\mathbb{Q}_{>0}$, $w\in\mathbb{Q}_{>0}$, and $r \in\mathbb{Z}_{>0}$ as above. 
For any locally Noetherian scheme $S$ over $\mathbb{C}$, we define $\mathscr{M}_{d,v,u,w,r}(S)$ to be a groupoid whose objects are 
 $$\left\{
 \vcenter{
 \xymatrix@C=12pt{
(\mathcal{X},\mathscr{A})\ar[rr]^-{f}\ar[dr]_{\pi_{\mathcal{X}}}&& \mathcal{C} \ar[dl]\\
&S
}
}
\;\middle|
\begin{array}{rl}
(i)&\text{$\pi_{\mathcal{X}}$ is a flat projective morphism and $\mathcal{X}$ is a scheme,}\\
(ii)&\text{$\mathscr{A}\in\mathrm{Pic}_{\mathcal{X}/S}(S)$ such that $\mathscr{A}_{\bar{s}}$ is ample for any}\\
&\text{geometric point $\bar{s}\in S$,}\\
(iii)&\text{$\omega_{\mathcal{X}/S}^{[r]}$ exists as a line bundle,}\\
(iv)&\text{$\pi_{\mathcal{X}*}\omega_{\mathcal{X}/S}^{[ler]}$ is locally free and it generates}\\
&\text{$H^0(\mathcal{X}_{s}, \mathcal{O}_{\mathcal{X}_{s}}(lerK_{\mathcal{X}_{s}}))$ for any point $s\in S$ and any}\\
&\text{$l\in\mathbb{Z}_{>0}$,}\\
(v)&\text{$f$ is the ample model of $\omega_{\mathcal{X}/S}^{[er]}$ over $S$ and}\\
&\text{$(\mathcal{X}_{\overline{s}},0,\mathscr{A}_{\overline{s}})\to \mathcal{C}_{\overline{s}} \in \mathfrak{Z}_{d, v,u,w}$ for any geometric}\\
&\text{point $\overline{s}\in S$}
\end{array}\right\}.$$
Here, we define an isomorphism $\alpha\colon (f \colon (\mathcal{X},\mathscr{A})\to\mathcal{C})\to (f' \colon (\mathcal{X}',\mathscr{A}')\to\mathcal{C}')$ of any two objects of $\mathscr{M}_{d,v,u,w,r}(S)$ to be an $S$-isomorphism $\alpha\colon\mathcal{X}\to\mathcal{X}'$ such that $\alpha^*\mathscr{A}'=\mathscr{A}$ as elements of $\mathrm{Pic}_{\mathcal{X}/S}(S)$.

Then $\mathscr{M}_{d,v,u,w,r}$ is a separated Deligne-Mumford stack of finite type over $\mathbb{C}$ with a coarse moduli space.
\end{thm}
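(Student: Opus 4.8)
The plan is to present $\mathscr{M}_{d,v,u,w,r}$ as a quotient stack $[\mathcal{U}/\mathrm{PGL}_{N+1}]$, where $\mathcal{U}$ is a suitable $\mathrm{PGL}_{N+1}$-invariant locally closed subscheme of a relative Hilbert scheme, and then to verify, in turn, that $\mathcal{U}$ is quasi-compact (so the stack is of finite type), that the stability locus is open, that the stabilizer group schemes are finite (so the stack is Deligne--Mumford), that the valuative criterion of separatedness holds, and finally that a separated Deligne--Mumford stack of finite type over $\mathbb{C}$ admits a coarse moduli space by the Keel--Mori theorem.

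First I would establish boundedness of the family $\mathfrak{Z}_{d,v,u,w}$ of the underlying polarized fibrations $f\colon(X,0,A)\to C$. By Lemma \ref{lem--Cartierindex} the integer $r$ may be chosen so that $\omega_X^{[er]}$ is base point free and defines $f$ for every member. Applying the canonical bundle formula $K_X\sim_{\mathbb{Q}}f^*(K_C+B_C+M_C)$ together with $K_X\equiv uf^*H$ and $\deg H=1$ gives $u=2g(C)-2+\deg B_C+\deg M_C$ with $B_C\ge 0$ and $M_C$ nef, so $g(C)$, $\deg B_C$ and $\deg M_C$ are bounded in terms of $u$, and $C$ together with the discriminant data varies in a bounded family (using \cite{birkar-geometry-moduli}). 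The general fibre is a Calabi--Yau with $(A|_F)^{d-1}=v$ and Cartier index of $K_F$ bounded by $r$, hence $(F,A|_F)$ is bounded; and since $N^1(X)$ is generated over $N^1(X/C)$ by the fibre class $f^*H$ and $(f^*H)^2=0$, the only nonzero intersection numbers of $A$ are $(A^d)=\mathrm{vol}(A)\le w$ and $(A^{d-1}\cdot f^*H)=v$, so $A$ is a bounded polarization. Assembling these, the pairs $(X,A)$ together with their fibrations form a bounded family; this is precisely where Theorem \ref{mainbound} does the work. Consequently there is $m_0=m_0(d,v,u,w,r)$ with $m_0A$ very ample and higher cohomology vanishing, so the Hilbert polynomial $P$ of $(X,m_0A)$ is one of finitely many (Corollary \ref{cor--hilbertpolynomial}); I set $N+1=P(1)$ and take $\mathcal{U}\subset\mathrm{Hilb}_P(\mathbb{P}^N)$ to be the locus of $X\subset\mathbb{P}^N$ with $A:=\mathcal{O}_X(1)$ such that $X$ is a klt-trivial fibration over a curve with the prescribed numerical invariants, $\omega_X^{[er]}$ is a line bundle with ample model the base curve, the sheaves $\pi_*\omega_X^{[ler]}$ have the correct rank, and $f$ is uniformly adiabatically K-stable.

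Next I would check that $\mathcal{U}$ is locally closed: after passing to the locally closed locus where $\omega^{[r]}$ is invertible, the rank conditions hold and $\mathcal{O}(1)$ is relatively ample, the conditions of being klt, of being a klt-trivial fibration with fixed ample model, and of carrying the prescribed (locally constant) numerical invariants are open, and uniform adiabatic K-stability is open by Theorem \ref{op}. Since $\mathcal{U}$ is then a finite-type $\mathbb{C}$-scheme with a natural $\mathrm{PGL}_{N+1}$-action, and since the defining conditions of the groupoid $\mathscr{M}_{d,v,u,w,r}(S)$ are fppf-local on $S$ with morphisms descending (so $\mathscr{M}_{d,v,u,w,r}$ is a stack), the tautological family over $\mathcal{U}$ induces an isomorphism $\mathscr{M}_{d,v,u,w,r}\cong[\mathcal{U}/\mathrm{PGL}_{N+1}]$; here the fact that an isomorphism of objects must satisfy $\alpha^*\mathscr{A}'=\mathscr{A}$ exactly, not merely up to a twist from the base, is what forces the two embedded families to differ by an element of $\mathrm{PGL}_{N+1}$. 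Thus $\mathscr{M}_{d,v,u,w,r}$ is an Artin stack of finite type over $\mathbb{C}$. For the Deligne--Mumford property I would argue that for $(X,0,A)\to C$ in $\mathfrak{Z}_{d,v,u,w}$ the group $\mathrm{Aut}(X,A)$ of automorphisms fixing the class of $A$ is linear algebraic, is reductive because $(X,A)$ is uniformly adiabatically K-stable, and is smooth in characteristic $0$; strict K-stability kills its identity component, so it is finite, the diagonal is unramified, and the stack is Deligne--Mumford. Separatedness is the valuative criterion: two families over a DVR agreeing over the generic point and with K-(poly)stable closed fibres are isomorphic, which is Theorem \ref{sep2} (ultimately \cite{CM}); this also makes the diagonal proper. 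Finally, a separated Deligne--Mumford stack of finite type over $\mathbb{C}$ has a coarse moduli space, a separated algebraic space of finite type over $\mathbb{C}$, by the Keel--Mori theorem.

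The main obstacle is the boundedness step: one must control the base curve, the fibration data, and the total space with a uniformly bounded polarization all simultaneously, which is the content of Theorem \ref{mainbound}; once that is in hand, openness (Theorem \ref{op}), separatedness (Theorem \ref{sep2}) and Keel--Mori are black boxes, and only the bookkeeping of the quotient-stack presentation remains. A secondary point, and the reason conditions (iii)--(v) are built into the moduli functor, is that one needs $\omega_{\mathcal{X}/S}^{[er]}$ to be a line bundle whose ample model behaves well in families, so that $\mathcal{C}$ and $f$ are recovered functorially from $\mathcal{X}\to S$; without this the prestack need not be a stack and its coarse space need not be separated.
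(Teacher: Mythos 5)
Your overall architecture (a $PGL$-quotient of a locally closed locus in a Hilbert scheme, openness of K-stability, finiteness of stabilizers, separatedness via Theorem \ref{sep2}, Keel--Mori) matches the paper's, but two steps as you state them have genuine gaps. First, embedding only by a single very ample multiple $m_0A$ does not let you recover the polarization itself: the tautological family over your $\mathcal{U}\subset\mathrm{Hilb}_P(\mathbb{P}^N)$ carries only $\mathcal{O}(1)\cong m_0\mathscr{A}$ (up to a twist from the base of the parameter scheme), and the moduli problem requires the class $\mathscr{A}\in\mathrm{Pic}_{\mathcal{X}/S}(S)$, with isomorphisms satisfying $\alpha^*\mathscr{A}'=\mathscr{A}$ on the nose; moreover $\mathscr{A}$ need not be representable by a line bundle on $\mathcal{X}$ at all. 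So the claimed equivalence $\mathscr{M}_{d,v,u,w,r}\cong[\mathcal{U}/PGL_{N+1}]$ is not justified: objects and stabilizers are matched only at the level of $m_0\mathscr{A}$, not $\mathscr{A}$. This is exactly why the paper embeds by the \emph{two} consecutive powers $I\mathscr{A}$ and $(I+1)\mathscr{A}$ (so that $\mathscr{A}$ is recovered as $p_2^*\mathcal{O}(1)\otimes p_1^*\mathcal{O}(-1)$ on the universal family) together with a third factor via $erK_X$ to pin down the fibration, and then proves a precise representability statement (Proposition \ref{lem-N-univ}) before identifying $\mathscr{M}_{d_1,d_2,d_3,h}$ with $[N/PGL(d_1)\times PGL(d_2)\times PGL(d_3)]$ after decomposing the stack by the locally constant invariants $d_1,d_2,d_3,h$ (which itself uses the invariance of plurigenera, Theorem \ref{thm--inv-pluri}).

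Second, your assertion that the remaining defining conditions cut out an \emph{open} locus is false as stated: conditions (iii)--(v) of the moduli functor (that $\omega_{\mathcal{X}/S}^{[r]}$ is a line bundle, that $\pi_{\mathcal{X}*}\omega_{\mathcal{X}/S}^{[ler]}$ is locally free and commutes with base change for \emph{all} $l>0$, and that $f$ is the ample model in a way compatible with base change) are only locally closed, and establishing this is the technical heart of the construction: it requires Koll\'ar's universal hull/quotient husk decomposition (Theorem \ref{hullsdecomp}, Corollary \ref{cor--hako}) and a Grothendieck-complex/flattening-stratification argument carried out for every $l$ simultaneously with a Noetherian termination step. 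Only the K-stability condition is handled by Theorem \ref{op}, and even there one must pass to the normalization of the parameter space. Finally, your Deligne--Mumford step, ``$\mathrm{Aut}$ is reductive because the fibration is uniformly adiabatically K-stable and strict K-stability kills the identity component,'' is not substantiated (the K-stability hypothesis concerns the adiabatic polarizations $\epsilon A+f^*H$, not $A$, and no reductivity statement is available here); the paper instead deduces finiteness of $\mathrm{Aut}$ from the separatedness/extension theorems (Proposition \ref{plussep}, Theorem \ref{sep2}, Corollaries \ref{plusfin} and \ref{fin2}): the automorphism group scheme is affine and proper, hence finite. With these three repairs your outline would essentially reproduce the paper's proof.
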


When $u>0$, Theorem \ref{modulimain} shows the existence of the moduli of the Iitaka fibrations from klt good minimal models of Iitaka dimension one (see \cite{birkar-moduli-stable-min-model} for the related topic).

We note that the isomorphisms in $\mathscr{M}_{d,v,u,w,r}$ are those in $\mathscr{M}_{d,v,u,r}$, but the converse is not necessarily true. 
The choice of $r$ in Lemma \ref{lem--Cartierindex} is not unique and the stacks $\mathscr{M}_{d,v,u,r}$ in Theorem \ref{quesmain} and $\mathscr{M}_{d,v,u,w,r}$ in Theorem \ref{modulimain} depend on the choice of $r$, however, their reduced structures are independent of $r$. For details, see Remark \ref{rem-samemoduli}. 

The second ingredient (Theorem \ref{mainbound} below) is the boundedness of $\mathscr{M}_{d,v,u,r}(\mathrm{Spec}\,\mathbb{C})$.
In fact, we prove the following much stronger assertion.
Let $d$ be a positive integer, $\Theta \subset \mathbb{Q}$ a DCC set, $v$ a positive rational number, and $u$ a rational number. 
We set
$$\mathfrak{D}_{d, \Theta, v,u}:=\left\{
\begin{array}{l}
f\colon (X,\Delta,A) \to C
\end{array}
\;\middle|
\begin{array}{rl}
(i)&\text{$f\colon (X,\Delta) \to C$ is a klt-trivial}\\
&\text{fibration over a curve $C$ such}\\
&\text{that $K_{X}+\Delta\equiv uf^{*}H$ with a}\\
&\text{line bundle $H$ of degree one,}\\
(ii)&\text{${\rm dim}X=d$,}\\
(iii)&\text{the coefficients of $\Delta$ belong to $\Theta$,}\\
(iv)&\text{$A$ is an $f$-ample $\mathbb{Q}$-Cartier Weil}\\
&\text{divisor such that $(H\cdot A^{d-1})=v$.}
\end{array}\right\}/\sim,$$
where $\sim$ means relative linear equivalence of $A$ over $C$. We denote the equivalence class by $[f\colon (X,\Delta,A) \to C]$. Consider also for any $w>0$,
$$\mathfrak{G}_{d, \Theta, v,u,w}:=\left\{
\begin{array}{l}
f\colon (X,\Delta,A) \to C
\end{array}
\;\middle|
\begin{array}{l}
\text{$f$ satisfies the conditions (i)--(iv) such that}\\
\text{$A$ is (globally) ample and $\mathrm{vol}(A)\le w$.}
\end{array}\right\}.$$

\begin{thm}[Boundedness]\label{mainbound}
Fix $d\in\mathbb{Z}_{>0}$, a DCC set $\Theta\subset \mathbb{Q}$, $v \in \mathbb{Q}_{>0}$, and $u\in\mathbb{Q}$. 
With notation as above, the following hold.
\begin{enumerate}
\item The set of klt pairs $(X,\Delta)$ appearing in $\mathfrak{D}_{d, \Theta, v,u}$ is log bounded, and\label{bound-(1)}
    \item there exists $w\in\mathbb{Q}_{>0}$, depending only on $d$, $\Theta$, $v$, and $u$, such that the natural map
    \[
    \mathfrak{G}_{d, \Theta, v,u,w}\longrightarrow\mathfrak{D}_{d, \Theta, v,u}
    \]
    is surjective.\label{bound-(2)}
\end{enumerate}
\end{thm}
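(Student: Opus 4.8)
The plan is to reconstruct a member $[f\colon(X,\Delta,A)\to C]$ of $\mathfrak{D}_{d,\Theta,v,u}$ from its general fibre, its base curve, and the gluing of the two, bounding each of these in terms of $d,\Theta,v,u$; this is the usual ``general fibre $+$ base $\Rightarrow$ total space'' scheme for boundedness of fibrations, the essential new input being that fixing the Iitaka volume $u$ forces boundedness in the base direction. \textbf{Step 1 (general fibre).} A general fibre $(F,\Delta_F)$, $\Delta_F:=\Delta|_F$, is a klt pair of dimension $d-1$ with $K_F+\Delta_F\equiv0$, the coefficients of $\Delta_F$ lie in $\Theta$, and $A_F:=A|_F$ is ample with $(A_F^{d-1})=v$ (using $f^{*}H\equiv F$ and the projection formula to identify $(H\cdot A^{d-1})=(f^{*}H\cdot A^{d-1})$). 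By Birkar's boundedness of polarised Calabi--Yau pairs (\cite{birkar-geometry-moduli}), the triples $(F,\Delta_F,A_F)$ form a log bounded family; in particular there is a uniform $m_1\in\mathbb{Z}_{>0}$ with $m_1A_F$ very ample and $H^{i}(F,\mathcal{O}_F(jm_1A_F))=0$ for all $i>0$ and $j\ge1$, and the quantities $\chi(F,\mathcal{O}_F(jm_1A_F))$, the number of components of $\Delta_F$, and their $A_F$-degrees are uniformly bounded.

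\textbf{Step 2 (the base).} Applying the canonical bundle formula — and noting that, $C$ being a curve, no birational modification of the base is needed — we obtain $K_X+\Delta\sim_{\mathbb{Q}}f^{*}(K_C+B_C+M_C)$ with $M_C$ nef, whence $\deg K_C+\deg B_C+\deg M_C=u$. Since $B_C\ge0$ and $\deg M_C\ge0$, this already bounds $g(C)\le\max\{0,\lceil(u+2)/2\rceil\}$ and $\deg B_C\le u+2$. To upgrade this to log boundedness of $(C,B_C)$ — in particular a bound on the number of points in $\Supp B_C$, i.e.\ on the number of points over which $f$ has bad fibres — I would invoke Birkar's boundedness of the base of klt-trivial fibrations (\cite{birkar-geometry-moduli}): the discriminant coefficients lie in a DCC set determined by $d$ and $\Theta$, and fixing $u$ pins down the relevant degrees, so that $(C,B_C+M_C)$ is log bounded.

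\textbf{Step 3 (total space; assertion (\ref{bound-(1)})).} Feeding Steps 1 and 2 into Birkar's boundedness of klt-trivial fibrations with bounded base and bounded polarised general fibre (\cite{birkar-geometry-moduli}) shows that $X$ is bounded; concretely, one realises $X=\operatorname{Proj}_{C}\bigoplus_{m\ge0}f_{*}\mathcal{O}_X(mA)$ as the zero locus of equations of uniformly bounded degree inside $\mathbb{P}_{C}(\mathcal{E})$, $\mathcal{E}:=f_{*}\mathcal{O}_X(m_0A)$ of uniform rank, once one knows — and this is where fixing $u$ enters, through the bound on $\deg M_C$ — that $\mathbb{P}_{C}(\mathcal{E})$ is bounded modulo twisting $\mathcal{E}$ by $f^{*}\mathrm{Pic}(C)$. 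Adjoining $\Delta$, whose number of components and degrees (with respect to a polarisation on the bounding family) are bounded by Steps 1--2, yields log boundedness of the pairs $(X,\Delta)$, which is (\ref{bound-(1)}). When $u\ne0$ one may in addition use Lemma \ref{lem--Cartierindex} to present $f$ through the linear system $|m_2e(K_X+\Delta)|$ for a uniform $m_2$; the case $u=0$ is handled separately, $C$ then lying in a very restricted family and $f$ being the relative ample model of $A$. \emph{I expect Steps 2--3 — upgrading bounded general fibre and bounded base to bounded total space while controlling the bad fibres of $f$, with the fixed Iitaka volume as the sole source of this control — to be the main obstacle.}

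\textbf{Step 4 (ample representatives; assertion (\ref{bound-(2)})).} By (\ref{bound-(1)}) the pairs $(X,\Delta)$ together with the morphisms $f\colon X\to C$ vary in a bounded family; within it, for each member I normalise $A$ inside its relative linear equivalence class — replacing it by $A_0:=A+f^{*}D'$ for a suitable divisor $D'$ on $C$ — so that $\deg f_{*}\mathcal{O}_X(mA_0)$ lies in a fixed bounded range for a uniform $m$, whence the intersection number $(A_0^{d})$ is uniformly bounded. For an effective divisor $D$ on $C$ of uniformly bounded degree $n$, the divisor $A_0+f^{*}D$ is globally ample (ampleness being uniform in a bounded family), and since $(f^{*}H)^{2}=0$ one computes $\mathrm{vol}(A_0+f^{*}D)=(A_0^{d})+dnv$, which is therefore bounded above by a constant $w=w(d,\Theta,v,u)$. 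This representative exhibits the class of $(X,\Delta,A)$ in the image of the natural map $\mathfrak{G}_{d,\Theta,v,u,w}\to\mathfrak{D}_{d,\Theta,v,u}$, which gives its surjectivity.
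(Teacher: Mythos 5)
Your Steps 1 and 2 match the paper's preliminary reductions (Lemma \ref{lem--Cartierindex} and the setup of Lemma \ref{lem--lct}), but the proposal has a genuine gap exactly where you flag it: Steps 2--3. There is no theorem in \cite{birkar-geometry-moduli} of the shape ``bounded polarised general fibre $+$ bounded base $\Rightarrow$ bounded klt-trivial fibration'', and the assertion that $\mathbb{P}_C(\mathcal{E})$, $\mathcal{E}=f_*\mathcal{O}_X(m_0A)$, is bounded modulo twisting by $f^*\mathrm{Pic}(C)$ ``because $\deg M_C$ is bounded'' is precisely the statement that has to be proved; bounding $\deg(K_C+B_C+M_C)$ says nothing about the positivity of the pushforward sheaves or about how far $A$ is from being ample within its $f^*\mathrm{Pic}(C)$-orbit. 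The paper's mechanism is different and is the real content: first a uniform lower bound $\alpha_{(F,\Delta|_F)}(A|_F)\ge 1/N$ on general fibres (Lemma \ref{lem--lct}, via \cite[Theorem 1.8]{birkar-bab} after reducing to a bounded, $\delta$-lc situation), and then Fujino's semipositivity theorem \cite[Theorem 1.11]{fujino-semi-positivity} applied to $l\bigl(K_X+\Delta-f^*K_C+\tfrac1N A'\bigr)$, where $A'\sim_{\mathbb{Q}}A+\tau f^*H$ and $\tau$ is the bigness threshold; this shows the nef threshold of $A+tf^*H$ exceeds $\tau$ by at most $N(2+eu)$, and a section-counting argument over $N'$ general fibres (each contributing at most $dv$ to the volume, starting from a non-big twist of volume $0$) gives $\mathrm{vol}(A+f^*D)\le d(4N+\lceil euN\rceil)v$ (Proposition \ref{prop--nefthreshold-volume}). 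Only after this does one get boundedness of $(X,\Delta)$, by applying \cite[Theorem 1.1, Corollary 1.6, Theorem 1.5]{birkar-geometry-moduli} to the total space polarised by the globally ample, volume-bounded divisor $L_{(f,A)}$ (Theorem \ref{thm--eff-veryample}); no ``fibre $+$ base'' gluing is used.

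This also breaks your Step 4: the paper's logical order is the reverse of yours --- statement (\ref{bound-(2)}) (in the form of Proposition \ref{prop--nefthreshold-volume}) is established first and (\ref{bound-(1)}) is deduced from it, whereas you derive (\ref{bound-(2)}) from (\ref{bound-(1)}). Even granting (\ref{bound-(1)}), the claims that after normalising $\deg f_*\mathcal{O}_X(mA_0)$ the number $(A_0^d)$ is uniformly bounded and that $A_0+f^*D$ is ample for a uniformly bounded $D$ (``ampleness being uniform in a bounded family'') are not justified; they are restatements of the nef-threshold and volume bounds that the semipositivity argument supplies, so as written Step 4 is circular. To repair the proposal you would need to replace Steps 3--4 by the alpha-invariant bound on fibres together with the semipositivity input controlling $f_*$ of a suitable adjoint-type divisor, i.e.\ essentially reprove Proposition \ref{prop--nefthreshold-volume}.
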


After the paper has been completed, Birkar informed the authors that he and Hacon obtained Theorem \ref{mainbound} (1) independently. 
Theorem \ref{mainbound} not only shows the boundedness of $\mathfrak{Z}_{d, v,u}$ but also assures that $\mathscr{M}_{d,v,u,r}$ is of finite type in Theorem \ref{quesmain}. 
For the other statement of the boundedness, see Proposition \ref{bou6}. 

We also study special K-stability, which was introduced by the second author \cite{CM}. 
This is a stronger condition than the uniform K-stability. 
By \cite{CM}, there exists an explicit criterion of the special K-stability without using test configurations, and the CM minimization conjecture, a numerical and stronger assertion than (\ref{intro-(4)}), holds for the spacial K-stability.
We note that a uniformly adaiabatically K-stable klt-trivial fibration over a curve is specially K-stable for a certain polarization (\cite[Theorem 3.12]{CM}). 
We show that all members of $\mathfrak{G}_{d, \Theta, v,u,w}$ are simultaneously specially K-stable for a certain choice of polarizations as follows.

\begin{thm}[Uniformity of adiabatic K-stability]\label{mainunif}
Let $d \in \mathbb{Z}_{>0}$, $\Theta\subset \mathbb{Q}$, $u \in \mathbb{Q}$, and $v \in \mathbb{Q}_{>0}$ be as in Theorem \ref{mainbound} and $w$ be a positive rational number.
Then, there exists an $\epsilon_0\in\mathbb{Q}_{>0}$, depending only on $d$, $\Theta $, $u$, $w$, and $v$, such that $(X,\Delta,\epsilon A+f^*H)$ is specially K-stable for any rational number $\epsilon\in(0,\epsilon_0)$, line bundle $H$ on $C$ of $\mathrm{deg}\,H=1$, and $f\colon (X,\Delta,A) \to C\in\mathfrak{G}_{d, \Theta, v, u,w}$. 

Furthermore, there exists $\alpha>0$ such that 
\[
M^{\mathrm{NA}}_{\Delta}(\mathcal{X},\mathcal{M})\ge\alpha(\mathcal{J}^{\epsilon A+f^*H})^{\mathrm{NA}}(\mathcal{X},\mathcal{M})
\]
for any $f\colon (X,\Delta,A) \to C\in\mathfrak{G}_{d, \Theta, v, u,w}$ with a line bundle $H$ on $C$ of $\mathrm{deg}\,H=1$, normal semiample test configuration $(\mathcal{X},\mathcal{M})$ for $(X,\epsilon A+f^*H)$, and rational number $\epsilon\in(0,\epsilon_0)$.
\end{thm}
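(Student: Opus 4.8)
The plan is to obtain Theorem \ref{mainunif} by combining the boundedness result Theorem \ref{mainbound} with the pointwise theory of adiabatic and special K-stability for klt-trivial fibrations over curves from \cite{fibst}, \cite{Hat}, \cite{CM}; the genuinely new point is the \emph{uniformity} of the constants $\epsilon_0$ and $\alpha$ over $\mathfrak{G}_{d,\Theta,v,u,w}$, and essentially everything reduces to controlling the relevant invariants along a bounded family.

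First I would produce the bounded family. By Theorem \ref{mainbound}(\ref{bound-(1)}) the klt pairs $(X,\Delta)$ occurring in $\mathfrak{G}_{d,\Theta,v,u,w}$ form a log bounded family; since moreover $A$ is globally ample with $(H\cdot A^{d-1})=v$ and $\mathrm{vol}(A)\le w$, a Matsusaka-type argument (boundedness of ample divisors of bounded volume on a bounded family of varieties) shows that the polarized triples $(X,\Delta,A)$, and hence the induced fibrations $f\colon X\to C$, vary in a family of finite type over $\mathbb{C}$. Consequently the genus of $C$, the degrees and the numbers and coefficients of the components of the discriminant $B_C$ and of the nef moduli part $M_C$ in $K_X+\Delta\sim_{\mathbb{Q}}f^*(K_C+B_C+M_C)$, and the moduli of the log Calabi--Yau fibers, are all bounded in terms of $d,\Theta,v,u,w$. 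Stratifying the base of this family, it then suffices to establish the desired estimates with constants that are constructible along the family and take the minimum over the finitely many strata.

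Second, on this bounded family I would invoke the characterization of uniform adiabatic K-stability for klt-trivial fibrations over curves from \cite{Hat} (Definition \ref{unifdef}, \cite[Theorem B]{Hat}). It reduces the uniform adiabatic K-stability of $f\colon(X,\Delta,\epsilon A+f^*H)\to C$ for small $\epsilon$ to the fiberwise Calabi--Yau condition, which is automatic by Yau, together with a positivity condition on the polarized log curve $(C,B_C+M_C)$ relative to $H$; this condition holds for every member of $\mathfrak{G}_{d,\Theta,v,u,w}$ (here $\deg H=1$, $(C,B_C)$ is klt, and $M_C$ is nef), and the associated quantitative data --- the coercivity constant of the base, and the leading coefficients in the $\epsilon$-expansion of $M^{\mathrm{NA}}_\Delta$ and $(\mathcal{J}^{\epsilon A+f^*H})^{\mathrm{NA}}$, which occur at a common positive power of $\epsilon$ because $(f^*H)^2=0$ --- depend only on the bounded invariants listed above. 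This yields a uniform threshold $\epsilon_0\in\mathbb{Q}_{>0}$; running the argument of \cite[Theorem 3.12]{CM} (uniformly adiabatically K-stable klt-trivial fibration over a curve implies specially K-stable for $\epsilon A+f^*H$ with $\epsilon$ small) in families, together with the test-configuration-free criterion for special K-stability from \cite{CM} read off along the bounded family, gives the special K-stability of $(X,\Delta,\epsilon A+f^*H)$ for every $\epsilon\in(0,\epsilon_0)$ and every member of $\mathfrak{G}_{d,\Theta,v,u,w}$.

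Finally, for the \textbf{Furthermore} part I would track the coercivity inequality through the adiabatic decomposition of an arbitrary normal semiample test configuration $(\mathcal{X},\mathcal{M})$ for $(X,\epsilon A+f^*H)$ into a test configuration of the base curve and a family of fiberwise test configurations, following \cite{fibst} and \cite{Hat}: the base part is bounded below by a uniform multiple of the non-Archimedean $\mathcal{J}$-energy of the base (uniform K-stability of a polarized log curve of bounded degree), the fiberwise part contributes a non-negative leading term (log Calabi--Yau fibers are K-semistable with vanishing Donaldson--Futaki invariant), and all cross terms and $\epsilon$-error terms are controlled by the bounds of the previous step; renormalizing $M^{\mathrm{NA}}_\Delta$ and $(\mathcal{J}^{\epsilon A+f^*H})^{\mathrm{NA}}$ by their common leading factor, one then extracts a single $\alpha>0$ independent of the fibration, of $H$, of $(\mathcal{X},\mathcal{M})$, and of $\epsilon\in(0,\epsilon_0)$. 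I expect the main obstacle to be precisely this last uniformity: one must show that the coercivity constant furnished pointwise by \cite{Hat}, \cite{fibst} does not degenerate either as the fibration moves in the bounded family or as $\epsilon\to0$, which requires a family-wise rather than pointwise version of the adiabatic estimates, with particular care for test configurations whose base component is trivial, where the lower bound is controlled entirely by the fiberwise Calabi--Yau geometry and the Weil--Petersson contribution of $M_C$.
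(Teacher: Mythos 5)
Your high-level strategy (reduce to a bounded family, then make the pointwise constants uniform) matches the paper's, and your identification of the main obstacle is accurate — but the proposal leaves that obstacle unresolved, and this is a genuine gap rather than a routine verification. The paper's proof does \emph{not} proceed by decomposing a test configuration $(\mathcal{X},\mathcal{M})$ into a base part and fiberwise parts and tracking coercivity through the decomposition, as you propose for the ``Furthermore'' part; that route would require a family-wise adiabatic expansion of $M^{\mathrm{NA}}_\Delta$ which is precisely what the ad hoc Definition \ref{unifdef} was introduced to avoid. Instead, the entire coercivity statement is extracted from the single valuative inequality of Theorem \ref{A.13}, $M^{\mathrm{NA}}_{\Delta}(\mathcal{Y},\mathcal{M})\ge (\mathcal{J}^{\delta_{(X,\Delta)}(L)L+K_X+\Delta})^{\mathrm{NA}}(\mathcal{Y},\mathcal{M})$, combined with a uniform lower bound $\delta_{(X,\Delta)}(\epsilon A+f^*H)\ge u+\tfrac{\delta_0}{2}$ and the elementary $\mathrm{J}^{A}$-semistability of $(X,\epsilon A+f^*H)$; no analysis of individual test configurations is needed. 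What your proposal is missing is the proof of that uniform lower bound on the $\delta$-invariant for small $\epsilon$: this is Proposition \ref{lem-delta-conv}, a family-wise version of the convergence $\lim_{\epsilon\to 0}\delta_{(X,\Delta)}(\epsilon A+f^*H)=\delta_{(\mathbb{P}^1,B)}(-K_{\mathbb{P}^1}-B-M)$, whose proof occupies several pages (Claim \ref{lem-cl3}: uniform bounds on multiplicities of horizontal parts of basis-type divisors and on $S_{m,\epsilon\mathcal{H}+f^*\mathcal{O}(1)}$ of components of singular fibers, via a simultaneous log resolution of the whole family). Saying the constants ``depend only on the bounded invariants'' does not establish this; the rate of convergence could a priori degenerate along the family, and ruling that out is the technical heart of the theorem.

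Two further points. First, the case analysis matters: the paper handles $u>0$ (Theorem \ref{unif+}, via a uniform lower bound on $\alpha$-invariants from \cite[Proposition 5.3]{BL} together with the $\mathrm{J}$-stability estimate of Theorem \ref{k-plus}), $u<0$ (Theorem \ref{unifKst}, via Proposition \ref{lem-delta-conv} and Theorem \ref{op}), and $u=0$ (directly via \cite[Theorem 1.8]{birkar-bab} and Lemma \ref{lem-delta-alpha}) by genuinely different arguments; your proposal filters everything through the adiabatic characterization, which is only the right lens when $u<0$. Second, the bounded family you need is not just log boundedness of the $(X,\Delta)$ plus a ``Matsusaka-type argument'': one needs finitely many log $\mathbb{Q}$-Gorenstein families carrying the polarization, the boundary divisor, \emph{and} the fibration structure simultaneously, with the discriminant divisors of the fibers controlled by the family (Proposition \ref{bou6} together with Lemma \ref{lem--generic}); this is where Theorem \ref{op} enters to give the uniform gap $\delta_{(\mathbb{P}^1,B_{\overline s})}(-K_{\mathbb{P}^1}-B_{\overline s}-M_{\overline s})\ge u+\delta_0$ that seeds the whole estimate.
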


It is known by \cite{Z} that every specially K-stable smooth polarized manifold $(X,L)$ has a cscK metric in the first Chern class $\mathrm{c}_1(L)$.
 By Theorems \ref{mainbound} and \ref{mainunif}, we have the following corollary on the ``uniform" existence of cscK metrics.

 \begin{cor}\label{cor-main}
     Let $d \in \mathbb{Z}_{>0}$, $\Theta\subset \mathbb{Q}$, $u \in \mathbb{Q}$, and $v \in \mathbb{Q}_{>0}$ be as in Theorem \ref{mainbound}. 
     Then there exists a $w>0$, depending only on $d$, $\Theta$, $u$, and $v$, satisfying the following:
     For any representative $f\colon (X,\Delta,A)\to C$ of any element of $\mathfrak{D}_{d,\Theta,v,u}$ with a general fiber $F$ of $f$, if $\mathrm{vol}(A+tF)\ge w$ for some $t\in\mathbb{Q}$ then $(X,\Delta,A+tF)$ is specially K-stable.
     
     Furthermore, if $X$ is smooth and $\Delta=0$, then $X$ has a cscK metric $\omega$ in $\mathrm{c}_1(A+tF)$. 
 \end{cor}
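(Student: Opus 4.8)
The plan is to deduce Corollary \ref{cor-main} by combining Theorem \ref{mainbound} (2) with Theorem \ref{mainunif}, and then invoking the cscK existence result of \cite{Z}. First I would apply Theorem \ref{mainbound} (2) to the given data $d,\Theta,v,u$ to obtain the rational number $w>0$, depending only on $d,\Theta,u,v$, such that the natural map $\mathfrak{G}_{d,\Theta,v,u,w}\to\mathfrak{D}_{d,\Theta,v,u}$ is surjective. This is the $w$ in the statement. Fix a representative $f\colon(X,\Delta,A)\to C$ of an element of $\mathfrak{D}_{d,\Theta,v,u}$, let $F$ be a general fiber, and suppose $\mathrm{vol}(A+tF)\ge w$ for some $t\in\mathbb{Q}$. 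Replacing $A$ by $A+tF$ (note $F\sim f^*\mathrm{pt}$, so this changes $A$ only by the pullback of a divisor on $C$, hence the equivalence class in $\mathfrak{D}_{d,\Theta,v,u}$ is unchanged and $(H\cdot(A+tF)^{d-1})=(H\cdot A^{d-1})=v$ since $F^2=0$ on the relevant intersection), and passing to the globally ample model, I would argue that $f\colon(X,\Delta,A+tF)\to C$ represents the same class in $\mathfrak{D}_{d,\Theta,v,u}$ and, by the surjectivity just quoted together with $\mathrm{vol}(A+tF)\le w$ being \emph{not} assumed but instead $\ge w$, I must be slightly more careful: what surjectivity gives is that \emph{some} representative with volume $\le w$ exists; I instead need the given representative directly.

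So the cleaner route: apply Theorem \ref{mainunif} with the same $\Theta,u,v$ and with the value $w$ produced above playing the role of its ``$w$'', obtaining $\epsilon_0>0$ depending only on $d,\Theta,u,w,v$ — hence only on $d,\Theta,u,v$. The point of using Theorem \ref{mainbound} (2) is that, up to relative linear equivalence over $C$, \emph{every} representative of an element of $\mathfrak{D}_{d,\Theta,v,u}$ is equivalent to one lying in $\mathfrak{G}_{d,\Theta,v,u,w}$; since special K-stability of $(X,\Delta,L)$ depends only on the numerical class of $L$ and is insensitive to twisting $A$ by $f^*(\text{divisor on }C)$ up to the adiabatic correction already accounted for by the $f^*H$ term, I can transport the conclusion of Theorem \ref{mainunif} to the given $(X,\Delta,A+tF)$. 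Concretely: write $A+tF$, after adjusting by the pullback of a line bundle on $C$, as $A_0+f^*H_0$ where $(X,\Delta,A_0)\in\mathfrak{G}_{d,\Theta,v,u,w}$ and $H_0$ has degree equal to some integer $m$; absorbing $\mathrm{deg}$ into a rescaling and using that $\epsilon A_0+f^*H$ for $\mathrm{deg}\,H=1$ is exactly the polarization class treated by Theorem \ref{mainunif}, I get that $(X,\Delta,\epsilon A_0+f^*H)$ is specially K-stable for $\epsilon\in(0,\epsilon_0)$. Choosing $\epsilon$ and $H$ so that $\epsilon A_0+f^*H$ is proportional to $A+tF$ (possible for suitable rational $\epsilon$ once $\mathrm{vol}(A+tF)\ge w$ forces the ample-cone position to be compatible), special K-stability descends to $(X,\Delta,A+tF)$ by scale-invariance of the stability notion.

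The main obstacle — and the step deserving the most care — is precisely this \emph{normalization/transport} argument: matching the polarization $A+tF$ on the given representative with the family of polarizations $\epsilon A_0+f^*H$ for which Theorem \ref{mainunif} yields uniform special K-stability, while keeping all constants ($w$, then $\epsilon_0$) depending only on $d,\Theta,u,v$. One has to check that the passage ``general fiber $F$, add $tF$, move to the ample model, rescale'' stays inside the adiabatic regime $(0,\epsilon_0)$ governed by Theorem \ref{mainunif}, using that $\mathrm{vol}(A+tF)\ge w$ bounds $t$ from below relative to the geometry controlled by log boundedness in Theorem \ref{mainbound} (1). Once this bookkeeping is done, the furthermore is immediate: if $X$ is smooth and $\Delta=0$, then $(X,A+tF)$ is a specially K-stable smooth polarized manifold, so by \cite{Z} its class $\mathrm{c}_1(A+tF)$ contains a cscK metric $\omega$. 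I expect the proof to be short modulo the careful reduction to $\mathfrak{G}_{d,\Theta,v,u,w}$ and the invocation of Theorem \ref{mainunif} with the matching value of $w$.
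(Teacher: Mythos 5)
Your overall strategy is the same as the paper's: use Theorem \ref{mainbound} (2) to replace $A$ (up to adding a multiple of $F$) by an ample divisor $A+t'_AF$ whose volume is bounded by a constant $w'$ depending only on $d,\Theta,u,v$, feed the resulting bounded class into Theorem \ref{mainunif} to get a uniform $\epsilon_0$, and then identify $A+tF$ up to scaling with a polarization of the shape $\epsilon(A+t'_AF)+F$, using that special K-stability is insensitive to rescaling; the cscK assertion then follows from \cite{Z}. The ``matching'' step you single out as the main obstacle is in fact a one-line identity, namely $\epsilon(A+t'_AF)+F=\epsilon\bigl(A+(t'_A+\epsilon^{-1})F\bigr)$, so $A+tF$ is proportional to $\epsilon(A+t'_AF)+F$ with $\epsilon=1/(t-t'_A)\in\mathbb{Q}$ whenever $t>t'_A$; this is exactly how the paper transports Theorem \ref{mainunif} to $A+tF$.

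There is, however, a genuine gap in your choice of $w$: you declare that the constant produced by Theorem \ref{mainbound} (2) ``is the $w$ in the statement.'' With that choice the hypothesis $\mathrm{vol}(A+tF)\ge w$ only forces $t$ to be at least roughly the threshold $t'_A$, not $t\ge t'_A+\epsilon_0^{-1}$; for $t$ in the intermediate range the parameter $\epsilon=1/(t-t'_A)$ exceeds $\epsilon_0$, and Theorem \ref{mainunif} gives no conclusion there, so your proof does not establish the corollary for that $w$. The threshold must be enlarged \emph{after} $\epsilon_0$ is determined: since $F\equiv f^*H$ with $(F\cdot A^{d-1})=v$ and $F^2=0$, the volume $\mathrm{vol}(A+sF)$ grows linearly with slope $dv$ in $s$ once $A+sF$ is ample, so setting $w:=w'+d\epsilon_0^{-1}v$ (the paper normalizes with an auxiliary $t''_A$ so that $\mathrm{vol}(A+t_AF)=w'$ exactly, making this $w$ manifestly uniform) guarantees that $\mathrm{vol}(A+tF)\ge w$ implies $t\ge t_A+\epsilon_0^{-1}$, hence $\epsilon\le\epsilon_0$, and this $w$ still depends only on $d,\Theta,u,v$. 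Since the corollary only asserts the existence of some such $w$, this correction is harmless; with it, your argument is essentially the paper's proof.
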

 
Furthermore, Corollary \ref{cor-main} states that there is a ``universal" family $\mathscr{U}'$ over $\mathscr{M}_{d,v,u,r}$ in Theorem \ref{quesmain} with a polarization $\mathscr{A}_{\mathscr{U}'}$ whose geometric fibers are specially K-stable varieties. 
Here, the word ``universal'' comes from the construction (see Remark \ref{rem-final}).

 \subsection{Structure of this paper and overview of proof}\label{structureintro}
The contents of this paper are as follows.
 
 In \S\ref{Pre}, we collect notations and definitions in birational geometry, Hilbert schemes and stacks.
To discuss the $\mathbb{Q}$-Gorensteinness of families, we explain the universal hull of coherent sheaves introduced by Koll\'ar \cite{kollar-moduli}.
We also collect basic facts of K-stability and some results on J-stability and uniform adiabatic K-stability (\cite{Hat}), and we introduce a characterization of the uniform adiabatic K-stability of klt-trivial fibrations over curves (Definition \ref{unifdef}). We make use of this characterization to construct our moduli spaces.
 
In \S\ref{Bousec}, we prove Theorem \ref{mainbound}. 
The idea is as follows: 
With notations in Theorem \ref{mainbound}, we first give an upper bound $n$ of the Cartier indices of the log canonical divisors (see Lemma \ref{lem--Cartierindex}) and we reduce Theorem \ref{mainbound} to the case where $\Theta=\frac{1}{n}\mathbb{Z}\cap[0,1]$. 
We also know that $(X,\Delta)$ are $\frac{1}{n}$-lc.
By using the boundedness of singularities, we next find a lower bound of the $\alpha$-invariants of $A|_F$ with respect to $(F,\Delta|_F)$ for the general fibers $F$ of $f$ (Lemma \ref{lem--lct}). 
Since $(F,\Delta|_{F})$ are $\frac{1}{n}$-lc pairs polarized by $A|_{F}$, the existence of the lower bound is a consequence of Birkar's result \cite{birkar-bab}. 
By using this lower bound and the semi-positivity theorem by Fujino \cite[Theorem 1.11]{fujino-semi-positivity}, we find an $m \in \mathbb{Z}$ such that $A+mF$ is ample and ${\rm vol}(A+mF)$ is universally bounded from above (Proposition \ref{prop--nefthreshold-volume}, which is a special case of Theorem \ref{mainbound} (\ref{bound-(2)})). 
Here $m$ can be negative.  
From this result and \cite{birkar-geometry-moduli}, we obtain Theorem \ref{mainbound}. 
The boundedness problem (\ref{intro-(2)}) will be solved in this section. 
For the construction of our moduli, we also prove a result on the finiteness of the Hilbert polynomials (Corollary \ref{cor--hilbertpolynomial}). 

In \S\ref{toolsec}, we prove two important properties, i.e., the openness (Theorem \ref{op}) and the separatedness (Theorem \ref{sep2}) of uniformly adiabatically K-stable klt-trivial fibrations over curves. 
The openness is a direct consequence of the lower semi-continuity of the $\delta$-invariants of the log twisted bases, which will be proved in Theorem \ref{op}.
Note that we cannot apply \cite{BL} since the case of families of polarized log pairs was studied in their paper. 
The separatedness has already been known by the second author \cite{CM} when the varieties are over $\mathbb{C}$. 
In Theorem \ref{sep2}, we will give an alternative proof of the separatedness, which is an enhancement of \cite[Corollary 3.22]{CM} and works for any algebraically closed field of characteristic zero.
These two results directly imply (\ref{intro-(3)}) and (\ref{intro-(4)}) respectively.
Thus, we can obtain all the key conditions (\ref{intro-(2)})--(\ref{intro-(4)}) for the uniform adiabatic K-stability in \S\ref{Bousec} and \S\ref{toolsec}.
We also discuss the invariance of  (anti-)plurigenera used in the construction of our moduli spaces (see Theorem \ref{thm--inv-pluri}).

In \S\ref{Consec}, we prove Theorem \ref{modulimain}, in other words, we construct the moduli space by using tools proved in \S\ref{Bousec} and \S\ref{toolsec}.
We also show Theorem \ref{quesmain} by applying Proposition \ref{prop--nefthreshold-volume}.

In \S\ref{seckst}, we prove Theorem \ref{mainunif} and Corollary \ref{cor-main}. 
For this, we first show that there exist finitely many log $\mathbb{Q}$-Gorenstein families parametrizing polarized klt-trivial fibrations over curves (Proposition \ref{bou6}). 
Compared to \S\ref{Consec}, we deal with klt-trivial fibrations whose boundary divisors are not necessarily zero. 
However, these log $\mathbb{Q}$-Gorenstein families can be constructed by a similar argument to the proof of Theorem \ref{modulimain}. 
In the case of nef log canonical divisors, Theorem \ref{mainunif} follows from a simple observation of J-stability in \cite{Hat2} (Theorem \ref{unif+}).
For other case, we prove that the uniform ``convergence of the $\delta$-invairant" (cf.~\cite[Theorem D]{Hat}) holds for members of a family of klt-trivial fibrations (Proposition \ref{lem-delta-conv}). 
Theorem \ref{mainunif} follows from these results, and Corollary \ref{cor-main} follows from Theorems \ref{mainbound} and \ref{mainunif}. 

\subsection*{Acknowledgements}
This work was partially supported by JSPS KAKENHI 22K13887 for K.H. and JSPS KAKENHI 22J20059 (Grant-in-Aid for JSPS Fellows DC1) for M.H. 
The authors are grateful to Professors Makoto Enokizono and Yuji Odaka and Doctors Dai Imaike, Kentaro Inoue, and Kazuhiro Ito for fruitful discussions which improved Theorem \ref{modulimain}.
The authors thank Professors J\'{a}nos Koll\'{a}r and Shou Yoshikawa for answering questions on the book \cite{kollar-moduli}. 
The authors thank Professors Caucher Birkar, Yoshonori Hashimoto, and Gang Tian and Doctor Eiji Inoue for useful comments.

\section{Preliminaries}\label{Pre}

Throughout this paper, we work over an algebraically closed field $\mathbbm{k}$ of characteristic zero unless otherwise stated.

\subsection*{Notation and Convention}
We collect notations and conventions used in this paper.

\begin{enumerate}[(1)]
\item\label{notation-(1)}
A {\em scheme} means a locally Noetherian scheme over $\mathbbm{k}$. 
For a scheme $X$, we denote the induced reduced scheme by $X_{\mathrm{red}}$.
A {\em variety} means an integral separated scheme of finite type over $\mathbbm{k}$. 
A {\em curve} means a smooth variety of dimension one. 
 
A {\em geometric point} of $X$ is a morphism $\mathrm{Spec}\,\Omega\to X$ where $\Omega$ is an algebraically closed field. 
For a point $x \in X$, $\overline{x}$ denotes the geometric point of $X$ which maps the unique point of $\mathrm{Spec}\,\Omega$ to $x$. 
We simply denote it by $\overline{x}\in X$.

\item\label{notation-(2)}
For any scheme $S$ and positive integer $d$, we denote $\mathbb{P}^{d}_{\mathbbm{k}}\times_{\rm Spec\,\mathbbm{k}} S$ by $\mathbb{P}_{S}^{d}$. 
We simply write $\mathbb{P}^{d}$ if there is no risk of confusion, for example, $S={\rm Spec}\,\mathbbm{k}$ or $S$ is a geometric point of a scheme. 
Let $p\colon \mathbb{P}_{S}^{d} \to \mathbb{P}^{d}_{\mathbbm{k}}$ be the projection. 
For any $m \in \mathbb{Z}$, we often denote $p^{*}\mathcal{O}_{\mathbb{P}^{d}_{\mathbbm{k}}}(m)$ by $\mathcal{O}(m)$, and we sometimes think $\mathcal{O}(m)$ of a Cartier divisor on $\mathbb{P}_{S}^{d}$ if there is no risk of confusion. 

\item\label{notation-(3)}
A morphism $f \colon X \to Y$ of schemes is called a {\em contraction} if $f$ is projective and $f_{*}\mathcal{O}_{X}\cong \mathcal{O}_{Y}$. 
For a morphism $f \colon X \to Y$ of schemes and a (geometric) point $y \in Y$, the fiber of $f$ over $y$ is denoted by $X_{y}$. 
For a $\mathbb{Q}$-divisor $D$ on $X$, we denote the restriction of $D$ to $f^{-1}(y)$ by $D_{y}$ when it is well-defined, for example, $D$ is $\mathbb{Q}$-Cartier at every codimension one point of $f^{-1}(y)$ and ${\rm Supp}\,D \not\supset f^{-1}(y)$. 
We note that $D_y$ does not coincide with the scheme-theoretic fiber in general.

\item\label{notation-(4)}
Let $X$ be a smooth variety, $D$ an snc divisor on $X$, and $f \colon X \to Z$ a morphism to a scheme $Z$. 
We say that $(X,D)$ is {\em log smooth over $Z$} or $f \colon (X,D) \to Z$ is {\em log smooth} if $f$ is a smooth surjective morphism and for any stratum $T$ of $(X,D)$, the restriction $f|_{T} \colon T \to Z$ is also a smooth surjective morphism. 

\item\label{notation-(5)}
We say that a subset of $\mathbb{R}$ satisfies the {\em descending chain condition} ({\em DCC}, for short) if the subset does not contain any strictly decreasing infinite sequence. 
We say that a subset of $\mathbb{R}$ satisfies the {\em ascending chain condition} ({\em ACC}, for short) if the subset does not contain any strictly increasing infinite
sequence. 
A subset of $\mathbb{R}$ is called a {\em DCC set} (resp.~an {\em ACC set}) if the subset satisfies the DCC (resp.~ACC).

\item\label{notation-(6)}
Let $a$ be a real number. 
Then we define $\lceil a \rceil$ to be the unique integer satisfying $ \lceil a \rceil -1< a \leq \lceil a \rceil$.
Let $X$ be a normal variety and let $D$ be an $\mathbb{R}$-divisor on $X$. 
Let $D=\sum_{i} d_{i}D_{i}$ be the prime decomposition. 
Then we define 
$\lceil D \rceil:=\sum_{i} \lceil d_{i} \rceil D_{i}$. 
We say that $D$ is a {\em Weil divisor} if every coefficients of $D$ is an integer, in other words, $D=\lceil D \rceil$ holds.
We define the reduced divisor $D_{\mathrm{red}}$ of $D$ to be $\sum_iD_i$.

\item\label{notation-(7)}
Let $X$ be a normal variety.
For a line bundle (resp.~$\mathbb{Q}$-line bundle, $\mathbb{R}$-line bundle) $L$ on $X$, we often think $L$ of a Cartier (resp.~$\mathbb{Q}$-Cartier, $\mathbb{R}$-Cartier) divisor on $X$. 
When $L$ is a line bundle on $X$, we often denote $L^{\otimes m}\otimes \mathcal{O}_X(D)$ by $\mathcal{O}_X(mL+D)$ for every Weil divisor $D$ on $X$. 

\item\label{notation-(8)}
Let $f \colon X \to Y$ be a morphism of schemes. 
Let $L_{1}$ and $L_{2}$ be line bundles on $X$.
We say that $L_{1}$ and $L_{2}$ are {\em linearly equivalent over $Y$}, denoted by $L_{1}\sim_{Y}L_{2}$, if there is a line bundle $L$ on $Y$ such that $L_{1}\cong L_{2}\otimes f^{*}L$. 
When $Y$ is a point, we simply say that $L_{1}$ and $L_{2}$ are {\em linearly equivalent} and we denote it by $L_{1}\sim L_{2}$. 

Suppose that $X$ is a normal variety. 
Let $D_1$ and $D_2$ be $\mathbb{Q}$-Cartier $\mathbb{Q}$-divisors on $X$.
We say that $D_{1}$ and $D_{2}$ are {\em $\mathbb{Q}$-linearly equivalent over $Y$}, denoted by $D_{1}\sim_{\mathbb{Q},Y}D_{2}$, if there exists a positive integer $m$ such that both $mD_{1}$ and $mD_{2}$ are Cartier and $\mathcal{O}_{X}(mD_{1}) \sim_{Y} \mathcal{O}_{X}(mD_{2})$. 
This definition is not standard.
However, the definition coincides with the usual definition of the relative $\mathbb{Q}$-linear equivalence when $Y$ is a variety (e.g. $f$ is a contraction). 
When $Y$ is a point, we simply say that $D_{1}$ and $D_{2}$ are {\em $\mathbb{Q}$-linearly equivalent} and we denote it by $D_{1}\sim_{\mathbb{Q}}D_{2}$. 
 
\item\label{notation-(9)} 
Let $X$ be a projective scheme over $\mathbbm{k}$, $A$ a Cartier divisor of $X$, and $\phi_{|A|}$ a rational map $X\dashrightarrow \mathbb{P}^{h^0(X,\mathcal{O}_X(A))-1}$ defined by the linear system $|A|$. If $A$ is semiample, $\phi_{|mA|}$ induces a contraction for every sufficiently large and divisible $m>0$, and this is a kind of ample model defined in \cite[Lemma 3.6.5 (3)]{BCHM}.
 Similarly, for a projective morphism $\pi \colon \mathcal{X}\to S$ of schemes and a $\pi$-semiample line bundle $\mathscr{A}$ on $\mathcal{X}$, we call a morphism $f \colon \mathcal{X}\to\mathbf{Proj}_S(\bigoplus_{l\ge0}\pi_*\mathscr{A}^{\otimes l})$ the {\it ample model} of $\mathscr{A}$ over $S$. 

\item\label{notation-(10)} 
Let $f_1 \colon X_1\to Y_1$ and $f_2 \colon X_2\to Y_2$ be morphisms of schemes over a scheme $S$. Then  the induced morphism $X_1\times_SX_2\to Y_1\times_SY_2$ from $f_1$ and $f_2$ is denoted by $f_1\times_Sf_2$. 
When $S=\mathrm{Spec}\,\mathbbm{k}$, we simply write $f_1\times f_2 \colon X_1\times X_2\to Y_1\times Y_2$.

\item\label{notation-(11)}
For any morphisms $f \colon \mathcal{X}\to S$ and $h \colon T\to S$, we denote $\mathcal{X}\times_ST$ by $\mathcal{X}_T$ and the base change $\mathcal{X}_T\to T$ by $f_T$. 
For any coherent sheaf $\mathscr{A}$ on $\mathcal{X}$, we denote $(h\times_S\mathrm{id}_{\mathcal{X}})^*\mathscr{A}$ by $\mathscr{A}_T$. 
For an $f$-ample line bundle $H$ on $\mathcal{X}$ and a polynomial $p$, if $\chi(\mathscr{A}_s(tH_s))=p(t)$ for every $t\in\mathbb{Z}$, then we say that {\it $\mathscr{A}_s$ has the Hilbert polynomial $p$ with respect to $H$}. 
\item\label{defn-hor-vert}
     Let $f \colon Y\to C$ be a contraction from a normal variety to a curve and $D$ a $\mathbb{Q}$-divisor on $Y$.
     Then we can decompose $D$ into $D_{\mathrm{vert}}+D_{\mathrm{hor}}$ where the support of $D_{\mathrm{hor}}$ is flat over $C$ and the support of $D_{\mathrm{vert}}$ has the zero-dimensional image in $C$. 
\end{enumerate}

\begin{defn}
     Let $S$ be a Noetherian scheme and let $S_{1},\,\cdots,\,S_{l}$ be locally closed subschemes of $S$ that are disjoint in each other and $\sqcup_{1=1}^{l}S_{i} = S$ set-theoretically. 
     Then we call the natural inclusion $\sqcup_{1=1}^{l}S_{i}\to S$ a {\it locally closed decomposition}. A subset $F \subset S$ is called a {\em constructible subset} if $F$ is a finite union of locally closed subsets. 
  \end{defn}
  
\begin{lem}\label{const--lem}
Let $S$ be a scheme of finite type over $\mathbbm{k}$. Suppose that $F\subset S$ is a constructible subset. Then $F$ is closed if and only if the following holds.
\begin{itemize}
    \item For any morphism $\varphi \colon C\to S$ from an affine curve $C$, if $\varphi^{-1}(F)$ is dense, then $\varphi(C)\subset F$. 
\end{itemize}
\end{lem}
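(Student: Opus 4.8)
\textbf{Proof plan for Lemma \ref{const--lem}.}

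The "only if" direction is immediate: if $F$ is closed and $\varphi^{-1}(F)$ is dense in the irreducible affine curve $C$, then $\varphi^{-1}(F)$, being closed (as the preimage of a closed set) and dense, equals $C$, so $\varphi(C)\subset F$. The content is in the "if" direction, and the plan is to prove the contrapositive: assuming $F$ is a constructible subset that is not closed, I will produce a morphism $\varphi\colon C\to S$ from an affine curve such that $\varphi^{-1}(F)$ is dense in $C$ but $\varphi(C)\not\subset F$.

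First I would pick a point $x\in\overline{F}\setminus F$. Since $F$ is constructible, by the Chevalley/constructibility structure theory $\overline{F}\setminus F$ is also constructible and of dimension strictly smaller than (the relevant component of) $\overline{F}$; more to the point, $x$ lies in the closure of $F$, so there is an irreducible closed subset $Z\subset\overline F$ of $S$ with $x\in Z$ and $Z\cap F$ dense in $Z$, and we may take $Z$ to be of minimal dimension among such; then $\dim Z\ge 1$ because $x\notin F$ forces $Z\neq\{x\}$. Replacing $S$ by $Z$ with its reduced structure, I reduce to the case where $S$ is an integral scheme of finite type over $\mathbbm{k}$, $F\subset S$ is a constructible dense subset (so $F$ contains a dense open $U\subset S$), and $x\in S\setminus F$ is a fixed closed point. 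The goal becomes: find an affine curve $C$ and a morphism $\varphi\colon C\to S$ whose image meets $U$ (hence $\varphi^{-1}(F)\supset\varphi^{-1}(U)$ is a nonempty open, hence dense in the irreducible $C$) and whose image also contains $x$.

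The existence of such a curve is a standard fact: through any closed point $x$ of an integral scheme of finite type over an infinite (here algebraically closed) field $\mathbbm{k}$, and meeting any prescribed nonempty open set $U$, there passes an integral curve. One way to see it: take an affine open neighborhood $\mathrm{Spec}\,R\ni x$ that meets $U$, choose a point $y\in U\cap\mathrm{Spec}\,R$, and cut down by $\dim S-1$ general hyperplane sections (via a Noether normalization or a general linear projection, using that $\mathbbm{k}$ is infinite) to get an integral affine curve $C_0\subset\mathrm{Spec}\,R$ containing both $x$ and $y$ — this uses a Bertini-type irreducibility statement for general linear sections of an irreducible affine variety over an infinite field, choosing the linear forms to vanish at $x$ and at $y$. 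Let $\nu\colon C\to C_0\hookrightarrow S$ be the normalization composed with the inclusion, so $C$ is an affine curve (smooth, hence our notion of "curve") and $\varphi:=\nu$ satisfies $x\in\varphi(C)$ and $\varphi(C)\cap U\neq\emptyset$. Then $\varphi^{-1}(F)\supset\varphi^{-1}(U)$ is a nonempty open subset of the irreducible curve $C$, hence dense, while $x\in\varphi(C)$ and $x\notin F$ shows $\varphi(C)\not\subset F$. This contradicts the hypothesis, so $F$ must be closed.

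The main obstacle is the curve-construction step: producing an \emph{irreducible} curve through $x$ that also meets $U$. The subtlety is that a general linear section of an irreducible affine variety of dimension $\ge 2$ is again irreducible (this is where $\mathbbm{k}$ infinite is essential), and one must arrange the linear forms to vanish at the two prescribed points without destroying this genericity — for a single prescribed point one can absorb it into the choice of origin, and for the second point one notes that requiring a hyperplane to pass through a fixed point is a codimension-one linear condition, still leaving a dense set of choices for which the successive sections stay irreducible. Everything else (reduction to the integral dense-constructible case, normalizing to land in the category of smooth affine curves, density of $\varphi^{-1}(F)$) is routine.
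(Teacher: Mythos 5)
Your overall architecture coincides with the paper's: argue by contradiction, pass to an irreducible component $Z$ of $\overline{F}$ so that $F\cap Z$ contains a dense open subset $U$ of $Z$, fix a closed point of $\overline{F}\setminus F$ (you get it from the Jacobson property; the paper instead takes an arbitrary $s\in\overline{F}\setminus F$ and then a closed point of $\overline{\{s\}}\setminus F$, using constructibility of $F$ once more), and then contradict the hypothesis by exhibiting a morphism from an affine curve whose image contains that point and meets $U$. The paper obtains this curve by citing \cite[\S6, Lemma]{Ab}; the reductions, the normalization step, and the density of $\varphi^{-1}(F)$ are handled the same way in both arguments.

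The gap is in your proof of the curve-existence step, which is the only nontrivial input. You propose to cut $Z$ by $\dim Z-1$ general hyperplane sections constrained to pass through $x$ and $y$, and you justify irreducibility of the successive sections by saying that passing through a prescribed point is ``a codimension-one linear condition, still leaving a dense set of choices.'' That logic is not valid: Bertini irreducibility only says the bad locus is a proper closed subset of the \emph{full} space of hyperplanes, and the linear subspace of hyperplanes through $x$ (and $y$) can lie entirely inside it. Concretely, if $Z$ is the affine cone over a plane curve of degree at least $2$ and $x$ is the vertex, then \emph{every} hyperplane through $x$ meets $Z$ in a union of at least two lines, so no ``general'' choice through $x$ gives an irreducible section; likewise on a quadric surface every plane through two points lying on a line of a ruling cuts out that line plus another line. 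So the step as written fails, even though the statement you need is true --- it is exactly the lemma the paper invokes. To repair it, either cite \cite[\S6, Lemma]{Ab} as the paper does, or prove it by a device that avoids degree-one sections through base points: for instance, compactify, blow up $x$ and $y$, apply Bertini's irreducibility theorem to general members of $|\pi^{*}\mathcal{O}(m)-E_{x}-E_{y}|$ for $m\gg 0$ (a base-point-free system not composed with a pencil when $\dim Z\ge 2$), intersect $\dim Z-1$ general members, and push the resulting irreducible curve back down to $Z$. With that replacement the rest of your argument goes through and is essentially the paper's proof.
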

\begin{proof}
The assertion is local and we may assume that $S$ is affine. 
Suppose that the condition holds. Let $\overline{F}$ be the Zariski closure and assume that there exists a point $s\in \overline{F}\setminus F$.
Take an irreducible component $Z$ of $\overline{F}$ containing $s$. 
It is easy to see that $F$ contains a non-empty open subset of $Z$ (cf.~\cite[6.C]{Mat}). 
On the other hand, since $\overline{\{s\}}\cap F$ is not dense in $\overline{\{s\}}$, there is a closed point $s_0\in \overline{\{s\}}\setminus F$. 
By \cite[\S6, Lemma]{Ab}, there exists a morphism $\varphi \colon C\to Z$ from an affine curve such that $\varphi^{-1}(F)$ is dense and $s_0\in \varphi(C)$. 
Thus, $s_0\in F$ by the condition and this is a contradiction.
\end{proof}

 \subsection{Birational geometry}
 In this subsection, we collect definitions concerned with singularities of pairs, klt-trivial fibration, and boundedness.
 \begin{defn}[Singularities of pairs]
A {\em subpair} $(X,\Delta)$ consists of a proper normal variety $X$ and a $\mathbb{Q}$-divisor $\Delta$ on $X$ such that $K_X+\Delta$ is $\mathbb{Q}$-Cartier. A subpair is called a {\em pair} if the coefficients of $\Delta$ are positive. Let $F$ be a prime divisor over $X$ and take $\pi \colon Y\to X$ a proper birational morphism from a normal variety such that $F$ appears as a divisor on $Y$. Then we define the {\em log discrepancy} of $F$ with respect to $(X,\Delta)$ by
 $$A_{(X,\Delta)}(F):=1+\mathrm{ord}_F(K_{Y}-\pi^*(K_X+\Delta)),$$
where $\mathrm{ord}_F$ is the divisorial valuation associated to $F$ with $\mathrm{ord}_F(F)=1$.
It is easy to see that $A_{(X,\Delta)}(F)$ is independent  of $\pi$. A (sub)pair $(X,\Delta)$ is called ({\em sub}){\em klt} (resp.~({\em sub}){\em lc}, $\epsilon$-({\em sub}){\em lc}) if $A_{(X,\Delta)}(F)>0$ (resp. ~$\geq 0$, $\geq \epsilon$)  for every prime divisor $F$ over $X$. 
We say that a proper normal variety $V$ is a {\em klt variety} if $(V,0)$ is a klt pair. 

For an effective $\mathbb{Q}$-Cartier $\mathbb{Q}$-divisor $M$ on a normal variety $X$, the {\em log canonical threshold} of $M$ with respect to a subpair $(X,\Delta)$, denoted by $\mathrm{lct}(X,\Delta;M)$, is defined as follows: 
If there exists a $t$ such that $(X,B+tM)$ is sublc, then
 \[
 \mathrm{lct}(X,\Delta;M):=\sup\{t\in\mathbb{Q}\,|\,(X,\Delta+tM)\, \textrm{is sublc}\},
 \]
and otherwise we set $\mathrm{lct}(X,\Delta;M):=-\infty$.
\end{defn}

\begin{defn}[Iitaka volume]\label{defn--iitakavol}Let $X$ be a normal projective variety and let $D$ be a $\mathbb{Q}$-Cartier divisor on $X$ such that the Iitaka dimension $\kappa(X,D)$ is nonnegative. Then the {\em Iitaka volume} of $D$, denoted by ${\rm Ivol}(D)$, is defined by $${\rm Ivol}(D):=\underset{m\to \infty}{\rm lim\,sup}\frac{{\rm dim}H^{0}(X,\mathcal{O}_{X}(\lfloor mD \rfloor))}{m^{\kappa(X,D)} \slash \kappa(X,D)!}.$$\end{defn}  

When $D$ is big, the Iitaka volume of $D$ coincides with the usual volume. 
By definition, we can easily check that ${\rm Ivol}(rD)=r^{\kappa(X,D)}\cdot {\rm Ivol}(D)$ for every $r \in \mathbb{Z}_{>0}$.

\begin{defn}[Klt-trivial fibration]\label{defn--klttrivialfib}
Let $(X,\Delta)$ be a klt pair, and let $f \colon X \to C$ be a contraction of normal projective varieties. 
Then $f \colon (X,\Delta) \to C$ is called a {\it klt-trivial fibration} if $K_{X}+\Delta \sim_{\mathbb{Q},\,C}0$. 

For a klt-trivial fibration $f \colon (X,\Delta) \to C$, we define the {\em discriminant $\mathbb{Q}$-divisor $B_{C}$} and the {\em moduli $\mathbb{Q}$-divisor} $M_{C}$ on $C$ as follows: 
For every prime divisor $P$ on $C$, let $b_{P}$ be the largest real number such that after shrinking $C$ around the generic point $\eta$ of $P$, the pair $(X,\Delta+b_Pf^{*}P)$ is lc. 
Note that $b_{P}$ is well-defined since $P$ is Cartier at $\eta$. 
Then we define the discriminant $\mathbb{Q}$-divisor $B_{C}$ by 
\[
B_{C}:=\sum_{P}(1-b_{P})P,
\]
where $P$ runs over prime divisors on $C$. 
Next, we fix a $\mathbb{Q}$-Cartier $\mathbb{Q}$-divisor $L$ on $C$ such that $K_{X}+\Delta \sim_{\mathbb{Q}}f^{*}L$. 
Then the moduli $\mathbb{Q}$-divisor $M_{C}$ is defined by 
$$M_C:=L-(K_C+B_C).$$ 
Note that $M_{C}$ is only defined up to $\mathbb{Q}$-linear equivalence class. 
We call 
\[
K_X+\Delta\sim_{\mathbb{Q}}f^*(K_C+B_C+M_C)
\]
the {\it canonical bundle formula}. 
\end{defn}

In general, we can define klt-trivial fibrations for subpairs and contractions (cf.~\cite{A}). 
However, for simplicity we always assume that $(X,\Delta)$ in klt-trivial fibrations are klt pairs.

We make use of the following fundamental fact.
\begin{thm}[{\cite[Theorem 0.1]{A}}]\label{Bsemi}
If ${\rm dim}\,C=1$, then $M_C$ is a semiample $\mathbb{Q}$-Cartier $\mathbb{Q}$-divisor.
\end{thm}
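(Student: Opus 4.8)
The plan is to exploit that $C$ is a smooth projective curve, so that $M_C$ is automatically $\mathbb{Q}$-Cartier, and for a $\mathbb{Q}$-divisor $D$ on a curve one has the clean dichotomy: $D$ is semiample if and only if $D$ is nef and, whenever $\deg D=0$, $D\sim_{\mathbb{Q}}0$ (if $\deg D>0$ then $D$ is ample, hence semiample). Thus it suffices to prove that $M_C$ is nef and to rule out a degree-zero moduli part that is not $\mathbb{Q}$-torsion. The argument follows the Hodge-theoretic approach of Ambro \cite{A}; below I outline the steps.

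First I would pass to a convenient model. The $\mathbb{Q}$-linear equivalence class of $M_C$ and the discriminant $B_C$ behave well under generically finite base change and log resolution of the fibration: there is a finite morphism $\tau\colon C'\to C$ and, after resolving the induced klt-trivial fibration $f'\colon(X',\Delta')\to C'$, one has $\tau^*M_C\sim_{\mathbb{Q}}M_{C'}$, and one may moreover arrange that $N(K_{X'}+\Delta')\sim (f')^*D'$ for a single Cartier divisor $D'$ on $C'$ (with $N$ a common Cartier index), that the local monodromies of the associated local system are unipotent, and that $M_{C'}$ is $\mathbb{Q}$-linearly equivalent to $\tfrac1N$ of the Hodge line bundle $\mathscr{L}':=f'_*\mathcal{O}_{X'}\!\big(N(K_{X'/C'}+\Delta')\big)$, i.e. to the lowest piece of the Deligne canonical extension of the polarized variation of Hodge structure carried by the ``Calabi--Yau type'' fibers of $f'$. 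Nefness of $M_C$ then follows from the Fujita--Kawamata semipositivity theorem (cf.\ \cite[Theorem 1.11]{fujino-semi-positivity}): the bottom Hodge-filtration piece of the canonical extension of a polarized $\mathbb{Z}$-VHS on a curve is nef, so $M_{C'}$ is nef, hence so is $M_C=\tfrac{1}{\deg\tau}\,\tau_*\tau^*M_C$, and in particular $\deg M_C\ge 0$. If $\deg M_C>0$ we are finished.

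The main obstacle is the equality case $\deg M_C=0$, hence $\deg M_{C'}=0$: one must show that the degree-zero nef Hodge bundle $\mathscr{L}'$ is trivial. The idea is that a degree-zero nef subbundle of the (also degree-zero) canonical extension is forced to be flat: by Deligne's semisimplicity for polarized variations of Hodge structure, together with the vanishing of the Kodaira--Spencer/second fundamental form imposed by the degree equality, $\mathscr{L}'$ underlies a rank-one real sub-variation of Hodge structure, hence a unitary flat line bundle; combined with unipotency of the local monodromies (which forces the unitary monodromy to be trivial) this gives $\mathscr{L}'\cong\mathcal{O}_{C'}$, so $M_{C'}\sim_{\mathbb{Q}}0$ and therefore $(\deg\tau)\,M_C\sim_{\mathbb{Q}}\tau_*\tau^*M_C\sim_{\mathbb{Q}}0$, i.e.\ $M_C\sim_{\mathbb{Q}}0$. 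This finishes the proof in all cases. The delicate points I expect are the bookkeeping in the base-change-and-resolution reduction so that $M_C$ is genuinely \emph{identified} with a Hodge bundle rather than merely dominated by one (this is exactly where Ambro's boundary property is used), and the rigidity argument in the degree-zero case, which may alternatively be run through the Arakelov-type inequality for Hodge bundles on curves, equality there forcing the period map to be constant.
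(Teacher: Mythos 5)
The paper gives no proof of this statement: it is quoted verbatim from Ambro's \cite[Theorem 0.1]{A}, so there is no internal argument to compare against. Your outline does reconstruct the architecture of Ambro's Hodge-theoretic proof (finite base change to unipotent local monodromies, identification of $M_{C'}$ with $\tfrac1N$ of the bottom Hodge piece of a canonical extension, Fujita--Kawamata semipositivity for nefness, and the dichotomy $\deg M_C>0$ versus $\deg M_C=0$ on a curve), and the reductions and the descent $(\deg\tau)M_C\sim_{\mathbb{Q}}\tau_*\tau^*M_C$ are fine.

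There is, however, a genuine gap at the crux of the hard case. After you deduce from $\deg\mathscr{L}'=0$ that the second fundamental form vanishes and $\mathscr{L}'$ underlies a rank-one unitary flat subbundle, you claim that unipotency of the local monodromies forces this unitary monodromy to be trivial. That implication is false: unipotency only constrains the monodromy around the degenerate fibres (a unitary unipotent operator is indeed the identity there), but it says nothing about the global monodromy representation of $\pi_1(C')$, and on a base curve of genus $\ge 1$ a flat unitary line bundle is an arbitrary point of $\mathrm{Pic}^0(C')$, in general not torsion. The missing ingredient is the arithmetic one: the rank-one unitary local system is a direct summand of a variation of Hodge structure defined over $\mathbb{Z}$, so the values of its monodromy character are algebraic integers; each Galois conjugate is again a rank-one sub-variation, hence unitary, so all conjugates have modulus one, and Kronecker's theorem forces the values to be roots of unity. (Equivalently: the monodromy group is discrete, since it preserves an integral lattice, and contained in a compact unitary group, hence finite.) This yields only $\mathscr{L}'^{\otimes m}\cong\mathcal{O}_{C'}$ for some $m$, not $\mathscr{L}'\cong\mathcal{O}_{C'}$, but that is enough to conclude $M_{C'}\sim_{\mathbb{Q}}0$ and then $M_C\sim_{\mathbb{Q}}0$ by your push-forward argument. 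Your alternative via the Arakelov equality forcing a constant period map can be made to work, but it too needs the ``discrete $\cap$ compact $=$ finite'' step to pass from constant period map to finite monodromy; as written, neither route closes the degree-zero case.
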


\begin{defn}[Discriminant $\mathbb{Q}$-divisor with respect to contraction]\label{defn--discriminant-morphism}
By extending the notion of the discriminant $\mathbb{Q}$-divisors in Definition \ref{defn--klttrivialfib}, for every sublc pair $(X,\Delta)$ with a contraction $f \colon X \to Z$ of normal varieties, we define the {\em discriminant $\mathbb{Q}$-divisor with respect to $f \colon (X,\Delta) \to Z$} as follows: For each prime divisor $P$ on $Z$, we define
$$\mu_{P}:={\rm sup}\left\{\gamma \in \mathbb{R}\,\middle|\, \text{$(X, \Delta+\gamma f^{*}P)$ is sublc over the generic point of $P$}\right\}.$$
Note that we may assume that $f^{*}P$ is well-defined since we may shrink $Z$ around the generic point of $P$.
We define
$$B:=\sum_{P}(1-\mu_{P})P,$$
where $P$ runs over prime divisors on $Z$. 

It is easy to see that this definition coincides with the discriminant $\mathbb{Q}$-divisor in Definition \ref{defn--klttrivialfib} when $f \colon (X,\Delta) \to Z$ is a klt-trivial fibration.
\end{defn}

\begin{defn}[Boundedness]
We say a set $\mathfrak{Q}$ of normal projective varieties is {\em bounded} if there exist finitely many projective morphisms $V_{i} \to T_{i}$ of varieties such that for each $X \in \mathfrak{Q}$ there exist an index $i$, a closed point $t \in T_{i}$, and an isomorphism $\phi \colon (V_i)_{t} \to X$.

A {\em couple} $(X, S)$ consists of a normal projective variety $X$ and a reduced divisor $S$ on $X$. 
We use the term couple because $K_X + S$ is not assumed to be $\mathbb{Q}$-Cartier. 
We say that a set $\mathfrak{P}$ of couples is {\em bounded} if there exist finitely many projective morphisms $V_{i} \to T_{i}$ of varieties and a reduced divisor $C$ on each $V_{i}$ such that for any $(X, S) \in \mathfrak{P}$ there exist an index $i$, a closed point $t \in T_{i}$, and an isomorphism $\phi \colon (V_i)_{t} \to X$ such that $((V_i)_{t},C_{t})$ is a couple and $C_{t}=\phi^{-1}_{*}S$. 

Finally, we say that a set $\mathfrak{R}$ of projective pairs $(X,\Delta)$ is {\em log bounded} if the set of the corresponding couples $(X, {\rm Supp}\,\Delta)$ is bounded. 
 \end{defn}

 \subsection{Hilbert schemes}\label{subsec-Hilb}
Let $f \colon \mathcal{X}\to S$ be a proper morphism of schemes and $\mathscr{A}$ an $f$-ample line bundle on $\mathcal{X}$. 
Then $\mathrm{Hilb}_{\mathcal{X}/S}^{p,\mathscr{A}}$ denotes the scheme representing the following functor $\mathfrak{Hilb}_{\mathcal{X}/S}^{p,\mathscr{A}}$.
For any morphism $T\to S$, we attain
 $$\mathfrak{Hilb}_{\mathcal{X}/S}^{p,\mathscr{A}}(T):=\left\{
\begin{array}{l}
\mathcal{Z}\subset \mathcal{X}_T
\end{array}
\;\middle|
\begin{array}{rl}
&\text{$\mathcal{Z}$ is a closed subscheme of $\mathcal{X}_T$ flat over $T$ whose}\\
&\text{fibers have the same Hilbert polynomial $p$ with}\\
&\text{respect to $\mathscr{A}$.}
\end{array}\right\}.$$
We remark that $\mathrm{Hilb}_{\mathcal{X}/S}^{p,\mathscr{A}}$ exists as a locally projective scheme over $S$. 
Indeed, it is well-known that if $\mathscr{A}$ is further $f$-very ample, then $\mathrm{Hilb}_{\mathcal{X}/S}^{p,\mathscr{A}}$ is projective over $S$ entirely (cf.~\cite[\S5]{FGA}). Therefore, for any quasi-compact open subset $U\subset S$, by taking $m>0$ such that $\mathscr{A}^{\otimes m}|_{\mathcal{X}_U}$ is $f_U$-very ample, we see that $\mathrm{Hilb}_{\mathcal{X}_U/U}^{p,\mathscr{A}|_{\mathcal{X}_U}}=\mathrm{Hilb}_{\mathcal{X}_U/U}^{q,\mathscr{A}^{\otimes m}|_{\mathcal{X}_U}}$ exists as a projective scheme over $S$, where $q(n)=p(mn)$ for any $n\in\mathbb{Z}$. By patching $\mathrm{Hilb}_{\mathcal{X}_U/U}^{p,\mathscr{A}|_{\mathcal{X}_U}}$ together over $S$, we obtain a unique locally projective scheme $\mathrm{Hilb}_{\mathcal{X}/S}^{p,\mathscr{A}}$ over $S$ up to isomorphism. 
In this paper, we call $\mathrm{Hilb}_{\mathcal{X}/S}^{p,\mathscr{A}}$ the {\em Hilbert scheme}. When $S=\mathrm{Spec}\,\mathbbm{k}$, we simply write $\mathrm{Hilb}_{\mathcal{X}}^{p,\mathscr{A}}$.
We denote $\sqcup_{p}\mathrm{Hilb}_{\mathcal{X}/S}^{p,\mathscr{A}}$ by $\mathrm{Hilb}_{\mathcal{X}/S}$, where $p$ runs over polynomials. 

Next, we assume that $f$ is flat and it has geometrically connected and normal fibers. 
Let $g \colon \mathcal{Y}\to S$ be another proper morphism of schemes and $\mathscr{B}$ a $g$-ample line bundle on $\mathcal{Y}$ such that the fibers of $g$ have the Hilbert polynomial $p$ with respect to $\mathscr{B}$. 
We set
\begin{align*}
\mathfrak{Isom}_S(\mathcal{X},\mathcal{Y})(T)&=\{\text{$h \colon \mathcal{X}_T\to\mathcal{Y}_T$ is a $T$-isomorphism}\},\quad\text{and}\\
\mathfrak{Isom}_S((\mathcal{X},\mathscr{A}),(\mathcal{Y},\mathscr{B}))(T)&=\{h\in\mathfrak{Isom}_S(\mathcal{X},\mathcal{Y})(T)|\,\text{$h^*\mathscr{B}_T\sim_T\mathscr{A}_T$}\}
\end{align*}
for any $S$-scheme $T$. 
By \cite[Theorem 5.23]{FGA} and Corollary \ref{cor--hako}, which we will treat later, the functor $\mathfrak{Isom}_S(\mathcal{X},\mathcal{Y})$ (resp.~$\mathfrak{Isom}_S((\mathcal{X},\mathscr{A}),(\mathcal{Y},\mathscr{B}))$) is represented by a locally closed subscheme $\mathrm{Isom}_S(\mathcal{X},\mathcal{Y})$ (resp.~$\mathrm{Isom}_S((\mathcal{X},\mathscr{A}),(\mathcal{Y},\mathscr{B}))$) of $\mathrm{Hilb}_{\mathcal{X}\times_S\mathcal{Y}/S}$ (resp.~$\mathrm{Hilb}^{q,\,p_1^*\mathscr{A}\otimes p_2^*\mathscr{B}}_{\mathcal{X}\times_S\mathcal{Y}/S}$), where $q$ is the polynomial defined by $q(m)=p(2m)$ and $p_1\colon\mathcal{X}\times_S\mathcal{Y}\to\mathcal{X}$ (resp.~$p_2\colon\mathcal{X}\times_S\mathcal{Y}\to\mathcal{Y}$) is the first (resp.~second) projection. 
Thus, we see that $\mathrm{Isom}_S((\mathcal{X},\mathscr{A}),(\mathcal{Y},\mathscr{B}))$ is locally quasi-projective over $S$. If $(\mathcal{X},\mathscr{A})=(\mathcal{Y},\mathscr{B})$, we set 
$$\mathrm{Aut}_S(\mathcal{X},\mathscr{A}):=\mathrm{Isom}_S((\mathcal{X},\mathscr{A}),(\mathcal{X},\mathscr{A})).$$ 
For details, we refer to \cite[\S5.6]{FGA}. 
We note that if we define
\[
\mathfrak{Aut}(\mathbb{P}^n_\mathbb{Z})(T):=\{T\text{-automorphisms of $\mathbb{P}^n_T$}\}
\]
for any locally Noetherian scheme $T$, then it is well-known that the functor $\mathfrak{Aut}(\mathbb{P}^n_\mathbb{Z})$ is represented by $PGL(n+1,\mathbb{Z})$ (cf.~\cite[\S0.5 (b)]{GIT}).

On the other hand, we define a presheaf $\mathfrak{Pic}_{\mathcal{X}/S}$ as follows:  For any morphism $T\to S$, we attain the following set
\[
\mathfrak{Pic}_{\mathcal{X}/S}(T)=\{\text{$L$: a line bundle on $\mathcal{X}_T$}\}/\sim_T.
\]
In other words, $\mathfrak{Pic}_{\mathcal{X}/S}(T)$ is the set of all relative linear equivalence classes of line bundles on $\mathcal{X}_T$ over $T$.
In general, $\mathfrak{Pic}_{\mathcal{X}/S}$ is not an \'{e}tale sheaf.
However, it is well-known (cf.~\cite[\S0.5]{GIT}, \cite[\S9]{FGA}) that under the same assumption on $\mathcal{X}\to S$ as the previous paragraph, there exists a separated scheme $\mathrm{Pic}_{\mathcal{X}/S}$ locally of finite type over $S$ such that there exist the following maps for all $T\to S$,
\[
\mathfrak{Pic}_{\mathcal{X}/S}(T)\hookrightarrow \mathrm{Hom}_S(T,\mathrm{Pic}_{\mathcal{X}/S})
\]
that is injective and they induce the \'{e}tale sheafification $\mathfrak{Pic}_{\mathcal{X}/S}\to \mathrm{Hom}_S(\bullet,\mathrm{Pic}_{\mathcal{X}/S})$ of $\mathfrak{Pic}_{\mathcal{X}/S}$.
Moreover, if $\mathcal{X}\to S$ has a section, then $\mathfrak{Pic}_{\mathcal{X}/S}$ coincides with $\mathrm{Hom}_S(\bullet,\mathrm{Pic}_{\mathcal{X}/S})$. 

  \subsection{Stacks}
 We refer to \cite[\S3, \S5]{Ols} and \cite{stacksproject-chap98} for the notations of fibered categories and algebraic spaces.
 Let $\mathbf{Sets}$ be the category of sets and $\mathbf{Sch}_{/S}$ the category of (locally Noetherian) schemes over $S$.
 If $S=\mathrm{Spec}\,\mathbbm{k}$, we denote $\mathbf{Sch}_{/\mathbbm{k}}$.
 For any scheme $S$, we endow $\mathbf{Sch}_{/S}$ with the \'{e}tale topology. 
 We recall the definition of stacks (see \cite[Proposition 4.6.2]{Ols}).
 \begin{defn}[Stacks]\label{defn--stacks}
     Let $p\colon\mathscr{C}\to\mathbf{Sch}_{/S}$ be a category fibered in groupoids.
     $\mathscr{C}$ is called a {\em stack} over $S$ if the following two conditions hold (cf.~\cite[Definition 4.6.1]{Ols}).
     \begin{enumerate}
             \item \label{defn--stack-(1)}
         For any $S$-scheme $X$ and any two objects $x,y\in\mathscr{C}(X):= p^{-1}(X)$, the presheaf $\mathfrak{Isom}_X(x,y)$, defined by $$\mathfrak{Isom}_X(x,y)(f\colon Y\to X):=\mathrm{Isom}_Y(f^*x,f^*y),$$ 
                   where the right hand side is the set of all isomorphisms $g$ such that $p(g)=\mathrm{id}_{Y}$, is an \'{e}tale sheaf, and
         \item \label{defn--stack-(2)} 
         for any \'{e}tale covering in $\mathbf{Sch}_{/S}$, all descent data with respect to the covering are effective (cf.~\cite[Definition 4.2.6]{Ols}). 
     \end{enumerate}
     \begin{rem}\label{rem-descent}
In the situation of Definition \ref{defn--stacks}, we consider the following condition.
     \begin{itemize}
             \item 
             For any set of $S$-schemes $\{X_i\}_{i\in I}$, the natural functor 
             $$\mathscr{C}\left(\bigsqcup_{i\in I}X_i\right)\to \prod_{i\in I}\mathscr{C}(X_i)$$ 
             is an equivalence of categories. 
         \end{itemize}
     \end{rem}
     We note that all stacks we treat in this paper satisfy this condition.
     Here, we explain the definition of descent data when $\mathscr{C}$ satisfies the above condition.
     We say that a surjective morphism $f\colon X'\to X$ is an {\it \'{e}tale covering} if $X':=\sqcup_{i\in I}X_i$ and $f|_{X_i}\colon X_i\to X$ is \'{e}tale for any $i\in I$.
     For any \'{e}tale covering $f$, let
     \[
     p_1,p_2\colon X'\times_XX'\to X'\text{ and }p_{12},p_{23},p_{13}\colon X'\times_XX'\times_XX'\to X'\times_XX'
     \]
     be the projections.
     A pair $(u'\in\mathscr{C}(X'),\sigma)$ is called a {\em descent datum} with respect to $f\colon X'\to X$ if $\sigma\in\mathrm{Isom}_{X'\times_XX'}(p_1^*u',p_2^*u')$ such that $p_{23}^*\sigma\circ p_{12}^*\sigma=p_{13}^*\sigma$.
     Note that for any $u\in\mathscr{C}(X)$, there exists a canonical isomorphism $\sigma_{\mathrm{can}}\in\mathrm{Isom}_{X'\times_XX'}(p_1^*f^*u,p_2^*f^*u)$ such that $(f^*u,\sigma_{\mathrm{can}})$ is a descent datum.
     If there is $u\in\mathscr{C}(X)$ such that  $\sigma\circ p_1^*g=p_2^*g\circ\sigma_{\mathrm{can}}$ for some $g\in\mathrm{Isom}(f^*u,u')$, then we call $(u',\sigma)$ an {\em effective descent datum}.
     We see that our definition and the original definition (\cite[Definition 4.2.6]{Ols}) of descent data are the same in this situation by \cite[Lemma 4.2.7]{Ols}.
 \end{defn}

 \begin{defn}[Artin stacks, Deligne-Mumford stacks]
     Let $\mathscr{C}$ be a stack over $\mathbbm{k}$. 
      $\mathscr{C}$ is called a {\it Deligne-Mumford (resp.~Artin) stack} if the following hold.
      \begin{enumerate}[(i)]
          \item The diagonal $\Delta\colon\mathscr{C}\to\mathscr{C}\times\mathscr{C}$ is representable, i.e.~ for any morphism $U\to \mathscr{C}\times\mathscr{C}$ from a scheme, $U\times_{ \mathscr{C}\times\mathscr{C}}\mathscr{C}$ is an algebraic space.
          \item There exists an \'{e}tale (resp.~smooth) surjectve morphism $\pi \colon X\to\mathscr{C}$ from a scheme.
      \end{enumerate}
      If $\mathscr{C}$ is a Noetherian Artin stack, we can define coherent sheaves on $\mathscr{C}$ in the way of \cite[9.1.14]{Ols}.
      If $\mathscr{L}$ is a coherent sheaf on $\mathscr{C}$ and there exists a smooth surjection $g\colon T\to\mathscr{C}$ such that $g^*\mathscr{L}$ is a line bundle, we say that $\mathscr{L}$ is a {\it line bundle} on $\mathscr{C}$ (see also \cite[\S9.3]{Ols}).
 \end{defn}

 \begin{ex}
     It is well-known that $\mathbf{Sch}_{/S}$ has the natural stack structure over $\mathbbm{k}$ for any scheme $S$ (see \cite[p.~97]{deligne-mumford}).
     We simply denote this stack by $S$. 
     For any scheme $T$, we know that $\mathbf{Sch}_{/S}(T)=\mathrm{Hom}(T,S)$.
     We denote this by $S(T)$.
 \end{ex}
 
  \begin{ex}\label{ex--quotient-stack}
     Let $X$ be a scheme of finite type over $\mathbbm{k}$ and $G$ be a  linear algebraic group over $\mathbbm{k}$.
     Then there exists a quotient stack $[X/G]$ defined as \cite[Example 8.1.12]{Ols}. 
     We remark that $[X/G]$ is an Artin stack of finite type over $\mathbbm{k}$ (cf.~\cite[Tag 036O]{stacksproject-chap98}). 
     Note that $X$ is quasi-compact.
 Moreover, for any $G$-equivariant line bundle $L$ on $X$, we can find a line bundle $\mathscr{L}$ on $[X/G]$ such that $\pi^*\mathscr{L}=L$, where $\pi\colon X\to [X/G]$ is the canonical projection (cf.~\cite[Exercise 9.H]{Ols}). 
 This $\mathscr{L}$ is unique up to isomorphism. 
     \end{ex}

The following category will be used in \S\ref{Consec}.
 
\begin{defn}\label{defn--pol}
Let $\mathfrak{Pol}$ be the category such that the collection of objects is
\[
\left\{
\begin{array}{l}
f\colon(\mathcal{X},\mathscr{A})\to S
\end{array}
\;\middle|
\begin{array}{rl}
&\text{$f$ is a surjective proper flat morphism of schemes whose}\\
&\text{geometric fibers are normal and connected, and}\\
&\text{$\mathscr{A}\in \mathrm{Pic}_{\mathcal{X}/S}(S)$ such that there exists an \'{e}tale covering}\\
&\text{$S'\to S$ by which the pullback of $\mathscr{A}$ to $\mathcal{X}\times_SS'$ is}\\
&\text{represented by a relatively ample line bundle over $S'$.}
\end{array}\right\},
\]
and an arrow $(g,\alpha)\colon(f\colon(\mathcal{X},\mathscr{A})\to S)\to (f'\colon(\mathcal{X}',\mathscr{A}')\to S')$ is defined in the way that $\alpha\colon S\to S'$ is a morphism and $g\colon\mathcal{X}\to \mathcal{X}'\times_{S'}S$ is an isomorphism such that $g^*\alpha_{\mathcal{X}'}^*\mathscr{A}'=\mathscr{A}$ as elements of $\mathrm{Pic}_{\mathcal{X}'/S'}(S)$. 
\end{defn}
It is easy to see that there exists a natural functor $p \colon \mathfrak{Pol}\to\mathbf{Sch}_{/\mathbbm{k}}$ such that $\mathfrak{Pol}$ is a category fibered in groupoids. 
 We further show the following.

 \begin{lem}\label{lem--descent}
$\mathfrak{Pol}$ is a stack over $\mathbbm{k}$.
\end{lem}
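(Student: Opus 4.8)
The plan is to verify directly the two axioms of a stack from Definition \ref{defn--stacks}, namely that $\mathfrak{Isom}$-presheaves are \'etale sheaves and that \'etale descent data are effective. For the first axiom, fix a scheme $S$ and two objects $x = (f\colon(\mathcal{X},\mathscr{A})\to S)$ and $y = (f'\colon(\mathcal{X}',\mathscr{A}')\to S)$ in $\mathfrak{Pol}(S)$. I would note that the underlying presheaf of isomorphisms of schemes, $T\mapsto \mathrm{Isom}_T(\mathcal{X}_T,\mathcal{X}'_T)$, is an \'etale (indeed fppf) sheaf because morphisms of schemes satisfy fppf descent (\cite[\S5]{FGA}). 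The extra condition cut out by the polarizations, $h^*\mathscr{A}'_T = \mathscr{A}_T$ as elements of $\mathfrak{Pic}_{\mathcal{X}_T/T}(T)$, also descends: $\mathfrak{Pic}_{\mathcal{X}/S}$ is a presheaf whose \'etale sheafification is represented by the separated scheme $\mathrm{Pic}_{\mathcal{X}/S}$ (as recalled in Subsection \ref{subsec-Hilb}), and an equality of two sections of an \'etale sheaf that holds after an \'etale cover holds on the nose. Hence the subpresheaf of those $h$ respecting the polarization is again an \'etale sheaf. This gives axiom \eqref{defn--stack-(1)}.

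For axiom \eqref{defn--stack-(2)}, let $f\colon X'\to X$ be an \'etale covering and let $(u',\sigma)$ be a descent datum, where $u' = (\mathcal{X}'\to X',\mathscr{A}')\in\mathfrak{Pol}(X')$ and $\sigma$ is a cocycle isomorphism over $X'\times_XX'$. The scheme $\mathcal{X}'$ together with $\sigma$ forms a descent datum for schemes over the \'etale (hence fppf) cover $X'\to X$; since $\mathcal{X}'\to X'$ is proper — in particular quasi-projective Zariski-locally on $X'$, or at least the descent is along an fppf morphism and the objects are schemes with a relatively ample line bundle available \'etale-locally — one invokes effectivity of fppf descent for (quasi-)projective morphisms (\cite[\S6]{FGA}, or \cite[Tag 0245]{stacksproject-chap98}-type statements) to produce a scheme $\mathcal{X}\to X$ descending $\mathcal{X}'$. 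Concretely: after refining $X'$ we may assume $\mathscr{A}'$ is an honest relatively ample line bundle, embed $\mathcal{X}'$ into a projective bundle over $X'$ using a power of $\mathscr{A}'$ compatibly with $\sigma$ (possibly after twisting $\mathscr{A}'$ by the pullback of a line bundle from $X'$ so that $\sigma$ becomes an equality of line bundles rather than merely a relative-linear-equivalence), and then descend the embedding. Flatness, properness, surjectivity, and normality and connectedness of geometric fibers all descend along the faithfully flat $f$, so $\mathcal{X}\to X$ again has normal connected geometric fibers.

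It remains to descend the polarization class. The collection of line bundles $\mathscr{A}'$ on $\mathcal{X}'$, together with the gluing furnished by $\sigma$, defines a section of the \'etale sheaf $\mathrm{Hom}_X(\bullet,\mathrm{Pic}_{\mathcal{X}/X})$ over the cover $X'\to X$ satisfying the cocycle condition, hence descends to a section over $X$, i.e.\ an element $\mathscr{A}\in\mathfrak{Pic}_{\mathcal{X}/X}(X)$ pulling back to the class of $\mathscr{A}'$ on $\mathcal{X}'$; here I use that $\mathcal{X}\to X$ has geometrically connected fibers so that $\mathrm{Pic}_{\mathcal{X}/X}$ exists and is separated, and that $\mathscr{A}'$ being \'etale-locally represented by a relatively ample line bundle is a condition on geometric fibers, hence preserved. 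This produces an object $(\mathcal{X}\to X,\mathscr{A})\in\mathfrak{Pol}(X)$, and by construction the canonical descent datum attached to it is isomorphic to $(u',\sigma)$, so the descent datum is effective.

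The main obstacle I anticipate is the bookkeeping around the polarization: $\mathscr{A}$ lives only in $\mathrm{Pic}_{\mathcal{X}/S}(S)$, i.e.\ up to relative linear equivalence and only \'etale-locally representable by an actual line bundle, so one cannot naively glue line bundles along $\sigma$. The clean way around this is to work throughout with the separated scheme $\mathrm{Pic}_{\mathcal{X}/X}$ (available precisely because geometric fibers are connected, which is built into the definition of $\mathfrak{Pol}$) and to phrase both the sheaf property and the descent in terms of sections of $\mathrm{Hom}_X(\bullet,\mathrm{Pic}_{\mathcal{X}/X})$; the reduction to an honest ample line bundle is then only needed to descend the scheme $\mathcal{X}$ itself, after which the polarization descends formally. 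Everything else — descent of flatness, properness, normality and connectedness of geometric fibers — is standard and I would only cite it.
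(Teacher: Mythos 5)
Your verification of axiom (\ref{defn--stack-(1)}) is fine and essentially equivalent to the paper's (the paper proves representability of the isomorphism functor by a scheme, you argue the sheaf property directly via separatedness of $\mathrm{Pic}_{\mathcal{X}/S}$; both work). The gap is in axiom (\ref{defn--stack-(2)}), at the step where you descend the total space $\mathcal{X}'$. The parenthetical move --- ``possibly after twisting $\mathscr{A}'$ by the pullback of a line bundle from $X'$ so that $\sigma$ becomes an equality of line bundles'' --- is not available in general. The cocycle $\sigma$ only gives $\sigma^*p_{2,\mathcal{X}'}^*\mathscr{A}'\cong p_{1,\mathcal{X}'}^*\mathscr{A}'\otimes(p_1^*f')^*\mathcal{M}$ for some line bundle $\mathcal{M}$ on $X'\times_XX'$, and $\mathcal{M}$ need not be a coboundary of the form $p_1^*\mathcal{N}\otimes p_2^*\mathcal{N}^{-1}$ for $\mathcal{N}$ on $X'$; the obstruction is of Brauer/gerbe type, and its possible nonvanishing is exactly why objects of $\mathfrak{Pol}$ carry only a class in $\mathrm{Pic}_{\mathcal{X}/S}(S)$ rather than an honest ample line bundle (and why, in the paper's proof, the descended ambient space $\mathcal{P}\to S$ has projective spaces as geometric fibers but is \emph{not} a projective bundle). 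Without an ample line bundle that is itself part of the descent datum, your appeal to ``effectivity of fppf descent for (quasi-)projective morphisms'' does not apply: descent of merely proper schemes is not effective in general, so this step fails as written, and your closing remark that the reduction to an honest ample bundle ``is only needed to descend the scheme $\mathcal{X}$ itself'' concedes precisely the point that needs a different argument.

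The paper's route around this is the missing idea: embed $\mathcal{X}'\hookrightarrow\mathbb{P}_{S'}(f'_*\mathscr{A}'^{\otimes m})$ and note that the projectivization is insensitive to the twist by $\mathcal{M}$, so $\sigma$ induces an isomorphism $\varphi$ between the two pullbacks of this fibration to $S'\times_SS'$. One must then check that $\varphi$ is canonical (independent of the chosen isomorphism $h$) and satisfies the cocycle condition; this uses the small but essential Claim that an automorphism of a line bundle, i.e.\ a unit on the base, induces the identity on $\mathbb{P}_T(p_*\mathscr{L})$ --- a verification absent from your proposal. With the cocycle in hand, the ambient space is descended to a scheme $\mathcal{P}\to S$ using its own relative anti-canonical bundle, which is canonical and relatively ample over $S$ (\cite[Proposition 4.4.12]{Ols}), the closed immersion $\mathcal{X}'\hookrightarrow\mathbb{P}_{S'}(f'_*\mathscr{A}'^{\otimes m})$ is descended by \cite[Proposition 4.4.3]{Ols}, and only at the end is the polarization recovered as a section of $\mathrm{Pic}_{\mathcal{X}/S}$, exactly as in your final paragraph. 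So the last step of your plan is correct, but the middle step needs the canonical-polarization argument rather than descent of a rigidified twist of $\mathscr{A}'$.
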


\begin{proof}
It suffices to check the conditions (\ref{defn--stack-(1)}) and (\ref{defn--stack-(2)}) of Definition \ref{defn--stacks} for $\mathfrak{Pol}$.
We first treat (\ref{defn--stack-(1)}). 
Take objects $f\colon(\mathcal{X},\mathscr{A})\to S$ and $f'\colon(\mathcal{X}',\mathscr{A}')\to S$.
Then $\mathfrak{Isom}_S(\mathcal{X},\mathcal{X}')$ is represented by a locally closed subscheme of $\mathrm{Hilb}_{\mathcal{X}\times_S\mathcal{X}'/S}$ (\cite[\S5.6]{FGA}).
Since $\mathrm{Pic}_{\mathcal{X}/S}$ is separated, we see that $\mathfrak{Isom}_S(f,f')\hookrightarrow\mathfrak{Isom}_S(\mathcal{X},\mathcal{X}')$ is a closed immersion. 
Therefore $\mathfrak{Isom}_S(f,f')$ is represented by a scheme. In particular, it is an \'{e}tale sheaf. 
Hence, (\ref{defn--stack-(1)}) holds.

Next, we treat (\ref{defn--stack-(2)}).
One can check that for any set of schemes $\{X_i\}_{i\in I}$, the natural functor 
\[
\mathfrak{Pol}\left(\bigsqcup_{i\in I} X_i\right)\to\prod_{i\in I} \mathfrak{Pol}(X_i)
\]
is an equivalence of categories.
By Remark \ref{rem-descent}, it suffices to show the following:
For any \'{e}tale covering $S'\to S$ with the projections
\[
    p_{1}, \, p_{2}\colon S'\times_SS'\to S' \quad \text{and}\quad p_{12}, \, p_{23}, \, p_{13}\colon S'\times_SS'\times_SS'\to S'\times_SS',
    \]
any descent datum $(f'\colon(\mathcal{X}',\mathscr{A}')\to S',\sigma)$ is effective.  
Here, $\sigma\in\mathrm{Isom}_{S'\times_SS'}(p_1^*f',p_2^*f')$.
If the pullback of $(f',\sigma)$ by an \'{e}tale covering $T \to S$ is effective, then so is $(f',\sigma)$ by the condition (\ref{defn--stack-(1)}). From this fact, by replacing $S'$ with a scheme $T$ admitting an \'{e}tale covering $T\to S'$, we may assume that $\mathscr{A}'$ is an $f'$-ample line bundle

By the $f'$-ampleness, there exists $m\in\mathbb{Z}_{>0}$ such that $H^i(\mathcal{X}'_s,\mathscr{A}'^{\otimes m}_s)=0$ for every $s\in S'$ and $i>0$ and the natural morphism $\mathcal{X}'\to\mathbb{P}_{S'}(f'_{*}\mathscr{A}'^{\otimes m})$ is a closed immersion.
We note that for any flat morphism $g\colon T\to S$, we have the natural isomorphism
    \[
    f'_{T*}g_{\mathcal{X}}^*\mathscr{A}'^{\otimes m}\cong g^*f'_*\mathscr{A}'^{\otimes m}
    \]
by \cite[III, Proposition 9.3]{Ha}. 
Thus, we may identify $f'_{T*}g_{\mathcal{X}}^*\mathscr{A}'^{\otimes m}$ with $g^*f'_*\mathscr{A}'^{\otimes m}$.
    Furthermore, $f'_*\mathscr{A}'^{\otimes m}$ is locally free by \cite[III, Theorem 12.11]{Ha}.
    On the other hand, there exist a line bundle $\mathcal{M}$ and an isomorphism 
    \[
    h\colon\sigma^{*}p_{2,\mathcal{X}'}^{*}\mathscr{A}'\cong p_{1,\mathcal{X}'}^{*}\mathscr{A}'\otimes(p_{1}^{*}f')^{*}\mathcal{M},
    \]
    where $p_{1,\mathcal{X}'}\colon\mathcal{X}'\times_SS'\to \mathcal{X}'$ (resp.~$p_{2,\mathcal{X}'}\colon\mathcal{X}'\times_SS'\to \mathcal{X}'$) is the morphism induced from base change of $p_{1}$ (resp.~$p_{2}$) by the canonical morphism $\mathcal{X}' \to S'$, and $p_{1}^*f'$ (resp.~$p_{2}^*f'$) is the base change of $f'$ by $p_1$ (resp.~$p_2$).
    Then 
    $h$ induces the following isomorphism 
    \[
    \varphi\colon\mathbb{P}_{S'\times_SS'}(p_1^*f'_*\mathscr{A}'^{\otimes m})=\mathbb{P}_{S'\times_SS'}(p_1^*f'_*\mathscr{A}'^{\otimes m}\otimes \mathcal{M}^{\otimes m})\overset{\cong}{\longrightarrow}\mathbb{P}_{S'\times_SS'}(p_{2}^*f'_*\mathscr{A}'^{\otimes m}).
    \]
    Here, the first equality is via the canonical isomorphism in \cite[II, Lemma 7.9]{Ha}. 
    
    The following easy claim implies that $\varphi$ is independent of the choice of $h$.
    \begin{claim*}
        Let $p \colon \mathcal{Y}\to T$ be a proper flat surjective morphism whose geometric fibers are connected and normal, and let $\mathscr{L}$ be a line bundle on $\mathcal{X}$.
        Suppose that $p_*\mathscr{L}$ is locally free.
        For any isomorphism $\tau \colon\mathscr{L}\to\mathscr{L}$, 
        $\tau$ induces the identity morphism of $\mathbb{P}_{T}(p_*\mathscr{L})$.
    \end{claim*}
    \begin{proof}[Proof of Claim]
    Now $\tau\in\mathrm{Hom}(\mathscr{L},\mathscr{L})$, and $\mathrm{Hom}(\mathscr{L},\mathscr{L})\cong H^0(\mathcal{Y},\mathcal{O}_{\mathcal{Y}})\cong H^0(T,\mathcal{O}_T)$ by \cite[9.3.11]{FGA}.
        Since $\tau$ is an isomorphism, we may regard $\tau$ as an element of $H^0(T,\mathcal{O}_T^*)$.
        Then the argument in the proof of \cite[II, Lemma 7.9]{Ha} works without any change.
    \end{proof}
    We continue to prove Lemma \ref{lem--descent}.
We will show that $\varphi$ defines a descent datum, i.e.~$p_{23}^*\varphi\circ p_{12}^*\varphi=p_{13}^*\varphi$. 
Clearly, this is equivalent to that $(p_{13}^*\varphi)^{-1}\circ p_{23}^*\varphi\circ p_{12}^*\varphi$ is the identity. 
We note that if the base change of the morphism $\sigma$ in the decent datum $(f',\sigma)$ by $p_{12}$ (resp.~$p_{23}$, $p_{13}$) is denoted by $p_{12}^*\sigma$ (resp.~$p_{23}^*\sigma$, $p_{13}^*\sigma$), then $p_{23}^*\sigma\circ p_{12}^*\sigma=p_{13}^*\sigma$ holds. 
This follows from the definition of descent data. 
We also note that the relative linear equivalence $p_{ij}^*h$ induces $p_{ij}^*\varphi$ for any $1\le i<j\le 3$. 
Thus, $(p_{13}^*\varphi)^{-1}\circ p_{23}^*\varphi\circ p_{12}^*\varphi$ is induced from the linear equivalence over $S'\times_SS'\times_SS'$
\begin{align*}
p_{12,\mathcal{X}'\times_SS'}^*p_{1,\mathcal{X}'}^*\mathscr{A}'&=(p_{13}^*\sigma)^*(( p_{23}^*\sigma)^{-1})^*(( p_{12}^*\sigma)^{-1})^*p_{12,\mathcal{X}'\times_SS'}^*p_{1,\mathcal{X}'}^*\mathscr{A}'\\
&\sim_{S'\times_SS'\times_SS'}(p_{13}^*\sigma)^*(( p_{23}^*\sigma)^{-1})^*p_{12,S'\times_S\mathcal{X}'}^*p_{2,\mathcal{X}'}^*\mathscr{A}'\\
&=(p_{13}^*\sigma)^*(( p_{23}^*\sigma)^{-1})^*p_{23,\mathcal{X}'\times_SS'}^*p_{1,\mathcal{X}'}^*\mathscr{A}'\\
&\sim_{S'\times_SS'\times_SS'}(p_{13}^*\sigma)^*p_{13,S'\times_S\mathcal{X}'}^*p_{2,\mathcal{X}'}^*\mathscr{A}'\\
&\sim_{S'\times_SS'\times_SS'}p_{12,\mathcal{X}'\times_SS'}^*p_{1,\mathcal{X}'}^*\mathscr{A}',
\end{align*}
where $p_{12,\mathcal{X}'\times_SS'}\colon\mathcal{X}'\times_SS'\times_SS'\to \mathcal{X}'\times_SS'$ is the base change of $p_{12}$ by the canonical morphism $\mathcal{X}'\times_SS'\to S'\times_SS'$, and $p_{23,\mathcal{X}'\times_SS'}\colon\mathcal{X}'\times_SS'\times_SS'\to \mathcal{X}'\times_SS'$ and $p_{13,\mathcal{X}'\times_SS'}\colon\mathcal{X}'\times_SS'\times_SS'\to \mathcal{X}'\times_SS'$ are defined similarly. 
By Claim, it immediately follows that $(p_{13}^*\varphi)^{-1}\circ p_{23}^*\varphi\circ p_{12}^*\varphi$ is the identity morphism. Thus $p_{23}^*\varphi\circ p_{12}^*\varphi=p_{13}^*\varphi$. 

    On the other hand, $-K_{\mathbb{P}_{S'}( f'_*\mathscr{A}'^{\otimes m})/S'}$ is relatively ample over $S$. 
    Hence, applying \cite[Proposition 4.4.12]{Ols} to $\mathbb{P}_{S'}(f'_*\mathscr{A}'^{\otimes m})$ and $-K_{\mathbb{P}_{S'} (f'_*\mathscr{A}'^{\otimes m})/S'}$, we may find a scheme $\mathcal{P}$ and a projective flat surjective morphism $\mathcal{P}\to S$ that canonically defines a descent datum isomorphic to $(\mathbb{P}_{S'}(f'_{*}\mathscr{A}'^{\otimes m}),\varphi)$.
Note that $\mathcal{P}$ is not a projective bundle but every geometric fiber over $S$ is a projective space.
    By applying \cite[Proposition 4.4.3]{Ols} to the closed immersion $\mathcal{X}'\hookrightarrow\mathbb{P}_{S'} (f'_*\mathscr{A}'^{\otimes m})$, we obtain a closed immersion $\mathcal{X}\hookrightarrow \mathcal{P}$ whose base change by $S'\to S$ coincides with $\mathcal{X}'\hookrightarrow\mathbb{P}_{S'} (f'_*\mathscr{A}'^{\otimes m})$.
On the other hand, by the definition of the Picard scheme, there exists a unique element $\mathscr{A}\in\mathrm{Pic}_{\mathcal{X}/S}(S)$ such that the pullback of $\mathscr{A}$ to $\mathcal{X}\times_SS'$ coincides with $\mathscr{A}'$. 

From the above facts, $(f',\sigma)$ is effective.
We finish the proof of Lemma \ref{lem--descent}.
\end{proof}

 \begin{rem}
 Let $f\colon\mathcal{X}\to S$ be a proper surjective flat morphism of schemes whose geometric fibers are normal and connected. We fix $\mathscr{A} \in \mathrm{Pic}_{\mathcal{X}/S}$.
    Then $(\mathcal{X},\mathscr{A}) \to S$ is an object of $\mathfrak{Pol}$ if and only if $\mathscr{A}_{\bar{s}}$ is ample for any geometric point $\bar{s}\in S$.
     Indeed, we may replace $f$ by $\mathcal{X}\times_{S}S' \to S'$ for some \'{e}tale covering $S' \to S$, thus we may assume that $\mathscr{A}$ is a line bundle. 
     Then $\mathscr{A}$ is $f$-ample if and only if $\mathscr{A}_{\bar{s}}$ is ample for any geometric point $\bar{s}\in S$ (cf.~\cite[Proposition 1.41]{KM}). 
     The converse is easy.
 \end{rem}
 
The following theorem is well-known to experts and holds since we assume that $\mathrm{char}(\mathbbm{k})=0$ (cf.~{\cite[\S11, Theorem]{Ab}}).

 \begin{thm}[{\cite[Remark 8.3.4]{Ols}}, \cite{KeM}] \label{dm}
 Let $\mathscr{C}$ be an Artin stack of finite type over $\mathbbm{k}$. If the diagonal morphism $\Delta \colon \mathscr{C}\to\mathscr{C}\times\mathscr{C}$ is finite, then $\mathscr{C}$ is a separated Deligne-Mumford stack. Furthermore, there exists a separated coarse moduli space of finite type over $\mathbbm{k}$. 
 \end{thm}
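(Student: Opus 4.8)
The plan is to reduce the statement to two standard inputs: the characterization of Deligne--Mumford stacks among Artin stacks in terms of the diagonal, and the Keel--Mori theorem on the existence of coarse moduli spaces.

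First I would check that the diagonal $\Delta\colon\mathscr{C}\to\mathscr{C}\times\mathscr{C}$ is unramified. Form the inertia stack $I_{\mathscr{C}}:=\mathscr{C}\times_{\Delta,\,\mathscr{C}\times\mathscr{C},\,\Delta}\mathscr{C}$; the first projection $I_{\mathscr{C}}\to\mathscr{C}$ is a base change of $\Delta$ and hence finite. For any geometric point $x\colon\mathrm{Spec}\,\Omega\to\mathscr{C}$, the fiber of $I_{\mathscr{C}}\to\mathscr{C}$ over $x$ is the automorphism group scheme $\underline{\mathrm{Aut}}(x)$ over $\Omega$, so it is a finite group scheme. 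Since $\mathrm{char}\,\mathbbm{k}=0$, and hence $\mathrm{char}\,\Omega=0$, Cartier's theorem forces every finite group scheme over $\Omega$ to be reduced, thus \'{e}tale. Therefore the automorphism group schemes of all geometric points of $\mathscr{C}$ are finite and reduced, which is precisely the condition for $\Delta$ to be unramified. By the standard criterion (\cite[\S8.3]{Ols}) that an algebraic stack with unramified diagonal is Deligne--Mumford, $\mathscr{C}$ is a Deligne--Mumford stack. Moreover, since $\Delta$ is finite it is proper, and a Deligne--Mumford stack is by definition separated exactly when its diagonal is proper; hence $\mathscr{C}$ is separated. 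This gives the first assertion.

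For the second assertion I would invoke the Keel--Mori theorem \cite{KeM} (see also \cite[Remark 8.3.4]{Ols}): a separated Deligne--Mumford stack of finite type over a Noetherian base with finite diagonal admits a coarse moduli space $c\colon\mathscr{C}\to M$, where $M$ is an algebraic space, $c$ is proper and surjective, and $M$ is of finite type over the base. Taking the base to be $\mathrm{Spec}\,\mathbbm{k}$ yields that $M$ is of finite type over $\mathbbm{k}$; and since $c$ is proper and surjective while $\mathscr{C}$ is separated, separatedness descends to $M$. This completes the plan.

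The step I expect to be the genuine obstacle is the Keel--Mori theorem itself, whose proof (the \'{e}tale-local construction of the quotient and the descent gluing it into an algebraic space) is substantial and is used here as a black box; the reduction to it is routine. A subtle rather than hard point is the passage from ``finite diagonal'' to ``unramified diagonal'': this genuinely uses $\mathrm{char}\,\mathbbm{k}=0$, and in positive characteristic the analogous statement fails, as $B\mu_p$ shows.
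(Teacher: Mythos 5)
Your proposal is correct and is exactly the argument the paper has in mind: the paper gives no proof of Theorem \ref{dm}, citing \cite[Remark 8.3.4]{Ols} and \cite{KeM}, and explicitly noting that the statement relies on $\mathrm{char}\,\mathbbm{k}=0$ — precisely the point you isolate (finite group schemes in characteristic zero are reduced, so a finite diagonal is unramified, giving Deligne--Mumford and separatedness, after which Keel--Mori supplies the coarse moduli space). So your write-up simply spells out the standard reduction behind the paper's citations, with no divergence in approach.
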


\begin{rem}
     The authors in \cite{Ols} and \cite{stacksproject-chap98} treat the category of schemes that are not necessarily locally Noetherian, but our theory works even if we treat $\mathbf{Sch}_{/\mathbbm{k}}$. 
     For example, we can extend $\mathfrak{Pol}$ to a stack over the category of all schemes, including schemes that are not locally Noetherian (cf.~\cite[28.2.12]{vakil22}). One can also see that we can apply Theorem \ref{dm} to $[N/PGL(d_1)\times PGL(d_2)\times PGL(d_3)]$, which is defined on the category of all schemes, in the proof of Theorem \ref{mod2}.
\end{rem}

\subsection{Universal hull and $\mathbb{Q}$-Gorenstein family}\label{subsec2.5}
For any scheme $X$ and coherent sheaf $\mathscr{F}$ on $X$ of pure dimension, we can define the $S_{2}$-{\em hull} of $\mathscr{F}$, which we denote by $\mathscr{F}^{[**]}$. 
For details, we refer to \cite[\S1.1]{huybrechts2010geometry}. If $X$ is a normal variety of dimension $d$ and $\mathscr{F}$ is of pure dimension $d$, then $\mathscr{F}^{[**]}=\mathcal{H}om_{\mathcal{O}_X}(\mathcal{H}om_{\mathcal{O}_X}(\mathscr{F},\mathcal{O}_X),\mathcal{O}_X)$.

Let $f \colon \mathcal{X} \to S$ be a flat projective surjective morphism between locally Noetherian schemes such that the relative dimension of $f$ is $d$ and all geometric fibers of $f$ are normal and connected. Then there is a closed reduced subscheme $Z\subset \mathcal{X}$ such that $f$ is smooth on $\mathcal{X}\setminus Z$ and the fiber $Z_s$ over any $s \in S$ satisfies ${\rm codim}_{\mathcal{X}_s}(Z_{s})\geq 2$.  Let $\mathscr{F}$ be a coherent sheaf on $X$ such that $\mathscr{F}|_{\mathcal{X}\setminus Z}$ is an invertible sheaf. We define a {\it (universal) hull} of $\mathscr{F}$, which we also denote by $\mathscr{F}^{[**]}$, to be a coherent sheaf with the following properties (cf.~\cite{Ko} or \cite[\S9]{kollar-moduli}).
 \begin{itemize}
 \item $\mathscr{F}^{[**]}$ is flat over $S$,
 \item there exists a morphism $q \colon \mathscr{F}\to\mathscr{F}^{[**]}$ that is an isomorphism outside $Z$, and
 \item for any point $s\in S$, the morphism $\mathscr{F}^{[**]}|_{\mathcal{X}_s}\to \mathscr{F}_s^{[**]}$ induced by $q$ is an isomorphism, where $\mathscr{F}_s^{[**]}$ is the $S_2$-hull of $\mathscr{F}_s :=\mathscr{F}|_{\mathcal{X}_s}$. 
 \end{itemize}
 A universal hull does not always exist for the sheaf $\mathscr{F}$ as above, but if it exists then $\mathscr{F}^{[**]}\cong j_*(\mathscr{F}|_{\mathcal{X}\setminus Z})$, where $j \colon \mathcal{X}\setminus Z\hookrightarrow \mathcal{X}$ is the inclusion. 
 Indeed, for any $p\in\mathcal{X}$ and any affine open neighborhood $U\subset \mathcal{X}$ of $p$, let $I_{Z}\subset\mathcal{O}_{\mathcal{X}}$ be the ideal sheaf corresponding to $Z$. 
 Take a regular sequence $\overline{a},\overline{b}\in I_Z\otimes\mathcal{O}_{U\cap\mathcal{X}_s}(U\cap\mathcal{X}_s)$ of $\mathscr{F}_s^{[**]}$ for $s:= f(p)$, i.e.~$\overline{a}$ is a non-zero divisor in $\mathscr{F}_s^{[**]}$ and $\overline{b}$ is a non-zero divisor in $\mathscr{F}_s^{[**]}/\overline{a}\mathscr{F}_s^{[**]}$. 
 If $\overline{a},\overline{b}$ are the restrictions of $a, \, b\in I_Z(U)$, then $a,b$ is also a regular sequence of $\mathscr{F}^{[**]}$ around $\mathcal{X}_s\cap U$ by \cite[(20.E)]{Mat}. Thus, shrinking $U$ if necessary, we may assume that there exists a regular sequence $a, \, b\in I_Z(U)$ of $\mathscr{F}^{[**]}$.
Now the natural map $\mathscr{F}^{[**]}\to j_*(\mathscr{F}|_{\mathcal{X}\setminus Z})$ is injective over $U$, and the surjectivity can be proved as follows: Let $m\in j_*(\mathscr{F}|_{\mathcal{X}\setminus Z})$ be a local section over $U$. By the assumption, there exists two sections $m_a,\,m_b\in\mathscr{F}^{[**]}(U)$ such that $m=\frac{m_a}{a}=\frac{m_b}{b}$. Here, we applied \cite[Theorem 27]{Mat} and assumed that $b$ is also a non-zero divisor by shrinking $U$. Thus, $bm_a=am_b$ as elements of $\mathscr{F}^{[**]}(U)$. From this and the fact that $b$ is a non-zero divisor in $(\mathscr{F}^{[**]}/a\mathscr{F}^{[**]})(U)$, we have $m\in\mathscr{F}^{[**]}(U)$. 

Hence, if a universal hull of $\mathscr{F}$ exists, then $\mathscr{F}^{[**]}\cong j_*(\mathscr{F}|_{\mathcal{X}\setminus Z})$, and furthermore we have $(\mathscr{F}^{[**]})_T= (\mathscr{F}_T)^{[**]}$ for any morphism $g \colon T\to S$. We denote this by $\mathscr{F}^{[**]}_T$.

By applying the Koll\'ar's theory \cite{kollar-moduli} to our setup, we obtain the following theorem. 

 \begin{thm}[cf.~{\cite[Theorem 9.40]{kollar-moduli}}]\label{hullsdecomp}
Let $f \colon \mathcal{X} \to S$ be a flat projective surjective morphism between schemes of finite type over $\mathbbm{k}$ such that the relative dimension of $f$ is $d$ and the geometric fibers of $f$ are normal and connected. 
Let $Z\subset \mathcal{X}$ be a closed subset such that $f$ is smooth on $\mathcal{X}\setminus Z$ and the fiber $Z_s$ over any $s \in S$ satisfies ${\rm codim}_{\mathcal{X}_s}(Z_{s})\geq 2$.  Let $\mathscr{F}$ be a coherent sheaf on $X$ such that $\mathscr{F}|_{\mathcal{X}\setminus Z}$ is an invertible sheaf on $\mathcal{X}\setminus Z$. 
Let $H$ be an $f$-ample line bundle on $\mathcal{X}$. 

Then there exist finitely many distinct polynomials $p_1,\,\cdots,\,p_l$ with corresponding locally closed subschemes $S_1,\,\cdots,\,S_l$ of $S$ satisfying the following. 
\begin{itemize}
    \item $S=\sqcup_{i=1}^lS_i $ set-theoretically,
    \item
     for each $1 \leq i \leq l$, there exists the universal hull $\mathscr{F}_{S_i}^{[**]}$ of $\mathscr{F}_{S_i}$ such that the Hilbert polynomial of $\mathscr{F}_{s}^{[**]}$ with respect to $H$ is $p_{i}$ for all $s\in S_i$, and
    \item for any morphism $g \colon T\to S$ from a locally Noetherian scheme $T$, if $\mathscr{F}_T$ has a universal hull $\mathscr{F}_T^{[**]}$ such that all fibers $\mathscr{F}_t^{[**]}$ have the same Hilbert polynomial $p$ with respect to $H_{t}$, then $p=p_i$ and $g$ factors through $S_i$ for some $i$.
\end{itemize}
 \end{thm}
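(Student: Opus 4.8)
The plan is to identify the asserted stratification with the flattening decomposition attached to the functor of universal hulls, in the spirit of \cite[Chapter 9]{kollar-moduli}. For a polynomial $q$, let $\underline{S}_q$ be the subfunctor of $\mathrm{Hom}(\bullet,S)$ on $\mathbf{Sch}_{/S}$ sending $g\colon T\to S$ to a one-point set if $\mathscr{F}_T$ admits a universal hull all of whose fibres have Hilbert polynomial $q$ with respect to $H$, and to the empty set otherwise. I claim that, once one knows that each $\underline{S}_q$ is represented by a locally closed subscheme $S_q\hookrightarrow S$ and that only finitely many $S_q$ are nonempty, the three bullet points follow formally. Over a field the $S_2$-hull $\mathscr{F}_s^{[**]}=\mathcal{H}om(\mathcal{H}om(\mathscr{F}_s,\mathcal{O}_{\mathcal{X}_s}),\mathcal{O}_{\mathcal{X}_s})$ always exists, is the unique universal hull over $\mathrm{Spec}\,\kappa(s)$, and has a well-defined Hilbert polynomial; hence each point of $S$ lies in exactly one $S_q$, so the $S_q$ are pairwise disjoint and cover $S$ set-theoretically, which gives the first two bullets. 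For the third, in characteristic zero the $S_2$-hull is compatible with the base change $\kappa(t)/\kappa(g(t))$ (as $\mathcal{X}_{g(t)}$ is geometrically normal), so a $T$-point of any $\underline{S}_q$ has, fibrewise over $t\in T$, $q$ equal to the Hilbert polynomial of $\mathscr{F}_{g(t)}^{[**]}$, which is one of the $p_i$; then $g$ factors through $S_{p_i}$ by the defining property of $S_{p_i}$. Thus everything reduces to (a) representability of $\underline{S}_q$ by a locally closed subscheme, and (b) finiteness of $\{q\mid S_q\neq\emptyset\}$.

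For (a), the strategy is the standard one for flattening subfunctors. Boundedness of the possible hulls of Hilbert polynomial $q$ (cf.\ \cite[Chapter 9]{kollar-moduli}) lets me fix $N=N(q)\gg0$ so that every such hull is a quotient of a fixed twist of $\mathscr{F}$ (or, since $\mathscr{F}$ is invertible away from the relative codimension-two locus $Z$, the hull is a divisorial sheaf of rank one and one may work with quotients of $\mathcal{O}_{\mathcal{X}}$), thereby embedding the moduli of hulls into a relative Quot scheme $\mathrm{Quot}^q\to S$ of finite type over $S$ (cf.\ \cite[\S5]{FGA}). Inside $\mathrm{Quot}^q$ one imposes the two conditions singling out hulls among husks: that the natural map from $\mathscr{F}$ to the universal quotient be an isomorphism on each geometric fibre outside $Z$, and that the universal quotient be fibrewise $S_2$. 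Both are locally closed conditions in families --- the former by constructibility of the locus where a morphism of coherent sheaves is an isomorphism, the latter by the standard description of the $S_2$-locus in a family (cf.\ \cite[Chapter 9]{kollar-moduli}) --- so they cut out a locally closed subscheme $Q_q^\circ\subseteq\mathrm{Quot}^q$ carrying the universal hull. Since a universal hull is unique up to unique isomorphism and this uniqueness persists over Artinian thickenings, $Q_q^\circ\to S$ is a monomorphism, locally of finite type; the remaining point is to conclude that it is in fact a locally closed immersion and represents $\underline{S}_q$, with $S_q:=Q_q^\circ$.

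For (b), granting (a), I would argue by Noetherian induction on $S$ (of finite type over $\mathbbm{k}$). For an irreducible component of $S_{\mathrm{red}}$ with generic point $\eta$, we have $\eta\in S_{q_\eta}$, where $q_\eta$ is the Hilbert polynomial of $\mathscr{F}_\eta^{[**]}$ by the discussion in the first paragraph; a locally closed subscheme containing the generic point of that component contains a dense open $U$ of it, so $U\subseteq S_{q_\eta}$. Replacing $S$ by $(S\setminus U)_{\mathrm{red}}$ and iterating terminates by noetherianity, and produces the finitely many $p_1,\dots,p_l$ together with $S_i:=S_{p_i}$.

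The hard part will be the last step of the second paragraph, namely upgrading the statement that $Q_q^\circ\to S$ is a monomorphism of finite type to the statement that it is a locally closed immersion: a monomorphism of finite type need not be an immersion in general, so one must genuinely exploit the geometry of hulls --- their uniqueness together with the base-change compatibility built into the definition of a universal hull --- to control the infinitesimal and specialization behaviour of $Q_q^\circ\to S$. This is exactly the content of \cite[Theorem 9.40]{kollar-moduli}, and the proof above is an application of that theorem (and its proof) to our setting, in which $\mathscr{F}$ is a rank-one sheaf invertible in codimension one relative to $S$. The ancillary points --- that being fibrewise $S_2$ and being an isomorphism in codimension one on fibres are locally closed conditions in families, and the boundedness and uniform-regularity inputs feeding the arguments above --- are by now standard.
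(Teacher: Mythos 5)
Your overall architecture (stratify $S$ by the Hilbert polynomial of the fibrewise hull, represent each stratum by a locally closed subscheme, finish by Noetherian induction) matches the paper's, but the one step you yourself flag as ``the hard part'' --- upgrading the finite-type monomorphism $Q_q^\circ\to S$ to a locally closed immersion --- is precisely where the content of the theorem lives, and you resolve it only by appealing to \cite[Theorem 9.40]{kollar-moduli}, which is the statement being proved. That is circular as a proof. Moreover, the route through a Quot scheme of quotients of a fixed twist of $\mathscr{F}$ with locally closed ``hull'' conditions does not by itself give you any properness, so there is no mechanism in your argument forcing the image of $Q_q^\circ$ to be locally closed rather than merely a constructible monomorphic image.

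The idea you are missing is to work with \emph{quotient husks} rather than hulls. The paper uses Koll\'ar's result that $\mathrm{QHusk}_{p}(\mathscr{F})\to S$ is a \emph{proper} algebraic space (husks are allowed to degenerate, hulls are not), together with the constructibility and upper semi-continuity (for the lexicographic order) of $s\mapsto\chi(\mathscr{F}_s^{[**]}(mH_s))$ from \cite[Theorem 9.36]{kollar-moduli}. One then peels off the stratum attached to the lexicographically \emph{maximal} polynomial $p_1$: by \cite[(15.5)]{Ko}, any quotient husk whose fibres have Hilbert polynomial $p_1$ must already be the universal hull, so $\mathrm{QHusk}_{p_1}(\mathscr{F})\to S$ is a quasi-finite, hence finite, proper monomorphism, and therefore a closed immersion; set $S_1:=\mathrm{QHusk}_{p_1}(\mathscr{F})$ and repeat on $S\setminus S_1$. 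It is this interplay between the properness of the husk space and maximality of the polynomial that converts ``monomorphism'' into ``immersion''; your generic-point induction in part (b) is fine as far as it goes, but it presupposes part (a) and cannot substitute for this argument.
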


The following result was proved by Hassett--Kov\'{a}cs \cite[3.11]{HK} when the fibers are Cohen-Macaulay, and Koll\'ar \cite[Proposition 9.42]{kollar-moduli} proved a more general statement. Thus, we omit the proof. 

\begin{cor}\label{cor--hako}
Let $f \colon \mathcal{X}\to S$, $\mathscr{F}$, and $H$ be as in Theorem \ref{hullsdecomp}. For any line bundle $L$ on $\mathcal{X}$, there exists a locally closed subscheme $S^u\subset S$ such that a morphism $g \colon T\to S$ from a locally Noetherian scheme $T$ factors through $S^{u} \hookrightarrow S$ if and only if the universal hull $\mathscr{F}_T^{[**]}$ exists and $L_T\otimes f_T^*M\cong \mathscr{F}_T^{[**]}$ for some line bundle $M$ on $T$.
\end{cor}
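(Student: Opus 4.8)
The plan is to reduce to $L=\mathcal{O}_{\mathcal{X}}$, then invoke Theorem \ref{hullsdecomp} to work one stratum at a time, and finally cut out the desired locus as an equalizer inside a relative Picard scheme. \emph{Reduction.} First I would replace $\mathscr{F}$ by $\mathscr{F}\otimes L^{-1}$; this is harmless because $(\mathscr{F}\otimes L^{-1})|_{\mathcal{X}\setminus Z}$ is again invertible and, using the projection formula together with $\mathscr{F}^{[**]}=j_*(\mathscr{F}|_{\mathcal{X}\setminus Z})$, one gets $(\mathscr{F}\otimes L^{-1})_T^{[**]}=\mathscr{F}_T^{[**]}\otimes L_T^{-1}$ whenever either side exists, compatibly with further base change. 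Thus I may assume $L=\mathcal{O}_{\mathcal{X}}$ and must find a locally closed $S^u\subset S$ such that $g\colon T\to S$ factors through $S^u$ iff $\mathscr{F}_T^{[**]}$ exists and $\mathscr{F}_T^{[**]}\cong f_T^*M$ for some line bundle $M$ on $T$. Applying Theorem \ref{hullsdecomp} to $\mathscr{F}$ and $H$ gives a locally closed decomposition $S=\sqcup_{i=1}^{l}S_i$ with universal hulls $\mathscr{G}^{(i)}:=\mathscr{F}_{S_i}^{[**]}$, and by its third bullet $\mathscr{F}_T^{[**]}$ exists precisely when $T$ decomposes into open--closed pieces each mapping into some $S_i$, in which case $\mathscr{F}_T^{[**]}$ is the base change of the corresponding $\mathscr{G}^{(i)}$. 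Hence it suffices to build, for each $i$, a locally closed $S_i^u\subset S_i$ such that for all $g\colon T\to S_i$ one has $(\mathscr{G}^{(i)})_T\cong f_T^*M$ for some $M$ iff $g$ factors through $S_i^u$; then $S^u:=\sqcup_iS_i^u$ works, the ``only if'' direction following by passing to connected components of $T$ and applying Theorem \ref{hullsdecomp} again.

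\emph{Construction of $S_i^u$.} Fix $i$ and set $S'=S_i$, $\mathscr{G}=\mathscr{G}^{(i)}$, a sheaf on $\mathcal{X}_{S'}$ that is flat over $S'$ and invertible off $Z_{S'}$. I would first locate the locus over which $\mathscr{G}$ becomes a line bundle after base change. The locus $U\subset\mathcal{X}_{S'}$ where $\mathscr{G}$ is locally free is open, and since $\mathscr{G}$ is flat over $S'$, the standard lemma ``flat over the base plus locally free on a fibre $\Rightarrow$ locally free'' shows that $U\cap\mathcal{X}_s$ is exactly the locus where $\mathscr{G}_s$ is locally free, a description stable under arbitrary base change. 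Therefore $\mathscr{G}_T$ is a line bundle iff $g(T)$ avoids the closed set $f_{S'}(\mathcal{X}_{S'}\setminus U)$ (closed because $f_{S'}$ is proper), i.e.\ iff $g$ factors through the open subscheme $W:=S'\setminus f_{S'}(\mathcal{X}_{S'}\setminus U)$. Over $W$ the line bundle $\mathscr{G}_W$ gives an element of $\mathfrak{Pic}_{\mathcal{X}_W/W}(W)$, hence a section $\sigma\colon W\to P:=\mathrm{Pic}_{\mathcal{X}_W/W}$ of the structure morphism; by Subsection \ref{subsec-Hilb}, $P$ exists and is separated over $W$ because $f_W$ is flat and proper with geometrically normal connected fibres. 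Let $e\colon W\to P$ be the identity section. The injection $\mathfrak{Pic}_{\mathcal{X}_W/W}\hookrightarrow\mathrm{Hom}_W(-,P)$ sends $[\mathscr{G}_T]$ and $[\mathcal{O}_{\mathcal{X}_T}]$ to $\sigma\circ g$ and $e\circ g$ by functoriality, and since $[\mathcal{O}_{\mathcal{X}_T}]$ is precisely the relative-linear-equivalence class consisting of all the $f_T^*M$, the condition ``$\mathscr{G}_T\cong f_T^*M$ for some $M$'' is equivalent to ``$\sigma\circ g=e\circ g$''. As $P\to W$ is separated, the equalizer $S_i^u$ of $\sigma$ and $e$ is a closed subscheme of $W$, hence locally closed in $S'$, and has exactly the required universal property.

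\emph{Main obstacle.} The step needing genuine care is the base-change analysis producing $W$: to pin down the correct scheme structure on $W$ and its universal property for non-reduced test schemes $T$, I must argue via flatness and Nakayama rather than merely topologically. Likewise, the translation ``$\mathscr{G}_T\cong f_T^*M$ for some $M$'' $\Leftrightarrow$ ``$\sigma\circ g=e\circ g$'' rests on the injectivity $\mathfrak{Pic}_{\mathcal{X}_W/W}\hookrightarrow\mathrm{Hom}_W(-,\mathrm{Pic})$ from Subsection \ref{subsec-Hilb}, which holds here only because the fibres of $f$ are geometrically normal and connected; this is the conceptual crux. The remaining bookkeeping -- assembling the $S_i^u$ into $S^u$ and verifying the equivalence for possibly disconnected $T$ -- is routine on top of Theorem \ref{hullsdecomp}.
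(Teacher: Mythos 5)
Your core construction is the paper's: stratify $S$ by Theorem \ref{hullsdecomp}, pass to the open locus where the universal hull is invertible, and then cut out the locus of relative linear equivalence with $L$ (your equalizer of the two sections $\sigma,e$ of the separated Picard scheme is the same device as the paper's appeal to the separatedness of $\mathrm{Pic}$ and the proof of \cite[Proposition 5.1]{GIT}); the preliminary twist by $L^{-1}$ is a harmless and valid normalization, since the fibrewise $S_2$-hull and the universal hull both commute with tensoring by a line bundle on $\mathcal{X}$. The one step that does not work as written is the final assembly: you declare $S^u:=\sqcup_i S_i^u$ and call it a locally closed subscheme of $S$, but a disjoint union of locally closed subschemes of the various strata is in general \emph{not} locally closed in $S$ (e.g.\ a closed curve minus a point inside an open stratum together with a point of a deeper stratum), and the corollary demands a single locally closed $S^u\subset S$. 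Nothing in your argument as stated rules this pathology out.

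The fix is exactly the paper's opening reduction, which you omit: if $\mathscr{F}_T^{[**]}\cong L_T\otimes f_T^*M$ (equivalently, after your twist, $\cong f_T^*M$), then for every $t\in T$ the Hilbert polynomial of $\mathscr{F}_t^{[**]}$ with respect to $H$ equals $\chi(L_t(mH_t))$, because $M_t$ is trivial on the fibre. Since $t\mapsto\chi(L_t(mH_t))$ is locally constant on $S$, one may first replace $S$ by a connected component so that this polynomial is a single $p$; then only the unique stratum $S_i$ with $p_i=p$ can meet your condition, all other $S_j^u$ are empty, and $S^u$ sits inside one stratum per component, hence is genuinely locally closed. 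You should state this Hilbert-polynomial comparison explicitly (it also tightens your parenthetical claim that $T$ ``decomposes into open--closed pieces each mapping into some $S_i$'', which rests on the same local constancy of the fibrewise Hilbert polynomial of the flat sheaf $\mathscr{F}_T^{[**]}$). With that observation inserted, your proof is complete and essentially coincides with the paper's.
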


From now on, we deal with the relative dualizing sheaf. 
Let $f \colon \mathcal{X} \to S$ be a flat projective surjective morphism of schemes of finite type over $\mathbbm{k}$ whose geometric fibers are normal and connected, and let $U \subset \mathcal{X}$ be the largest open subscheme such that $f$ is smooth at every point of $U$. 
Let $\omega_{\mathcal{X}/S}$ be the relative dualizing sheaf. Then $\omega_{\mathcal{X}/S}^{\otimes m}$ is a coherent sheaf and $\omega_{\mathcal{X}/S}^{\otimes m}|_{U}$ is an invertible sheaf for every $m\in\mathbb{Z}$. 
Hence, we may use the previous results to $\omega_{\mathcal{X}/S}^{\otimes m}$. 
For each $m\in\mathbb{Z}$, if the universal hull of $\omega_{\mathcal{X}/S}^{\otimes m}$ exists, then $\omega_{\mathcal{X}/S}^{[m]}$ denotes the (universal) hull. We also have  $\omega_{\mathcal{X}_T/T}^{[m]}=(h\times_{S}\mathrm{id}_{\mathcal{X}})^*\omega_{\mathcal{X}/S}^{[m]}$ for every morphism $h \colon T\to S$ since $\omega_{U/S}=\omega_{\mathcal{X}/S}|_{U}$ is a line bundle that commutes with the base change. 

\begin{defn}[$\mathbb{Q}$-Gorenstein family, log $\mathbb{Q}$-Gorenstein family]\label{defn-q-gor}
Let $f \colon \mathcal{X} \to S$ be a flat projective surjective morphism of schemes of finite type over $\mathbbm{k}$ whose geometric fibers are normal and connected. 
We say that $f \colon \mathcal{X}\to S$ is a {\it $\mathbb{Q}$-Gorenstein family} over $S$ if there exists $m\in\mathbb{Z}_{>0}$ such that $\omega_{\mathcal{X}/S}^{[m]}$ exists as a line bundle. 

For any $f \colon \mathcal{X} \to S$ as above, if $S$ is normal, then $\mathcal{X}$ is also normal, $\omega_{\mathcal{X}/S}$ is reflexive and $\omega_{\mathcal{X}/S} = \mathcal{O}_{\mathcal{X}}(K_{\mathcal{X}/S})$ for some Weil divisor $K_{\mathcal{X}/S}$ on $\mathcal{X}$ (cf.~\cite[Prop.~A10]{PSZ}, \cite[\S2]{CP}). 

Let $f \colon \mathcal{X} \to S$ be as above. 
Suppose that $S$ is normal. 
Let $\Delta$ be an effective $\mathbb{Q}$-divisor on $\mathcal{X}$ such that the support of $\Delta$ contains no fiber of $f$. 
We say that $f \colon (\mathcal{X},\Delta)\to S$ is a {\it log $\mathbb{Q}$-Gorenstein family} if $K_{\mathcal{X}/S}+\Delta$ is $\mathbb{Q}$-Cartier. 
\end{defn}

\begin{rem}
Let $f \colon \mathcal{X} \to S$ be as above. 
Suppose that $S$ is normal. 
\begin{itemize}
\item
Let $U\subset\mathcal{X}$ be the open locus on which $f$ is smooth. 
If $f \colon (\mathcal{X},\Delta)\to S$ is a log $\mathbb{Q}$-Gorenstein family, then $\omega_{\mathcal{X}/S}|_U=\omega_{U/S}$ is an invertible sheaf (cf.~\cite[0E9Z]{stacksproject-chap98}), and thus $\Delta|_U$ is $\mathbb{Q}$-Cartier. 
 For any morphism $h \colon T\to S$ from a normal variety $T$ and the induced morphism $\sigma \colon \mathcal{X}_T \to \mathcal{X}$, we define $\Delta_T$ as a unique extension of $\sigma^*(\Delta|_U)$ on $U\times_ST$. 
 Then we can check that \[
 K_{\mathcal{X}_T/T}+\Delta_T=\sigma^*(K_{\mathcal{X}/S}+\Delta)
 \]
 by applying \cite[Thm 3.6.1]{Con} to $U\times_ST\to U$. 
 See also \cite[\S2]{CP}. 
\item
Let $\mathcal{D}$ be an effective Weil divisor on $\mathcal{X}$ such that $\mathcal{D}$ is flat over $S$ as a scheme and it has only geometrically integral fibers over $S$. 
 Then the scheme-theoretic fiber $\mathcal{D}_s$ for any $s\in S$ is also a Weil divisor, $\mathcal{O}_{\mathcal{X}}(-\mathcal{D})$ is also flat and the restriction $\mathcal{O}_{\mathcal{X}}(-\mathcal{D})|_{U}$ is locally free by \cite[Lemma 2.1.7]{huybrechts2010geometry}.
\item
Let $\Delta$ be an effective $\mathbb{Q}$-divisor on $\mathcal{X}$ such that the support of $\Delta$ contains no fiber of $f$. 
Here, we do not assume that $f \colon (\mathcal{X},\Delta)\to S$ is a log $\mathbb{Q}$-Gorenstein family. 
Let $j \colon U \hookrightarrow \mathcal{X}$ be the open immersion. 
We fix $m\in\mathbb{Z}_{>0}$ such that $m\Delta$ is a Weil divisor on $\mathcal{X}$. 
If a universal hull of $\mathcal{O}_{\mathcal{X}}(m(K_{\mathcal{X}/S}+\Delta))$ exists, then the $S_2$ condition of $\mathcal{O}_{\mathcal{X}}(m(K_{\mathcal{X}/S}+\Delta))$ implies
 $$\mathcal{O}_{\mathcal{X}}(m(K_{\mathcal{X}/S}+\Delta))=j_*\mathcal{O}_{U}(m(K_{U/S}+\Delta|_U))=\mathcal{O}_{\mathcal{X}}(m(K_{\mathcal{X}/S}+\Delta))^{[**]}.$$  
Moreover, if any irreducible component of $\Delta$ is flat over $S$ as a reduced scheme and it has only geometrically integral fibers over $S$, then $\mathcal{O}_{\mathcal{X}}(m(K_{\mathcal{X}/S}+\Delta))|_U$ is locally free. Then, we can apply  Corollary \ref{cor--hako} to $\mathcal{O}_{\mathcal{X}}(m(K_{\mathcal{X}/S}+\Delta))$ and any line bundle $L$ on $\mathcal{X}$ to construct a locally closed subscheme $S^{u}\subset S$ satisfying the property of Corollary \ref{cor--hako}.
\end{itemize}
\end{rem}

 \subsection{K-stability}\label{seckstdef}
 In this subsection, we collect some definitions and known results on K-stability.
 
A {\em polarized variety} $(X,L)$ consists of a proper normal variety $X$ and an ample $\mathbb{Q}$-line bundle $L$ on it. 
The notation of polarized varieties and subpairs are the same, however, we adopt these notations because both are standard. 
We will mainly deal with subpairs in \S\ref{Bousec}, \S\ref{toolsec}, and \S\ref{seckst}, whereas we will deal with polarized varieties in \S\ref{Consec}. 

Let $\Delta$ be a $\mathbb{Q}$-divisor such that $(X,\Delta)$ is a pair. We call $(X,\Delta,L)$ a {\em polarized pair}. We denote the algebraic group 
$$\{g\in\mathrm{Aut}(X)\,|\,g_*\Delta=\Delta, \,g^*L\sim_{\mathbb{Q}} L\}$$
 by $\mathrm{Aut}(X,\Delta,L)$. 
This is a closed subscheme of $\{g\in\mathrm{Aut}(X)\,|\,g^*L\sim_{\mathbb{Q}} L\}$, which is a group scheme of finite type over $\mathbbm{k}$ since $\chi(X,L^{\otimes m}\otimes g^*L^{\otimes n})=\chi(X,L^{\otimes m+n})$ for every sufficiently divisible $m$ and $n\in\mathbb{Z}_{>0}$ (see \cite[\S 5.6]{FGA}). 
Hence, the above algebraic group is also of finite type over $\mathbbm{k}$. 
We can check that $\mathrm{Aut}(X,\Delta,L)$ is a linear algebraic group.
 Indeed, for any sufficiently divisible $m\in\mathbb{Z}_{>0}$, since there exists a well-defined closed immersion $G_m\hookrightarrow PGL(h^0(X,L^{\otimes m}))$, the group scheme
 $$G_m:=\{g\in\mathrm{Aut}(X)\,|\,g_*\Delta=\Delta, \,g^*L^{\otimes m}\sim L^{\otimes m}\}$$ is affine.  
 Since $\mathrm{Aut}(X,\Delta,L)$ is an algebraic group and $$\mathrm{Aut}(X,\Delta,L)=\bigcup_{m:\,\text{sufficiently divisible}}G_m$$
as sets, we have $\mathrm{Aut}(X,\Delta,L)=G_m$ for some $m$.
 Hence, $\mathrm{Aut}(X,\Delta,L)$ is affine.

We say that $f \colon (X,\Delta,A)\to C$ is a {\it polarized klt-trivial fibration over a curve} if $f \colon (X,\Delta)\to C$ is a klt-trivial fibration over a proper curve and $A$ is an $f$-ample $\mathbb{Q}$-line bundle on $X$. 

We give the following ad hoc definition of uniform adiabatic K-stability of $f$.

\begin{defn}[Uniform adiabatic K-stability]\label{unifdef}
A polarized klt-trivial fibration over a curve $f \colon (X,\Delta,A)\to C$ is called {\it uniformly adiabatically K-stable} if one of the following hold.
\begin{itemize}
    \item $K_X+\Delta\sim_{\mathbb{Q}}f^*(K_C+B_C+M_C)$ is nef, or
    \item $C=\mathbb{P}^1$, $K_X+\Delta\sim_{\mathbb{Q}}uf^*(\mathcal{O}(1))$ for some $u<0$, and $\max_{p\in\mathbb{P}^1}\mathrm{ord}_p(B_C)<1+\frac{u}{2}$, where $B_C$ is the discriminant $\mathbb{Q}$-divisor with respect to $f$.
\end{itemize}
Here, $B_{C}$ and $M_{C}$ are $\mathbb{Q}$-divisors defined in Definition \ref{defn--klttrivialfib}. 
\end{defn}
 
We note that the uniform adiabatic K-stability is a condition of $(C,B_C,M_C)$, which we call a {\it log-twisted pair}, rather than $f$. 

Next, we recall the definition of K-stability but we do not need it except in \S\ref{seckst}.

\begin{defn}[K-stability]
Let $(X,\Delta,L)$ be a polarized log pair of dimension $d$. 
$\pi \colon (\mathcal{X},\mathcal{L})\to\mathbb{A}^1$ 
is called a {\em (semi)ample test configuration} if the following hold.
\begin{itemize}
\item $\pi\colon\mathcal{X}\to\mathbb{A}^1$ is a proper and flat morphism of schemes.
\item $\mathcal{L}$ is a (semi)ample $\mathbb{Q}$-line bundle on $\mathcal{X}$.
\item $\mathbb{G}_m$ acts on $(\mathcal{X},\mathcal{L})$ so that $\pi$ is $\mathbb{G}_m$-equivariant where $\mathbb{G}_m$ acts on $\mathbb{A}^1$ by multiplication.
\item $(\pi^{-1}(1),\mathcal{L}|_{\pi^{-1}(1)})\cong(X,L)$.
\end{itemize}
We will denote $\pi \colon (\mathcal{X},\mathcal{L})\to\mathbb{A}^1$ by $(\mathcal{X},\mathcal{L})$ for simplicity. In this paper, we only treat test configurations $(\mathcal{X},\mathcal{L})$ such that $\mathcal{X}$ is normal. 
A test configuration $(\mathcal{X},\mathcal{L})$ is {\em trivial} if $\mathcal{X}$ is $\mathbb{G}_m$-equivariantly isomorphic to $X\times\mathbb{A}^1$ and we denote $\mathcal{X}$ by $X_{\mathbb{A}^1}$ in this case.
Let $p\colon X_{\mathbb{A}^1}\to X$ be the canonical projection.
It is well-known that for any semiample test configuration $(\mathcal{X},\mathcal{L})$, there is a normal semiample test configuration $(\mathcal{Y},\sigma^*\mathcal{L})$ together with two $\mathbb{G}_m$-equivariant morphisms $\sigma \colon \mathcal{Y}\to\mathcal{X}$ and $\rho \colon \mathcal{Y}\to X_{\mathbb{A}^1}$ that are the identity morphisms over $\mathbb{A}^1\setminus\{0\}$. 
Let $H$ be an $\mathbb{R}$-line bundle on $X$ and $\mathcal{D}$ be the closure of $\Delta \times\mathbb{G}_m\subset \mathcal{X}$. 
Then we define the non-Archimedean Mabuchi functional and the non-Archimedean J$^H$-functional by
\begin{equation*}
\begin{split}
M^\mathrm{NA}_{\Delta}(\mathcal{X},\mathcal{L}):=&(K_{\overline{\mathcal{X}}/\mathbb{P}^1}+\overline{\mathcal{D}}+\mathcal{X}_{0,\mathrm{red}}-\mathcal{X}_0)\cdot\overline{\mathcal{L}}^d-\frac{d(K_{X}+\Delta)\cdot L^{d-1}}{(d+1)L^d}\cdot\overline{\mathcal{L}}^{d+1},\quad {\rm and}\\
(\mathcal{J}^H)^\mathrm{NA}(\mathcal{X},\mathcal{L}):=&(p\circ\rho)^*H\cdot\sigma^*\overline{\mathcal{L}}^d-\frac{dH\cdot L^{d-1}}{(d+1)L^d}\cdot\overline{\mathcal{L}}^{d+1}.
\end{split}
\end{equation*}
Here the overline denotes the canonical compactification (cf.~\cite[\S3, \S7]{BHJ}). It is easy to see that $M^\mathrm{NA}_{\Delta}(\mathcal{X},\mathcal{L})=M^\mathrm{NA}_{\Delta}(\mathcal{Y},\sigma^*\mathcal{L})$ and $(\mathcal{J}^H)^\mathrm{NA}(\mathcal{X},\mathcal{L})=(\mathcal{J}^H)^\mathrm{NA}(\mathcal{Y},\sigma^*\mathcal{L})$. 
Hence, the functionals are well-defined. 
We say that $(X,B,L)$ is {\em uniformly K-stable} (resp.~$(X,L)$ is {\em uniformly {\rm J$^H$}-stable}) if there exists a positive constant $\epsilon>0$ such that 
\[
M^\mathrm{NA}_{\Delta}(\mathcal{X},\mathcal{L})\ge\epsilon (\mathcal{J}^L)^\mathrm{NA}(\mathcal{X},\mathcal{L}),\quad (\mathrm{resp.}\, (\mathcal{J}^H)^\mathrm{NA}(\mathcal{X},\mathcal{L})\ge\epsilon (\mathcal{J}^L)^\mathrm{NA}(\mathcal{X},\mathcal{L}))
\]
for any normal semiample test configuration.

We note that $(\mathcal{J}^L)^\mathrm{NA}(\mathcal{X},\mathcal{L})\ge 0$, and $(\mathcal{J}^L)^\mathrm{NA}(\mathcal{X},\mathcal{L})=0$ if and only if $(\mathcal{X},\mathcal{L})$ is trivial for any ample normal test configuration (cf.~\cite[Proposition 7.8]{BHJ}).
In \cite{BHJ}, $(\mathcal{J}^L)^\mathrm{NA}$ is introduced and denoted by $I^\mathrm{NA}-J^\mathrm{NA}$. 
This coincides with the minimum norm independently introduced in \cite{De2}.
\end{defn}

\begin{defn}
Let $(X,\Delta,L)$ be a klt polarized pair. 
Let $r$ be a positive integer such that $rL$ is a line bundle. 
For any $m\in\mathbb{Z}_{>0}$, a $\mathbb{Q}$-divisor $D_{mr}$ is called a {\em $mr$-basis type divisor} of $L$ if $D_{mr}=\frac{1}{mrh^0(X,\mathcal{O}_X(mrL))}\sum_{i=1}^{h^0(X,\mathcal{O}_X(mrL))}E_i$ such that $E_i$'s form a basis of $H^0(X,\mathcal{O}_X(mrL))$.
We define $\delta_{mr}$ and $\delta$-invariants as follows (cf.~\cite{FO}, \cite{BlJ}).
\begin{align*} 
\delta_{mr,(X,\Delta)}(L)&:=\inf_{D_{mr}}\mathrm{lct}(X,\Delta;D_{mr}),\\\delta_{(X,\Delta)}(L)&:=\lim_{m\to\infty}\delta_{mr,(X,\Delta)}(L),
\end{align*}
where $D_{mr}$ runs over all $mr$-basis type divisors. 
By \cite{BlJ}, the above limit exists. 
\end{defn}

For any prime divisor $F$ over $X$ with a projective birational morphism $\pi \colon Y\to X$ such that $F$ appears as a prime divisor on $Y$, we define
\begin{align*}
S_{L}(F):=\frac{1}{L^n}\int_0^\infty\mathrm{vol}(L-tF)dt,
\end{align*}
where $\mathrm{vol}(L-tF)$ denotes $\mathrm{vol}(\pi^*L-tF)$ by the abuse of notations. 
We set $$S_{mr,L}(F):=\max_{D_{mr}}\mathrm{ord}_F(D_{mr})=\frac{\sum_{i\ge1}h^0(Y,\mathcal{O}_Y(mr\pi^*L-iF))}{mrh^0(X,\mathcal{O}_X(mrL))},$$ where $D_{mr}$ runs over all $mr$-basis type divisors (cf.~\cite[Lemma 2.2]{FO}). 
It is well-known (cf.~\cite[Lemma 2.9]{BlJ}) that 
$$\lim_{m\to\infty}S_{mr,L}(F)=S_L(F).$$ 
Furthermore, we have
\[
\delta_{(X,\Delta)}(L)=\inf_F\frac{A_{(X,\Delta)}(F)}{S_L(F)}
\]
 by \cite{BlJ}, where $F$ runs over all prime divisors over $X$. 
 
\begin{defn}[$\alpha$-invariant] 
Let $(X,\Delta,L)$ be a klt polarized pair. 
We define the {\em $\alpha$-invariant}, denoted by $\alpha_{(X,\Delta)}(L)$, by
\begin{equation*}
\begin{split}
\alpha_{(X,\Delta)}(L):=&\inf\{\mathrm{lct}(X,\Delta;D)\,|\, D \in |L|_{\mathbb{Q}}\}\\
=&\inf\{\mathrm{lct}(X,\Delta;D)\,|\, D \in |L|_{\mathbb{R}}\}.
\end{split}
\end{equation*}
\end{defn}
This notion was introduced by Tian \cite{T3} (and restated in \cite{T2}) to obtain a sufficiency condition for the existence of K\"{a}hler--Einstein metrics on Fano manifolds.
 
The following fact is well-known. 
 \begin{lem}[cf.~{\cite[Theorem 9.14]{BHJ}}, {\cite[Proposition 2.1, Lemma 2.2]{Fjtb}}]\label{lem-delta-alpha}
 Let $(X,\Delta,L)$ be a $d$-dimensional klt polarized pair. Then
\[
0<\frac{d+1}{d}\alpha_{(X,\Delta)}(L)\le \delta_{(X,\Delta)}(L)\le (d+1)\alpha_{(X,\Delta)}(L).
\]
\end{lem}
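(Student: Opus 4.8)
The plan is to reduce both inequalities, together with the positivity, to two facts about a single prime divisor $F$ over $X$: a valuative formula for the $\alpha$-invariant, and a sharp comparison of $S_L(F)$ with the pseudoeffective threshold of $F$. Throughout, $d=\dim X$.

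First I would establish the valuative formula $\alpha_{(X,\Delta)}(L)=\inf_E A_{(X,\Delta)}(E)/T_L(E)$, where $E$ runs over all prime divisors over $X$, $\pi\colon Y\to X$ is a model on which $E$ appears, and $T_L(E):=\sup\{t\ge 0\mid \mathrm{vol}(\pi^*L-tE)>0\}$ is the big threshold. The point is that, because $L$ is ample, $T_L(E)$ also equals the effective threshold $\sup\{t\mid \pi^*L-tE\sim_{\mathbb{R}}\text{(effective)}\}$: if $\pi^*L-tE$ is effective and $0\le s<t$, then $\pi^*L-sE=(1-\tfrac{s}{t})\pi^*L+\tfrac{s}{t}(\pi^*L-tE)$ is ample plus effective, hence big. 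Then, using the standard identity $\mathrm{lct}(X,\Delta;D)=\inf_E A_{(X,\Delta)}(E)/\mathrm{ord}_E(D)$ for a nonzero effective $\mathbb{Q}$-Cartier divisor $D$ and interchanging the two infima in $\alpha_{(X,\Delta)}(L)=\inf_{D\in|L|_{\mathbb{Q}}}\mathrm{lct}(X,\Delta;D)$, one gets $\alpha_{(X,\Delta)}(L)=\inf_E A_{(X,\Delta)}(E)/\sup_D\mathrm{ord}_E(D)=\inf_E A_{(X,\Delta)}(E)/T_L(E)$, the divisors $E$ with $T_L(E)=0$ contributing $+\infty$ and being negligible.

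Next I would interpret $S_L(F)$ as a barycenter coordinate. Fix $F\subset Y$ and an admissible flag $Y_\bullet$ on $Y$ whose first member is $F$, and let $\Delta\subset\mathbb{R}^d_{\ge 0}$ be the associated Okounkov body of $\pi^*L$. By the Lazarsfeld--Mustata slicing description of Okounkov bodies, $\mathrm{vol}_{\mathbb{R}^d}(\Delta\cap\{x_1\ge t\})=\tfrac{1}{d!}\,\mathrm{vol}(\pi^*L-tF)$ for all $t\ge 0$; in particular $\mathrm{vol}_{\mathbb{R}^d}(\Delta)=\tfrac{1}{d!}L^d$ and $\max_{x\in\Delta}x_1=T_L(F)$. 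Integrating in $t$ and using Fubini,
\[
S_L(F)=\frac{1}{L^d}\int_0^{\infty}\mathrm{vol}(\pi^*L-tF)\,dt=\frac{\int_\Delta x_1\,dx}{\mathrm{vol}_{\mathbb{R}^d}(\Delta)},
\]
the first coordinate of the barycenter of $\Delta$. Since $t\mapsto\mathrm{vol}_{\mathbb{R}^{d-1}}(\Delta\cap\{x_1=t\})^{1/(d-1)}$ is concave (Brunn--Minkowski), the classical sharp estimate for centroids of convex bodies, extremal for simplices, gives $\tfrac{T}{d+1}\le\bar x_1(K)\le\tfrac{d}{d+1}T$ for any convex body $K\subset\mathbb{R}^d$ whose $x_1$-range equals $[0,T]$. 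Applied to $\Delta$ this yields
\[
\tfrac{1}{d+1}\,T_L(F)\ \le\ S_L(F)\ \le\ \tfrac{d}{d+1}\,T_L(F)
\]
for every prime divisor $F$ over $X$.

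With these in hand the lemma follows formally from $\delta_{(X,\Delta)}(L)=\inf_F A_{(X,\Delta)}(F)/S_L(F)$: from $S_L(F)\le\tfrac{d}{d+1}T_L(F)$ we get $A_{(X,\Delta)}(F)/S_L(F)\ge\tfrac{d+1}{d}\,A_{(X,\Delta)}(F)/T_L(F)\ge\tfrac{d+1}{d}\alpha_{(X,\Delta)}(L)$, hence $\delta_{(X,\Delta)}(L)\ge\tfrac{d+1}{d}\alpha_{(X,\Delta)}(L)$; from $S_L(F)\ge\tfrac{1}{d+1}T_L(F)$ we get $A_{(X,\Delta)}(F)/S_L(F)\le(d+1)\,A_{(X,\Delta)}(F)/T_L(F)$, and taking the infimum over $F$ gives $\delta_{(X,\Delta)}(L)\le(d+1)\alpha_{(X,\Delta)}(L)$. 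Positivity is then automatic: $(X,\Delta)$ being klt forces $\delta_{(X,\Delta)}(L)>0$ (\cite{BlJ}), so $\alpha_{(X,\Delta)}(L)\ge\delta_{(X,\Delta)}(L)/(d+1)>0$. The step I expect to need the most care is the threshold bookkeeping in the valuative formula for $\alpha$, since the equality of effective and pseudoeffective thresholds is precisely where ampleness of $L$ enters and is what makes $\alpha$ computable from volumes; I would also invoke, rather than reprove, the sharp constant $\tfrac{d}{d+1}$ in the centroid inequality. I note finally that the lower bound $S_L(F)\ge\tfrac{1}{d+1}T_L(F)$ can also be obtained directly from concavity of $t\mapsto\mathrm{vol}(\pi^*L-tF)^{1/d}$ (a concave function lies above its chord), so the Okounkov-body input is really only needed for the sharp upper bound.
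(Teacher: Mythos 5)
Your argument is correct, and it is in substance the standard proof behind the references the paper cites for this lemma (\cite{BlJ}, \cite{Fjtb}, \cite{BHJ}): the paper itself gives no proof, and your route --- the valuative formula $\alpha_{(X,\Delta)}(L)=\inf_E A_{(X,\Delta)}(E)/T_L(E)$, the comparison $\tfrac{1}{d+1}T_L(F)\le S_L(F)\le \tfrac{d}{d+1}T_L(F)$ via the barycenter of the Okounkov body (sharp centroid bound), and $\delta_{(X,\Delta)}(L)=\inf_F A_{(X,\Delta)}(F)/S_L(F)$ --- is exactly how Blum--Jonsson and Fujita establish it, with your closing remark (concavity of $\mathrm{vol}^{1/d}$ suffices for the lower bound but not the upper) also accurate. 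The one point worth stating explicitly when you invoke the centroid inequality is that the $x_1$-range of the Okounkov body of $\pi^*L$ is exactly $[0,T_L(F)]$: the maximum equals $T_L(F)$ by the Lazarsfeld--Musta\c{t}\u{a} slicing formula, and the minimum is $0$ because for $m\gg0$ the pullbacks of general members of the base-point-free system $|mL|$ do not vanish along $F$, which is what makes the upper bound $S_L(F)\le\tfrac{d}{d+1}T_L(F)$ (and hence $\delta\le(d+1)\alpha$) legitimate.
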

\begin{ex}\label{ex--calculation}
When $X$ is a curve, we can easily compute $\delta_{(X,\Delta)}(L)$ as follows: Since every prime divisor over $X$ is a point $P\in X$, we have \[
S_L(P)=\frac{1}{\mathrm{deg}\,L}\int^{\mathrm{deg}\,L}_0(\mathrm{deg}\,L-t)dt=\frac{\mathrm{deg}\,L}{2}.\]
Thus, we have
\[
\delta_{(X,\Delta)}(L)=\frac{2}{\mathrm{deg}\,L}\inf_PA_{(X,\Delta)}(P)=2\frac{(1-\max_{P\in X}\mathrm{ord}_P(\Delta))}{\mathrm{deg}\,L}.
\]
In this case we have $$\alpha_{(X,\Delta)}(L)=\frac{1-\max_{P\in X}\mathrm{ord}_P(\Delta)}{\mathrm{deg}\,L}=\frac{1}{2}\delta_{(X,\Delta)}(L).$$
\end{ex}

The following notion will also be used in this paper.

\begin{defn}[Special K-stability, \cite{CM}]\label{defn-special}
We say that a klt polarized pair $(X,\Delta,L)$ is {\em specially K-stable} if $\delta_{(X,\Delta)}(L)L+K_X+\Delta$ is ample and $(X,L)$ is uniformly J$^{\delta_{(X,\Delta)}(L)L+K_X+\Delta}$-stable. 
\end{defn}
Note that the special K-stability depends only on the numerical class of $L$ since so do $\delta_{(X,\Delta)}(L)$ and the uniform J$^{\delta_{(X,\Delta)}(L)L+K_X+\Delta}$-stability.

By the following, we know that the special K-stability implies the uniform K-stability. 
\begin{thm}[{\cite[Corollary 3.21]{CM}}]\label{A.13}
Let $(X,\Delta,L)$ be a klt polarized variety and $(\mathcal{X},\mathcal{L})$ be a normal semiample test configuration for $(X,L)$. 
Then,
\[
M^{\mathrm{NA}}_{\Delta}(\mathcal{X},\mathcal{L})\ge (\mathcal{J}^{\delta_{(X,\Delta)}(L)L+K_X+\Delta})^{\mathrm{NA}}(\mathcal{X},\mathcal{L}).
\]
\end{thm}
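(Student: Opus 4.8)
The statement we must address is the inequality $M^{\mathrm{NA}}_{\Delta}(\mathcal{X},\mathcal{L})\ge (\mathcal{J}^{\delta_{(X,\Delta)}(L)L+K_X+\Delta})^{\mathrm{NA}}(\mathcal{X},\mathcal{L})$ for an arbitrary normal semiample test configuration. The plan is to reduce the Mabuchi functional to a combination of a J-type term and an ``entropy-like'' term governed by log discrepancies, and then to bound that term below using the very definition of the $\delta$-invariant. Write $\delta := \delta_{(X,\Delta)}(L)$ and $B := \delta L + K_X+\Delta$, and let $\mathcal{B}$ denote the $\mathbb{R}$-line bundle on $\mathcal{X}$ obtained by the same recipe used to build $(\mathcal{J}^{H})^{\mathrm{NA}}$ (i.e. pulling $B$ back via $p\circ\rho$). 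The first step is the algebraic identity
\begin{equation*}
M^{\mathrm{NA}}_{\Delta}(\mathcal{X},\mathcal{L}) = (\mathcal{J}^{B})^{\mathrm{NA}}(\mathcal{X},\mathcal{L}) + \Big(M^{\mathrm{NA}}_{\Delta}(\mathcal{X},\mathcal{L}) - (\mathcal{J}^{B})^{\mathrm{NA}}(\mathcal{X},\mathcal{L})\Big),
\end{equation*}
so that it suffices to show the bracketed difference is nonnegative. Expanding both non-Archimedean functionals from their definitions in the excerpt, the leading intersection-number terms against $\overline{\mathcal{L}}^{d+1}$ cancel by the choice $B = \delta L + K_X + \Delta$ (this is exactly why $\delta$ appears as the coefficient), and one is left with
\begin{equation*}
M^{\mathrm{NA}}_{\Delta}(\mathcal{X},\mathcal{L}) - (\mathcal{J}^{B})^{\mathrm{NA}}(\mathcal{X},\mathcal{L}) = \big(K_{\overline{\mathcal{X}}/\mathbb{P}^1}+\overline{\mathcal{D}}+\mathcal{X}_{0,\mathrm{red}}-\mathcal{X}_0 - \mathcal{B}\big)\cdot\overline{\mathcal{L}}^{d},
\end{equation*}
up to a harmless normalization; the residual term is (a multiple of) $\sum_{E}\big(A_{(X,\Delta)}(\mathrm{ord}_E) - \delta\, S_L(\mathrm{ord}_E)\big)\,\mathrm{something}\cdot\overline{\mathcal{L}}^d$ over the irreducible components $E$ of the central fiber. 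The key point is that the coefficient of each $E$ in the class $K_{\overline{\mathcal{X}}/\mathbb{P}^1}+\overline{\mathcal{D}}+\mathcal{X}_{0,\mathrm{red}}-\mathcal{X}_0$ is precisely $A_{(X,\Delta)}(\mathrm{ord}_E)$ (the log-discrepancy interpretation of the log-canonical-type term, as in \cite{BHJ}), while the coefficient of $E$ in $\mathcal{B}$ computes $\delta\, S_L(\mathrm{ord}_E)$ plus the contribution of $K_X+\Delta$, and these latter contributions reorganize into the same discrepancy term; I would make this bookkeeping precise by working on a fixed log resolution dominating $\mathcal{X}$ and $X_{\mathbb{A}^1}$.

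Granting the reduction to an expression of the form $\sum_E \big(A_{(X,\Delta)}(\mathrm{ord}_E) - \delta\, S_L(\mathrm{ord}_E)\big)(\text{positive multiplicity})$, the conclusion is immediate: by the formula $\delta_{(X,\Delta)}(L)=\inf_F A_{(X,\Delta)}(F)/S_L(F)$ recalled in the excerpt (from \cite{BlJ}), every term $A_{(X,\Delta)}(\mathrm{ord}_E) - \delta\, S_L(\mathrm{ord}_E)$ is $\ge 0$, and the multiplicities $\overline{\mathcal{L}}^d\cdot E \ge 0$ by semiampleness of $\mathcal{L}$. Hence the whole sum is nonnegative, which is what we wanted. (The passage from $\delta_{mr}$-basis-type divisors to the asymptotic $S_L$, and the identification of $S_L(\mathrm{ord}_E)$ with the relevant intersection coefficient, are the content of \cite[Lemma 2.9]{BlJ} and the Okounkov-body computation underlying $(\mathcal{J}^H)^{\mathrm{NA}}$; I would cite these rather than reprove them.)

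The main obstacle is the second step: matching the coefficient of each vertical component $E\subset\mathcal{X}_0$ in the mixed intersection class $\big(K_{\overline{\mathcal{X}}/\mathbb{P}^1}+\overline{\mathcal{D}}+\mathcal{X}_{0,\mathrm{red}}-\mathcal{X}_0 - \mathcal{B}\big)\cdot\overline{\mathcal{L}}^d$ with the numerical quantity $A_{(X,\Delta)}(\mathrm{ord}_E) - \delta S_L(\mathrm{ord}_E)$. This requires care because $\mathcal{L}$ is only semiample (not ample), so $\overline{\mathcal{L}}^d\cdot E$ can vanish and one must argue that no negative contributions are hidden there, and because the ``moduli'' part of $K_X+\Delta$ (via the canonical bundle formula, in the fibration setting) must be tracked through the compactification. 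I would handle the semiample case by the standard trick of perturbing $\mathcal{L}$ to $\mathcal{L}+\epsilon(\text{ample})$, proving the inequality for the perturbed (ample) configuration where all multiplicities are strictly positive, and then letting $\epsilon\to 0$ by continuity of the intersection numbers. The log-discrepancy identification itself is essentially \cite[\S7--\S9]{BHJ}; the new input over \cite{BHJ} is only the bookkeeping of the $\delta L$ twist, which is linear and causes no genuine difficulty once the leading terms have been seen to cancel.
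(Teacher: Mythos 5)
There is a genuine gap, and it sits exactly where your sketch says the work is ``bookkeeping''. First, a concrete computational error: with the paper's definitions the $\overline{\mathcal{L}}^{d+1}$-terms do \emph{not} cancel when you subtract $(\mathcal{J}^{B})^{\mathrm{NA}}$ from $M^{\mathrm{NA}}_{\Delta}$ for $B=\delta L+K_X+\Delta$; the difference of those terms is $\frac{d\delta}{d+1}\overline{\mathcal{L}}^{d+1}$, not zero. What the subtraction actually leaves (using linearity of $(\mathcal{J}^{H})^{\mathrm{NA}}$ in $H$) is
$M^{\mathrm{NA}}_{\Delta}-(\mathcal{J}^{B})^{\mathrm{NA}}=H^{\mathrm{NA}}_{\Delta}-\delta\,(\mathcal{J}^{L})^{\mathrm{NA}}$, where $H^{\mathrm{NA}}_{\Delta}:=(K_{\overline{\mathcal{X}}/\mathbb{P}^1}+\overline{\mathcal{D}}+\mathcal{X}_{0,\mathrm{red}}-\mathcal{X}_0)\cdot\overline{\mathcal{L}}^d-(p\circ\rho)^*(K_X+\Delta)\cdot\sigma^*\overline{\mathcal{L}}^d$ is the non-Archimedean entropy, which (as in \cite{BHJ}) does decompose as $\sum_E A_{(X,\Delta)}(v_E)\,b_E\,(E\cdot\overline{\mathcal{L}}^d)$ over the components $E$ of $\mathcal{X}_0$. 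So the theorem is equivalent to $H^{\mathrm{NA}}_{\Delta}\ge\delta\,(I^{\mathrm{NA}}-J^{\mathrm{NA}})$, and the second functional is a genuinely global quantity, not a sum over central-fiber components.

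This is the second and more serious problem: your claim that the coefficient of each $E$ in the subtracted class ``computes $\delta S_L(\mathrm{ord}_E)$'' has no content as stated. $S_L(v_E)$ is an asymptotic volume integral attached to the valuation $v_E$ on $X$; it is not the multiplicity of $E$ in $(p\circ\rho)^*B$ (a class pulled back from $X$ has no canonical decomposition along $\mathcal{X}_0$), and $(\mathcal{J}^{L})^{\mathrm{NA}}$ is not of the form $\sum_E \delta S_L(v_E)b_E(E\cdot\overline{\mathcal{L}}^d)$ by inspection. The needed comparison between $\delta(I^{\mathrm{NA}}-J^{\mathrm{NA}})$ and $\sum_E A_{(X,\Delta)}(v_E)b_E(E\cdot\overline{\mathcal{L}}^d)$ is precisely the content of the theorem; it does not follow from the valuative formula $\delta=\inf_F A_{(X,\Delta)}(F)/S_L(F)$ plus termwise positivity, nor from \cite[Lemma 2.9]{BlJ} (which only gives $S_{m}\to S$) or an ``Okounkov-body computation underlying $(\mathcal{J}^{H})^{\mathrm{NA}}$''. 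Note also that the paper itself gives no proof here---it quotes \cite[Corollary 3.21]{CM}---and the argument there (in the tradition of Fujita--Odaka, Blum--Jonsson and K.~Zhang) runs through the filtration on the section ring induced by the test configuration and basis-type divisors compatible with it, which is how the $S$-values are actually brought into contact with $E^{\mathrm{NA}}$-, $I^{\mathrm{NA}}$-, $J^{\mathrm{NA}}$-type quantities; that step cannot be replaced by coefficient matching on a log resolution. (Your perturbation trick for the semiample case is comparatively minor, but even there one must perturb within classes of the form $\sigma^*\overline{\mathcal{L}}+\epsilon(\text{exceptional/relatively ample})$ so as to stay a test configuration for $(X,L)$, and this deserves a sentence of justification.)
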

Over $\mathbb{C}$, there exists an intrinsic criterion for J-stability and special K-stability without using test configurations. 

\begin{thm}\label{thm-jst}
Let $(X,L)$ be a polarized variety over $\mathbb{C}$ of dimension $d$, and let $H$ be an ample $\mathbb{R}$-line bundle on $X$. 
Then $(X,L)$ is uniformly $\mathrm{J}^H$-stable if and only if there exists $\epsilon>0$ such that
\[
\left(d\frac{H\cdot L^{d-1}}{L^d}L-pH\right)\cdot L^{p-1}\cdot V\ge \epsilon(d-p) L^p\cdot V
\] for any $p$-dimensional subvariety $V\subset X$ with $0<p<d$. 
In particular, if $(X,\Delta,L)$ is a polarized klt pair and $H:=\delta_{(X,\Delta)}(L)L+K_X+\Delta$ is ample, then the specially K-stablity of $(X,\Delta,L)$ is equivalent to the existence of $\epsilon>0$ such that the above inequality holds for any subvariety $V\subset X$.
\end{thm}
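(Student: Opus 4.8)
The plan is to derive the intrinsic numerical criterion for uniform $\mathrm{J}^H$-stability from the Okounkov-body / valuative description of the non-Archimedean $\mathrm{J}^H$-functional, together with the fact that test configurations can be controlled by flag ideals and ultimately by subvarieties of $X$. First I would recall that, over $\mathbb{C}$, uniform $\mathrm{J}^H$-stability is equivalent to the uniform positivity of the ``$\mathrm{J}^H$-slope'' along all normal semiample test configurations, and that by the work on $\mathrm{J}$-stability (e.g.\ the results of Dervan and of the second author in \cite{Hat2}, cited in the introduction) the slope $(\mathcal{J}^H)^{\mathrm{NA}}(\mathcal{X},\mathcal{L}) - \epsilon (\mathcal{J}^L)^{\mathrm{NA}}(\mathcal{X},\mathcal{L})$ is nonnegative for all test configurations if and only if the ``twisted'' class $d\frac{H\cdot L^{d-1}}{L^d}L - \epsilon' L$ (or rather the class appearing in the statement) is suitably positive. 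Concretely, one reduces to deformation-to-the-normal-cone test configurations along a subvariety $V$: for such a test configuration the intersection numbers $(p\circ\rho)^*H\cdot\sigma^*\overline{\mathcal{L}}^d$ and $\overline{\mathcal{L}}^{d+1}$ can be computed explicitly in terms of $H\cdot L^{p-1}\cdot V$, $L^p\cdot V$, $L^d$ and $H\cdot L^{d-1}$, and the inequality $(\mathcal{J}^H)^{\mathrm{NA}}\ge\epsilon(\mathcal{J}^L)^{\mathrm{NA}}$ becomes, after clearing denominators, exactly
\[
\left(d\frac{H\cdot L^{d-1}}{L^d}L-pH\right)\cdot L^{p-1}\cdot V\ge \epsilon(d-p) L^p\cdot V.
\]
This gives the ``only if'' direction immediately, and identifies the correct $\epsilon$.

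For the ``if'' direction I would argue that deformation-to-the-normal-cone configurations are, in an appropriate sense, extremal among all semiample test configurations for the $\mathrm{J}^H$-functional. The cleanest route is to invoke the positivity criterion already available in the literature: by \cite{Hat2} (and the related analytic work of Dervan--Sektnan) uniform $\mathrm{J}^H$-stability of $(X,L)$ is equivalent to a Nakai--Moishezon-type positivity statement for the cohomology class $d\frac{H\cdot L^{d-1}}{L^d}L - H$ relative to $L$, tested on all subvarieties; that is precisely the displayed inequality with the factor $(d-p)$ and $p$ keeping track of dimensions. So the first main step is to match the normalizations in \cite{Hat2} with the normalization of $(\mathcal{J}^H)^{\mathrm{NA}}$ and $(\mathcal{J}^L)^{\mathrm{NA}}$ used here (in particular the constant $\frac{d}{(d+1)L^d}$ in the definition of the functionals), and the second main step is to translate ``for all normal semiample test configurations'' into ``for all subvarieties $V$'' via the reduction to flag ideals / deformation to the normal cone.

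For the ``In particular'' clause I would simply specialize $H:=\delta_{(X,\Delta)}(L)L+K_X+\Delta$: by Definition \ref{defn-special} the pair $(X,\Delta,L)$ is specially K-stable precisely when this $H$ is ample and $(X,L)$ is uniformly $\mathrm{J}^H$-stable, so the equivalence with the numerical inequality for all subvarieties $V\subset X$ is a direct consequence of the general statement applied to this particular $H$. I would note here that the condition $0<p<d$ covers all proper subvarieties including $p=1$ (curves) and that no normality hypothesis on $V$ is needed since intersection numbers factor through the normalization.

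The hard part will be the ``if'' direction — i.e.\ showing that testing the inequality on subvarieties $V$ suffices to control \emph{all} normal semiample test configurations, not just the deformation-to-the-normal-cone ones. This requires either (a) the reduction, due to the structure theory of test configurations, that the $\mathrm{J}^H$-slope of an arbitrary semiample test configuration is bounded below by a positive combination of the slopes of deformation-to-the-normal-cone configurations along the subvarieties cut out by the associated flag ideal, or (b) directly invoking the Nakai--Moishezon-type criterion for $\mathrm{J}$-stability from \cite{Hat2}, whose proof already contains this reduction. I expect to take route (b), so that the bulk of the proof is the bookkeeping needed to reconcile the two sets of normalizations and to verify that the characteristic-zero and over-$\mathbb{C}$ hypotheses used in \cite{Hat2} apply verbatim; the genuinely new content beyond \cite{Hat2} is only the explicit passage through the functionals $(\mathcal{J}^H)^{\mathrm{NA}}$ and $(\mathcal{J}^L)^{\mathrm{NA}}$ as defined in this paper.
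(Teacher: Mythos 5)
Your proposal is correct and ultimately takes the same route as the paper: the paper gives no proof of this theorem at all, simply citing Chen \cite{G} for the K\"ahler case and \cite[Theorem 8.12]{Hat2} for general polarized varieties, which is precisely your route (b). Your additional sketch of the ``only if'' direction via deformation to the normal cone and the specialization $H=\delta_{(X,\Delta)}(L)L+K_X+\Delta$ for the ``in particular'' clause are both consistent with how the result is used and justified in the text.
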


The above theorem has firstly been proved in the case of K\"{a}hler manifolds by \cite{G}, but currently the theorem holds for all polarized varieties by \cite[Theorem 8.12]{Hat2}. 
For polarized (resp.~K\"{a}hler) manifolds, it was shown by \cite{DP} (resp.~\cite{So}) that uniform J$^H$-stability is equivalent to a certain weaker condition.

Roughly speaking, the uniform adiabatic K-stability of $f \colon (X,\Delta,A)\to C$ was originally defined to be the uniform K-stability of $(X,\Delta,\epsilon A+L)$ with fixed some ample $\mathbb{Q}$-line bundle $L$ on $C$ for any sufficiently small $\epsilon>0$.
The original definition \cite[Definition 2.6]{Hat} and the ad hoc definition coincide by the following theorems.  
Furthermore, we do not have to fix $L$ on $C$ by what we stated after Definition \ref{defn-special}.

\begin{thm}[{\cite[Theorem 8.15]{Hat2}}]\label{k-plus}
Let $(X,\Delta,A)$ be a klt polarized pair over $\mathbb{C}$. If $K_X+\Delta$ is nef, then there exists a real number $C>0$, depending only on the intersection numbers $A^i\cdot(K_X+\Delta)^{{\rm dim}\,X-i}$ for $0\leq i \leq {\rm dim}\,X$, such that $(X,\epsilon A+K_X+\Delta)$ is uniformly $\mathrm{J}^{K_X+\Delta+C\epsilon(\epsilon A+K_X+\Delta)}$-stable for every $\epsilon>0$.
Furthermore, there exist real numbers $\epsilon_0>0$ and $\alpha>0$ such that $(X,\Delta,\epsilon A+K_X+\Delta)$ is specially K-stable and
\[
M^\mathrm{NA}_\Delta(\mathcal{X},\mathcal{M})\ge\alpha (\mathcal{J}^{\epsilon A+K_X+\Delta})^\mathrm{NA}(\mathcal{X},\mathcal{M})
\]
 for any $0<\epsilon<\epsilon_0$ and normal semiample test configuration $(\mathcal{X},\mathcal{M})$ for $(X,\epsilon A+K_X+\Delta)$. 
\end{thm}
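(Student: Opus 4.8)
The plan is to handle the three assertions in turn: (a) uniform $\mathrm{J}$-stability of the twisted polarization, (b) special K-stability of $(X,\Delta,\epsilon A+K_X+\Delta)$, and (c) the Mabuchi estimate. Parts (b) and (c) will be essentially formal consequences of (a) together with the cited comparison results (Lemma \ref{lem-delta-alpha}, Theorem \ref{A.13}), so the real work is in (a). Throughout write $N:=K_X+\Delta$, which is nef, $L_\epsilon:=\epsilon A+N$, which is ample for $\epsilon>0$, $d:=\dim X$, and let $\nu(N)$, $\nu(N|_V)$ denote numerical dimensions.

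For (a) I would apply the intrinsic criterion of Theorem \ref{thm-jst}. Since $H_\epsilon\cdot L_\epsilon^{d-1}=N\cdot L_\epsilon^{d-1}+C\epsilon L_\epsilon^{d}$ for $H_\epsilon:=N+C\epsilon(\epsilon A+N)=N+C\epsilon L_\epsilon$, a direct expansion gives, for every $p$-dimensional subvariety $V\subset X$ with $0<p<d$,
\[
\Bigl(d\tfrac{H_\epsilon\cdot L_\epsilon^{d-1}}{L_\epsilon^{d}}L_\epsilon-pH_\epsilon\Bigr)\cdot L_\epsilon^{p-1}\cdot V-\epsilon'(d-p)L_\epsilon^{p}\cdot V=\Bigl((d-p)(C\epsilon-\epsilon')-D(V,\epsilon)\Bigr)L_\epsilon^{p}\cdot V ,
\]
where
\[
D(V,\epsilon):=p\,\tfrac{N\cdot L_\epsilon^{p-1}\cdot V}{L_\epsilon^{p}\cdot V}-d\,\tfrac{N\cdot L_\epsilon^{d-1}}{L_\epsilon^{d}}
\]
is the "polarization defect". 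Thus uniform $\mathrm{J}^{H_\epsilon}$-stability reduces, via Theorem \ref{thm-jst}, to the existence for every $\epsilon>0$ of $\epsilon'>0$ with $D(V,\epsilon)<(d-p)(C\epsilon-\epsilon')$ for all $V$, and the heart of the matter is the uniform bound $\sup_{V}D(V,\epsilon)\le K\epsilon$ for $0<\epsilon\le\epsilon_0$ with $K,\epsilon_0$ depending only on the intersection numbers $A^i\cdot N^{d-i}$; choosing $C$ larger than $K$ and than $d/\epsilon_0$ (so that $D(V,\epsilon)/\epsilon\le d/\epsilon_0<C$ also for $\epsilon\ge\epsilon_0$, since $0\le D(V,\epsilon)\le d$ always) then finishes (a). To prove the displayed bound one uses two facts about the "slope function" $g_V(\epsilon):=p\,\frac{N\cdot L_\epsilon^{p-1}\cdot V}{L_\epsilon^{p}\cdot V}$ (which equals $d\,\frac{N\cdot L_\epsilon^{d-1}}{L_\epsilon^d}$ when $V=X$, $p=d$). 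First, writing $L_\epsilon=N+\epsilon A$ one has $g_V(\epsilon)=p-\mathbb{E}_\epsilon[J]$, where $J$ is distributed with $P_\epsilon(J=j)\propto\binom{p}{j}(N^{p-j}A^{j}\cdot V)\epsilon^{j}$; since these laws increase in the likelihood-ratio order as $\epsilon$ grows (Khovanskii--Teissier positivity guarantees the coefficients are nonnegative) and $j\mapsto p-j$ is non-increasing, $g_V$ is non-increasing in $\epsilon$, with $g_V(0)=p-\min\{j:N^{p-j}A^{j}\cdot V>0\}=\nu(N|_V)$. Second, $\nu(N|_V)\le\nu(N)$, because $N^{k}\cdot L^{p-k}\cdot V>0$ forces $\nu(N)\ge k$. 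Consequently $D(V,\epsilon)=g_V(\epsilon)-g_X(\epsilon)\le g_V(0)-g_X(\epsilon)\le g_X(0)-g_X(\epsilon)$, and since $g_X$ is a rational function of $\epsilon$ determined by the numbers $A^i\cdot N^{d-i}$ and analytic near $0$, we get $g_X(0)-g_X(\epsilon)\le K\epsilon$ for small $\epsilon$. I expect this uniform defect estimate — controlling $D(V,\epsilon)$ simultaneously over all subvarieties $V$ — to be the main technical point, the rest of (a) being bookkeeping.

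For (b), note $\delta_{(X,\Delta)}(L_\epsilon)L_\epsilon+N$ is ample since $\delta_{(X,\Delta)}(L_\epsilon)>0$, $L_\epsilon$ is ample and $N$ is nef. The $\mathbb{R}$-divisors in $|L_\epsilon|_{\mathbb{R}}$, for $\epsilon$ in a fixed bounded interval, have degree against $A^{d-1}$ bounded by $\epsilon_0A^d+N\cdot A^{d-1}$, hence form a bounded family on the fixed klt pair $(X,\Delta)$; therefore $\alpha_{(X,\Delta)}(L_\epsilon)$, and so by Lemma \ref{lem-delta-alpha} also $\delta_{(X,\Delta)}(L_\epsilon)$, is bounded below by a positive constant uniformly for all small $\epsilon$, whence $\delta_{(X,\Delta)}(L_\epsilon)>C\epsilon$ once $\epsilon<\epsilon_0$ after shrinking $\epsilon_0$. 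Now $\delta_{(X,\Delta)}(L_\epsilon)L_\epsilon+N=H_\epsilon+\bigl(\delta_{(X,\Delta)}(L_\epsilon)-C\epsilon\bigr)L_\epsilon$, and the inequality in Theorem \ref{thm-jst} is preserved — with its constant increased by $t$ — under replacing the twist $H$ by $H+tL_\epsilon$ for $t\ge0$; applying this with $H=H_\epsilon$ and $t=\delta_{(X,\Delta)}(L_\epsilon)-C\epsilon\ge0$ yields uniform $\mathrm{J}^{\delta_{(X,\Delta)}(L_\epsilon)L_\epsilon+N}$-stability of $(X,L_\epsilon)$, which is special K-stability of $(X,\Delta,L_\epsilon)$ by Definition \ref{defn-special}.

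For (c), Theorem \ref{A.13} gives $M^{\mathrm{NA}}_{\Delta}(\mathcal{X},\mathcal{M})\ge(\mathcal{J}^{\delta_{(X,\Delta)}(L_\epsilon)L_\epsilon+N})^{\mathrm{NA}}(\mathcal{X},\mathcal{M})$ for any normal semiample test configuration $(\mathcal{X},\mathcal{M})$ for $(X,L_\epsilon)$. Since the non-Archimedean functional $(\mathcal{J}^{(\cdot)})^{\mathrm{NA}}$ is linear in the twisting class and $(\mathcal{J}^{H_\epsilon})^{\mathrm{NA}}\ge\epsilon'(\mathcal{J}^{L_\epsilon})^{\mathrm{NA}}\ge0$ by (a) and $(\mathcal{J}^{L_\epsilon})^{\mathrm{NA}}\ge0$, we obtain
\[
(\mathcal{J}^{\delta_{(X,\Delta)}(L_\epsilon)L_\epsilon+N})^{\mathrm{NA}}(\mathcal{X},\mathcal{M})\ge\bigl(\delta_{(X,\Delta)}(L_\epsilon)-C\epsilon\bigr)(\mathcal{J}^{L_\epsilon})^{\mathrm{NA}}(\mathcal{X},\mathcal{M}).
\]
As $\delta_{(X,\Delta)}(L_\epsilon)-C\epsilon$ is bounded below by a positive constant uniformly for $\epsilon<\epsilon_0$ (again after shrinking $\epsilon_0$), this yields an $\alpha>0$ independent of $\epsilon$ and of the test configuration with $M^{\mathrm{NA}}_{\Delta}(\mathcal{X},\mathcal{M})\ge\alpha\,(\mathcal{J}^{\epsilon A+K_X+\Delta})^{\mathrm{NA}}(\mathcal{X},\mathcal{M})$, completing the proof.
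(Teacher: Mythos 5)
The paper gives no proof of this statement: it is imported verbatim from \cite[Theorem 8.15]{Hat2} (with the remark that it ``follows from the proof of'' that result), so your proposal can only be compared with that citation and with how the paper consumes the statement in Theorem \ref{unif+}. Your parts (b) and (c) reproduce exactly the derivation the paper itself carries out there: a uniform positive lower bound for $\alpha_{(X,\Delta)}(\epsilon A+K_X+\Delta)$, Lemma \ref{lem-delta-alpha} to pass to $\delta$, Theorem \ref{A.13}, and linearity plus nonnegativity of $(\mathcal{J}^{\bullet})^{\mathrm{NA}}$ to peel off the factor $\delta_{(X,\Delta)}(L_\epsilon)-C\epsilon$. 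Part (a) is where you genuinely replace the citation by an argument: you reduce to the numerical criterion (Theorem \ref{thm-jst}, itself \cite[Theorem 8.12]{Hat2}) and prove the uniform defect bound $\sup_V D(V,\epsilon)\le K\epsilon$ by writing the slope $g_V(\epsilon)$ as $p-\mathbb{E}_\epsilon[J]$, using the monotone likelihood-ratio order to get monotonicity in $\epsilon$, and comparing $g_V(0^+)=\nu\bigl((K_X+\Delta)|_V\bigr)$ with $\nu(K_X+\Delta)=g_X(0^+)$. This is more elementary and self-contained than the decomposition machinery behind \cite{Hat2}; the algebraic identity for the twisted criterion, the two-regime choice of $C$, and the dependence of $C$ (and of the Lipschitz constant for $g_X$ near $0$) only on the numbers $A^i\cdot(K_X+\Delta)^{d-i}$ all check out, and $H_\epsilon=K_X+\Delta+C\epsilon L_\epsilon$ is ample as Theorem \ref{thm-jst} requires since $K_X+\Delta$ is nef.

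Two steps should be tightened, though neither is fatal. First, the inequality $\nu\bigl((K_X+\Delta)|_V\bigr)\le\nu(K_X+\Delta)$ is true, but your justification (``$N^{k}\cdot A^{p-k}\cdot V>0$ forces $\nu(N)\ge k$'', where your $L$ should read $A$) merely restates it; either cite the standard fact that for a nef divisor $N$ one has $\nu(N)=\max\{k: N^k\not\equiv0\}$, or prove it by choosing members of $|mA|$ containing $V$ and inducting on dimension. Second, the ``bounded family'' sentence giving the uniform lower bound on $\alpha_{(X,\Delta)}(L_\epsilon)$ is vague; the clean statement is monotonicity: for $0<\epsilon\le\epsilon_0$ and effective $D\sim_{\mathbb{R}}L_\epsilon$, adding $(\epsilon_0-\epsilon)A'$ with $A'\ge0$, $A'\sim_{\mathbb{R}}A$, gives $\mathrm{lct}(X,\Delta;D)\ge\alpha_{(X,\Delta)}(L_{\epsilon_0})>0$, which is precisely the inequality used in the proof of Theorem \ref{unif+}. (Your parenthetical ``$0\le D(V,\epsilon)$'' is neither justified nor needed; only the upper bound $D(V,\epsilon)\le d$ enters.) With these repairs the proposal is a complete and correct proof.
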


\begin{thm}[{\cite[Theorem B]{Hat}}]\label{k-minus}
Let $f \colon (X,\Delta,A)\to (\mathbb{P}^1,\mathcal{O}(1))$ be a polarized klt trivial fibration over $\mathbb{C}$ such that $K_X+\Delta\sim_{\mathbb{Q}}uf^*(\mathcal{O}(1))$ for some $u<0$. 

Then, $f$ is uniformly adiabatically K-stable if and only if there exist real numbers $\epsilon_0>0$ and $\alpha>0$ such that $(X,\Delta,\epsilon A-(K_X+\Delta))$ is specially K-stable and 
\[
M^\mathrm{NA}_\Delta(\mathcal{X},\mathcal{M})\ge\alpha (\mathcal{J}^{\epsilon A-(K_X+\Delta)})^\mathrm{NA}(\mathcal{X},\mathcal{M})
\]
 for any $0<\epsilon<\epsilon_0$ and for any normal semiample test configuration $(\mathcal{X},\mathcal{M})$ for $(X,\epsilon A-(K_X+\Delta))$. 

\end{thm}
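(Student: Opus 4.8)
The plan is to re-express uniform adiabatic K-stability and the displayed conclusion in terms of the intrinsic invariants $\delta_{(X,\Delta)}(L_\epsilon)$ and the $\mathrm{J}$-stability threshold of $(X,\Delta,L_\epsilon)$, where $L_\epsilon:=\epsilon A-(K_X+\Delta)$, and then to compare them as $\epsilon\to 0^+$. Since $K_X+\Delta\sim_{\mathbb{Q}}uf^*\mathcal{O}(1)$ with $u<0$, we may write $L_\epsilon=\epsilon A+(-u)f^*\mathcal{O}(1)$; as $A$ is $f$-ample and $f^*\mathcal{O}(1)$ is semiample, there is $\epsilon_1>0$ with $L_\epsilon$ ample for $0<\epsilon<\epsilon_1$, so the right-hand side of the equivalence makes sense. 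The canonical bundle formula $K_X+\Delta\sim_{\mathbb{Q}}f^*(K_{\mathbb{P}^1}+B_C+M_C)$ together with Theorem \ref{Bsemi} shows that $B_C$ is an effective $\mathbb{Q}$-divisor on $\mathbb{P}^1$, that $M_C$ is semiample, and that $\deg(B_C+M_C)=u+2$.

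First I would compute the adiabatic limit $\delta_0:=\lim_{\epsilon\to 0^+}\delta_{(X,\Delta)}(L_\epsilon)$. By the uniform convergence of the $\delta$-invariant (\cite[Theorem D]{Hat}, reproved in family form as Proposition \ref{lem-delta-conv}), this limit equals the twisted $\delta$-invariant of the log-twisted base pair $(\mathbb{P}^1,B_C,M_C)$ with respect to the base polarization $(-u)\mathcal{O}(1)$; on a curve a computation exactly as in Example \ref{ex--calculation} — the semiampleness of $M_C$ contributes nothing to the relevant infimum, the Calabi--Yau structure of the general fibre prevents horizontal valuations from lowering the limit, and the only surviving intersection numbers on the $\mathbb{P}^1$-base are those involving $(f^*\mathcal{O}(1))^{\le 1}$, so that $S_{L_\epsilon}$ of a fibre tends to $(-u)/2$ — gives $\delta_0=2\bigl(1-\max_p\mathrm{ord}_p B_C\bigr)/(-u)$. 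Moreover this convergence is uniform: for any $c<\delta_0$ there is $\epsilon_0<\epsilon_1$ with $\delta_{(X,\Delta)}(L_\epsilon)>c$ on $(0,\epsilon_0)$, and dually $\delta_0\le 1$ forces $\delta_{(X,\Delta)}(L_\epsilon)<1$ for all small $\epsilon$. By Definition \ref{unifdef}, $f$ is uniformly adiabatically K-stable exactly when $\max_p\mathrm{ord}_p B_C<1+u/2$, i.e.\ exactly when $\delta_0>1$.

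Now I would prove the two implications. Assume $\delta_0>1$, fix $c$ with $1<c<\delta_0$ and $\epsilon_0$ as above, and put $\delta_\epsilon:=\delta_{(X,\Delta)}(L_\epsilon)>c>1$ for $0<\epsilon<\epsilon_0$. Then
\[
H_\epsilon:=\delta_\epsilon L_\epsilon+K_X+\Delta=\delta_\epsilon\epsilon A+(1-\delta_\epsilon)\,u\,f^*\mathcal{O}(1),
\]
and since $1-\delta_\epsilon<0$ and $u<0$ the base coefficient $(1-\delta_\epsilon)u$ is positive, so $H_\epsilon$ is ample for $\epsilon$ small. It remains to show that $(X,L_\epsilon)$ is uniformly $\mathrm{J}^{H_\epsilon}$-stable with a constant independent of $\epsilon$; by the intrinsic criterion Theorem \ref{thm-jst} this amounts to the inequality
\[
\Bigl(d\,\tfrac{H_\epsilon\cdot L_\epsilon^{d-1}}{L_\epsilon^d}\,L_\epsilon-pH_\epsilon\Bigr)\cdot L_\epsilon^{p-1}\cdot V\ \ge\ \alpha\,(d-p)\,L_\epsilon^{p}\cdot V
\]
for all subvarieties $V\subset X$ with a uniform $\alpha>0$, which I would verify by expanding all intersection numbers in $\epsilon$ and separating the cases $V$ horizontal and $V$ contained in a fibre: the leading terms reduce, for $V$ in a fibre, to the corresponding uniform $\mathrm{J}$-stability on the fixed Calabi--Yau fibre $(X_\eta,A|_{X_\eta})$ against an ample class, and for $V$ horizontal to a base estimate controlled by $\delta_0>c>1$. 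Together with Theorem \ref{A.13} this gives $M^{\mathrm{NA}}_\Delta(\mathcal{X},\mathcal{M})\ge(\mathcal{J}^{H_\epsilon})^{\mathrm{NA}}(\mathcal{X},\mathcal{M})\ge\alpha\,(\mathcal{J}^{L_\epsilon})^{\mathrm{NA}}(\mathcal{X},\mathcal{M})$, and special K-stability of $(X,\Delta,L_\epsilon)$ is precisely the conjunction of ``$H_\epsilon$ ample'' and this $\mathrm{J}$-stability. For the converse, if $f$ is not uniformly adiabatically K-stable then $\delta_0\le 1$; when $\delta_0<1$ we have $\delta_\epsilon<1$ for small $\epsilon$, so $(1-\delta_\epsilon)u<0$, and testing $H_\epsilon$ against any fixed curve $\Gamma\subset X$ not contained in a fibre gives $H_\epsilon\cdot\Gamma=\delta_\epsilon\epsilon(A\cdot\Gamma)+(1-\delta_\epsilon)u(\deg\Gamma/\mathbb{P}^1)\to(1-\delta_0)u(\deg\Gamma/\mathbb{P}^1)<0$, so $H_\epsilon$ is not nef for small $\epsilon$ and $(X,\Delta,L_\epsilon)$ is never specially K-stable there; the displayed uniform Mabuchi bound also fails, since a uniform inequality $M^{\mathrm{NA}}_\Delta\ge\alpha(\mathcal{J}^{L_\epsilon})^{\mathrm{NA}}$ for all small $\epsilon$ would force $\liminf_{\epsilon\to0}\delta_{(X,\Delta)}(L_\epsilon)\ge1$. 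The boundary case $\delta_0=1$ is handled by a short limiting version of the same argument, in which the $\mathrm{J}$-stability constant degenerates.

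The main obstacle is the second step together with its uniformity: that, as $\epsilon\to0$, the infimum defining $\delta_{(X,\Delta)}(L_\epsilon)$ is asymptotically attained by vertical (fibre) valuations uniformly in $\epsilon$, and the parallel uniform estimate for the $\mathrm{J}$-functional in the adiabatic limit. These are the technical core of \cite{Hat}; everything else is bookkeeping with the canonical bundle formula and with intersection numbers on a $\mathbb{P}^1$-fibration, where only powers $\le1$ of $f^*\mathcal{O}(1)$ contribute.
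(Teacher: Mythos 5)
First, note that this paper does not prove Theorem \ref{k-minus} at all: it is imported verbatim from \cite[Theorem B]{Hat}, and the only hint the paper gives about its proof is the remark that the equalities $\lim_{\epsilon\to0}\delta_{(X,\Delta)}(\epsilon A-(K_X+\Delta))=2(\max_p\mathrm{ord}_p(B_{\mathbb{P}^1})-1)/u=\delta_{(\mathbb{P}^1,B_{\mathbb{P}^1})}(-K_{\mathbb{P}^1}-B_{\mathbb{P}^1}-M_{\mathbb{P}^1})$ are the key steps. Your skeleton is consistent with that route: the identification of uniform adiabatic K-stability with $\delta_0>1$ via Definition \ref{unifdef} and Example \ref{ex--calculation} is correct, the ampleness of $L_\epsilon$ and of $H_\epsilon=\delta_\epsilon L_\epsilon+K_X+\Delta$ for small $\epsilon$ is fine (using that $A$ is only $f$-ample), and the chain ``uniform $\mathrm{J}^{H_\epsilon}$-stability with $\epsilon$-independent constant $+$ Theorem \ref{A.13} $\Rightarrow$ the displayed Mabuchi bound'' is the right mechanism.

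However, as a proof the proposal has genuine gaps exactly at the two places where the content of \cite{Hat} lies. (1) The uniform $\mathrm{J}^{H_\epsilon}$-stability: ``expanding intersection numbers in $\epsilon$ and separating $V$ vertical/horizontal'' is not carried out, and it is delicate precisely because $H_\epsilon$ degenerates to the semiample, non-ample class $(1-\delta_0)uf^*\mathcal{O}(1)$, so the constant produced by Theorem \ref{thm-jst} must be tracked uniformly as the leading terms of both sides vanish at different orders in $\epsilon$; this adiabatic J-estimate (together with the uniform convergence of $\delta_\epsilon$) is the technical core you acknowledge but do not supply. (2) The converse: when $\delta_0<1$ the failure of ampleness of $H_\epsilon$ only kills \emph{special} K-stability, and your claim that a uniform bound $M^{\mathrm{NA}}_\Delta\ge\alpha(\mathcal{J}^{L_\epsilon})^{\mathrm{NA}}$ would force $\liminf_\epsilon\delta_{(X,\Delta)}(L_\epsilon)\ge1$ is not justified by anything cited here: Theorem \ref{A.13} is a one-way inequality, and for general (non-Fano) polarizations there is no valuative converse available, so one must actually construct test configurations (from degenerations of the base $(\mathbb{P}^1,B_C)$, equivalently from the worst fibre $f^*p$) whose normalized $M^{\mathrm{NA}}$ degenerates — this is how \cite{Hat} proves the ``only if'' direction. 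In particular the boundary case $\delta_0=1$, where each $(X,\Delta,L_\epsilon)$ may well remain specially K-stable while no uniform $\alpha$ exists, is dismissed with ``a short limiting version of the same argument,'' which is not an argument; and the earlier assertion that $\delta_0\le1$ forces $\delta_\epsilon<1$ for all small $\epsilon$ is false as stated when $\delta_0=1$ (no monotonicity in $\epsilon$ is available). So the outline matches the intended strategy, but the proof of the hard implications is missing, which is precisely why the present paper quotes the theorem rather than reproving it.
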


We remark that Theorem \ref{k-plus} follows from the proof of \cite[Theorem 8.15]{Hat2}.
On the other hand, the following equalities 
\[
\lim_{\epsilon\to0}\delta_{(X,\Delta)}(\epsilon A-(K_X+\Delta))= 2\frac{(\max_{p\in\mathbb{P}^1}\mathrm{ord}_p(B_{\mathbb{P}^1})-1)}{u}=\delta_{(\mathbb{P}^1,B_{\mathbb{P}^1})}(-K_{\mathbb{P}^1}-B_{\mathbb{P}^1}-M_{\mathbb{P}^1})
\]
are key steps to show Theorem \ref{k-minus} (cf.~\cite[Theorem D]{Hat}). 
We will show Theorem \ref{mainunif} in \S\ref{seckst} with them in mind.

\section{Boundedness}\label{Bousec}

In this section, we prove results of the boundedness of certain classes.

Let $d$ be a positive integer, $v$ a positive rational number, $u\neq0$ a rational number, and $\Theta \subset \mathbb{Q}_{\geq 0}$ a DCC set. 
We set $e:=\frac{u}{|u|}$ (thus $eu=|u|$). 
We consider the following sets:
\begin{equation*}
\begin{split}
\mathfrak{F}_{d, \Theta, v}:=&\left\{ f\colon (X,\Delta) \to C \;\middle|
\begin{array}{rl}
(i)&\text{$f$ is a klt-trivial fibration over a curve $C$,}\\
(ii)&\text{${\rm dim}X=d$,}\\
(iii)&\text{the coefficients of $\Delta$ belong to $\Theta$,}\\
(iv)&\text{there is an $f$-ample $\mathbb{Q}$-Cartier Weil}\\
&\text{divisor $A$ on $X$ such that ${\rm vol}(A|_{F})=v$, }\\
& \text{where $F$ is a general fiber of $f$.}
\end{array}\right\}\end{split}
\end{equation*}
and
\begin{equation*}
\begin{split}
\mathfrak{G}_{d, \Theta, v, u}:=&\left\{ f\colon (X,\Delta) \to C \in \mathfrak{F}_{d, \Theta, v} \;\middle|
\begin{array}{l}
\text{$K_{X}+\Delta \equiv uf^{*}H$ for some Cartier}\\
\text{divisor $H$ with ${\rm deg}\,H=1$}
\end{array}\right\}.
\end{split}
\end{equation*}
Lemma \ref{lem--Cartierindex} below is crucial for the boundedness and the lemma will be used in \S\ref{Consec}. 

\begin{lem}\label{lem--Cartierindex}
There exists a positive integer $r$, depending only on $d$, $\Theta$, $v$, and $u$, such that for any element $f\colon (X,\Delta) \to C$ of $\mathfrak{G}_{d, \Theta, v, u}$, we have $er(K_X+\Delta)\sim f^{*}D$ for some very ample Cartier divisor $D$ on $C$. 
In particular, $er(K_{X}+\Delta)$ is a base point free Cartier divisor and the linear system $|er(K_X+\Delta)|$ defines $f$. 
Furthermore, there are only finitely many possibilities of ${\rm dim}\,H^0(X,\mathcal{O}_{X}(er(K_X+\Delta)))$. 
\end{lem}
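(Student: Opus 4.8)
The plan is to reduce everything to the base curve $C$ via the canonical bundle formula and then to invoke Birkar's boundedness result \cite{birkar-geometry-moduli}. By Definition \ref{defn--klttrivialfib} we have
\[
K_X+\Delta\sim_{\mathbb{Q}}f^*(K_C+B_C+M_C),
\]
and comparing this with $K_X+\Delta\equiv uf^*H$ gives $K_C+B_C+M_C\equiv uH$; in particular $\deg(K_C+B_C+M_C)=u$. Since $M_C$ is semiample by Theorem \ref{Bsemi} and $B_C$ is effective, we get $2g(C)-2\le u$, so the genus $g(C)$ ranges over a finite set depending only on $u$ (and $C\cong\mathbb{P}^1$ when $u<0$). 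Thus the ``base geometry'' is already bounded.

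The key input is the following consequence of \cite{birkar-geometry-moduli}: there is a positive integer $N$, depending only on $d$, $\Theta$, and $v$, such that for every $f\colon(X,\Delta)\to C$ in $\mathfrak{G}_{d,\Theta,v,u}$ the divisor $N(K_X+\Delta)$ is Cartier and $N(K_X+\Delta)\sim f^*L$ for some Cartier divisor $L$ on $C$. The mechanism I would use: a general fibre $(F,\Delta|_F)$ is a $(d-1)$-dimensional klt pair with $K_F+\Delta|_F\sim_{\mathbb{Q}}0$, coefficients in the DCC set $\Theta$, polarised by the ample Weil divisor $A|_F$ with $\mathrm{vol}(A|_F)=v$, so by Birkar's boundedness of polarised log Calabi--Yau pairs these form a log bounded family and $N_0(K_F+\Delta|_F)\sim 0$ for some $N_0=N_0(d,\Theta,v)$; feeding this bounded fibre index into the canonical bundle formula (and using $K_X+\Delta\sim_{\mathbb{Q},C}0$) bounds the denominators of $B_C$, of $M_C$, and hence of $\Delta$ itself, producing $N$ and $L$. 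Note that $\deg L=Nu$.

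Now I would take $r$ to be a large multiple of $N$ and set $D:=\tfrac{er}{N}L$, a Cartier divisor on $C$ of degree $eru=r|u|>0$. Since $g(C)$ lies in a finite set bounded by $u$, for $r$ large enough we have $\deg D\ge 2g(C)+1$, so $D$ is very ample and $er(K_X+\Delta)\sim f^*D$; hence $er(K_X+\Delta)$ is a base point free Cartier divisor. Because $f$ is a contraction, the projection formula gives
\[
H^0(X,\mathcal{O}_X(er(K_X+\Delta)))\cong H^0(X,f^*\mathcal{O}_C(D))\cong H^0(C,\mathcal{O}_C(D)),
\]
so the morphism attached to $|er(K_X+\Delta)|$ is $f$ followed by the closed embedding of $C$ into $\mathbb{P}(H^0(C,\mathcal{O}_C(D))^\vee)$; in particular it defines $f$. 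Finally, by Riemann--Roch on $C$,
\[
\dim H^0(C,\mathcal{O}_C(D))=\deg D-g(C)+1=r|u|-g(C)+1
\]
once $\deg D>2g(C)-2$, and $g(C)$ takes only finitely many values, so there are only finitely many possibilities for $\dim H^0(X,\mathcal{O}_X(er(K_X+\Delta)))$.

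The main obstacle is the second step: extracting from \cite{birkar-geometry-moduli} the uniform integer $N$ with $N(K_X+\Delta)\sim f^*(\text{Cartier})$. This requires both Birkar's boundedness of the polarised log Calabi--Yau fibres (to bound their index) and the fact that a bounded fibre index forces uniformly bounded denominators in the canonical bundle formula over a curve -- controlling simultaneously the moduli part $M_C$ (whose mere semiampleness, Theorem \ref{Bsemi}, gives no uniform denominator), the discriminant $B_C$, and the coefficients of $\Delta$ (a priori only in a DCC set, but forced into $\tfrac1N\mathbb{Z}$ once $N(K_X+\Delta)$ is Cartier). Everything else -- bounding $g(C)$, producing a very ample $D$, and the cohomology computation -- is routine.
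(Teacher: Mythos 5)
Your overall skeleton matches the paper's: reduce to the base via the canonical bundle formula, bound the genus of $C$ by $u$, find a uniform integer making $e\cdot(\text{that integer})\cdot(K_X+\Delta)$ the pullback of an integral divisor of positive degree, then take a large multiple $r$ to get very ampleness and use Riemann--Roch on $C$ for the finiteness of $\dim H^0$. However, the step you yourself flag as ``the main obstacle'' is precisely where the content of the lemma lies, and your proposed mechanism for it does not work as stated, so there is a genuine gap. Knowing that the general fibre $(F,\Delta|_F)$ lies in a log bounded family with $N_0(K_F+\Delta|_F)\sim 0$ for a uniform $N_0$ does \emph{not} by itself bound the denominators of the canonical bundle formula over $C$: the denominator of the moduli part $M_C$ is governed by torsion/monodromy data (in the Fujino--Mori/Ambro picture, by Betti numbers of certain covers of $F$), not by the index of $K_F+\Delta|_F$; and the coefficients of the discriminant $B_C$ are lc thresholds computed along the \emph{singular} fibres, so no amount of information about the general fibre controls their denominators. ``Feeding the bounded fibre index into the canonical bundle formula'' is therefore an assertion, not an argument, and without it your integer $N$ (and hence $D=\tfrac{er}{N}L$) is not available.

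The paper fills this gap with two specific inputs that your proposal omits. First, for the moduli part: effective non-vanishing \cite[Corollary 1.4]{birkar-geometry-moduli} on $F$ produces an effective Weil divisor $E\sim mA+tf^*H$ with ${\rm vol}(E|_F)=m^{d-1}v$ bounded, and then Birkar's effective adjunction \cite[Lemma 7.4]{birkar-nefpart} applied to $(X,\Delta)\to C$ and $E$ gives a uniform $q$ with $q(K_X+\Delta)\sim qf^*(K_C+B_C+M_C)$ and $qM_C$ Cartier; mere semiampleness of $M_C$ (Theorem \ref{Bsemi}) is only used to get $\deg M_C\geq 0$. Second, for the discriminant: by the ACC for lc thresholds \cite[Theorem 1.1]{hmx-acc} the coefficients of $B_C$ lie in a DCC set $\Psi$ depending only on $d$ and $\Theta$; combining this with the degree constraint $\deg B_C\le eu+2$ (which bounds the number of components by $(eu+2)/\inf\Psi$) and the integrality of $\deg(q'(K_C+B_C+M_C))$, one shows the coefficients also lie in an ACC set $\Psi'$, so they lie in the finite set $\Psi\cap\Psi'$, yielding a uniform $q''$ with $q''B_C$ integral. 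Only after both steps does one obtain the uniform integer (the paper's $q'q''$, which in fact also depends on $u$, not just on $d,\Theta,v$ as you claim for your $N$), and from there your concluding steps (very ampleness for large $r$ via the genus bound, the projection formula, and Riemann--Roch) agree with the paper and are fine.
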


\begin{proof}
We fix an element $f\colon (X,\Delta) \to C$ of $\mathfrak{G}_{d, \Theta, v, u}$, and we pick a Weil divisor $A$ on $X$ as in (iv) of $\mathfrak{F}_{d, \Theta, v}$. 

Let $F$ be a general fiber of $f$, and pick a Cartier divisor $H$ on $C$ with ${\rm deg}\,H=1$. 
By applying \cite[Corollary 1.4]{birkar-geometry-moduli} to $(F,\Delta|_{F})$ and $A|_{F}$, we can find $m \in \mathbb{Z}_{>0}$, depending only on $d$ and $\Theta$, such that 
$H^{0}(F, \mathcal{O}_{X}(mA|_{F}))\neq 0$. 
Then there is a sufficiently large positive integer $t$ such that 
$$E \sim mA+tf^{*}H$$
for some effective Weil divisor $E$ on $X$. 
By construction, we have ${\rm vol}(E|_{F})=m^{d-1}v$. 
By applying \cite[Lemma 7.4]{birkar-nefpart} to $(X,\Delta)\to C$ and $E$, we can find a positive integer $q$, depending only on $d$, $\Theta$, and $v$, such that we can wirte
$$q(K_{X}+\Delta)\sim qf^{*}(K_{C}+B+M),$$
where $B$ (resp.~$M$) is the discriminant part (resp.~moduli part) of the canonical bundle formula, such that $qM$ is Cartier. 
Then we have ${\rm deg}(K_{C}+B+M)\leq eu$. 

By definition of the discriminant part of the canonical bundle formula and the ACC for lc thresholds \cite[Theorem 1.1]{hmx-acc}, we see that the coefficients of $B$ belong to a DCC set of $\mathbb{Q}_{> 0}$ depending only on $d$ and $\Theta$, which we denote by $\Psi$. 
Let $q'$ be the smallest positive integer such that $q'u$ is an integer and $q$ divides $q'$.  
Since ${\rm deg}\,K_{C}\geq-2$ and ${\rm deg}\,M\in \frac{1}{q}\mathbb{Z}_{\geq0}$ by Theorem \ref{Bsemi}, we see that 
$${\rm deg}(q'B) \in \{0,1,\cdots, eq'u+2q'\}.$$
We define $\delta:={\rm inf}\,\Psi$, which is a positive rational number because $\Psi$ satisfies the DCC. 
Since ${\rm deg}\,B\leq eu+2$, the number of components of $B$ is not greater than $\frac{eu+2}{\delta}$. 
Thus, all the coefficients of $B$ belong to the following set:
$$\Psi':=\left\{a_{0}-\sum_{i=1}^{l}a_{i}\,\middle |\, a_{0} \in \frac{1}{q'}\mathbb{Z}\cap [0, eu+2], \;a_{i}\in \Psi,\; l\leq \frac{eu+2}{\delta}\right\}.$$
We can easily check that $\Psi'$ satisfies the ACC because $\frac{1}{q'}\mathbb{Z}\cap [0, eu+2]$ is a finite set and $\Psi$ satisfies the DCC. 
Hence $\Psi \cap \Psi'$ satisfies the ACC and the DCC, which implies that $\Psi \cap \Psi'$ is a finite set. 

From these facts, we can find $q''$, depending only on $\Psi \cap \Psi'$, such that $q''B$ is a Weil divisor. 
By construction, $\Psi \cap \Psi'$ depends only on $d$, $\Theta$, $v$, and $u$.  
Since $q$ divides $q'$ by construction, we have 
 $$q'q''(K_{X}+\Delta)\sim q'q''f^{*}(K_{C}+B+M)$$
and the right hand side is Cartier. 
The integer $q'q''$ depends only on $d$, $\Theta$, $v$, and $u$.  

Since $eq'q''(K_{C}+B+M)$ is ample and Cartier, there is a positive integer $r$, depending only on $d$, $\Theta$, $v$, and $u$, such that $r$ is divided by $q'q''$ and $er(K_{C}+B+M)$ is very ample. 
Then this $r$ is the desired positive integer. 
We put $D:=er(K_{C}+B+M)$. 
The finiteness of ${\rm dim}\,H^0(X,\mathcal{O}_{X}(er(K_X+\Delta)))$ follows from 
\begin{equation*}
\begin{split}
0\leq &{\rm dim}\,H^0(X,\mathcal{O}_{X}(er(K_X+\Delta)))={\rm dim}\,H^0(C,\mathcal{O}_{C}(D))\\
=& {\rm dim}\,H^1(C,\mathcal{O}_{C}(D))+{\rm deg}\, D+\chi(C,\mathcal{O}_{C})\\
=& eru+1+\bigl({\rm dim}\,H^0(C,\mathcal{O}_{C}(K_{C}-D))- {\rm dim}\,H^0(C,\mathcal{O}_{C}(K_{C}))\bigr)\\
\leq & eru+1
\end{split}
\end{equation*}
by the Riemann--Roch theorem. 
\end{proof}

Let $n$ be a positive integer. 
We define 
$$\mathfrak{F}_{d, n, v}:=\left\{ f\colon (X,\Delta) \to C \;\middle|
\begin{array}{rl}
(i)&\text{$f$ is a klt-trivial fibration over a curve $C$,}\\
(ii)&\text{${\rm dim}X=d$,}\\
(iii)&\text{$n\Delta$ is a Weil divisor,}\\
(iv)&\text{there is an $f$-ample $\mathbb{Q}$-Cartier Weil}\\
&\text{divisor $A$ on $X$ such that ${\rm vol}(A|_{F})=v$, }\\
& \text{where $F$ is a general fiber of $f$.}
\end{array}\right\}.$$
Then Lemma \ref{lem--Cartierindex} shows the existence of an $n \in \mathbb{Z}_{>0}$ such that $\mathfrak{G}_{d, \Theta, v, u}$ is a subset of the following set
\begin{equation*}
\begin{split}
\mathfrak{G}_{d, n, v, u}:=&\left\{ f\colon (X,\Delta) \to C \in \mathfrak{F}_{d, n, v} \;\middle|
\begin{array}{l}
\text{$K_{X}+\Delta \equiv uf^{*}H$ for some}\\
\text{Cartier divisor $H$ with ${\rm deg}\,H=1$}
\end{array}\right\}.
\end{split}
\end{equation*}
Moreover, there exists a positive integer $r$, depending only on $d$, $n$, $v$, and $u$, such that for any element $f\colon (X,\Delta) \to C$ of $\mathfrak{G}_{d, n, v, u}$, the divisor $er(K_{X}+\Delta)$ is a base point free Cartier divisor and the linear system $|er(K_{X}+\Delta)|$ defines $f$. 

In the rest of this section, we will deal with $\mathfrak{F}_{d, n, v}$ and $\mathfrak{G}_{d, n, v, u}$ for the fixed $d, n \in \mathbb{Z}_{>0}$, $v \in \mathbb{Q}_{>0}$, and $u \in \mathbb{Q}_{\neq 0}$. 

The following lemma gives a lower bound of the $\alpha$-invariants for general fibers of the elements of $\mathfrak{F}_{d,n,v}$.

\begin{lem}\label{lem--lct}
There exists a positive integer $N$, depending only on $d$, $n$, and $v$, such that for any element $f\colon (X,\Delta) \to C$ of $\mathfrak{F}_{d, n, v}$ and any $\mathbb{Q}$-Cartier Weil divisor $A$ on $X$ as in (iv) of $\mathfrak{F}_{d, n, v}$, we have the inequality 
$$\alpha_{(F,\Delta|_{F})}(A|_F)\geq \frac{1}{N},$$
where $F$ is a general fiber of $f$. 
\end{lem}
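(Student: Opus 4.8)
The plan is to reduce the statement to the general fibre, realise it as an $\tfrac1n$-lc polarised log Calabi--Yau pair, and then quote a boundedness theorem of Birkar together with the ACC for log canonical thresholds. So fix $f\colon(X,\Delta)\to C$ in $\mathfrak{F}_{d,n,v}$, a general fibre $F$, and a Weil divisor $A$ as in (iv). By adjunction, $(F,\Delta|_F)$ is a klt pair of dimension $d-1$; since $K_X+\Delta\sim_{\mathbb{Q},C}0$ we get $K_F+\Delta|_F\sim_{\mathbb{Q}}0$; since $n\Delta$ is a Weil divisor, so is $n(\Delta|_F)$, so the coefficients of $\Delta|_F$ lie in $\tfrac1n\mathbb{Z}\cap[0,1)$; and $A|_F$ is an ample $\mathbb{Q}$-Cartier Weil divisor with $\mathrm{vol}(A|_F)=v$. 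The first point to record is that $(F,\Delta|_F)$ is $\tfrac1n$-lc — as already noted in the introduction, the pairs in question are $\tfrac1n$-lc, and this is inherited by restriction to a general fibre. Thus the general fibres form a family of $\tfrac1n$-lc log Calabi--Yau pairs of fixed dimension $d-1$, each polarised by an ample divisor of fixed volume $v$.

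The key input is then Birkar's result \cite{birkar-bab} on boundedness of singularities of $\epsilon$-lc polarised pairs: applied with $\epsilon=\tfrac1n$ to $(F,\Delta|_F)$ and $A|_F$ (whose log canonical divisor is $\mathbb{Q}$-trivial, in particular $A|_F-(K_F+\Delta|_F)$ is ample and of volume $v$), it gives a uniform lower bound, depending only on $d-1$, $n$ and $v$, for the log canonical thresholds $\mathrm{lct}(F,\Delta|_F;D)$ as $D$ ranges over $|A|_F|_{\mathbb{Q}}$; that is, exactly $\alpha_{(F,\Delta|_F)}(A|_F)\ge\tfrac1N$ for a suitable $N$. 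If one only extracts from \cite{birkar-bab} the log boundedness of the triples $(F,\Delta|_F,A|_F)$ (after replacing $A|_F$ by a bounded multiple so as to embed the fibres), then one finishes as follows: the fibres fit into finitely many log $\mathbb{Q}$-Gorenstein families with marked polarisations over finite-type bases, and on each such base the function sending a parameter to the corresponding $\alpha$-invariant is constructible and lower semicontinuous by the ACC for log canonical thresholds \cite{hmx-acc}; since each fibre is klt this function is strictly positive, and a Noetherian induction over the (finitely many) bases produces a single $N\in\mathbb{Z}_{>0}$ depending only on $d$, $n$, $v$ with $\alpha_{(F,\Delta|_F)}(A|_F)\ge\tfrac1N$ for all members.

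The step I expect to be the main obstacle is precisely the passage from boundedness of the family of polarised pairs to the \emph{uniform} lower bound on the $\alpha$-invariant: one must control the variation of $\mathrm{lct}$'s of members of the moving linear system $|A|_F|_{\mathbb{Q}}$ uniformly across the family, which is exactly where the ACC for log canonical thresholds is indispensable (and where simply knowing each individual $\alpha$-invariant is positive would not suffice). A secondary, more bookkeeping-type difficulty is that the fibres $(F,\Delta|_F)$ need not be Gorenstein and $A|_F$ is only a $\mathbb{Q}$-Cartier Weil divisor, so Birkar's theorem and the family statements must be set up for a bounded multiple of $A|_F$; this is routine once such a multiple is fixed, and does not affect the final bound since it depends only on $d$, $n$ and $v$.
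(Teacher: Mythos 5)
Your overall strategy (restrict to a general fibre, then quote Birkar) is the paper's, but there is a genuine gap at the step you treat as free: the assertion that $(F,\Delta|_F)$ is $\tfrac1n$-lc. Being klt with coefficients in $\tfrac1n\mathbb{Z}$ does not bound the log discrepancies below by $\tfrac1n$; that would require a bound on the Cartier index of $K_F+\Delta|_F$, which is not available here. The remark in the introduction that the pairs are $\tfrac1n$-lc refers to the situation of Theorem \ref{mainbound} \emph{after} Lemma \ref{lem--Cartierindex} has bounded the Cartier index of $K_X+\Delta$, and that lemma uses the hypothesis $K_X+\Delta\equiv uf^*H$ with $u\neq 0$; Lemma \ref{lem--lct} is stated for $\mathfrak{F}_{d,n,v}$, where no such bound is known (for the fibres it would amount to an effective index statement for klt log Calabi--Yau pairs, which is not a formality). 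So the $\epsilon$-lc input needed for \cite[Theorem 1.8]{birkar-bab} is missing in your argument. The paper supplies it differently: a uniform $\delta$-lc bound for the fibres, with $\delta$ depending only on $d-1$ and $n$, deduced from the ACC for numerically trivial pairs \cite[Theorem 1.5]{hmx-acc} (note: that theorem, not the ACC for lc thresholds that you invoke, is what is used).

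A second issue, which you label as routine bookkeeping but which is where the paper actually spends its effort, is verifying the hypotheses of \cite[Theorem 1.8]{birkar-bab}: that theorem requires (roughly) a very ample divisor of bounded degree with the divisor minus the boundary big, not merely an ample $\mathbb{Q}$-Cartier Weil divisor of bounded volume with $A|_F-(K_F+\Delta|_F)$ ample. Producing a \emph{uniformly} bounded multiple of $A|_F$ that is Cartier and very ample is itself a boundedness statement: the paper uses \cite[Corollary 1.4]{birkar-geometry-moduli} to get an effective $E_F\sim mA|_F$, \cite[Corollary 1.6]{birkar-geometry-moduli} to get log boundedness of $(F,\mathrm{Supp}(\Delta|_F+E_F))$, \cite[Lemma 2.24]{birkar-compl} to bound the Cartier index, then \cite[Theorem 1.1]{kollar-eff-basepoint-free} and \cite[Lemma 7.1]{fujino-eff-slc} for very ampleness, and the length of extremal rays to see that $3dm''A|_F-\Delta|_F$ is the pushdown of a big divisor. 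Your alternative route (log boundedness plus constructibility/semicontinuity of $\alpha$ over finitely many families, in the spirit of \cite[Proposition 5.3]{BL}, which the paper does use in Theorem \ref{unif+}) is viable in principle, but it rests on exactly the same boundedness ingredients you defer, so it does not bypass this work either.
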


\begin{proof}
We fix $f\colon (X,\Delta) \to C\in \mathfrak{F}_{d, n, v}$ and $A$ as in the final condition of $\mathfrak{F}_{d, n, v}$. 
Let $F$ be a general fiber of $f$. 
We put $\Delta_{F}=\Delta|_{F}$. 

By applying \cite[Corollary 1.4]{birkar-geometry-moduli} to $(F,\Delta_{F})$ and $A|_{F}$, we can find an $m \in \mathbb{Z}_{>0}$, depending only on $d$ and $n$, such that 
$$mA|_{F} \sim E_{F}$$
 for some effective Weil divisor $E_{F}$ on $F$. 
Then we have ${\rm vol}(E_{F})=m^{d-1}v$. 
By applying \cite[Corollary 1.6]{birkar-geometry-moduli} to $(F,\Delta_{F})$ and $E_{F}$, we see that $(F,{\rm Supp}(\Delta_{F}+E_{F}))$ belongs to a bounded family depending only on $d$, $n$, $v$, and $m$. 
By \cite[Lemma 2.24]{birkar-compl}, there exists $m' \in \mathbb{Z}_{>0}$, depending only on $d$, $n$, $v$, and $m$, such that $m'E_{F}\sim m'mA|_{F}$ is Cartier. 
Then $m'm$ depends only on $d$, $n$, and $v$. 

Because the divisor $m'mA|_{F}-(K_{F}+\Delta_{F})$ is ample, we may apply the effective base point freeness \cite[Theorem 1.1]{kollar-eff-basepoint-free} and the effective very ampleness \cite[Lemma 7.1]{fujino-eff-slc}. 
Hence, there exists $m'' \in \mathbb{Z}_{>0}$, depending only on $d$, $n$, and $v$, such that $m''A|_{F}$ is very ample. 
Taking a small $\mathbb{Q}$-factorialization of $X$ and applying the length of extremal rays, we see that $K_{F}+3dm''A|_{F}$ is the pushdown of a big divisor (cf.~\cite[Lemma 2.46]{birkar-compl}). 
Because we have
$$3dm''A|_{F}-\Delta_{F}\sim_{\mathbb{Q}}3dm''A|_{F}+K_{F},$$
we see that $3dm''A|_{F}-\Delta_{F}$ is the pushdown of a big divisor. 
We also have 
$${\rm vol}(3dm''A|_{F})=(3dm'')^{d-1}v.$$ 
By the ACC for numerically trivial pairs \cite[Theorem 1.5]{hmx-acc}, we can find a positive real number $\delta>0$, depending only on $d-1$ and $n$, such that $(F,\Delta_{F})$ is $\delta$-lc. 

From the above discussion, we may apply \cite[Theorem 1.8]{birkar-bab} to $(F,\Delta_{F})$ and $A|_{F}$, and we can find $\epsilon \in \mathbb{R}_{>0}$, depending only on $d-1$, $\delta$, and $(3dm'')^{d-1}v$, such that 
$$\alpha_{(F,\Delta|_{F})}(A|_F)\geq \epsilon.$$
Construction of $m''$ and $\delta$ implies that $\epsilon$ depends only on $d$, $n$, and $v$. 
Finally, we define $N$ to be the minimum positive integer satisfying $\epsilon >\frac{1}{N}$. 
Then $N$ satisfies the condition of Lemma \ref{lem--lct}. 
\end{proof}

\begin{defn}\label{defn--boundedvol}
Let $N$ be the positive integer in Lemma \ref{lem--lct}. 
We define 
$$\alpha:=d(4N+\lceil euN \rceil)v.$$
Note that $\alpha$ depends only on $d$, $n$, $v$, and $u$. 
\end{defn}

The following result is a crucial step for the boundedness and a special form of Theorem \ref{mainbound} (\ref{bound-(2)}). 

\begin{prop}\label{prop--nefthreshold-volume}
For any element $f\colon (X,\Delta) \to C$ of $\mathfrak{G}_{d, n, v, u}$ and any $\mathbb{Q}$-Cartier Weil divisor $A$ on $X$ as in (iv) of $\mathfrak{F}_{d, n, v}$, there exists a Cartier divisor $D$ on $C$ such that $A+f^{*}D$ is ample and  ${\rm vol}(A+f^{*}D)\leq \alpha$. 
\end{prop}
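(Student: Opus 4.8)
The plan is to produce a Cartier divisor $D$ on $C$ for which $A+f^*D$ is nef but close to the "nef threshold," so that its volume is controlled. First I would reduce to estimating the minimal degree $t_0$ such that $A+f^*(t_0 H)$ is nef, where $H$ is a degree one divisor on $C$; here $t_0\in\mathbb{Q}$ and may be negative. Given such a $t_0$, choosing $D$ an integral divisor of degree $\lceil t_0\rceil$ (or $\lceil t_0\rceil$ plus a bounded constant to ensure ampleness rather than mere nefness) makes $A+f^*D$ ample, and $\mathrm{vol}(A+f^*D)=\mathrm{vol}(A+f^*(t_0H))+(\text{bounded correction})$ by expanding the top self-intersection; the leading terms are $(A|_F^{d-1})\cdot(\text{bounded number of fiber insertions})=v\cdot(\text{bounded})$, which is where the explicit constant $\alpha=d(4N+\lceil euN\rceil)v$ comes from. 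So the whole problem collapses to bounding $t_0$ from above by something like $4N+\lceil euN\rceil$ (minus a universal lower bound that is harmless).

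For the upper bound on the nef threshold, the key input is the lower bound on the $\alpha$-invariant of the general fiber from Lemma \ref{lem--lct}: $\alpha_{(F,\Delta|_F)}(A|_F)\ge 1/N$. By the standard relation between $\alpha$-invariants and log canonical thresholds, a general member of a suitable multiple of $|A|$ restricted to $F$, together with $\Delta|_F$, stays lc for coefficients up to roughly $1/N$; equivalently $N\!\cdot\!(A|_F)+K_F+\Delta|_F$ (or a nearby divisor) behaves well. The mechanism I expect the authors to use is Fujino's semipositivity theorem \cite[Theorem 1.11]{fujino-semi-positivity}: one writes down an auxiliary klt-trivial fibration structure obtained by adding $\sim\frac{1}{N}$ times a general divisor in $|NA|$ (or $|N'A|$ for an appropriate multiple) to $\Delta$, so that the new boundary is still lc over the generic point of $C$, and then the moduli part of the associated canonical bundle formula is semiample — in particular nef — on $C$. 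Translating the semipositivity of that moduli part back through the canonical bundle formula, and using $K_X+\Delta\equiv uf^*H$ to absorb the discriminant and $K_C$ contributions (whose degrees are controlled by $u$, hence the $\lceil euN\rceil$ term), yields that $A+f^*(t_0H)$ is nef for some $t_0\le 4N+\lceil euN\rceil$. The factor of $d$ and the precise $4N$ come from keeping track of how many copies of $A|_F$ versus fiber classes appear and from the effective bounds.

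The main obstacle I anticipate is the careful bookkeeping in passing from the fiberwise statement (the $\alpha$-invariant bound on $F$) to a global statement on $X$ over $C$: one must choose the auxiliary divisor $G\in|N'A|$ (general, hence with $(F,\Delta|_F+\tfrac{1}{N'}G|_F)$ still lc) so that $G$ is vertical-free and its restriction to $F$ is the general member, verify that the resulting $(X,\Delta+\tfrac{1}{N'}G)\to C$ is genuinely a klt-trivial (or at least lc-trivial) fibration so Fujino's theorem applies, and then extract a numerical inequality of the shape $N' A + f^*(\text{something of bounded degree}) \sim_{\mathbb Q} f^*(\text{nef})$. Subtleties about $\mathbb{Q}$-Cartier-ness of $A$, the difference between $A$ and its round-ups, and the fact that the discriminant divisor $B_C$ may be nonzero (but with degree $\le eu+2$, already controlled in Lemma \ref{lem--Cartierindex}) all need to be handled, but none of these should be serious given the tools already assembled in the excerpt. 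Once the nef threshold bound $t_0\le 4N+\lceil euN\rceil$ is in hand, the volume estimate $\mathrm{vol}(A+f^*D)\le d(4N+\lceil euN\rceil)v=\alpha$ is a routine expansion of $(A+f^*D)^d$ using $(f^*D)^2=0$ and $(A|_F^{d-1})=v$.
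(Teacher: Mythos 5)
Your overall skeleton (fiberwise $\alpha$-invariant bound from Lemma \ref{lem--lct}, Fujino's semipositivity applied to an auxiliary lc-trivial structure, then a fiber-counting estimate for the volume) is the same as the paper's, but two steps of your plan do not work as stated. First, there is no absolute bound on the nef threshold $t_0$: replacing $A$ by $A\pm f^{*}(mH)$ leaves all the hypotheses of (iv) intact while shifting $t_0$ by $\mp m$, so $t_0\le 4N+\lceil euN\rceil$ is false, and there is also no ``universal lower bound'' to subtract, since the nef, ample and bigness thresholds all shift together. What is universally bounded is only the \emph{difference} between the ample threshold and the bigness threshold: the paper anchors everything at $\tau$, the smallest integer with $A+\tau f^{*}H$ big, chooses an effective $A'\sim_{\mathbb{Q}}A+\tau f^{*}H$ (bigness guarantees effectivity — your proposed general member of $|NA|$ may not exist, as $A$ is only $f$-ample), applies Fujino to $l(K_X+\Delta-f^{*}K_C+\tfrac1N A')$, and concludes that $A+(3N+euN+\tau)f^{*}H$ is ample.

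Second, the volume bound is not ``a routine expansion of $(A+f^{*}D)^d$'': expanding gives $A^d+d(\deg D)v$, and $A^d$ is completely uncontrolled (it is not bounded, and for a non-nef divisor $\mathrm{vol}(A)=0$ does not force $A^d\le 0$). The paper gets the bound by a different mechanism: since $A''=A+(\tau-N)f^{*}H$ is not big, $\mathrm{vol}(A'')=0$, and then the Jiang-type cohomological count — restricting to $G\in|N'f^{*}H|$ consisting of $N'=4N+\lceil euN\rceil$ general fibers and using $h^{0}(mA''+kN'f^{*}H)\le h^{0}(mA''+(k-1)N'f^{*}H)+h^{0}(G,mA|_G)$ — yields $\mathrm{vol}(A''+N'f^{*}H)\le \mathrm{vol}(A'')+dN'v=\alpha$. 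Some replacement for this step (e.g.\ the restricted-volume derivative formula) is indispensable; without it, knowing only that $A+f^{*}D$ is ample for a $D$ at bounded distance above $\tau$ does not bound its volume.
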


\begin{proof}
We fix an element $f\colon (X,\Delta) \to C$ of $\mathfrak{G}_{d, n, v, u}$, and we pick a Weil divisor $A$ on $X$ as in (iv) of $\mathfrak{F}_{d, n, v}$. 
Let $H$ be a Cartier divisor on $C$ such that ${\rm deg}\,H=1$.  

We define $\tau$ to be the smallest integer such that $A+\tau f^{*}H$ is big. 
Note that $\tau$ is well-defined since $H$ is not numerically trivial and $A+tf^{*}H$ is ample for all $t\gg0$. 
We fix an effective $\mathbb{Q}$-divisor 
$$A' \sim_{\mathbb{Q}} A+\tau f^{*}H.$$ 
Let $N \in \mathbb{Z}_{>0}$ be as in Lemma \ref{lem--lct}. 
By the property of $N$ in Lemma \ref{lem--lct}, there is a non-empty open subset $U\subset C$ such that $(X,\Delta+\frac{1}{N}A')$ is lc on $f^{-1}(U)$. 

Because $A'$ is ample over $C$ and $K_{X}+\Delta \sim_{\mathbb{Q},C}0$, we can find a positive integer $l$ such that $$\Phi:=l\left(K_{X}+\Delta-f^{*}K_{C}+\frac{1}{N}A'\right)$$
is Cartier and base point free over $C$ and $\Phi$ defines an embedding into $\mathbb{P}_{C}(f_{*}\mathcal{O}_{X}(\Phi))$. 
By applying \cite[Theorem 1.11]{fujino-semi-positivity} to $f\colon X \to C$ and $\Phi$, we see that $f_{*}\mathcal{O}_{X}(\Phi)$ is nef.
In other words, the Cartier divisor corresponding to $\mathcal{O}_{\mathbb{P}_{C}(f_{*}\mathcal{O}_{X}(\Phi))}(1)$ is nef. 
Because $\mathcal{O}_{X}(\Phi)$ coincides with the pullback of $\mathcal{O}_{\mathbb{P}_{C}(f_{*}\mathcal{O}_{X}(\Phi))}(1)$ to $X$, it follows that $\Phi$ is nef. 

Since $f\colon (X,\Delta) \to C$ is an element of $\mathfrak{G}_{d, n, v, u}$, it follows that the divisor
$$euf^{*}H-(K_{X}+\Delta)$$ 
is nef. 
Since ${\rm deg}H=1$, we see that $2H+K_{C}$ is nef. 
Therefore, the divisor
\begin{equation*}
\begin{split}
&(2+eu+\frac{\tau}{N})f^{*}H+\frac{1}{N}A\\
\sim_{\mathbb{Q}}&(2+eu)f^{*}H+\frac{1}{N}A'\\
=&(K_{X}+\Delta-f^{*}K_{C}+\frac{1}{N}A')-(K_{X}+\Delta)+f^{*}K_{C}+(2+eu)f^{*}H\\
=&\frac{1}{l}\Phi+f^{*}(2H+K_{C})+(euf^{*}H-(K_{X}+\Delta))
\end{split}
\end{equation*}
is nef. 
Thus, $A+(N(2+eu)+\tau)f^{*}H$ is nef. 
Since $A$ is $f$-ample, we see that 
$$A+(3N+euN+\tau)f^{*}H=A+(N(2+eu)+\tau)f^{*}H+Nf^{*}H$$
is ample. 

By definition of $\tau$, the divisor $A+(\tau-1)f^{*}H$ is not big.
Hence, we have 
$${\rm vol}(A+(\tau-N)f^{*}H)=0.$$ 
Since $K_{X}+\Delta \equiv uf^{*}H$, by the canonical bundle formula, we have $eu \geq {\rm deg}\,K_{C}$. 
Then $(4N+\lceil euN \rceil)H$ is very ample. 

We put $A'':=A+(\tau-N)f^{*}H$ and $N':=4N+\lceil euN \rceil$. 
Let $G\in |N'f^{*}H|$ be a member consisting of $N'$ general fibers. 
Then ${\rm vol}(A'')=0$ and $A''|_{G}=A|_{G}$ by definition. 
For each $m\in \mathbb{Z}_{> 0}$ and $0< k \leq m$, we consider the exact sequence
\begin{equation*}
\begin{split}
0\longrightarrow H^{0}\bigl(X,\mathcal{O}_{X}\bigl(mA''+(k-1)N'f^{*}H\bigr)\bigr)
\longrightarrow& H^{0}\bigl(X,\mathcal{O}_{X}\bigl(mA''+kN'f^{*}H\bigr)\bigr)\\
\longrightarrow &H^{0}\bigl(G,\mathcal{O}_{G}(mA|_{G})\bigr)
\end{split}
\end{equation*}
induced by
$$
0\longrightarrow \mathcal{O}_{X}\bigl(mA''+kN'f^{*}H-G\bigr)
\longrightarrow \mathcal{O}_{X}\bigl(mA''+kN'f^{*}H\bigr)
\longrightarrow \mathcal{O}_{G}(mA|_{G})\longrightarrow 0.$$
By similar arguments to \cite[Proof of Lemma 2.5]{jiang-fano-birat-bound}, we have 
\begin{equation*}\begin{split}
&{\rm dim}\,H^{0}(X,\mathcal{O}_{X}(mA''+mN'f^{*}H))\\
\leq&{\rm dim}\,H^{0}(X,\mathcal{O}_{X}(mA''))+m\cdot{\rm dim}H^{0}\bigl(G,\mathcal{O}_{G}(mA|_{G})\bigr).
\end{split}\end{equation*}
Since $G$ consists of $N'$ general fibers, taking the limit $m \to \infty$, we have
$${\rm vol}(A''+N'f^{*}H)\leq {\rm vol}(A'')+dN'\cdot {\rm vol}(A|_{F})
=0+dN'v. $$
Here, we used that ${\rm vol}(A'')=0$ and ${\rm vol}(A|_{F})=v$. 
We put
$$D=(3N+\lceil euN \rceil+\tau)H.$$
By recalling the definitions of $A''$, $N'$, and $\alpha$ (see Definition \ref{defn--boundedvol}), we obtain
$${\rm vol}(A+f^{*}D)={\rm vol}(A''+N'f^{*}H)\leq dN'v=\alpha.$$
Thus $D$ is the desired Cartier divisor on $C$. We finish the proof. 
\end{proof}

\begin{defn}\label{defn--maximum-multiple}
Let $\alpha$ be the positive real number in Definition \ref{defn--boundedvol}. 
For any element $f\colon (X,\Delta) \to C$ of $\mathfrak{G}_{d, n, v, u}$ and any $\mathbb{Q}$-Cartier Weil divisor $A$ on $X$ as in (iv) of $\mathfrak{F}_{d, n, v}$, we pick a Cartier divisor $H$ on $C$ with ${\rm deg}\,H=1$ and we
define
$$m_{(f,A)}:={\rm max}\left\{ m \in \mathbb{Z} \mid \text{$A+mf^{*}H$ is ample, ${\rm vol}(A+mf^{*}H)\leq \alpha$}\right\}.$$
Note that $m_{(f,A)}$ is well-defined by Proposition \ref{prop--nefthreshold-volume}.
We may have $m_{(f,A)}\leq 0$. 
We define $$L_{(f,A)}:=A+m_{(f,A)} f^{*}H.$$
\end{defn}

Now we are ready to prove the boundedness. 

\begin{thm}[Boundedness]\label{thm--eff-veryample}
The set of klt pairs $(X,\Delta)$ appearing in $\mathfrak{G}_{d, n, v, u}$ is log bounded. 
Furthermore, there exists a positive integer $I_{0}$, depending only on $d$, $n$, $v$, and $u$, such that $I_{0}L_{(f,A)}$ is an ample Cartier divisor on $X$.
In particular, $I_0A$ is Cartier.
\end{thm}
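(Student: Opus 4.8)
The plan is to derive both assertions simultaneously from Birkar's boundedness theorem for polarised $\epsilon$-lc pairs \cite{birkar-geometry-moduli}, applied to the pair $(X,\Delta)$ polarised by the ample $\mathbb{Q}$-Cartier Weil divisor $L_{(f,A)}$. Two uniform inputs make this possible: a lower bound for the log discrepancies of $(X,\Delta)$, which I would extract from Lemma \ref{lem--Cartierindex}, and the upper bound $\mathrm{vol}(L_{(f,A)})\le\alpha$, which is Definition \ref{defn--maximum-multiple} combined with Proposition \ref{prop--nefthreshold-volume}.

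First I would check that $(X,\Delta)$ is $\frac{1}{r}$-lc, where $r$ is the integer of Lemma \ref{lem--Cartierindex}. That lemma gives $er(K_X+\Delta)\sim f^{*}D$ for a very ample, hence Cartier, divisor $D$ on $C$; since $e=\pm1$ the divisor $r(K_X+\Delta)$ is linearly equivalent to $f^{*}(\pm D)$, so it is Cartier. Since $K_Y$ is Cartier on any smooth model $\pi\colon Y\to X$ while $\pi^{*}(K_X+\Delta)$ has coefficients in $\frac{1}{r}\mathbb{Z}$ along every prime divisor, every log discrepancy $A_{(X,\Delta)}(E)=1+\mathrm{ord}_E(K_Y-\pi^{*}(K_X+\Delta))$ lies in $\frac{1}{r}\mathbb{Z}$, and being positive because $(X,\Delta)$ is klt, it is $\ge\frac{1}{r}$. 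Next, $L_{(f,A)}=A+m_{(f,A)}f^{*}H$ is an ample $\mathbb{Q}$-Cartier Weil divisor with $\mathrm{vol}(L_{(f,A)})\le\alpha$, the coefficients of $\Delta$ lie in the finite set $\frac{1}{n}\mathbb{Z}\cap[0,1)$, and $\mathrm{dim}\,X=d$. Finally I would verify the positivity of $L_{(f,A)}-(K_X+\Delta)$ required by the boundedness theorem. As $K_X+\Delta\equiv uf^{*}H$, one has $L_{(f,A)}-(K_X+\Delta)\equiv A+(m_{(f,A)}-u)f^{*}H$. If $u<0$ this is the sum of the ample divisor $L_{(f,A)}$ and the nef divisor $-(K_X+\Delta)$, hence ample. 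If $u>0$, the proof of Proposition \ref{prop--nefthreshold-volume} already shows $m_{(f,A)}\ge 3N+\lceil euN\rceil+\tau$, with $N$ the integer of Lemma \ref{lem--lct} and $\tau$ the least integer such that $A+\tau f^{*}H$ is big; since $N\ge1$ this yields $m_{(f,A)}-u>\tau$, so $A+(m_{(f,A)}-u)f^{*}H$ is big. In either case $L_{(f,A)}-(K_X+\Delta)$ is pseudo-effective.

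Now I would apply \cite{birkar-geometry-moduli} to the class of $\frac{1}{r}$-lc projective pairs $(X,\Delta)$ of dimension $d$ with boundary coefficients in $\frac{1}{n}\mathbb{Z}\cap[0,1)$ polarised by the ample $\mathbb{Q}$-Cartier Weil divisor $L_{(f,A)}$ of volume $\le\alpha$ with $L_{(f,A)}-(K_X+\Delta)$ pseudo-effective: this gives, on one hand, that the couples $(X,\mathrm{Supp}\,\Delta)$ form a bounded family — i.e. the klt pairs $(X,\Delta)$ occurring in $\mathfrak{G}_{d,n,v,u}$ are log bounded — and, on the other hand, a positive integer $I_0$, depending only on $d,n,v,u$, such that $I_0L_{(f,A)}$ is an ample Cartier divisor on $X$. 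The last assertion then follows since $L_{(f,A)}-A=m_{(f,A)}f^{*}H$ is Cartier, so $I_0A=I_0L_{(f,A)}-I_0m_{(f,A)}f^{*}H$ is Cartier as well. The main obstacle is precisely the appeal to Birkar's theorem in this form: it is the uniform boundedness of the singularities, encoded in the $\frac{1}{r}$-lc property coming from Lemma \ref{lem--Cartierindex}, that lets a single bounded-volume polarisation control both the birational type of the pairs and the Cartier index of $L_{(f,A)}$; once that is granted, the remaining steps are bookkeeping with facts already in place.
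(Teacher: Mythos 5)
Your reduction to Birkar's results is the right idea in spirit (and your preliminary checks are fine: the $\frac{1}{r}$-lc bound from Lemma \ref{lem--Cartierindex}, the bound $\mathrm{vol}(L_{(f,A)})\le\alpha$, the pseudo-effectivity/bigness of $L_{(f,A)}-(K_X+\Delta)$ in both signs of $u$, and the final deduction that $I_0A$ is Cartier). But the single black-box appeal to \cite{birkar-geometry-moduli} does not match what that paper actually provides, and this creates a genuine gap. The boundedness statements there that yield \emph{log} boundedness of couples (the ones used in this paper, \cite[Theorem 1.5, Corollary 1.6]{birkar-geometry-moduli}) take an \emph{effective} polarising divisor — their conclusion is about $(X,\mathrm{Supp}(B+N))$ — and in the non-Calabi--Yau case the volume hypothesis is on $K_X+B+N$, not on $N$ alone. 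So one cannot simply feed in $N=L_{(f,A)}$, which is not effective. The paper's proof first manufactures an effective member $E\sim mL_{(f,A)}$ by applying Birkar's effective non-vanishing \cite[Theorem 1.1]{birkar-geometry-moduli} on a small $\mathbb{Q}$-factorialization (where $\phi^*L_{(f,A)}-K_{X'}$ is big), and then argues separately for $u<0$ (adding three general fibers scaled by $\frac{eu}{3}$ to produce a klt Calabi--Yau pair and invoking \cite[Corollary 1.6]{birkar-geometry-moduli}) and for $u>0$ (bounding $\mathrm{vol}(K_X+\Delta+E)$ and invoking \cite[Theorem 1.5]{birkar-geometry-moduli}). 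Your proposal omits the non-vanishing step and the case division entirely.

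The gap is most serious for the second assertion. No theorem of \cite{birkar-geometry-moduli} outputs a uniform Cartier index for the polarisation; attributing ``$I_0L_{(f,A)}$ Cartier'' to that citation is attributing a conclusion it does not contain. In the paper this is obtained only because the support of the effective divisor $E\in|mL_{(f,A)}|$ is made part of the bounded couple $(X,\mathrm{Supp}(\Delta+E))$ (and $(X,\mathrm{Supp}(\Delta+G+E))$ when $u<0$), after which \cite[Lemma 2.24]{birkar-compl} — bounded Cartier index for $\mathbb{Q}$-Cartier Weil divisors supported on the boundary of a bounded family — gives $I'E$ Cartier, hence $I_0L_{(f,A)}=I'mL_{(f,A)}$ Cartier. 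Without producing $E$, including its support in the bounded family, and citing such a Cartier-index lemma, the log boundedness of $(X,\mathrm{Supp}\,\Delta)$ alone gives you no control over the Cartier index of $L_{(f,A)}$, whose support is unrelated to $\mathrm{Supp}\,\Delta$. To repair the proposal you should insert exactly these missing steps; with them, your argument becomes essentially the paper's proof.
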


\begin{proof}
We pick $f\colon (X,\Delta) \to C \in \mathfrak{G}_{d, n, v, u}$ and a $\mathbb{Q}$-Cartier Weil divisor $A$ on $X$ as in (iv) of $\mathfrak{F}_{d, n, v}$. 
By Lemma \ref{lem--Cartierindex}, we can find a positive integer $r$, depending only on $d$, $n$, $v$, and $u$, such that $(X,\Delta)$ is $\frac{1}{r}$-lc. 
By \cite[Corollary 1.4.3]{BCHM}, there is a small $\mathbb{Q}$-factorization $\phi \colon X' \to X$ of $X$. 
Then $(X',0)$ is an $\frac{1}{r}$-lc pair and $3d\phi^{*}L_{(f,A)}-K_{X'}$ is big. 
By applying \cite[Theorem 1.1]{birkar-geometry-moduli} to $X'$ and $3d\phi^{*}L_{(f,A)}$, 
we can find a positive integer $m$, depending only on $d$, $n$, $v$, and $u$, such that 
$$H^{0}(X', \mathcal{O}_{X'}(m\phi^{*}L_{(f,A)})) \neq 0.$$ 
Thus, we can find an effective Weil divisor $E \sim mL_{(f,A)}$. 
We have 
$${\rm vol}(E)\leq m^{d}\alpha.$$

For each $f\colon (X,\Delta) \to C \in \mathfrak{G}_{d, n, v, u}$, we fix $E \sim mL_{(f,A)}$ as above, and we prove that the set of $(X,{\rm Supp}(\Delta+E))$ is bounded.  
If $u<0$, then we have $eu \leq 2$ by the canonical bundle formula (cf.~Theorem \ref{Bsemi}). 
By taking a reduced divisor $G$ on $X$ consisting of three general fibers of $f$, we get a klt Calabi--Yau pair $(X,\Delta+\frac{eu}{3}G)$. 
By applying \cite[Corollary 1.6]{birkar-geometry-moduli} to $(X,\Delta+\frac{eu}{3}G)$ and $E$, we see that the set of such couples $(X,{\rm Supp}(\Delta+G+E))$ is bounded. 
In particular, the set of $(X,{\rm Supp}(\Delta+E))$ for some $f\colon (X,\Delta) \to C \in \mathfrak{G}_{d, n, v, u}$ is bounded. 
If $u>0$, then we pick a Cartier divsior $H$ on $C$ such that ${\rm deg}H=1$ and 
$K_{X}+\Delta \equiv uf^{*}H$.  
Since the volume depends only on the numerical class, we have 
\begin{equation*}
\begin{split}
{\rm vol}(K_{X}+\Delta+E)=(K_{X}+\Delta+E)^{d}=&
d(uf^{*}H\cdot E^{d-1})+(E^{d})\\
=&duf^{*}H \cdot (mL_{(f,A)})^{d-1}+{\rm vol}(E)\\
\leq& dum^{d-1}v+m^{d}\alpha.
\end{split}
\end{equation*}
Since $(X,\Delta)$ is $\frac{1}{r}$-lc by Lemma \ref{lem--Cartierindex} and $n\Delta$ is a Weil divisor by definition, we may apply \cite[Theorem 1.5]{birkar-geometry-moduli} to $(X,\Delta)$ and $E$, and the set of $(X,{\rm Supp}(\Delta+E))$ is bounded. 
By these arguments, we obtain the boundedness of the set of $(X,{\rm Supp}(\Delta+E))$. 

The first statement of Theorem \ref{thm--eff-veryample} immediately follows from the above discussion. 
Moreover, \cite[Lemma 2.24]{birkar-compl} implies the existence of a positive integer $I'$, depending only on $d$, $n$, $v$, and $u$, such that $I'E$ is Cartier. 
Set $I_{0}:=I'm$. 
Then $I_{0}$ depends only on $d$, $n$, $v$, and $u$, and $I_0L_{(f,A)}$ is Cartier. 
We complete the proof.
\end{proof}

\begin{rem}\label{rem--ell-bound} 
We define
$$\mathfrak{G}_{d, n, v, 0}:=\left\{ f\colon (X,\Delta) \to C \in \mathfrak{F}_{d, n, v} \;\middle|
\begin{array}{l}
K_{X}+\Delta \equiv 0
\end{array}\right\}, \qquad {\rm and}$$
$$\mathfrak{V}_{d,n,v}:=\left\{ (X,\Delta) \;\middle|
\begin{array}{l}
\text{$(X,\Delta)$ is a klt pair and $f\colon (X,\Delta) \to C \in \mathfrak{G}_{d, n, v,0}$ for some $f$}
\end{array}\right\}.$$
Then the same argument as in this section implies that $\mathfrak{V}_{d,n,v}$ is log bounded.  
Indeed, applying the argument in Lemma \ref{lem--Cartierindex}, we can find a positive integer $r$, depending only on $d$, $n$, and $v$, such that $r(K_{X}+\Delta) \sim 0$. 
We define $\alpha:=4dNv$ (see Definition \ref{defn--boundedvol}). 
By the same argument as in Proposition \ref{prop--nefthreshold-volume}, for any element $f\colon (X,\Delta) \to C$ of $\mathfrak{G}_{d, n, v, 0}$ and any $\mathbb{Q}$-Cartier Weil divisor $A$ on $X$ as in (iv) of $\mathfrak{F}_{d, n, v}$, there exists a Cartier divisor $D$ on $C$ such that $A+f^{*}D$ is ample and  ${\rm vol}(A+f^{*}D)\leq \alpha$. 
Then we can define the line bundle $L_{(f,A)}$ as in Definition \ref{defn--maximum-multiple}. 
Then the argument in Theorem \ref{thm--eff-veryample} implies that $\mathfrak{V}_{d,n,v}$ is log bounded. 
Moreover, there exists a positive integer $I_0$, depending only on $d$, $n$, and $v$, such that $I_0L_{(f,A)}$ is an ample Cartier divisor (cf.~Theorem \ref{thm--eff-veryample}). 
\end{rem}

\begin{proof}[Proof of Theorem \ref{mainbound}]
By Lemma \ref{lem--Cartierindex}, we may assume that $\Theta= \frac{1}{n}\mathbb{Z}\cap [0,1]$ for some $n\in\mathbb{Z}_{>0}$.
Then the assertion immediately follows from Theorem \ref{thm--eff-veryample}, Remark \ref{rem--ell-bound}, and the existence of $L_{(f,A)}$ as in Definition \ref{defn--maximum-multiple}.
\end{proof}

We make use of the following result to construct moduli spaces in Section \ref{Consec}.

\begin{cor}\label{cor--hilbertpolynomial}
Fix $d\in\mathbb{Z}_{>0}$, a DCC subset $\Theta\subset \mathbb{Q}\cap[0,1]$ and rational numbers $u,v\in\mathbb{Q}$, where $v>0$. 
 For any $w\in\mathbb{Q}_{>0}$, consider the following set
 $$\mathfrak{G}_{d, \Theta, v,u,w}:=\left\{
\begin{array}{l}
f\colon (X,\Delta,A) \to C
\end{array}
\;\middle|
\begin{array}{rl}
(i)&\text{$f$ is a klt-trivial fibration over a curve}\\
&\text{$C$ such that $K_{X}+\Delta\equiv uf^{*}H$ with a}\\
&\text{line bundle $H$ on $C$ of ${\rm deg}\,H=1$,}\\
(ii)&\text{${\rm dim}X=d$,}\\
(iii)&\text{the coefficients of $\Delta$ belong to $\Theta$,}\\
(iv)&\text{$A$ is an ample $\mathbb{Q}$-Cartier Weil divisor}\\
&\text{on $X$ such that $(H\cdot A^{d-1})=v$ and}\\
& \text{$\mathrm{vol}(A)\le w$.}
\end{array}\right\}.$$
We also fix  $w' \in \mathbb{Q}_{>0}$.  
Then, there exist 
\begin{itemize}
\item
a positive integer $I$, depending only on $d$, $\Theta$, $u$, $v$, and $w$, and 
\item
finitely many polynomials $P_1,\cdots,P_l$, depending only on $d$, $\Theta$, $u$, $v$, $w$, and $w'$, 
\end{itemize}
satisfying the following.
  For any $f\colon(X,\Delta,A)\to C\in \mathfrak{G}_{d, \Theta, v,u,w}$ and nef Cartier divisor $M$ on $X$, 
\begin{itemize}
\item
$IA+M$ is very ample,
\item
$H^j(X,\mathcal{O}_X(m(IA+M)))=0$ for every $j>0$ and $m\in\mathbb{Z}_{>0}$, and 
\item
if $\mathrm{vol}(IA+M)\le w'$, then there is $1\le i\le l$ such that 
$$\chi(X,\mathcal{O}_X(m(IA+M)))=P_i(m)$$
for every $m\in\mathbb{Z}_{>0}$.
\end{itemize}
\end{cor}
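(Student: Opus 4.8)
\emph{Strategy.} The plan is to combine Theorem \ref{mainbound}(1), Lemma \ref{lem--Cartierindex}, Birkar's boundedness of polarized pairs, effective very ampleness of Koll\'ar--Fujino, and Kawamata--Viehweg vanishing; once a suitable uniform $I$ is in hand, the count of Hilbert polynomials is immediate. First, by Lemma \ref{lem--Cartierindex} there is $r\in\mathbb{Z}_{>0}$, depending only on $d$, $\Theta$, $u$, $v$, such that $er(K_X+\Delta)$ is Cartier for every $f\colon(X,\Delta,A)\to C\in\mathfrak{G}_{d,\Theta,v,u,w}$; in particular the Cartier index of $K_X+\Delta$ divides $r$. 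By Theorem \ref{mainbound}(1) the pairs $(X,\Delta)$ appearing here are log bounded, so by \cite[Lemma 2.24]{birkar-compl} there is $I_0\in\mathbb{Z}_{>0}$, depending only on $d$, $\Theta$, $u$, $v$, with $I_0A$ an ample Cartier divisor; hence $\mathrm{vol}(A)=A^d\ge I_0^{-d}$.

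Next I would bound the polarizations themselves. Set $I_1:=I_0\lceil d|u|vI_0^d\rceil+I_0$. Since $C$ is a curve we have $(f^*H)^2=0$ and $f^*H\cdot A^{d-1}=v$, so $\mathrm{vol}(I_1A-(K_X+\Delta))=I_1^dA^d-dI_1^{d-1}|u|v>0$ by the lower bound on $A^d$; thus $I_1A-(K_X+\Delta)$ is big, while $\mathrm{vol}(I_1A)\le I_1^dw$. Now the argument of Theorem \ref{thm--eff-veryample} applies: an effective divisor $E\sim mI_1A$ exists with $m$ bounded (\cite[Theorem 1.1]{birkar-geometry-moduli}), the couples $(X,\mathrm{Supp}(\Delta+E))$ are bounded (\cite[Theorem 1.5, Corollary 1.6]{birkar-geometry-moduli}), and passing to Hilbert schemes of a fixed relatively very ample bundle shows that the polarized varieties $(X,I_1A)$, hence the $(X,A)$, form a bounded family. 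Because $f$ is the morphism defined by $|er(K_X+\Delta)|$ (Lemma \ref{lem--Cartierindex}) and $f^*H$ is a degree-one pullback, $f$ and the class $f^*H$ spread out with this family, so the data $(X,A,f^*H)$ is bounded.

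Now I would choose $I$. On the bounded family $\{(X,A,f^*H)\}$ a standard Noetherian induction using openness of ampleness in flat projective families produces $I_3$, a multiple of $I_0$ depending only on $d$, $\Theta$, $u$, $v$, $w$, such that $I_3A-uf^*H$ is ample for every member (this is automatic with any $I_3\ge 1$ when $u\le 0$, since then $-uf^*H$ is nef). Since $K_X+\Delta\equiv uf^*H$ and $f^*H$, $M$ are nef, for every nef Cartier divisor $M$ the divisor
\[
I_3A+M-(K_X+\Delta)\equiv(I_3A-uf^*H)+M
\]
is ample. Let $c_0=c_0(d,r)$ be the constant of effective very ampleness (\cite[Theorem 1.1]{kollar-eff-basepoint-free} together with \cite[Lemma 7.1]{fujino-eff-slc}): if $(X,\Delta)$ is a klt pair of dimension $d$ with $r(K_X+\Delta)$ Cartier, $L_0$ is a nef Cartier divisor with $L_0-(K_X+\Delta)$ nef and big, and $N$ is a nef Cartier divisor, then $c_0L_0+N$ is very ample. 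Setting $I:=c_0I_3$ and applying this with $L_0:=I_3A$, $N:=M$, we obtain that $IA+M$ is very ample, with $I$ depending only on $d$, $\Theta$, $u$, $v$, $w$. For the cohomology vanishing, for every $m\ge 1$ the divisor
\[
m(IA+M)-(K_X+\Delta)=\bigl(IA+M-(K_X+\Delta)\bigr)+(m-1)(IA+M)
\]
is ample, hence nef and big, so Kawamata--Viehweg vanishing for the klt pair $(X,\Delta)$ gives $H^j(X,\mathcal{O}_X(m(IA+M)))=0$ for all $j>0$ and $m\ge 1$.

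Finally, assume $\mathrm{vol}(IA+M)\le w'$. For $1\le k\le d+1$ the very ample divisor $k(IA+M)$ has degree $k^d\mathrm{vol}(IA+M)\le(d+1)^dw'$ and embeds $X$ as a nondegenerate $d$-dimensional subvariety of $\mathbb{P}^{N_k}$ with $N_k+1=h^0(X,\mathcal{O}_X(k(IA+M)))=\chi(X,\mathcal{O}_X(k(IA+M)))$ by the vanishing above; since a nondegenerate $d$-dimensional subvariety of $\mathbb{P}^{N}$ of bounded degree forces $N$ to be bounded, each $N_k$ lies in a finite set depending only on $d$ and $w'$. Hence the polynomial $m\mapsto\chi(X,\mathcal{O}_X(m(IA+M)))$, of degree at most $d$, is determined by these finitely many values and so ranges over a finite set $\{P_1,\dots,P_l\}$ depending only on $d$, $w'$ and $I$, i.e.\ only on $d$, $\Theta$, $u$, $v$, $w$, $w'$. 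I expect the main obstacle to be the middle step: Theorem \ref{mainbound}(1) bounds only the pairs $(X,\Delta)$, so one must genuinely upgrade to boundedness of the polarized pairs $(X,A)$ — this is where the observation that $I_1A-(K_X+\Delta)$ is big and Birkar's boundedness of polarized pairs enter — and then extract a uniform integer $I_3$; once these are in place, the appeals to effective very ampleness, Kawamata--Viehweg vanishing and the Hilbert-polynomial count are routine.
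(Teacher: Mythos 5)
Your endgame (Kawamata--Viehweg vanishing, absorbing the nef divisor $M$, and counting Hilbert polynomials through their values at $m=1,\dots,d+1$) is sound and essentially reproduces the paper's Lemma \ref{lem--hilbertpolynomial}. The genuine gap is the middle step. From \cite[Theorem 1.1, Theorem 1.5, Corollary 1.6]{birkar-geometry-moduli} you only get, as in Theorem \ref{thm--eff-veryample}, boundedness of the \emph{couples} $(X,{\rm Supp}(\Delta+E))$; the sentence ``passing to Hilbert schemes of a fixed relatively very ample bundle shows that the polarized varieties $(X,I_1A)$, hence the $(X,A)$, form a bounded family'' and the subsequent spreading out of $f$ and $f^*H$ is precisely the hard point and is asserted, not proved. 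To put $(X,A)$ into finitely many families with the class of $A$ tracked you need a uniformly very ample divisor built from $A$ with uniformly bounded $h^0$; but uniform very ampleness of $cA$ via Koll\'ar--Fujino requires $cA-(K_X+\Delta)$ nef and big with $c$ uniform, which for $u>0$ is exactly the positivity you plan to extract later from the family (your $I_3$) --- so the argument as written is circular. The alternative, spreading out $E\sim mI_1A$ itself over Birkar's family of couples, is not addressed either: the multiplicities of $E$ do not lie in a finite set a priori, and its degree against the family's polarization is not controlled, so $E$ does not automatically extend as a relative divisor class. The paper's proof avoids this by a device you are missing: by the cone theorem (\cite[Theorem 3.7]{KM}) the divisors $A'=3drI_0A+r(K_X+\Delta)$ and $A''=A'+3drI_0A$ are ample with $A'-(K_X+\Delta)$ and $A''-(K_X+\Delta)$ ample \emph{regardless of the sign of $u$}, so effective freeness and very ampleness apply to them unconditionally; embedding $X$ into $\mathbb{P}^{N_1}\times\mathbb{P}^{N_2}$ by $(d+2)I_1A'$ and $(d+2)I_1A''$ makes $3drI_0A$ visible on the Hilbert-scheme family as $p_1^*\mathcal{O}(-1)\otimes p_2^*\mathcal{O}(1)$, and only then does relative ampleness over a Noetherian stratification produce the uniform $N'$ (your $I_3$) with $3dI_0N'A-(K_X+\Delta)$ ample. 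Note also that the polarized boundedness you invoke is essentially Proposition \ref{bou6}/Theorem \ref{mod2}, whose proofs in the paper use this very corollary.

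There are smaller inaccuracies you should repair even granting the above. The claim that $I_0A$ is Cartier ``by Theorem \ref{mainbound}(1) and \cite[Lemma 2.24]{birkar-compl}'' is a misapplication: that lemma bounds Cartier indices of divisors supported on the bounded couple, which $A$ is not; the correct reference is the ``in particular'' of Theorem \ref{thm--eff-veryample} (and Remark \ref{rem--ell-bound}, which you also need for $u=0$, where Lemma \ref{lem--Cartierindex} does not apply). The identity $\mathrm{vol}(I_1A-(K_X+\Delta))=I_1^dA^d-dI_1^{d-1}|u|v$ is only valid when the divisor is nef; for $u>0$ you should instead quote a Morse-type inequality $\mathrm{vol}(P-N)\ge P^d-dP^{d-1}\cdot N$ for nef $P,N$, which still yields bigness. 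Finally, the ``effective very ampleness'' statement you attribute to \cite[Theorem 1.1]{kollar-eff-basepoint-free} and \cite[Lemma 7.1]{fujino-eff-slc} is misstated: $L_0$ must be ample (a nef $L_0$ with nef and big adjoint need not have any very ample multiple), and absorbing the extra nef summand $N$ requires the Castelnuovo--Mumford regularity argument via Kawamata--Viehweg vanishing and \cite[Lemma 5.1]{FGA}, i.e.\ the content of Lemma \ref{lem--hilbertpolynomial}, rather than being a direct consequence of those two references; in your application $L_0=I_3A$ is ample, so this is fixable, but the step must be spelled out.
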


Before the proof, we show the following criterion for very ampleness and finiteness of Hilbert polynomials.

\begin{lem}\label{lem--hilbertpolynomial}
Fix $d\in\mathbb{Z}_{>0}$ and $w\in \mathbb{R}_{>0}$. Then there are finitely many polynomials $P_{1},\cdots, P_{l}$, depending only on $d$ and $w$, such that for any $d$-dimensional projective klt pair  $(X,\Delta)$, very ample Cartier divisor $A$ on $X$, and nef Cartier divisor $M$ on $X$, if $A-(K_X+\Delta)$ is nef and big and $\mathrm{vol}((d+2)A+M)\le w$, then
\begin{itemize}
\item
$(d+2)A+M$ is very ample,
\item
$H^j(X, \mathcal{O}_{X}(m((d+2)A+M)))=0$ for every $j>0$ and $m\in\mathbb{Z}_{>0}$, and 
\item
there is $1\leq i \leq l$ such that 
$$\chi(X, \mathcal{O}_{X}(m((d+2)A+M)))=P_{i}(m)$$ 
for every $m\in\mathbb{Z}_{>0}$.
\end{itemize} 
\end{lem}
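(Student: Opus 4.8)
The plan is to derive the three assertions from three standard inputs: Kawamata--Viehweg--Nadel vanishing for projective klt pairs, Mumford's theorem on Castelnuovo--Mumford regularity (a coherent sheaf which is $0$-regular with respect to a base point free very ample divisor is globally generated), and the boundedness of the family of subvarieties of a fixed projective space having bounded dimension and bounded degree. The hypothesis that $A-(K_X+\Delta)$ is nef and big will be used only to guarantee, in each cohomological estimate below, that the relevant Cartier divisor minus $K_X+\Delta$ is again nef and big; this is the only place klt-ness is needed. Note that the first two bullets will not actually use the bound $w$.

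First I would prove the vanishing. For $m\in\mathbb{Z}_{>0}$ one has
\[
m\bigl((d+2)A+M\bigr)-(K_X+\Delta)=\bigl(A-(K_X+\Delta)\bigr)+\bigl(m(d+2)-1\bigr)A+mM,
\]
and since $m(d+2)-1\ge d+1\ge 1$, the divisor $\bigl(m(d+2)-1\bigr)A$ is ample, $mM$ is nef, and $A-(K_X+\Delta)$ is nef and big, so the right-hand side is nef and big; hence $H^j(X,\mathcal{O}_X(m((d+2)A+M)))=0$ for all $j>0$ and $m>0$. Next I would show $(d+1)A+M$ is globally generated: for $1\le j\le d$,
\[
\bigl((d+1-j)A+M\bigr)-(K_X+\Delta)=\bigl(A-(K_X+\Delta)\bigr)+(d-j)A+M
\]
is nef and big (as $d-j\ge 0$), so $H^j(X,\mathcal{O}_X((d+1-j)A+M))=0$, while for $j>d$ this cohomology vanishes for dimension reasons; thus $\mathcal{O}_X((d+1)A+M)$ is $0$-regular with respect to the base point free divisor $A$, hence globally generated by Mumford's theorem. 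Finally $(d+2)A+M=A+\bigl((d+1)A+M\bigr)$ is very ample: writing $\phi_A$ for the closed immersion attached to $|A|$ and $\phi_B$ for the morphism attached to the globally generated $B:=(d+1)A+M$, the morphism $(\phi_A,\phi_B)$ is a closed immersion because $\phi_A$ is, so composing with a Segre embedding exhibits the subsystem of $|(d+2)A+M|$ spanned by the image of $H^0(A)\otimes H^0(B)\to H^0((d+2)A+M)$ as very ample, and therefore so is the complete linear system $|(d+2)A+M|$.

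For the last bullet, assume $\mathrm{vol}((d+2)A+M)\le w$. By the previous step $L:=(d+2)A+M$ is very ample, and by the vanishing $h^0(X,\mathcal{O}_X(mL))=\chi(X,\mathcal{O}_X(mL))$ for $m\ge 1$. The complete embedding $X\hookrightarrow\mathbb{P}^N$ defined by $|L|$ is non-degenerate of degree $L^d=\mathrm{vol}(L)\le w$, so $N\le\lfloor w\rfloor+d-1$, a bound depending only on $d$ and $w$. Hence the polarized pairs $(X,L)$ occurring here are parametrized by finitely many locally closed subsets of Hilbert schemes of projective spaces of dimension at most $\lfloor w\rfloor+d-1$, i.e.\ they form a bounded family; consequently only finitely many Hilbert polynomials $P_1,\dots,P_l$, depending only on $d$ and $w$, arise, and $\chi(X,\mathcal{O}_X(mL))$ is one of them.

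I do not anticipate a genuine obstacle. The only point requiring care is the bookkeeping in the two regularity computations: one must check that after subtracting $K_X+\Delta$ together with the appropriate multiple of $A$ one is still left with a nef and big divisor throughout the relevant ranges of $j$ and $m$, which is exactly what the hypothesis on $A-(K_X+\Delta)$ is designed to supply. Possible singularities of $X$ cause no trouble, since Kawamata--Viehweg--Nadel vanishing applies to arbitrary projective klt pairs and Mumford regularity to arbitrary coherent sheaves.
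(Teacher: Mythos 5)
Your proposal is correct. For the first two bullets (vanishing and very ampleness) your argument is essentially the paper's: Kawamata--Viehweg vanishing applied to $mA'-kA-(K_X+\Delta)$, Castelnuovo--Mumford $0$-regularity with respect to $A$ to get global generation of $(d+1)A+M$ (the paper cites the regularity lemma from FGA for exactly this), and then ``very ample $\otimes$ globally generated is very ample.'' Where you genuinely diverge is the finiteness of Hilbert polynomials. The paper stays elementary: it proves by cutting with general members $Y\in|mA'|$ and inducting on dimension that $h^0(X,\mathcal{O}_X(mA'))\le d+m^dw$, and then uses that the degree-$d$ numerical polynomial $\chi(X,\mathcal{O}_X(mA'))$ is determined by its values at $m=1,\dots,d+1$, each of which is a non-negative integer bounded only in terms of $d$ and $w$. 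You instead bound the ambient dimension of the embedding by $|L|$, $L=(d+2)A+M$, via the classical inequality $\deg\ge\operatorname{codim}+1$ for irreducible non-degenerate varieties (using $L^d=\mathrm{vol}(L)\le w$), and then invoke boundedness of subvarieties of fixed dimension and bounded degree in a fixed projective space to conclude there are finitely many Hilbert polynomials. This works, and the external input is standard (the paper itself uses a statement of this kind, \cite[3.28.2]{Kchow}, elsewhere); the trade-off is that your route imports a nontrivial boundedness theorem where the paper's hyperplane-section count is self-contained, and you should phrase the boundedness input via Chow-variety-type results rather than ``finitely many locally closed subsets of Hilbert schemes,'' since stratifying the Hilbert scheme already presupposes knowing which Hilbert polynomials occur.
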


\begin{proof}
Put $A'=(d+2)A+M$. 
By the Kawamata--Viehweg vanishing theorem, we have
$$H^{j}(X, \mathcal{O}_{X}(mA'-kA))=0$$
for every $m \in \mathbb{Z}_{>0}$, $0\le k\le d+1$, and $j>0$. 
By \cite[Lemma 5.1]{FGA}, $A'-A$ is globally generated, and 
$$\chi(X, \mathcal{O}_{X}(mA'))={\rm dim}\,H^{0}(X,  \mathcal{O}_{X}(mA'))$$
for every $m \in \mathbb{Z}_{>0}$. 
Since $A$ is very ample, so is $A'=(A'-A)+A$.
Furthermore, we can check the following claim:
\begin{claim*}
For every $m \in \mathbb{Z}_{>0}$, we have 
$${\rm dim}\,H^{0}(X,  \mathcal{O}_{X}(mA')) \leq d+m^dw.$$
\end{claim*}
\begin{proof}[Proof of Claim]
Let $Y \sim mA'$ be a general hyperplane section. 
Then we have
$$0\longrightarrow \mathcal{O}_{X} \longrightarrow \mathcal{O}_{X}(mA') \longrightarrow \mathcal{O}_{Y}(mA'|_{Y}) \longrightarrow 0,$$
hence we see that 
$$H^{0}(X,  \mathcal{O}_{X}(mA'))\leq 1+H^{0}(Y,  \mathcal{O}_{Y}(mA'|_{Y}))).$$
This relation implies the case of ${\rm dim}X=1$ in the claim, and the general case follows from the relation and an induction on the dimension of $X$. 
\end{proof}
Therefore, if we put $P(m)=\chi(X, \mathcal{O}_{X}(mA'))$, then there are only finitely many possibilities of $P(1), \cdots, P(d+1)$ depending only on $d$ and $w$. 
In particular,  they do not depend on $M$. 
Lemma \ref{lem--hilbertpolynomial} follows from this fact. 
\end{proof}

\begin{proof}[Proof of Corollary \ref{cor--hilbertpolynomial}]
By Lemma \ref{lem--Cartierindex}, we can find $r$, depending only on $d$, $\Theta$, $v$, and $u$, such that $\Theta=[0,1]\cap\frac{1}{r}\mathbb{Z}$ and $r(K_X+\Delta)$ is Cartier for all $f \colon (X,\Delta,A)\to C\in\mathfrak{G}_{d,\Theta,v,u,w}$.
By Theorem \ref{thm--eff-veryample} and Remark \ref{rem--ell-bound}, there exists $I_0$, depending only on $d$, $\Theta$, $v$, and $u$, such that $I_0A$ is Cartier for all $f \colon (X,\Delta,A)\to C\in\mathfrak{G}_{d,\Theta,v,u,w}$. 
Note that $3dI_0A+K_X+\Delta$ is ample by \cite[Theorem 3.7]{KM}.
Set 
$$A':=3drI_0A+r(K_X+\Delta) \qquad {\rm and} \qquad A'':=A'+3drI_{0}A.$$ 
Then $A'$, $A'-(K_X+\Delta)$, $A''$, $A''-(K_X+\Delta)$ are ample.
By the effective base point freeness \cite[Theorem 1.1]{kollar-eff-basepoint-free} and the effective very ampleness \cite[Lemma 7.1]{fujino-eff-slc}, there exists $I_{1}\in\mathbb{Z}_{>0}$, depending only on $d$, such that $I_{1}A'$ and $I_{1}A''$ very ample.
Now
\begin{equation*}
\begin{split}
{\rm vol}((d+2)I_{1}(A'+A''))=& ((d+2)I_{1})^{d}{\rm vol}(A'+A'')\\
\leq& ((d+2)I_{1})^{d}\bigl((6drI_{0})^{d}w+(6drI_{0})^{d-1}drvu\bigr), 
\end{split}
\end{equation*}
hence Lemma \ref{lem--hilbertpolynomial} implies that there are only finitely many possibilities of the Hilbert polynomials 
$$m\mapsto \chi\bigl(X,\mathcal{O}_{X}\bigl(m((d+2)I_{1}(A'+A''))\bigr)\bigr)$$
for the elements $f \colon (X,\Delta,A)\to C\in\mathfrak{G}_{d,\Theta,v,u,w}$. 
Similarly, Lemma \ref{lem--hilbertpolynomial} implies that there are only finitely many possibilities of the Hilbert polynomials 
$$m\mapsto \chi\bigl(X,\mathcal{O}_{X}\bigl(m((d+2)I_{1}A')\bigr)\bigr)\quad {\rm and \quad} m\mapsto \chi\bigl(X,\mathcal{O}_{X}\bigl(m((d+2)I_{1}A'')\bigr)\bigr)$$
for the elements $f \colon (X,\Delta,A)\to C\in\mathfrak{G}_{d,\Theta,v,u,w}$. 
In particular, there exist positive integers $N_1$ and $N_2$, depending only on $d$, $\Theta$, $u$, $v$, and $w$, such that 
$${\rm dim}\,H^0(X,\mathcal{O}_{X}((d+2)I_1A'))\le N_1\quad {\rm and} \quad {\rm dim}\,H^0(X,\mathcal{O}_{X}((d+2)I_1A''))\le N_2
$$
for every $f \colon (X,\Delta,A)\to C\in\mathfrak{G}_{d,\Theta,v,u,w}$. 
From this fact, there exists a closed immersion $\varphi\colon X\hookrightarrow \mathbb{P}^{N_1}\times\mathbb{P}^{N_2}$ such that 
$$\varphi^*p_1^*\mathcal{O}_{\mathbb{P}^{N_1}}(1)\cong \mathcal{O}_{X}((d+2)I_1A') \quad {\rm and} \quad \varphi^*p_2^*\mathcal{O}_{\mathbb{P}^{N_2}}(1)\cong \mathcal{O}_{X}((d+2)I_1A''),$$
where $p_1\colon\mathbb{P}^{N_1}\times\mathbb{P}^{N_2}\to\mathbb{P}^{N_1}$ and $p_2\colon\mathbb{P}^{N_1}\times\mathbb{P}^{N_2}\to\mathbb{P}^{N_2}$ are the projections. 
By the theory of Hilbert schemes, there exist a scheme $S$ of finite type over $\mathbbm{k}$ and a closed subscheme $\mathcal{X}\subset \mathbb{P}^{N_1}\times\mathbb{P}^{N_2}\times S$, which is flat over $S$, such that for any $f\colon(X,\Delta,A)\to C\in\mathfrak{G}_{d,\Theta,v,u,w}$, there is a closed point $s\in S$ satisfying $X\cong \mathcal{X}_s$ and the condition that the immersion $\mathcal{X}_s \subset \mathbb{P}^{N_1}\times\mathbb{P}^{N_2}$ coincides with $\varphi$. 

By the definitions of $A'$ and $A''$ and shrinking $S$ if necessary, we may assume that
$\bigl(p_1^*\mathcal{O}_{\mathbb{P}^{N_1}}(-1)\otimes p_2^*\mathcal{O}_{\mathbb{P}^{N_2}}(1)\bigr)|_{\mathcal{X}}$ is ample over $S$ (cf.~\cite[Corollary 1.41]{KM}). 
Then there exists a positive integer $N'$, depending only on $d$, $\Theta$, $u$, $v$, and $w$, such that the line bundle
$\bigl(p_1^*\mathcal{O}_{\mathbb{P}^{N_1}}(-N'-2)\otimes p_2^*\mathcal{O}_{\mathbb{P}^{N_2}}(N'+1)\bigr)|_{\mathcal{X}}$ is ample over $S$. 
This fact and the definitions of $A'$ and $A''$ imply that
\begin{equation*}
\begin{split}
-(N'+2)(d+2)I_1A'+(N'+1)(d+2)I_1A'=&(d+2)I_1(-A'+(N'+1)\cdot3drI_{0}A)\\
=&(d+2)I_1(3drI_{0}N'A-r(K_{X}+\Delta))\\
=&(d+2)I_1r(3dI_{0}N'A-(K_{X}+\Delta))
\end{split}
\end{equation*}
is ample for all $f \colon (X,\Delta,A)\to C\in\mathfrak{G}_{d,\Theta,v,u,w}$, therefore $3dI_{0}N'A-(K_{X}+\Delta)$ is ample. 

Recall from the definition of $I_{0}$ (see Theorem \ref{thm--eff-veryample} and Remark \ref{rem--ell-bound}) that $I_{0}A$ is Cartier. 
By the effective base point freeness \cite[Theorem 1.1]{kollar-eff-basepoint-free} and the effective very ampleness \cite[Lemma 7.1]{fujino-eff-slc}, there exists $I_{2}\in\mathbb{Z}_{>0}$, depending only on $d$, such that $I_{2}A$ is very ample for every $f \colon (X,\Delta,A)\to C\in\mathfrak{G}_{d,\Theta,v,u,w}$. 
Now define 
$$I:=(d+2)3dI_{0}N'I_{2},$$ which depends only on $d$, $\Theta$, $u$, $v$, and $w$. 
Then $\frac{1}{d+2}IA$ is a very ample Cartier divisor and $\frac{1}{d+2}IA-(K_{X}+\Delta)$ is ample for all $f \colon (X,\Delta,A)\to C\in\mathfrak{G}_{d,\Theta,v,u,w}$. 
By Lemma \ref{lem--hilbertpolynomial}, we see that this $I$ is the desired positive integer.
\end{proof}

\section{Tools for construction of the moduli spaces}\label{toolsec}

In this section we prove some results to construct the moduli spaces in this paper.

\subsection{Openness}

In this subsection, we prove the openness of uniformly adiabatically K-stable klt-trivial fibrations.

\begin{lem}\label{lem--generic}
Let $\mathcal{X} \to S$ and $\mathcal{Z} \to S$ be flat projective surjective morphisms of normal varieties such that all the geometric fibers of the morphisms are normal and connected. 
Let $f \colon  \mathcal{X} \to \mathcal{Z}$ be a contration over $S$. 
Let $(\mathcal{X},\mathcal{D})$ be a pair such that $K_{\mathcal{X}}+\mathcal{D}\sim_{\mathbb{Q}, \mathcal{Z}}0$, ${\rm Supp}\,\mathcal{D}$ does not contain any fiber of $\mathcal{X} \to S$, and $(\mathcal{X}_{\overline{s}},\mathcal{D}_{\overline{s}})$ is a klt pair for every geometric point $\overline{s}\in S$. 
Let $\overline{\eta} \in S$ be the geometric generic point. 

Then there exists an open subset $U \subset S$ such that for every closed point $t \in U$, the discriminant $\mathbb{Q}$-divisors $\mathcal{B}$, $B_{\overline{\eta}}$, and $B_{t}$ with respect to $f \colon (\mathcal{X},\mathcal{D}) \to \mathcal{Z}$, 
$f_{\overline{\eta}} \colon (\mathcal{X}_{\overline{\eta}},\mathcal{D}_{\overline{\eta}}) \to \mathcal{Z}_{\overline{\eta}}$, and $f_{t} \colon (\mathcal{X}_{t},\mathcal{D}_{t}) \to \mathcal{Z}_{t}$ respectively satisfy
$$\underset{\overline{P}}{{\rm max}}\,{\rm coeff}_{\overline{P}}(B_{\overline{\eta}})=\underset{P}{{\rm max}}\,{\rm coeff}_{P}(\mathcal{B}|_{\mathcal{Z}\times_{S}U})=\underset{P'}{{\rm max}}\,{\rm coeff}_{P'}(B_{t}),$$ 
where $\overline{P}$ (resp.~$P$, $P'$) runs over prime divisors on $\mathcal{Z}_{\overline{\eta}}$ (resp.~$\mathcal{Z}\times_{S}U$, $\mathcal{Z}_{t}$). 
Furthermore, $\mathcal{B}|_{\mathcal{Z}_t}$ is well-defined and $\mathcal{B}|_{\mathcal{Z}_t}=B_t$ for any closed point $t\in U$.
\end{lem}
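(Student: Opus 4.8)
The plan is to shrink $S$ finitely many times so that the entire configuration restricts compatibly to the fibers of $\mathcal{X}\to S$ and $\mathcal{Z}\to S$, and then to read all three discriminants off a single log resolution.

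\emph{Step 1 (normalizing the family).} First I would shrink $S$, using generic flatness and generic smoothness (we are in characteristic zero), so that $S$ is smooth, $\mathcal{D}$ is flat over $S$ with $\mathcal{D}|_{\mathcal{X}_t}=\mathcal{D}_t$, the morphism $f$ restricts to contractions $f_t\colon(\mathcal{X}_t,\mathcal{D}_t)\to\mathcal{Z}_t$ of normal varieties with $K_{\mathcal{X}_t}+\mathcal{D}_t\sim_{\mathbb{Q},\mathcal{Z}_t}0$, and $(\mathcal{X},\mathcal{D})$ is klt (hence so is each $(\mathcal{X}_t,\mathcal{D}_t)$) — the latter by the standard fact that a flat family of klt pairs over a smooth base has klt total space, via inversion of adjunction along general hyperplane sections of $S$. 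Let $\Sigma\subset\mathcal{Z}$ be the closed locus over which $f\colon(\mathcal{X},\mathcal{D})\to\mathcal{Z}$ fails to be ``nice'' (i.e.\ where $f$ is not smooth, or $\mathcal{X}$ is singular, or $\Supp\mathcal{D}$ is not relatively snc); it has only finitely many divisorial components $P_1,\dots,P_m$, and $\Supp\mathcal{B}\subseteq\{P_1,\dots,P_m\}$. Shrinking $S$ further, I may assume each $P_j$, each non-$\mathbb{Q}$-Cartier locus of each $P_j$, and the higher-codimensional part of $\Sigma$ are flat over $S$ of the expected relative dimension, and that $f_t$ is nice over $\mathcal{Z}_t\setminus\Sigma_t$. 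This already makes $\Supp\mathcal{B}$ contain no fiber $\mathcal{Z}_t$ and $\mathcal{B}$ be $\mathbb{Q}$-Cartier at every codimension-one point of $\mathcal{Z}_t$, so $\mathcal{B}|_{\mathcal{Z}_t}$ is well-defined; and it gives $\Supp B_t\subseteq\{(P_1)_t,\dots,(P_m)_t\}$ for every closed point $t$, because over a nice locus the discriminant vanishes (there the relevant log canonical threshold is $\ge 1$).

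\emph{Step 2 (one log resolution for everything).} Next I would take the standard resolution data for the canonical bundle formula adapted to $\Sigma$: a log resolution $g\colon\mathcal{W}\to\mathcal{Z}$ of $(\mathcal{Z},\Sigma_{\mathrm{div}})$ and a log resolution $\pi\colon\mathcal{Y}\to\mathcal{X}'$ of the normalized main component $\mathcal{X}'$ of $\mathcal{X}\times_{\mathcal{Z}}\mathcal{W}$, with $\mathcal{Y}\to\mathcal{W}$ a morphism and with $\Exc$ together with all transforms and preimages of $\mathcal{D}$ and of $\Sigma_{\mathrm{div}}$ snc. By generic smoothness, after one more shrinking of $S$ the varieties $\mathcal{Y}$ and $\mathcal{W}$ are smooth over $S$, all these snc divisors restrict to snc divisors (compatibly with strata) on every fiber over $S$, and the whole tower restricts fiber-by-fiber to the analogous tower for $f_t$; in particular, for each $j$, $\mathcal{Y}_t\to\mathcal{X}_t$ is a log resolution of $(\mathcal{X}_t,\mathcal{D}_t+f_t^{*}(P_j)_t)$ over the generic point of $(P_j)_t$. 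The coefficient of $\mathcal{B}$ along $P_j$ is the value $1-\min_E\bigl((1-a_E)/\mathrm{mult}_E((f\circ\pi)^{*}P_j)\bigr)$, the minimum over prime divisors $E$ on $\mathcal{Y}$ with center $P_j$, where $\sum_E a_E E=\mathcal{D}_{\mathcal{Y}}$ is the log pullback of $\mathcal{D}$ defined by $K_{\mathcal{Y}}+\mathcal{D}_{\mathcal{Y}}=\pi^{*}(K_{\mathcal{X}}+\mathcal{D})$. Since the discrepancy coefficients $a_E$, the divisor $\mathcal{D}_{\mathcal{Y}}$, and the pullbacks of Cartier divisors all restrict to the corresponding data for $f_t$, the same minimum computed on $\mathcal{Y}_t$ equals $1-\mathrm{coeff}_{(P_j)_t}(B_t)$. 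Hence $\mathrm{coeff}_{(P_j)_t}(B_t)=\mathrm{coeff}_{P_j}(\mathcal{B})$ for all $j$ and all closed points $t$ in the open set $U$ obtained as the intersection of the above shrinkings, and together with $\Supp B_t\subseteq\{(P_j)_t\}$ from Step 1 this yields $B_t=\mathcal{B}|_{\mathcal{Z}_t}$ on $U$.

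\emph{Step 3 (generic fiber and conclusion).} For $B_{\bar\eta}$ I would use the faithfully flat base change $\phi\colon\mathcal{Z}_{\bar\eta}=\mathcal{Z}\times_S\mathrm{Spec}\,\overline{K(S)}\to\mathcal{Z}$: base changing the resolution data of Step 2 along $\phi$ still yields a log resolution of $(\mathcal{X}_{\bar\eta},\mathcal{D}_{\bar\eta})$ with the same discrepancies (geometric normality and smoothness in characteristic zero), so the discriminant is compatible with $\phi$, i.e.\ $B_{\bar\eta}=\phi^{*}\mathcal{B}$; as $\phi$ is faithfully flat onto a normal variety, each $\phi^{*}P_j$ is a reduced sum of prime divisors, so the set of nonzero coefficients of $B_{\bar\eta}$ is exactly the set of nonzero coefficients of $\mathcal{B}$ along $P_1,\dots,P_m$. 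Since the $P_j$ are horizontal over $S$, that same set of coefficients occurs in $\mathcal{B}|_{\mathcal{Z}\times_SU}$ and in every $\mathcal{B}|_{\mathcal{Z}_t}=B_t$, so the three maxima coincide. The main obstacle is precisely the uniformity over all $t\in U$ in Steps 1--2: ruling out a discriminant component of $B_t$ on a special fiber that does not come from a component of $\mathcal{B}$. I handle this by trapping all such potential components inside the finitely many divisorial components of the bad locus $\Sigma$ and forcing the single resolution to resolve over all of them, so that the comparison of coefficients becomes a plain restriction statement; the characteristic-zero hypothesis (generic smoothness, geometric normality) is what makes restriction to a fiber over $S$ commute with the formation of log resolutions and of discrepancies.
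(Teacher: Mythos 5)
Your proposal is correct and follows essentially the same strategy as the paper: shrink $S$ so that a relative log resolution of $(\mathcal{X},\mathcal{D})$ together with a birational model of $\mathcal{Z}$ becomes log smooth over $S$ with all relevant snc data (log pullback of $\mathcal{D}$, pullbacks of the finitely many divisorial components of the bad locus) restricting fiberwise, and handle the geometric generic fiber by base-change compatibility of the discriminant in characteristic zero. The only differences are bookkeeping: you compute the coefficients along the components $P_j$ directly via the min formula on the resolution, where the paper pushes forward the discriminant $\Gamma$ of the auxiliary fibration $(\mathcal{Y},\mathcal{D}_{\mathcal{Y}})\to\mathcal{W}$, and you base change to $\overline{K(S)}$ directly rather than via the paper's finite \'etale base change of $S$ combined with the invariance statement from Ambro's Lemma 5.1.
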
 

\begin{proof}
First, note that we may shrink $S$ whenever we focus on an open subset of $S$. 
Moreover, as \cite[Lemma 5.1]{A}, we see that $\underset{P}{\rm max}\,{\rm coeff}_{P}(\mathcal{B}|_{\mathcal{Z}\times_{S}U})$ is not changed for any $U\subset S$ even if we replace $(\mathcal{X},\mathcal{D})\to \mathcal{Z} \to S$ with the base change by any \'etale surjective morphism $S' \to S$. 
Thus, in the rest of the proof, we will freely shrink $S$ and take the base change of $(\mathcal{X},\mathcal{D})\to \mathcal{Z} \to S$ by an \'etale surjective morphism if necessary. 

By shrinking $S$, we may assume that $S$ is smooth, ${\rm Supp}\,\mathcal{B}$ does not contain any fiber of $\mathcal{Z} \to S$ and the codimension of ${\rm Sing}(\mathcal{Z})\cap \mathcal{Z}_{s}$ in $\mathcal{Z}_{s}$ is at least two for every $s \in S$. 
In particular, we can define $\mathcal{B}_{\overline{\eta}}$, and we can also define $\mathcal{B}_{s}$ for every closed point $s \in S$. 

In this paragraph, we show the first equality of Lemma \ref{lem--generic}. 
We denote the morphism $\overline{\eta} \to S$ by $\tau$. 
By shrinking $S$, we can find a finite morphism $\varphi  \colon  S' \to S$ and a morphism $\psi  \colon  \overline{\eta} \to S'$ such that $ \tau=\varphi \circ \psi$ and
for any component $\overline{Q}$ of $B_{\overline{\eta}}$, there is a prime divisor $Q'$ on $\mathcal{Z}\times_{S}S'$ whose pullback to $\mathcal{Z}_{\overline{\eta}}$ is $\overline{Q}$. 
By shrinking $S$, we may assume that $\varphi$ is \'etale.
By replacing $(\mathcal{X},\mathcal{D})\to \mathcal{Z} \to S$ with the base change by $\varphi$, we may assume that for any component $\overline{Q}$ of $B_{\overline{\eta}}$, there is a prime divisor $Q$ on $S$ such that $\tau^{*}Q=\overline{Q}$. 
Let $B_{\overline{\eta}}$ and $\mathcal{B}$ be $\mathbb{Q}$-divisors as in Lemma \ref{lem--generic}.
By shrinking $S$ and replacing $(\mathcal{X},\mathcal{D})\to \mathcal{Z} \to S$ with an \'etale base change, we may assume $\tau^{*}(K_{\mathcal{Z}}+\mathcal{B})=K_{\mathcal{Z}_{\overline{\eta}}}+B_{\overline{\eta}}$. 
Then $\mathcal{B}_{\overline{\eta}}=B_{\overline{\eta}}$. 
Shrinking $S$, we may further assume that any component of $\mathcal{B}$ dominates $S$. 
Then
$$\underset{P}{{\rm max}}\,{\rm coeff}_{P}(\mathcal{B})=\underset{\overline{P}}{{\rm max}}\,{\rm coeff}_{\overline{P}}(B_{\overline{\eta}}),$$
where $P$ (resp.~$\overline{P}$) runs over prime divisors on $\mathcal{Z}$ (resp.~$\mathcal{Z}_{\overline{\eta}}$). 

From now on, we show the second equality of Lemma \ref{lem--generic}. 
We can construct a diagram 
$$
\xymatrix{
\mathcal{Y}\ar[r]^{g}\ar[d]_{f'}&\mathcal{X}\ar[d]^{f}\\
\mathcal{W}\ar[r]_{h}&\mathcal{Z}
}
$$
of projective morphisms, where $\mathcal{Y}$ and $\mathcal{W}$ are smooth varieties, and snc divisors $\Sigma$ on $\mathcal{W}$ and $\Xi$ on $\mathcal{Y}$
such that 
\begin{itemize}
\item
$h$ is birational and $g$ is a log resolution of $(\mathcal{X},\mathcal{D})$, 
\item
$f'$ is a contraction, 
\item
$\Xi\supset f'^{*}\Sigma \cup g_{*}^{-1}\mathcal{D}\cup {\rm Ex}(g)$ and the vertical part of $\Xi$ with respect to $f'$ maps into $\Sigma$, and
\item
$(\mathcal{Y},\Xi)$ is log smooth over $\mathcal{W}\setminus \Sigma$, in other words, the restriction of $f' \colon (\mathcal{Y},\Xi) \to \mathcal{W}$ over $\mathcal{W}\setminus \Sigma$ is log smooth. 
\end{itemize}
By shrinking $S$, we may assume that for every closed point $t \in S$, the restricted diagram 
$$
\xymatrix{
(\mathcal{Y}_{t},\Xi_{t})\ar[r]^{g_{t}}\ar[d]_{f'_{t}}&(\mathcal{X}_{t},\mathcal{D}_{t})\ar[d]^{f_{t}}\\
(\mathcal{W}_{t},\Sigma_{t})\ar[r]_{h_{t}}&\mathcal{Z}_{t},
}
$$
satisfies the same conditions as stated above. 
 We define $\mathcal{D}_{\mathcal{Y}}$ by $K_{\mathcal{Y}}+\mathcal{D}_{\mathcal{Y}}=g^{*}(K_{\mathcal{X}}+\mathcal{D})$ and $g_*\mathcal{D}_{\mathcal{Y}}=\mathcal{D}$. 
Let $\Gamma$ be the discriminant $\mathbb{Q}$-divisor with respect to $f' \colon (\mathcal{Y},\mathcal{D}_{\mathcal{Y}}) \to \mathcal{W}$. For each closed point $t \in S$, let $G_{t}$ be the discriminant $\mathbb{Q}$-divisor with respect to $f'_{t} \colon (\mathcal{Y}_{t},\mathcal{D}_{\mathcal{Y}_{t}}) \to \mathcal{W}_{t}$. 
Then $h_{*}\Gamma=\mathcal{B}$ and $h_{t*}G_{t}=B_{t}$, where $B_{t}$ is the discriminant $\mathbb{Q}$-divisor with respect to  the klt-trivial fibration $f_{t}  \colon (\mathcal{X}_{t},\mathcal{D}_{t}) \to \mathcal{Z}_{t}$. 
Shrinking $S$, we may assume $\Gamma_{t}=G_{t}$ for every closed point $t \in S$. 
Then $\mathcal{B}_{t}=h_{t*}\Gamma_{t}=h_{t*}G_{t}=B_{t}$, and the snc condition of $\Sigma_{t}$ implies that
$$\underset{P}{{\rm max}}\,{\rm coeff}_{P}(\mathcal{B})=\underset{P'}{{\rm max}}\,{\rm coeff}_{P'}(B_{t}),$$
where $P$ (resp.~$P'$) runs over prime divisors on $\mathcal{Z}$ (resp.~$\mathcal{Z}_{t}$). 

By the above discussion, Lemma \ref{lem--generic} holds. 
\end{proof}

\begin{thm}[Openness of uniform adiabatic K-stability]\label{op}
Let $S$ be a normal variety, $\pi \colon  (\mathcal{X},\mathcal{D}) \to S$ a log $\mathbb{Q}$-Gorenstein family, and let $f \colon \mathcal{X}\to \mathcal{P}$ be a contraction over $S$, where $\mathcal{P}$ is a scheme projective and smooth over $S$. 
Let $\mathcal{H}$ be an $f$-ample $\mathbb{Q}$-divisor on $\mathcal{X}$, and let $\mathcal{L}$ be a Cartier divisor on $\mathcal{P}$. Suppose that there exists an integer $m>0$ such that $(\mathcal{P}_{\bar{s}},\mathcal{L}_{\bar{s}})=(\mathbb{P}^1,\mathcal{O}(m))$ for any geometric point $\bar{s}\in S$. 
Assume that $-(K_{\mathcal{X}/S}+\mathcal{D})\sim_{\mathbb{Q},S}\frac{u}{m}f^*\mathcal{L}$ for some $u\in\mathbb{Q}_{>0}$ and all the geometric fibers of $\pi$ are klt.

Then the function
\begin{equation*}
h \colon S\ni s\mapsto \underset{P_{\overline{s}}}{{\rm max}}\,{\rm coeff}_{P_{\overline{s}}}(B_{\overline{s}}),
\end{equation*}
where $\overline{P}_{\overline{s}}$ runs over prime divisors on $\mathbb{P}_{\overline{s}}^{1}$, is constructible and upper semi-continuous. 
In particular, the subset 
$$W=\{s\in S\,|\,\text{$f_{\overline{s}} \colon (\mathcal{X}_{\overline{s}},\mathcal{D}_{\overline{s}},\mathcal{H}_{\overline{s}})\to \mathcal{P}_{\overline{s}}$ is uniformly adiabatically K-stable}\}$$ is open and there exists a positive real number $v$ such that 
$$\delta_{(\mathbb{P}^{1},B_{\overline{s}})}(-K_{\mathbb{P}^1}-B_{\overline{s}}-M_{\overline{s}})\geq 1+v$$
 for every geometric point $\overline{s} \in W$, where $B_{\overline{s}}$ and $M_{\overline{s}}$ are the discriminant $\mathbb{Q}$-divisor and the moduli $\mathbb{Q}$-divisor with respect to $f_{\overline{s}}$, respectively.
\end{thm}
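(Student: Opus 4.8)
The plan is to reduce the entire statement to the constructibility and upper semi-continuity of the single function
\[
g\colon S\ni s\longmapsto \max_{P}\mathrm{ord}_{P}(B_{\overline s})\in\mathbb{Q},
\]
the maximum running over closed points $P$ of $\mathcal{P}_{\overline s}\cong\mathbb{P}^{1}$. First I would restrict $-(K_{\mathcal{X}/S}+\mathcal{D})\sim_{\mathbb{Q},S}\tfrac{u}{m}f^{*}\mathcal{L}$ to a geometric fiber: since $K_{\mathcal{X}_{\overline s}}+\mathcal{D}_{\overline s}\sim_{\mathbb{Q}}f_{\overline s}^{*}(K_{\mathbb{P}^1}+B_{\overline s}+M_{\overline s})$ by the canonical bundle formula and $f_{\overline s}$ has connected fibers over the normal variety $\mathbb{P}^{1}$, one descends the relation to $\mathbb{P}^{1}$ and obtains $\deg(-K_{\mathbb{P}^1}-B_{\overline s}-M_{\overline s})=u$; in particular this $\mathbb{Q}$-line bundle is ample and $(\mathbb{P}^1,B_{\overline s})$ is klt (the klt-ness of $(\mathcal{X}_{\overline s},\mathcal{D}_{\overline s})$ forces all coefficients of $B_{\overline s}$ to be $<1$). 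Hence Example \ref{ex--calculation} gives $h(s)=\tfrac{2}{u}\bigl(1-g(s)\bigr)$. As $u>0$ is fixed, $h$ is constructible (resp.\ lower semi-continuous) precisely when $g$ is; moreover $-u\,f_{\overline s}^{*}\mathcal{O}(1)$ is nef for no $\overline s$, so by Definition \ref{unifdef} one has $W=\{s\mid h(s)>1\}=\{s\mid g(s)<1-\tfrac{u}{2}\}$, which is open once $h$ is lower semi-continuous, and once $h$ is also constructible it takes finitely many values on $W$, yielding a $v>0$ with $h\ge 1+v$ on $W$.

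For constructibility of $g$ I would run Noetherian induction on $S$ via Lemma \ref{lem--generic}. Note $K_{\mathcal{X}}+\mathcal{D}\sim_{\mathbb{Q},\mathcal{P}}0$, since both $K_{\mathcal{X}}-K_{\mathcal{X}/S}$ and $\tfrac{u}{m}f^{*}\mathcal{L}$ are pulled back from $\mathcal{P}$; so Lemma \ref{lem--generic} applied with $\mathcal{Z}=\mathcal{P}$ produces a non-empty open $U\subset S$ on which $g$ is constant, equal to its value at the geometric generic point. Replacing $S$ by the normalization of each irreducible component of $(S\setminus U)_{\mathrm{red}}$ and base-changing the whole datum along it --- which is again a log $\mathbb{Q}$-Gorenstein family over a normal base satisfying all hypotheses (by the remarks after Definition \ref{defn-q-gor}, $\mathbb{Q}$-Gorensteinness and the relevant $\mathbb{Q}$-linear equivalences are stable under base change, and $g$ is unchanged because it depends only on geometric fibers) --- and iterating, one expresses $S$ as a finite disjoint union of locally closed pieces on each of which $g$ is constant; thus $g$ is constructible.

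For upper semi-continuity, fix $c\in\mathbb{Q}$. Then $\{g\ge c\}$ is constructible, and by Lemma \ref{const--lem} it is closed once I check that for every morphism $\varphi\colon C\to S$ from an affine curve with $\varphi^{-1}\{g\ge c\}$ dense one has $\varphi(C)\subset\{g\ge c\}$. Pulling the datum back along the normalization of $C$, I reduce to the case where $S=B$ is a smooth affine curve; constructibility of $g$ makes density force the value $g(\xi)$ at the generic point $\xi$ to be $\ge c$, so it suffices to prove $g(x)\ge g(\xi)$ for every closed point $x$. After a finite base change $B'\to B$ --- which changes neither $g$ at closed $\mathbbm{k}$-points nor $g$ at the generic point, since the geometric discriminant pulls back with the same coefficients --- I may assume the point $P^{*}$ of the geometric generic fiber realizing $g(\xi)$ is $\mathbbm{k}(B')$-rational; its closure $\mathcal{Q}\subset\mathcal{P}$ is then a section of $\mathcal{P}\to B'$ meeting each fiber in one reduced point $P_{t}$, and $\mathrm{coeff}_{P_{\xi'}}(B_{\xi'})=g(\xi)$. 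Setting $E:=f^{*}\mathcal{Q}$, one has $E_{t}=f_{t}^{*}P_{t}$ and $\mathrm{coeff}_{P_{t}}(B_{t})=1-\mathrm{lct}(\mathcal{X}_{t},\mathcal{D}_{t};E_{t})$ for every point $t$ of $B'$. By the semi-continuity of log canonical thresholds in families --- the non-klt locus of $(\mathcal{X}_{B'},\mathcal{D}_{B'}+\gamma E)$ is closed, and is vertical over $B'$ as soon as its geometric generic fiber is klt --- one gets $\mathrm{lct}(\mathcal{X}_{x'},\mathcal{D}_{x'};E_{x'})\le\mathrm{lct}(\mathcal{X}_{\xi'},\mathcal{D}_{\xi'};E_{\xi'})$ for every closed $x'$, hence $g(x')\ge\mathrm{coeff}_{P_{x'}}(B_{x'})\ge g(\xi)$; descending back to $B$ finishes the argument.

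I expect the delicate point to be this last step rather than the bookkeeping of the reductions: upgrading ``the pair over a special fiber is klt'' to ``the pair over the generic fiber is klt'' must be carried out through inversion of adjunction along the Cartier fiber $\mathcal{X}_{x'}$ (so that the different is correctly accounted for) combined with the generic-fiber adjunction, and the finite base change making $P^{*}$ rational is precisely what removes the ramification factor that would otherwise spoil the inequality $\mathrm{coeff}_{P_{x'}}(B_{x'})\ge g(\xi)$.
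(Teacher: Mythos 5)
Your overall route is the paper's: you reduce $h$ to the maximal discriminant coefficient via the degree computation and Example \ref{ex--calculation}, get constructibility from Lemma \ref{lem--generic} plus Noetherian induction, and prove semi-continuity by reducing to a smooth affine curve base through Lemma \ref{const--lem}, making a finite base change so that the ``worst'' horizontal divisor becomes a section, and comparing the special and generic fibers along it. (The paper packages the last step with the maximal component $T$ of the relative discriminant $\mathcal{B}$, normalized so that $T\to S$ is a section, rather than with your section $\mathcal{Q}$ through the worst point of the geometric generic fiber; the two are equivalent. One small bookkeeping point: Lemma \ref{lem--generic} identifies the value of $g$ at closed points of an open set with its value at the geometric generic point, so for non-closed points you still need to apply it again to $\overline{\{s\}}$, as the paper does.)

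The one genuine soft spot is your inline justification of the key inequality $\mathrm{lct}(\mathcal{X}_{x'},\mathcal{D}_{x'};E_{x'})\le\mathrm{lct}(\mathcal{X}_{\xi'},\mathcal{D}_{\xi'};E_{\xi'})$. The parenthetical ``the non-klt locus of $(\mathcal{X}_{B'},\mathcal{D}_{B'}+\gamma E)$ is closed, and is vertical as soon as its geometric generic fiber is klt'' argues in the wrong direction: verticality of the non-klt locus only tells you that \emph{general} closed fibers are no worse than the generic one, and says nothing about the fixed special point $x'$, which is exactly where the vertical non-klt locus may sit. The inequality you need is the specialization inequality, and its proof is the contrapositive inversion-of-adjunction argument that you only gesture at in your last paragraph: if $(\mathcal{X}_{\overline{\xi}'},\mathcal{D}_{\overline{\xi}'}+\gamma E_{\overline{\xi}'})$ is not lc, the witnessing divisor over the generic fiber spreads out to a divisor over $\mathcal{X}_{B'}$ with negative log discrepancy for $(\mathcal{X}_{B'},\mathcal{D}_{B'}+\gamma E)$ whose center is horizontal; by properness this center meets the Cartier fiber $\mathcal{X}_{x'}$, so $(\mathcal{X}_{B'},\mathcal{D}_{B'}+\gamma E+\mathcal{X}_{x'})$ is not lc near $\mathcal{X}_{x'}$, and Kawakita's inversion of adjunction then gives that $(\mathcal{X}_{x'},\mathcal{D}_{x'}+\gamma E_{x'})$ is not lc, i.e.\ $\gamma\ge\mathrm{lct}(\mathcal{X}_{x'},\mathcal{D}_{x'};E_{x'})$. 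This is precisely the computation the paper carries out (with $u'>\gamma$ and the section coming from $T^{\nu}$), so your proposal is correct in outline but the decisive step must be replaced by this argument rather than the stated ``verticality'' heuristic.
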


\begin{proof}
We first reduce Theorem \ref{op} to the case where $\mathcal{P}\cong\mathbb{P}^1_S$, $m=1$, and $\mathcal{L}=\mathcal{O}_{\mathbb{P}^1_S}(1)$. 
For every closed point $s\in S$, there exists an \'{e}tale morphism $g^s\colon T^s\to S$ such that $s\in g^s(T^s)$ and $\mathcal{P}_{T^s}\to T^s$ has a section $\iota^s\colon T^s\to \mathcal{P}_{T^s}$ (cf.~\cite[Corollary 1.3.10]{Ols}).
By considering $T=\sqcup_{s_i}T^{s_i}$ for some finitely many closed points $s_i\in S$, we obtain an \'{e}tale surjective morphism $g\colon T\to S$ such that $h\colon\mathcal{P}_{T}\to T$ has a section $\iota\colon T\to \mathcal{P}_{T}$.
Then $\iota(T)$ is a Cartier divisor on $\mathcal{P}_T$ (\cite[Lemma 9.3.4]{FGA}, \cite[Definition-Lemma 4.20]{kollar-moduli}). 
By \cite[III, Corollary 12.9]{Ha}, the sheaf $h_*\mathcal{O}_{\mathcal{P}_T}(\iota(T))$ is
locally free of rank two and
\[
h^{*}h_{*}\mathcal{O}_{\mathcal{P}}(\iota(T))\longrightarrow H^0(\mathcal{P}_t,\mathcal{O}_{\mathcal{P}_t}(\iota(T)|_{\mathcal{P}_t}))
\]
is surjective. 
Therefore, we obtain a morphism $\mathcal{P}_T\to \mathbb{P}_T(h_*\mathcal{O}_{\mathcal{P}_T}(\iota(T)))$. 
Then the right hand side is a $\mathbb{P}^{1}$-bundle, and the morphism is an isomorphism. 
Since $g$ is open and surjective, if Theorem \ref{op} holds for $T$, then Theorem \ref{op} also holds for $S$.
Thus, we may assume that $\mathcal{P}$ is a $\mathbb{P}^1$-bundle over $S$. 
Since the problem is local, we may assume that $\mathcal{P}=\mathbb{P}^1_S$ by shrinking $S$. 
Then $\mathcal{L}\sim_S\mathcal{O}_{\mathbb{P}^1_S}(m)$. 
It is easy to see that we may replace $\mathcal{L}$ by $\mathcal{O}_{\mathbb{P}^1_S}(m)$. 
In this way, we may assume that $\mathcal{L}=\mathcal{O}_{\mathbb{P}^1_S}(m)$. By replacing $u$ with $\frac{u}{m}$, we may assume $m=1$. 

Next, we show that $h$ is constructible. By \cite[(6,C)]{Mat}, it suffices to show that $h^{-1}(w)$ contains a non-empty open subset of $S$ under the assumption that $S$ is a variety and that $h^{-1}(w)$ is dense for every $w\in\mathbb{Q}_{>0}$. We pick an open subset $V \subset S\setminus {\rm Sing}(S)$. 
Since the fibers of $\pi\colon \mathcal{X} \to S$ are normal, we have 
$K_{\pi^{-1}(V)/V}=K_{\pi^{-1}(V)}-(\pi|_{\pi^{-1}(V)})^{*}K_{V}$. 
Thus, $K_{\pi^{-1}(V)}+\mathcal{D}|_{\pi^{-1}(V)}$ is $\mathbb{Q}$-Cartier. 
Let $\eta$ be the generic point of $S$. 
By Lemma \ref{lem--generic} and shrinking $V$ if necessary, we may assume 
$$\underset{\overline{P}}{{\rm max}}\,{\rm coeff}_{\overline{P}}(B_{\overline{\eta}})=\underset{P}{{\rm max}}\,{\rm coeff}_{P}(B_{t})$$
for every closed point $t \in V$, where $\overline{P}$ (resp.~$P$) runs over prime divisors on $\mathbb{P}_{\overline{\eta}}^{1}$ (resp.~$\mathbb{P}_{t}^{1}$). 
For any point $s \in V$, by applying Lemma \ref{lem--generic} to $\overline{\{s\}}\cap V$, we see that 
$\underset{P_{\overline{s}}}{{\rm max}}\,{\rm coeff}_{P_{\overline{s}}}(B_{\overline{s}})$ are determined by $f_{t} \colon (\mathcal{X}_{t},\mathcal{D}_{t})\to \mathbb{P}_{t}^{1}$ for general closed points $t \in \overline{\{s\}}\cap V$. 
This means that $h$ is constant on $V$. Thus the constructibility holds. 

From now on we prove the upper semi-continuity. 
The constructibility of $h$ implies that $h$ takes only finitely many values. 
We fix $w \in \mathbb{Q}_{>0}$. 
By Lemma \ref{const--lem}, we may assume that $S$ is a curve and $h(s) \geq w$ for  every general point $s \in S$. 
Then $S$ is smooth, and hence we may write $K_{\mathcal{X}/S}=K_{\mathcal{X}}-\pi^{*}K_{S}$. 
Thus $K_{\mathcal{X}}+\mathcal{D}$ is $\mathbb{Q}$-Cartier. 
Let $\mathcal{B}$ be the discriminant $\mathbb{Q}$-divisor with respect to $f \colon (\mathcal{X},\mathcal{D}) \to \mathbb{P}^{1}_{S}$. 
By Lemma \ref{lem--generic}, we can find an open subset $V\subset S$ such that
\begin{equation*}
\underset{Q}{{\rm max}}\,{\rm coeff}_{Q}(\mathcal{B})=\underset{P}{{\rm max}}\,{\rm coeff}_{P}(B_{t})
\end{equation*}
for every closed point $t\in V$, where $Q$ (resp.~$P$) runs over prime divisors on $\mathbb{P}^{1}_{S}$ (resp.~$\mathbb{P}_{t}^{1}$). 
Since $h(s) \geq w$ for general points $s \in S$, we have
$$\underset{Q}{{\rm max}}\,{\rm coeff}_{Q}(\mathcal{B})\geq u.$$ 
In our situation, the klt property of the geometric fibers of $\pi$ and the inversion of adjunction \cite{kawakita} imply that $(\mathcal{X},\mathcal{D}+\mathcal{X}_{s})$ is lc for every closed point $s \in S$. Therefore, every component of $\mathcal{B}$ is horizontal over $S$. 
By our assumption, there exists a component $T$ of $\mathcal{B}$ such that 
$${\rm coeff}_{T}(\mathcal{B})=\underset{Q}{{\rm max}}\,{\rm coeff}_{Q}(\mathcal{B})\geq u.$$
Since every component of $\mathcal{B}$ dominates $S$, by \cite[Lemma 5.1]{A}, this fact is preserved even if we take any finite base change of $(\mathcal{X},\mathcal{D})\to \mathbb{P}^{1}_{S}\to S$. 
Let $\psi \colon T^{\nu} \to S$ be the natural morphism, where $T^{\nu}$ is the normalization of $T$. 
We consider the base change of $(\mathcal{X},\mathcal{D})\to \mathbb{P}^{1}_{S}\to S$ by $\psi$, which we denote by $(\mathcal{X}_{T^{\nu}},\mathcal{D}_{T^{\nu}})\to \mathbb{P}^{1}_{T^{\nu}}\to T^{\nu}$, with the morphism $\psi_{\mathbb{P}^{1}} \colon  \mathbb{P}^{1}_{T^{\nu}} \to \mathbb{P}^{1}_{S}$. 
By construction, $\psi_{\mathbb{P}^{1}}^{*}T$ has a component isomorphic to $T^{\nu}$. 
Since we only need to deal with closed points of $S$, we may replace $(\mathcal{X},\mathcal{D})\to \mathbb{P}^{1}_{S}\to S$ with $(\mathcal{X}_{T^{\nu}},\mathcal{D}_{T^{\nu}})\to \mathbb{P}^{1}_{T^{\nu}}\to T^{\nu}$. 
By this replacement, we may assume that $T\to S$ is an isomorphism. 
We put $\gamma= 1-{\rm coeff}_{T}(\mathcal{B})$. 
Then there is a prime divisor $\mathcal{E}$ over $\mathcal{X}$ such that $\mathcal{E}$ maps onto $T$ and $A_{(\mathcal{X},\mathcal{D}+u'f^{*}T)}(\mathcal{E})<0$ for any real number $u'>\gamma$.  
Since $T$ dominates $S$, for every closed point $c \in S$, the pair $(\mathcal{X}, \mathcal{D}+u'f^{*}T+\mathcal{X}_{c})$ is not lc around $\mathcal{X}_{c}$. 
By the inversion of adjunction \cite{kawakita}, the pair $(\mathcal{X}_{c}, \mathcal{D}_{c}+u'f_{c}^{*}T|_{\mathbb{P}_{c}^{1}})$ is not lc for any $u'>\gamma$. 
Since $\mathbb{P}^{1}_{S}$ is smooth and $T \to S$ is an isomorphism, $T|_{\mathbb{P}_{c}^{1}}$ is a prime divisor on $\mathbb{P}_{c}^{1}$. 
Thus, the discriminant $\mathbb{Q}$-divisor $B_{c}$ with respect to $f_{c} \colon (\mathcal{X}_{c},\mathcal{D}_{c}) \to \mathbb{P}_{c}^{1}$ has a component whose coefficient is at least $1-\gamma$.
This shows that 
for every closed point $c \in S$, 
$$u \leq\underset{Q}{{\rm max}}\,{\rm coeff}_{Q}(\mathcal{B})=1-\gamma \leq \underset{P'}{{\rm max}}\,{\rm coeff}_{P'}(B_{c}),$$
where $P'$ runs over prime divisors on $\mathbb{P}^{1}_{c}$.
Thus the upper semi-continuity of $h$ holds. 
The final statement of Theorem \ref{op} follows from this fact and Example \ref{ex--calculation}. 
\end{proof}

\subsection{Separatedness}

In this subsection we show the separatedness of the moduli spaces that we will construct in \S\ref{Consec}. 

\begin{note}
Let $C$ be an affine curve. We say that $f \colon (X,\Delta,L)\to C$ is a {\em polarized $\mathbb{Q}$-Gorenstein family} if $f \colon (X,\Delta)\to C$ is a log $\mathbb{Q}$-Gorenstein family over $C$ and $L$ is an $f$-ample line bundle. 
Let $0\in C$ be a closed point and $C^\circ=C\setminus \{0\}$ the punctured curve. We put 
$$(X,\Delta,L)\times_CC^\circ=(X\times_CC^\circ,\Delta\times_CC^\circ,L|_{X\times_CC^\circ}).$$ 
For another polarized $\mathbb{Q}$-Gorenstein family $f' \colon (X',\Delta',L')\to C$, we define 
$$g \colon (X,\Delta,L)\to(X',\Delta',L')$$
 to be a $C$-isomorphism $g \colon X\to X'$ such that $f'\circ g=f$, $g_* \Delta =\Delta'$ and $g^*L'\sim_{C}L$. We define $C^\circ$-isomorphisms between $(X,\Delta,L)\times_CC^\circ$ and $(X',\Delta',L')\times_CC^\circ$ similarly.

Let $f \colon (X,\Delta,L)\to C$ and $f' \colon (X',\Delta',L')\to C$ be polarized $\mathbb{Q}$-Gorenstein families. For contractions $\pi \colon (X,\Delta,L)\to (\mathbb{P}^1_{C},\mathcal{O}(1))$ and $\pi' \colon (X',\Delta',L')\to (\mathbb{P}^1_{C},\mathcal{O}(1))$ over $C$, we define $(\alpha,\beta) \colon \pi\to\pi'$ as a pair of $C$-isomorphisms $\alpha \colon (X,\Delta,L)\to(X',\Delta',L')$ and $\beta \colon (\mathbb{P}^1_{C},\mathcal{O}(1))\to(\mathbb{P}^1_{C},\mathcal{O}(1))$ such that $\pi'\circ\alpha=\beta\circ\pi$. 
\end{note}

The following has been already known by Boucksom when $\Delta=0$, but we write here the proof for the sake of completeness. 
We also note that Odaka \cite{O4} proved that the following holds for  polarized klt varieties such
that with numerically trivial canonical divisor is K-stable.

\begin{prop}[cf.~{\cite[Theorem 1.1]{B}}, {\cite[Theorem 3.1 and Remark 3.6]{BX}}]\label{plussep}
Let $C$ be an affine curve. 
Let $f \colon (X,\Delta,L)\to C$ and $f'\colon (X',\Delta',L')\to C$ be two polarized $\mathbb{Q}$-Gorenstein families. 
Suppose that there exists a 
$C^\circ$-isomorphism $$g^\circ \colon (X,\Delta,L)\times_CC^\circ\to(X',\Delta',L')\times_CC^\circ$$
 and both $K_X+\Delta$ and $K_{X'}+\Delta'$ are nef over $C$. 
If $(X_0,\Delta_0)$ is klt and $(X'_0,\Delta'_0)$ is lc, then $g^\circ$ can be extended to a $C$-isomorphism $g \colon (X,\Delta,L)\to(X',\Delta',L')$.
\end{prop}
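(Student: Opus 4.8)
The plan is to reduce the extension problem to a comparison of minimal models over the DVR $R := \mathcal{O}_{C,0}$. First I would pass to the local picture: set $R = \mathcal{O}_{C,0}$, let $X_R, X'_R$ be the base changes, and note that $g^\circ$ gives an isomorphism over the punctured spectrum $\mathrm{Spec}\, R \setminus \{0\}$ identifying the generic fibers of $(X_R, \Delta_R)$ and $(X'_R, \Delta'_R)$ together with $L$ and $L'$. Both $(X_R, \Delta_R)$ and $(X'_R, \Delta'_R)$ are log $\mathbb{Q}$-Gorenstein families with $K + \Delta$ nef over $\mathrm{Spec}\, R$; thus they are both \emph{minimal models} (in the relative sense) of the common generic fiber. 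The strategy is then: (1) take a common log resolution $W \to X_R$, $W \to X'_R$ of the graph of the birational map $X_R \dashrightarrow X'_R$; (2) run a suitable MMP on $W$ to compare the two minimal models; (3) use the uniqueness statement for minimal models — more precisely the negativity lemma — to conclude that the induced birational map $\phi \colon X_R \dashrightarrow X'_R$ is actually an isomorphism in codimension one, and then that it contracts no divisors at all because both families have the same generic fiber and central fiber dimension; (4) promote the codimension-one isomorphism to a genuine isomorphism using the fact that $K_{X_R} + \Delta_R$ and $K_{X'_R} + \Delta'_R$ are both relatively semiample (they are relatively nef and, being the relative canonical of a family with klt/lc fibers over a curve, one can invoke relative base-point-freeness) and that $\phi$ pulls one back to the other.

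In more detail, for step (3) the key input is the classical fact that if $(Y, \Delta_Y) \dashrightarrow (Y', \Delta_{Y'})$ is a birational map between two $\mathbb{Q}$-factorial minimal models over a base that is an isomorphism in codimension one on the generic fiber, and $K+\Delta$ is nef on both, then writing $p \colon W \to Y$, $q \colon W \to Y'$ for a common resolution, $p^*(K_Y + \Delta_Y) = q^*(K_{Y'} + \Delta_{Y'})$ by applying the negativity lemma twice (the difference is $p$-nef and $q$-nef simultaneously, hence both effective and anti-effective). The asymmetry in the hypotheses — $(X_0, \Delta_0)$ klt but $(X'_0, \Delta'_0)$ only lc — is exactly what is needed to run this argument: the klt central fiber of $X$ guarantees $X_R$ is klt, so an MMP over $R$ terminates and no lc center obstructs the negativity lemma on the $X$-side; once one has $p^*(K_{X_R}+\Delta_R) = q^*(K_{X'_R}+\Delta'_R)$, the map $\phi$ extracts no divisor with positive discrepancy jump, and combined with $(X_0,\Delta_0)$ klt one deduces $\phi$ is an isomorphism in codimension one; the lc hypothesis on the other side suffices because we only need $K_{X'_R}+\Delta'_R$ to be relatively semiample there (no discrepancy positivity is demanded of $X'_R$). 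Having identified $X_R$ and $X'_R$ in codimension one, the contraction defined by a large multiple of $K_{X_R}+\Delta_R$ — which by semiampleness exists and agrees with the one defined by $K_{X'_R}+\Delta'_R$ under $\phi$ — shows the two are abstractly isomorphic over $R$; finally one checks the isomorphism respects $L$ and $L'$ by comparing them on the punctured spectrum where $g^\circ$ already matches them, then spreading out.

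For step (4) and to globalize back from $R$ to the affine curve $C$: the isomorphism $g \colon X_R \to X'_R$ obtained over $\mathrm{Spec}\, R$ agrees with $g^\circ$ over the generic point, and both are of finite type, so $g$ is defined over some open neighborhood of $0$ in $C$; glueing with $g^\circ$ away from $0$ yields the desired $C$-isomorphism $g \colon (X, \Delta, L) \to (X', \Delta', L')$. One must verify $g_* \Delta = \Delta'$ and $g^* L' \sim_C L$; the divisor statement follows because $\Delta$ and $\Delta'$ are the unique extensions (no fiber components) of divisors that agree on $C^\circ$, and the line bundle statement follows from the relative Picard group of a family with connected fibers over a curve being controlled by its restriction to the generic fiber, where $g^*L' \sim L$ is already known — up to a twist by a line bundle pulled back from $C$, which is exactly the meaning of $\sim_C$.

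The main obstacle I expect is carrying out step (3) rigorously when $X'_R$ is only lc (not klt) on the central fiber: the negativity-lemma comparison of the two pullbacks needs to be organized so that one only runs MMP / uses termination on the klt side $X_R$, and then transfers the conclusion. Concretely, one should run a $(K_W + \Delta_W)$-MMP over $X'_R$ starting from the common resolution $W$; this terminates because $W$ is klt (being a resolution of the klt $X_R$ away from, and compatible with, $\Delta$), and its output is a minimal model over $X'_R$ which — by uniqueness of relative ample models — must be $X'_R$ itself; tracing which divisors get contracted and matching discrepancies gives $p^*(K_{X_R}+\Delta_R) = q^*(K_{X'_R}+\Delta'_R)$, and the klt hypothesis on $X_R$ forces every exceptional divisor of $p$ to also be exceptional for $q$ and vice versa. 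Packaging this cleanly, and making sure the polarizations $L, L'$ are carried along at each MMP step (they are $f$-ample, hence the comparison respects them after the finitely many flips and divisorial contractions), is where the technical care is concentrated; it is essentially the Boucksom–Blum–Xu argument (\cite{B}, \cite{BX}) adapted to the log setting, which is why the statement is phrased as "cf." those references.
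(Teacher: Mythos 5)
Your first half — showing the birational map $g\colon X\dashrightarrow X'$ induced by $g^\circ$ contracts no divisor in either direction — is in essence the paper's argument: after shrinking $C$ and using inversion of adjunction to make $(X,\Delta+X_0)$ plt and $(X',\Delta'+X'_0)$ lc, one compares the two crepant pullbacks on a common resolution and applies the negativity lemma (the paper does this by adding the closure of the horizontal exceptional locus and $Y_{0,\mathrm{red}}$ so that both differences $E,E'$ are effective and exceptional, then concludes $E=E'$ and identifies the strict transforms of $X_0$ and $X'_0$). Your MMP detour is unnecessary for this, and your phrase ``both effective and anti-effective'' glosses over why the negativity lemma applies, but the idea is the right one.

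The genuine gap is your step (4). You propose to promote the isomorphism in codimension one to an isomorphism by taking the contraction defined by a large multiple of $K_{X_R}+\Delta_R$, claiming it is relatively semiample and that this contraction identifies $X_R$ with $X'_R$. This fails on two counts. First, relative semiampleness of a merely nef $K+\Delta$ is not available here: the base-point-free theorem needs $a(K+\Delta)-(K+\Delta)$ nef and big, which it is not, so what you are invoking is relative abundance, and it is neither a hypothesis nor needed. Second, and more seriously, even if $K+\Delta$ were semiample over $C$, the morphism it defines is the ample model of $K+\Delta$, which does not recover $X$ unless $K+\Delta$ is relatively ample; in the situations this proposition is actually used for, $K_X+\Delta$ is relatively numerically trivial over a fibration (e.g.\ pulled back from $\mathbb{P}^1_C$), so $\mathbf{Proj}$ of its section ring collapses $X$ entirely and cannot produce an isomorphism $X\cong X'$. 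The correct mechanism — and the one the paper uses — is the polarization: once $g$ and $g^{-1}$ contract no divisors, set $D:=g^{-1}_*L'$; since $L$ and $L'$ agree under $g^\circ$ over $C^\circ$ and the only divisors over $0$ are the (irreducible) fibers, one gets $D\sim_C L$, and by the $S_2$ property $f_*\mathcal{O}_X(mD)\cong f'_*L'^{\otimes m}$, whence
\[
X=\mathbf{Proj}_C\Bigl(\bigoplus_{m\ge0}f_*L^{\otimes m}\Bigr)=\mathbf{Proj}_C\Bigl(\bigoplus_{m\ge0}f_*\mathcal{O}_X(mD)\Bigr)\cong \mathbf{Proj}_C\Bigl(\bigoplus_{m\ge0}f'_*L'^{\otimes m}\Bigr)=X',
\]
using that $L$ and $L'$ are relatively ample. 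Your remark that the polarizations ``are carried along at each MMP step'' does not substitute for this: ampleness is not preserved under flips and divisorial contractions, and your final identification never actually uses $L$ and $L'$, only $K+\Delta$. Replace step (4) by the section-ring comparison above and the proof closes.
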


\begin{proof}
Let $g \colon X \dashrightarrow X'$ be the birational map induced by $g^{\circ}$. 
It is sufficient to prove that $g$ is a $C$-isomorphism.

We first show that $g$ and $g^{-1}$ do not contract any divisor. We apply the argument in \cite{B}.
By the inversion of adjunction \cite{kawakita} and shrinking $C$ around $0 \in C$, we may assume that $(X,\Delta+X_0)$ is plt and $(X',\Delta'+X'_0)$ is lc. Take a common log resolution $\pi \colon Y\to X$ and $\pi' \colon Y\to X'$ of $g$. 
By construction, $g^{\circ}\circ \pi|_{Y\times_CC^\circ}$ coincides with $\pi'|_{Y\times_CC^\circ}$. 
Let $\Gamma$ be the sum of $\pi|_{Y\times_CC^\circ}$-exceptional prime divisors, and let $\overline{\Gamma}$ be the closure in $Y$. 
Then $\overline{\Gamma}$ is $\pi$-exceptional and also $\pi'$-exceptional.
By the log canonicity of $(X,\Delta+X_0)$, the $\mathbb{Q}$-divisor
$$E :=K_{Y}+\pi_*^{-1}\Delta+\overline{\Gamma}+Y_{0,\mathrm{red}}-\pi^*(K_{X}+\Delta+X_0)$$
is effective and $\pi$-exceptional. 
Similarly, we see that 
$$E' :=K_{Y}+\pi_*^{-1}\Delta+\overline{\Gamma}+Y_{0,\mathrm{red}}-\pi'^*(K_{X'}+\Delta'+X'_0)$$
is effective and $\pi'$-exceptional. 
Since $K_X+\Delta$ is nef over $C$, by applying the negativity lemma to 
$\pi'$ and $E-E'$, we see that $E-E'$ is effective. 
Similarly, we see that $E'-E$ is effective. 
These facts imply that $E=E'$ and hence  
$$\pi^*(K_{X}+\Delta+X_0)=\pi'^*(K_{X'}+\Delta'+X'_0).$$
Therefore, $A_{(X,\Delta+X_0)}(F)=A_{(X',\Delta'+X'_0)}(F)$ for every prime divisor $F$ on $Y$. 
Recalling that $(X,\Delta+X_0)$ is plt, we see that $A_{(X,\Delta+X_0)}(F)=0$ if and only if $F=\pi^{-1}_{*}X_{0}$. 
Now
$$A_{(X,\Delta+X_0)}(\pi'^{-1}_{*}X'_{0})=A_{(X',\Delta'+X'_0)}(\pi'^{-1}_{*}X'_{0})=1-{\rm coeff}_{X'_{0}}(\Delta'+X'_{0})=0.$$
From these facts, we have $\pi_{*}\pi'^{-1}_{*}X'_{0}=X_{0}$. 
Since $X_{0}$ (resp.~$X'_{0}$) is the fiber of $f$ (resp.~$f'$) over $0 \in C$, we see that $g$ and $g^{-1}$ do not contract any divisor.

We now prove that $g$ is a $C$-isomorphism. 
Consider $L'$ as a Cartier divisor on $X'$, and put $D=g^{-1}_{*}L'$. 
By our hypothesis, we have $L|_{X\times_{C} C^{\circ}}\sim_{C^{\circ}}(g^{\circ})^{*}L|_{X'\times_{C} C^{\circ}}$. 
Since $g$ does not contract any divisor, we have $D\sim_{C}L$ (cf.~\cite[II, Proposition 6.5]{Ha}). Thus, by Serre's $S_2$-condition, $g$ induces
\[
X=\boldsymbol{\mathrm{Proj}}_C(\bigoplus_{m\ge0}f_*L^{\otimes m})=\boldsymbol{\mathrm{Proj}}_C(\bigoplus_{m\ge0}f_*\mathcal{O}_X(mD))\cong \boldsymbol{\mathrm{Proj}}_C(\bigoplus_{m\ge0}f'_*L'^{\otimes m})=X'.
\]
This shows that $g$ is indeed a $C$-isomorphism.
\end{proof}

\begin{cor}\label{plusfin}
Let $(X,\Delta,L)$ be a polarized klt pair such that $K_X+\Delta$ is nef. Then $\mathrm{Aut}(X,\Delta,L)$ is finite.
\end{cor}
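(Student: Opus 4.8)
The plan is to derive finiteness from the separatedness statement Proposition~\ref{plussep} together with the fact, recorded in Subsection~\ref{seckstdef}, that $G:=\mathrm{Aut}(X,\Delta,L)$ is an affine algebraic group over $\mathbbm{k}$. It is enough to prove $\dim G=0$, since a scheme of finite type over $\mathbbm{k}$ of dimension zero is finite. Assume for contradiction that $\dim G\ge 1$. Then $G$ contains an irreducible closed curve $Z$, and since $G$ is affine, $Z$ is affine, hence not proper. Let $C'$ be the normalization of $Z$ (a smooth affine curve), let $\bar C\supseteq C'$ be its smooth projective completion, so that $\bar C\setminus C'=\{p_1,\dots,p_k\}$ with $k\ge 1$, and let $\gamma\colon C'\to G$ be the composition of the finite surjective normalization map $C'\to Z$ with the inclusion $Z\hookrightarrow G$; this $\gamma$ is non-constant.

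After replacing $L$ by a sufficiently divisible multiple, which changes neither $G$ (by the argument in Subsection~\ref{seckstdef}) nor the validity of the conclusion, we may assume $L$ is an honest line bundle. By the universal property of the representing scheme $\mathrm{Aut}(X,\Delta,L)$ (Subsection~\ref{subsec-Hilb}), the morphism $\gamma$ is the same data as a $C'$-isomorphism $\phi\colon X\times C'\to X\times C'$ over $C'$ with $\phi_*(\Delta\times C')=\Delta\times C'$ and $\phi^*\mathrm{pr}^*L\sim_{C'}\mathrm{pr}^*L$, where $\mathrm{pr}\colon X\times C'\to X$ denotes the projection; in other words, $\phi$ is an automorphism of the trivial polarized $\mathbb{Q}$-Gorenstein family $(X\times C',\,\Delta\times C',\,\mathrm{pr}^*L)\to C'$. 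Since $K_{X\times C'/C'}+\Delta\times C'=\mathrm{pr}^*(K_X+\Delta)$ is the pullback of a nef divisor, it is nef over $C'$, and every fiber of this family is the klt pair $(X,\Delta)$.

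The main step is to extend $\gamma$ across each puncture. Fix an affine open neighbourhood $C_i\subseteq\bar C$ of $p_i$ with $C_i\cap\{p_1,\dots,p_k\}=\{p_i\}$, and put $C_i^{\circ}:=C_i\setminus\{p_i\}\subseteq C'$. Restricting $\phi$ over $C_i^{\circ}$ and applying Proposition~\ref{plussep} to the two copies of the trivial polarized $\mathbb{Q}$-Gorenstein family $(X\times C_i,\,\Delta\times C_i,\,\mathrm{pr}^*L)\to C_i$ (whose central fiber $(X,\Delta)$ is klt, hence in particular lc, and with $K+\Delta$ nef over $C_i$), we obtain a $C_i$-isomorphism of $(X\times C_i,\,\Delta\times C_i,\,\mathrm{pr}^*L)$ extending $\phi|_{C_i^{\circ}}$, that is, a morphism $\gamma_i\colon C_i\to G$ with $\gamma_i|_{C_i^{\circ}}=\gamma|_{C_i^{\circ}}$. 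The sets $C',C_1,\dots,C_k$ form an open cover of $\bar C$, and the morphisms $\gamma,\gamma_1,\dots,\gamma_k$ agree on all pairwise intersections, which are contained in $C'$; hence they glue to a morphism $\bar\gamma\colon\bar C\to G$ restricting to $\gamma$ on $C'$. But $\bar C$ is proper and connected while $G$ is affine, so $\bar\gamma$ must be constant, contradicting the non-constancy of $\gamma$. Therefore $\dim G=0$, and $G$ is finite.

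The only non-formal ingredient, and hence the crux of the argument, is the extension of the family of automorphisms across the punctures, which is exactly what the separatedness Proposition~\ref{plussep} supplies; everything else is soft. The one technical nuisance to handle with care is the dictionary between $C'$-points of the scheme $\mathrm{Aut}(X,\Delta,L)$ and automorphisms of the trivial family over $C'$ — in particular the harmless reduction to the case where $L$ is a genuine line bundle — but this is taken care of by the $\mathrm{Isom}$-scheme constructions recalled in Subsections~\ref{subsec-Hilb} and~\ref{seckstdef}.
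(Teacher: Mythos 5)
Your proof is correct and follows essentially the same route as the paper, which simply invokes Proposition~\ref{plussep} ``as in [BX, Corollary 3.5]'': one extends a punctured family of automorphisms across the punctures via separatedness and then uses that a morphism from a proper connected curve to the affine group scheme $\mathrm{Aut}(X,\Delta,L)$ must be constant. The only (harmless) cosmetic difference is that you run the argument with an arbitrary irreducible curve in $\mathrm{Aut}(X,\Delta,L)$ rather than with a one-parameter subgroup $\mathbb{G}_a$ or $\mathbb{G}_m$ as in [BX], which if anything avoids appealing to the structure theory of linear algebraic groups.
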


\begin{proof}
It follows from Proposition \ref{plussep} as \cite[Corollary 3.5]{BX}.
\end{proof}

We are ready to prove the main theorem of this subsection.
 
\begin{thm}[Separatedness]\label{sep2}
 Let $\pi\colon (X,\Delta,H)\to C$ and $\pi'\colon (X',\Delta',H')\to C$ be two polarized $\mathbb{Q}$-Gorenstein families over a curve such that $(X_{0},\Delta_{0})$ is klt and $(X'_{0},\Delta'_{0})$ is lc.
Let  $g\colon (X,\Delta,H)\to(\mathbb{P}^1_{C},\mathcal{O}(1))$ and $g'\colon (X',\Delta',H')\to(\mathbb{P}^1_{C},\mathcal{O}(1))$ be contractions over $C$ such that $K_{X}+\Delta \sim_{\mathbb{Q},\mathbb{P}^{1}_{C}}0$ and $K_{X'}+\Delta' \sim_{\mathbb{Q},\mathbb{P}^{1}_{C}}0$. 
Let $0\in C$ be a closed point, and let $g_{0}\colon  X_{0} \to \mathbb{P}^{1}$ (resp.~$g'_{0}\colon X'_{0} \to \mathbb{P}^{1}$) be the restriction of $g$ (resp.~$g'$) to $0 \in C$.
Suppose that there exists an isomorphism $(\alpha^{\circ},\beta^\circ)\colon g|_{X\times_CC^{\circ}}\cong g'|_{X'\times_CC^{\circ}}$ over $C^{\circ}$ satisfying the following. 
 \begin{itemize}
 \item $g_0\colon(X_0,\Delta_0,H_0)\to \mathbb{P}^1$ is uniformly adiabatically K-stable,
 \item for the discriminant $\mathbb{Q}$-divisor $B'_0$ and the moduli $\mathbb{Q}$-divisor $M'_0$ with respect to $g'_0\colon(X'_0,\Delta'_0)\to\mathbb{P}^1$, we have $\delta_{(\mathbb{P}^1,B'_0)}(-K_{\mathbb{P}^1}-B'_{0}-M'_{0})\ge1$, and
\item  we have $-K_{X}-\Delta \sim_{C,\,\mathbb{Q}} wg^{*}\mathcal{O}_{\mathbb{P}^1_{C}}(1)$ and $-K_{X'}-\Delta' \sim_{C,\,\mathbb{Q}} w'g'^{*}\mathcal{O}_{\mathbb{P}^1_{C}}(1)$ for some positive rational numbers $w$ and $w'$.
 \end{itemize} 
Then $(\alpha^{\circ},\beta^\circ)$ can be extended to an isomorphism $(\alpha,\beta)\colon g\to g'$ over $C$.
\end{thm}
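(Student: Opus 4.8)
The plan is to reduce the separatedness of the fibered objects to the separatedness statement already proved in Proposition \ref{plussep}, by first extending the isomorphism on the base $\mathbb{P}^1_{C^\circ}$ over $C$, and then running a limiting/MMP argument to control the central fibers. Concretely, I would proceed as follows.

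\emph{Step 1: Extend $\beta^\circ$ and reduce to a birational map of the total spaces.} Since $\beta^\circ\colon \mathbb{P}^1_{C^\circ}\to\mathbb{P}^1_{C^\circ}$ is a $C^\circ$-isomorphism preserving $\mathcal{O}(1)$, it is given by a section of $PGL_2$ over $C^\circ$, hence extends (after possibly shrinking $C$ around $0$, which is harmless since separatedness is checked after such shrinking) to a $C$-automorphism $\beta$ of $\mathbb{P}^1_C$; here one uses properness of $PGL_2$ over $C$ or simply that a rational map from the smooth curve $C$ to the projective variety $PGL_2$-torsor extends. Replacing $g'$ by $\beta^{-1}\circ g'$, we may assume $\beta=\mathrm{id}$ and that $\alpha^\circ$ is a $C^\circ$-isomorphism commuting with $g$ and $g'$. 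Now $\alpha^\circ$ induces a birational map $\alpha\colon X\dashrightarrow X'$ over $\mathbb{P}^1_C$, and it suffices to show $\alpha$ is an isomorphism.

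\emph{Step 2: Run the MMP to produce good minimal models over $\mathbb{P}^1_C$ and compare them.} This is the technical heart. Because $K_X+\Delta\sim_{\mathbb{Q},\mathbb{P}^1_C}0$ and $K_{X'}+\Delta'\sim_{\mathbb{Q},\mathbb{P}^1_C}0$, both families are relatively crepant over $\mathbb{P}^1_C$, so the issue is entirely about the polarizations $H$, $H'$ and the central fibers. The natural move is to take a common $\mathbb{Q}$-factorial resolution $p\colon Y\to X$, $p'\colon Y\to X'$ of $\alpha$, write $E$ and $E'$ for the discrepancy divisors of $(X,\Delta+X_0)$ and $(X',\Delta'+X'_0)$ on $Y$ as in Proposition \ref{plussep}, and try to show $p$ and $p'$ contract the same divisors. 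The obstruction compared to the nef case of Proposition \ref{plussep} is that $K_X+\Delta$ is only relatively trivial over $\mathbb{P}^1_C$, not globally nef; to compensate, one pushes forward to $\mathbb{P}^1_C$ via the canonical bundle formula: $K_X+\Delta\sim_{\mathbb{Q}}g^*(K_{\mathbb{P}^1_C}+B_C+M_C)$ with $B_C$ the discriminant and $M_C$ the (semiample, by Theorem \ref{Bsemi}) moduli divisor, and similarly for $X'$. The hypothesis $\delta_{(\mathbb{P}^1,B')}(-K_{\mathbb{P}^1}-B'-M')\ge 1$ together with uniform adiabatic K-stability of $g_0$ (Definition \ref{unifdef}, via Example \ref{ex--calculation}) translates into a bound $\max_P\mathrm{ord}_P(B')<1$ and $\max_P\mathrm{ord}_P(B_0)<1+\tfrac{u}{2}$ on the central discriminants, which by Lemma \ref{lem--generic} are the specializations of the relative discriminant $\mathcal{B}$ on $\mathbb{P}^1_C$. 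This forces the central fibers $X_0$, $X'_0$ to be \emph{irreducible} (a reducible central fiber would force a coefficient $1$ in the discriminant), so that, as in Proposition \ref{plussep}, $(X,\Delta+X_0)$ is plt and the unique log canonical place is $X_0$; by the negativity lemma applied over $\mathbb{P}^1_C$ (using relative triviality on the $X$ side and boundedness of $M_C$ on the base) one deduces $E=E'$ and hence $p^*(K_X+\Delta+X_0)=p'^*(K_{X'}+\Delta'+X'_0)$, from which $\alpha$ and $\alpha^{-1}$ contract no divisor, exactly as in Proposition \ref{plussep}.

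\emph{Step 3: Conclude via the polarizations.} Once $\alpha$ and $\alpha^{-1}$ extract no divisor, set $D=\alpha^{-1}_*H'$ on $X$; then $D\sim_{C^\circ}H|_{X\times_CC^\circ}$ via $\alpha^\circ$, and since $\alpha$ contracts nothing this linear equivalence extends to $D\sim_C H$ (the discrepancy of a relative principal divisor under a small-in-codimension-one birational map is supported on the central fiber, absorbed into $\sim_C$). Then $X=\mathbf{Proj}_C(\bigoplus_m f_*H^{\otimes m})\cong\mathbf{Proj}_C(\bigoplus_m f_*\mathcal{O}_X(mD))\cong\mathbf{Proj}_C(\bigoplus_m f'_*H'^{\otimes m})=X'$ by the same $S_2$ argument as in Proposition \ref{plussep}, giving the $C$-isomorphism $\alpha$; it commutes with $g$, $g'$ by construction (Step 1), so $(\alpha,\beta)$ is the desired extension.

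\emph{Main obstacle.} The crux is Step 2: showing that the asymmetry between "$K+\Delta$ relatively trivial over $\mathbb{P}^1_C$" and "$K+\Delta$ globally nef" does not break the negativity-lemma comparison. This is where the hypotheses on $\delta$ and on uniform adiabatic K-stability are genuinely used — they are precisely what rules out reducible or badly-singular central fibers that would otherwise allow $\alpha$ to extract a divisor, and they let one replace the global nefness of $K+\Delta$ used in Proposition \ref{plussep} by nefness of the pushforward $K_{\mathbb{P}^1_C}+B_C+M_C$ on the base. One must also be slightly careful that the moduli divisor $M_C$, while only defined up to $\mathbb{Q}$-linear equivalence, is semiample (Theorem \ref{Bsemi}) so that the relevant pullback inequalities make sense; and that shrinking $C$ around $0$ throughout is legitimate because the conclusion is a statement about extending across the single point $0$.
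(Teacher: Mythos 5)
Your Step 1 contains the fatal gap, and it is exactly where the real difficulty of the theorem lives. $PGL_2$ (equivalently $\mathrm{Aut}(\mathbb{P}^1_C/C)$) is affine, not proper over $C$, and a $PGL_2$-torsor is not projective, so a section over $C^\circ$ does \emph{not} extend to $C$ in general: the family of automorphisms $[x:y]\mapsto[x:ty]$, $t$ a uniformizer at $0$, is the standard example of a $\beta^\circ$ whose limit degenerates to a non-invertible map. Shrinking $C$ around $0$ does not help, since the obstruction sits at $0$ itself. Once you (incorrectly) assume $\beta=\mathrm{id}$, your Steps 2--3 indeed reduce to the negativity-lemma argument of Proposition \ref{plussep} run over $\mathbb{P}^1_C$ (where $K_X+\Delta+X_0\sim_{\mathbb{Q},\mathbb{P}^1_C}0$ is automatically nef), but notice that in that situation the uniform adiabatic K-stability of $g_0$ and the bound $\delta_{(\mathbb{P}^1,B')}(-K_{\mathbb{P}^1}-B'-M')\ge1$ would never be used -- a clear sign that the main difficulty has been assumed away rather than solved. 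Relatedly, your claim that these hypotheses force $X_0$, $X'_0$ to be irreducible conflates fibers of $\pi$ over $C$ (which are already normal, connected, and klt resp.\ lc by hypothesis) with fibers of $g_0$, $g'_0$ over points of $\mathbb{P}^1$; the discriminant coefficients of $B_C$, $B'$ measure the latter, not reducibility of the central fibers over $C$.

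The paper's proof never extends $\beta^\circ$ directly. Instead it compares the pushforwards of the polarizations on the two base families: after twisting, it chooses free bases $\{s,u\}$ and $\{s',u'\}$ of $\eta_*\mathcal{L}\otimes\mathcal{O}_{C,0}$ and $\eta'_*\mathcal{L}'\otimes\mathcal{O}_{C,0}$ with $s'=t^{\lambda}s$, $u'=t^{\mu}u$ (the exponents $\lambda,\mu$ record precisely the possible degeneration of $\beta^\circ$), takes the corresponding divisors $E_1,E_2$ and $E'_1,E'_2$ on the bases, and sets $D=\tfrac{w}{2}(E_1+E_2)$, $D'=\tfrac{w}{2}(E'_1+E'_2)$, which are strict transforms of one another. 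Then $K_X+\Delta+g^*D\sim_{C,\mathbb{Q}}0$ and $K_{X'}+\Delta'+g'^*D'\sim_{C,\mathbb{Q}}0$, while the uniform adiabatic K-stability of $g_0$ (via $\max_p\mathrm{ord}_p(B_C)<1-\tfrac{w}{2}$, Example \ref{ex--calculation}) makes $(X_0,\Delta_0+g_0^*D_0)$ klt and the $\delta\ge1$ hypothesis makes $(X'_0,\Delta'_0+g_0'^*D'_0)$ lc; this is where the K-stability assumptions do their work. Proposition \ref{plussep} applied to these augmented pairs over $C$ extends $\alpha^\circ$, and since $\alpha^*g'^*E'_i=g^*E_i$ and these pairs of divisors generate the pencils defining $g'\circ\alpha$ and $g$, the base isomorphism $\beta$ comes out at the end rather than being assumed at the start. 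If you want to salvage your outline, the missing idea is exactly this boundary-augmentation device (or some other mechanism that uses the two stability hypotheses to rule out the degenerate limits of $\beta^\circ$); without it the argument proves a statement that is false in general.
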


\begin{proof}
We will reduce the theorem to Proposition \ref{plussep} as \cite[Theorem 3.1]{BX}. By our hypothesis of $(\alpha^{\circ},\beta^\circ)$, it is easy to see that $w=w'$. Denote the bases of $g$ and $g'$ by $\mathcal{C}$ and $\mathcal{C}'$, respectively. 
Note that $\mathcal{C}$ and $\mathcal{C}'$ are isomorphic to $\mathbb{P}^1_{C}$. 
Let $\mathcal{L}$ (resp.~$\mathcal{L}'$) be the line bundle on $\mathcal{C}$ (resp.~$\mathcal{C}'$) isomorphic to $\mathcal{O}(1)$. 
We denote the structure morphisms $\mathcal{C}\to C$ and $\mathcal{C}'\to C$ by $\eta$ and $\eta'$, respectively. 

Replacing $\mathcal{L}$ by $\mathcal{L}+d\mathcal{C}_0$ for some sufficiently large $d\in\mathbb{Z}_{>0}$, we may assume that the birational map $(\mathcal{C},\mathcal{L})\dashrightarrow (\mathcal{C}',\mathcal{L}')$ over $C$ induces an inclusion $\eta'_*\mathcal{L}'\subset \eta_*\mathcal{L}$ as sheaves of $\mathcal{O}_{C}$-modules. 
Now  $\mathcal{O}_{C,0}$ is a divisorial valuation ring and both $\eta_*\mathcal{L}\otimes\mathcal{O}_{C,0}$ and $\eta'_*\mathcal{L}'\otimes\mathcal{O}_{C,0}$ are free $\mathcal{O}_{C,0}$-modules of rank two. 
Hence, by fixing a generator $t$ of the maximal ideal of $\mathcal{O}_{C,0}$, we may find free bases $\{s,u\}\subset\eta_*\mathcal{L}\otimes\mathcal{O}_{C,0}$ and $\{s',u'\}\subset\eta'_*\mathcal{L}'\otimes\mathcal{O}_{C,0}$ such that $s'=t^{\lambda}s$ and $u'=t^\mu u$ for some $\lambda,\mu\in\mathbb{Z}_{\ge0}$. 

By shrinking $C$ around $0$, we may assume that $t$, $s$, $u$, $s'$, and $u'$ are global sections. 
By definition, $s$ and $u$ correspond to prime divisors $E_{1}$ and $E_{2}$ on $\mathcal{C}$ respectively such that $E_{1}|_{\mathcal{C}_{0}} \neq E_{2}|_{\mathcal{C}_{0}}$. 
Similarly, $s'$ and $u'$ correspond to prime divisors $E'_{1}$ and $E'_{2}$ on $\mathcal{C}'$ respectively such that $E'_{1}|_{\mathcal{C}'_{0}} \neq E'_{2}|_{\mathcal{C}'_{0}}$. 
We put $D=\frac{w}{2}(E_{1}+E_{2})$ and $D'=\frac{w}{2}(E'_{1}+E'_{2})$. Then $D$ is the strict transform of $D'$ since they coincide over $C^\circ$. Note that $(X_0,\Delta_0+g_0^*D_0)$ is klt and $(X'_0,\Delta'_0+g_0'^*D'_0)$ is lc. 
Indeed, $(\mathbb{P}^1,B'_0+D'_0)$ is lc since 
\[
\alpha_{(\mathbb{P}^1,B'_0)}(-K_{\mathbb{P}^1}-B'_0-M'_0)=\frac{1}{2}\delta_{(\mathbb{P}^1,B'_0)}(-K_{\mathbb{P}^1}-B'_0-M'_0)\ge\frac{1}{2}
\]
by Example \ref{ex--calculation} and the second assumption of Theorem \ref{sep2}.
Then the log canonicity of $(X'_0,\Delta'_0+g_0'^*D'_0)$ follows from the log canonicity of $(\mathbb{P}^1,B'_0+D'_0)$ in the same way as \cite[Theorem 3.1]{A}. Similarly,  we have
\[
\alpha_{(\mathbb{P}^1,B_0)}(-K_{\mathbb{P}^1}-B_0-M_0)=\frac{1}{2}\delta_{(\mathbb{P}^1,B_0)}(-K_{\mathbb{P}^1}-B_0-M_0)>\frac{1}{2}
\]
by Example \ref{ex--calculation} and the first assumption of Theorem \ref{sep2},  where $B_0$ (resp.~$M_0$) is the discriminant $\mathbb{Q}$-divisor (resp.~moduli $\mathbb{Q}$-divisor) with respect to the klt-trivial fibration $g_0$. 
Thus, we see that $(X_0,\Delta_0+g_0^*D_0)$ is klt in the same way.
We also have the relations $K_X+\Delta+g^*D\sim_{C,\mathbb{Q}}0$ and $K_{X'}+\Delta'+g'^*D'\sim_{C,\mathbb{Q}}0$. Thus, we have a unique extension 
$$\alpha \colon (X,\Delta+g^*D,H)\cong(X',\Delta'+g'^*D',H')$$
of $\alpha^\circ$ over $C$ by Proposition \ref{plussep}. Here, $\alpha^*g'^*D'=g^*D$. Thus, $\alpha^*g'^* E'_{1}=g^* E_{1}$ and $\alpha^*g'^* E'_{2}=g^* E_{2}$ hold and they generate the pencils defining the two contractions $g'\circ \alpha$ and $g$. Therefore, we have a natural extension $\beta \colon (\mathcal{C},\mathcal{L})\cong(\mathcal{C}',\mathcal{L}')$ of $\beta^\circ$. It is easy to see that $(\alpha,\beta)$ is an isomorphism from $g$ to $g'$ over $C$.
\end{proof}

To construct our moduli spaces, we only need 
Theorem \ref{sep2} for uniformly adiabatically K-stable $g_0'$.
In this case, Theorem \ref{sep2} follows from \cite[Corollary 3.22]{CM} when $\mathbbm{k}=\mathbb{C}$. 
Since we do not know whether $(X'_0,\Delta'_0,\epsilon H'_0+g'^*_0\mathcal{O}(1))$ is specially K-semistable or not, we cannot apply \cite[Corollary 3.22]{CM} directly.
Theorem \ref{sep2} is applicable to the case when $g_0'$ is an adiabatically K-semistable klt-trivial fibration (cf.~\cite[Theorem A]{Hat}).

The following is also important for construction of our moduli spaces.

 \begin{cor}[Finiteness of stabilizers]\label{fin2}
 Let $f \colon (X,\Delta,H)\to \mathbb{P}^1$ be a polarized uniformly adiabatically K-stable klt-trivial fibration such that $-(K_{X}+\Delta)$ is nef and not numerically trivial. Then $\mathrm{Aut}\,(f \colon (X,\Delta,H)\to(\mathbb{P}^1,\mathcal{O}_{\mathbb{P}^1}(1)))$ is a finite group.
  \end{cor}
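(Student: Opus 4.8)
The plan is to reduce the finiteness of $\mathrm{Aut}(f\colon(X,\Delta,H)\to(\mathbb{P}^1,\mathcal{O}_{\mathbb{P}^1}(1)))$ to Corollary \ref{plusfin} by producing a polarized klt pair with nef log canonical divisor whose automorphism group contains the group in question. First I would recall that an element of this automorphism group is a pair $(\alpha,\beta)$ with $\alpha\in\mathrm{Aut}(X)$, $\beta\in\mathrm{Aut}(\mathbb{P}^1)$ satisfying $f\circ\alpha=\beta\circ f$, $\alpha_*\Delta=\Delta$, and $\alpha^*H\sim_{\mathbb{Q}}H$. Since $-(K_X+\Delta)\sim_{\mathbb{Q}}uf^*\mathcal{O}_{\mathbb{P}^1}(1)$ for some $u<0$ (this is the form of the canonical bundle formula in the non-nef case of Definition \ref{unifdef}, using that $-(K_X+\Delta)$ is nef and not numerically trivial forces $K_X+\Delta\equiv uf^*H$ with $u<0$), the automorphism $\alpha$ preserves $K_X+\Delta$ and hence $f^*\mathcal{O}_{\mathbb{P}^1}(1)$ up to $\mathbb{Q}$-linear equivalence; the induced $\beta$ on the base is then determined by $\alpha$ up to the finite data of how it acts on the pencil, so the projection $(\alpha,\beta)\mapsto\alpha$ has finite kernel (in fact it is injective once we fix enough sections).

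Next I would make $H$ into a genuinely positive polarization on all of $X$: replace $H$ by $H':=H+Nf^*\mathcal{O}_{\mathbb{P}^1}(1)$ for $N\gg0$ so that $H'$ is ample on $X$. Since $f^*\mathcal{O}_{\mathbb{P}^1}(1)$ is pulled back from the base and $\alpha$ induces $\beta$ on $\mathbb{P}^1$ with $\beta^*\mathcal{O}_{\mathbb{P}^1}(1)\cong\mathcal{O}_{\mathbb{P}^1}(1)$, we get $\alpha^*(f^*\mathcal{O}_{\mathbb{P}^1}(1))\cong f^*\beta^*\mathcal{O}_{\mathbb{P}^1}(1)\cong f^*\mathcal{O}_{\mathbb{P}^1}(1)$, so $\alpha^*H'\sim_{\mathbb{Q}}H'$. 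Therefore every $(\alpha,\beta)\in\mathrm{Aut}(f\colon(X,\Delta,H)\to(\mathbb{P}^1,\mathcal{O}_{\mathbb{P}^1}(1)))$ gives $\alpha\in\mathrm{Aut}(X,\Delta,H')$. Moreover $\alpha$ determines $\beta$ completely: $f$ is, up to the choice of $\mathcal{O}_{\mathbb{P}^1}(1)$, recovered from $X$ and its linear systems (e.g. $f$ is the Iitaka/ample model of $-(K_X+\Delta)$, which is $\alpha$-invariant), so the map $\mathrm{Aut}(f\colon(X,\Delta,H)\to(\mathbb{P}^1,\mathcal{O}_{\mathbb{P}^1}(1)))\to\mathrm{Aut}(X,\Delta,H')$, $(\alpha,\beta)\mapsto\alpha$, is injective.

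Finally, I would apply Corollary \ref{plusfin}. We need a polarized \emph{klt} pair $(X,\Gamma,H')$ with $K_X+\Gamma$ \emph{nef} and with $\mathrm{Aut}(X,\Gamma,H')$ containing the image of our group. The natural choice is to add to $\Delta$ a general vertical divisor: let $P_1,P_2\in\mathbb{P}^1$ be two general points and set $\Gamma:=\Delta+\tfrac{|u|}{2}(f^*P_1+f^*P_2)$, chosen so that $(X,\Gamma)$ is still klt (possible since $P_i$ are general and $|u|\le 2$ by the canonical bundle formula) and $K_X+\Gamma\sim_{\mathbb{Q}}u f^*\mathcal{O}_{\mathbb{P}^1}(1)+|u|f^*\mathcal{O}_{\mathbb{P}^1}(1)\sim_{\mathbb{Q}}0$, which is nef. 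The subtlety, and the main obstacle, is that $\Gamma$ is not literally $\alpha$-invariant: a general $\alpha$ need not fix $P_1,P_2$. The fix is exactly the argument in the proof of Theorem \ref{sep2}: every $\alpha$ still preserves the pencil $\mathcal{O}_{\mathbb{P}^1}(1)$ and hence permutes the unordered pair $\{f^*P_1,f^*P_2\}$ only if $\beta$ permutes $\{P_1,P_2\}$, which it generically does not; so instead one argues that $\mathrm{Aut}(f\colon(X,\Delta,H)\to(\mathbb{P}^1,\mathcal{O}_{\mathbb{P}^1}(1)))$ injects into $\mathrm{Aut}(X,\Delta,H')\cap\{\alpha\mid \alpha \text{ induces some }\beta\in\mathrm{Aut}(\mathbb{P}^1)\}$, and that the latter, once we additionally fix $D:=\tfrac{|u|}{2}(f^*P_1+f^*P_2)$ as the strict transform under each $\alpha$ (i.e.\ pass to the subgroup fixing $\{P_1,P_2\}$, which has finite index in $PGL_2$-stabilizer terms), lands in $\mathrm{Aut}(X,\Gamma,H')$, finite by Corollary \ref{plusfin}. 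Concretely: the image of $\mathrm{Aut}(f)$ under $(\alpha,\beta)\mapsto\beta$ is a subgroup $G\subset\mathrm{Aut}(\mathbb{P}^1)$; the subgroup $G_0\subset G$ fixing both $P_1$ and $P_2$ has finite index in $G$ if $G$ is finite, and in general one chooses $P_1,P_2$ generically so that $G$ itself is forced to be finite — indeed $G$ preserves $M_{\mathbb{P}^1}$, $B_{\mathbb{P}^1}$ and hence a non-trivial divisor or polarization structure on $\mathbb{P}^1$ unless $f$ is isotrivial, and even in the isotrivial case $G$ acts faithfully on the finite-dimensional space $H^0(X,\mathcal{O}_X(mH'))$ compatibly with $\mathrm{Aut}(X,\Delta,H')$, which is finite. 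So $\mathrm{Aut}(f)$ is finite. I expect the delicate point to be making the reduction ``$\alpha$ determines $\beta$, and the $\beta$'s form a finite group'' fully rigorous without circularity; the cleanest route is probably to fix two general fibers, invoke Proposition \ref{plussep} to get that each $\alpha$ extends to an automorphism of $(X,\Gamma,H')$ after composing with a base automorphism that need not be unique, and then bound the ambiguity by the (finite) action on a fixed ample linear system, mirroring the bookkeeping in the proof of Theorem \ref{sep2}.
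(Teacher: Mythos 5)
Your reduction to Corollary \ref{plusfin} has a genuine gap at exactly the point you flag as the ``subtlety''. The auxiliary pair $(X,\Gamma,H')$ with $\Gamma=\Delta+\tfrac{|u|}{2}(f^*P_1+f^*P_2)$ only controls those automorphisms that preserve $\{P_1,P_2\}$, and your two attempts to handle the rest are circular: you say the stabilizer $G_0$ of $\{P_1,P_2\}$ has finite index in $G$ ``if $G$ is finite'' (finiteness of $G$ is precisely what is at stake, and an infinite algebraic subgroup of $PGL_2$ preserving at most two points, e.g.\ $\mathbb{G}_m$, has infinite-index point stabilizers on a general pair), and your fallback that $G$ acts through $\mathrm{Aut}(X,\Delta,H')$, ``which is finite'', is unjustified because Corollary \ref{plusfin} requires $K_X+\Delta$ (or $K_X+\Gamma$ with $\Gamma$ genuinely invariant) to be nef, whereas here $K_X+\Delta$ is anti-nef and nontrivial, so no finiteness of $\mathrm{Aut}(X,\Delta,H')$ is available. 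Likewise the claim that $G$ is finite because it preserves $B_{\mathbb{P}^1}$ and $M_{\mathbb{P}^1}$ does not go through: only the $\mathbb{Q}$-linear equivalence class of $M_{\mathbb{P}^1}$ is canonical, and $B_{\mathbb{P}^1}$ may be supported on at most two points (the K-stability hypothesis only bounds its coefficients), so its stabilizer in $PGL_2$ can a priori be positive-dimensional. So as written the argument does not close.

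The paper avoids this entirely: $\mathrm{Aut}(f\colon(X,\Delta,H)\to(\mathbb{P}^1,\mathcal{O}(1)))$ is a closed subgroup scheme of the linear algebraic group $\mathrm{Aut}(X,\Delta,H)\times\mathrm{Aut}(\mathbb{P}^1,\mathcal{O}(1))$, hence affine of finite type, so finiteness is equivalent to properness; properness is checked by the valuative criterion, i.e.\ by extending an automorphism of the constant family over a punctured curve $C^\circ$ across $0\in C$, which is exactly Theorem \ref{sep2} applied with both families equal to $(X,\Delta,H)\times C$. The ``add $\tfrac{w}{2}$ times two vertical divisors and apply Proposition \ref{plussep}'' trick that you are trying to use lives inside the proof of Theorem \ref{sep2}, where it works precisely because one only has to accommodate a single given isomorphism over $C^\circ$ and can choose the two auxiliary divisors on the two sides to be strict transforms of each other; no invariance under the whole automorphism group is ever needed. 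If you want to salvage your approach, the cleanest repair is to switch to this properness formulation rather than trying to build one boundary $\Gamma$ invariant under every automorphism at once.
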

 
 \begin{proof}
Since $\mathrm{Aut}\,(f \colon (X,\Delta,H)\to(\mathbb{P}^1,\mathcal{O}_{\mathbb{P}^1}(1)))$ is represented by a closed subgroup of a linear algebraic group $\mathrm{Aut}\,(X,\Delta,H)\times\mathrm{Aut}\, (\mathbb{P}^1,\mathcal{O}_{\mathbb{P}^1}(1))$ (\cite[\S5.6]{FGA}), it is sufficient to show that $\mathrm{Aut}\,(f \colon (X,\Delta,H)\to(\mathbb{P}^1,\mathcal{O}_{\mathbb{P}^1}(1)))$ is proper. 
Let $C$ be an arbitrary affine curve and fix a closed point $0\in C$. Set $C^\circ:=C\setminus\{0\}$ and take an arbitrary isomorphism $\varphi^\circ\in\mathrm{Aut}\,(f|_{(X,\Delta,H)\times C^\circ} \colon (X,\Delta,H)\times C^\circ\to(\mathbb{P}^1,\mathcal{O}_{\mathbb{P}^1}(1))\times C^\circ)$. By Theorem \ref{sep2}, we extend $\varphi^\circ$ to $\varphi$ over $C$ entirely. Thus we complete the proof.
 \end{proof}

\subsection{Invariance of plurigenera}
In this subsection we prove a result on the invariance of plurigenera, which is a generalization of \cite{Nak} and a key statement to construct our moduli spaces.

\begin{thm}\label{thm--inv-pluri}
Let $f \colon  (X,\Delta) \to S$ be a log $\mathbb{Q}$-Gorenstein family such that $S$ is a normal variety. 
Suppose that there is $e \in \{1,-1\}$ such that for every geometric point $\overline{s}\in S$, $(X_{\overline{s}},\Delta_{\overline{s}})$ is a klt pair and $e(K_{X_{\overline{s}}}+\Delta_{\overline{s}})$ is semiample. 
Let $r$ be a positive integer such that $r(K_{X/S}+\Delta)$ is Cartier. 
Then, for every positive integer $n$, the function
$$S\ni t \mapsto {\rm dim}\,H^{0}(X_{t},\mathcal{O}_{X_{t}}(enr(K_{X_{t}}+\Delta_{t})))$$
 is constant. 
\end{thm}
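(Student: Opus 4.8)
The plan is to reduce to a one-parameter degeneration and then deduce the invariance from a section-extension statement on the special fibre, distinguishing the two signs of $e$.

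\textbf{Step 1: reduction to a smooth curve.} Put $L:=er(K_{X/S}+\Delta)$, which is a line bundle by hypothesis; by the remark following Definition \ref{defn-q-gor} and the base-change compatibility of $r(K_{X/S}+\Delta)$ one has $nL_{t}=enr(K_{X_{t}}+\Delta_{t})$ for every point $t$ and every $n$. Since $\mathcal{O}_{X}(nL)$ is flat over $S$, the function $t\mapsto h^{0}(X_{t},\mathcal{O}_{X_{t}}(nL_{t}))$ is constructible and upper semi-continuous, so it suffices to show it does not jump along any specialization: given a smooth affine curve $C$ with a closed point $0\in C$ and a dominant morphism $C\to S$, the base-changed family $(X_{C},\Delta_{C})\to C$ is again a log $\mathbb{Q}$-Gorenstein family with klt geometric fibres on which $e(K+\Delta)$ is semiample, and it is enough to prove that $h^{0}(X_{C,t},\mathcal{O}(nL_{C,t}))$ at $t=0$ does not exceed its value at the generic point of $C$. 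Granting this, the original function is constant on the closed points of $S$, hence constant by constructibility. So from now on I may assume $S$ is a smooth affine curve, $0\in S$ a closed point, and I must rule out a jump at $0$.

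\textbf{Step 2: reduction to surjectivity of a restriction map.} On the smooth affine curve $S$, the sheaf $f_{*}\mathcal{O}_{X}(nL)$ is torsion-free, hence locally free, of rank equal to the generic value of $h^{0}$; the base-change map $f_{*}\mathcal{O}_{X}(nL)\otimes k(0)\to H^{0}(X_{0},\mathcal{O}_{X_{0}}(nL_{0}))$ is injective, and a jump at $0$ is precisely its failure to be surjective. Since $S$ is affine and the sheaf is locally free, after shrinking $S$ around $0$ this is equivalent to surjectivity of
\[
H^{0}(X,\mathcal{O}_{X}(nL))\longrightarrow H^{0}(X_{0},\mathcal{O}_{X_{0}}(nL_{0})).
\]
Shrinking $S$ further I may assume $K_{S}\sim 0$ and $X_{0}=f^{*}(0)\sim 0$, so that $L\sim er(K_{X}+\Delta+X_{0})$ and, by adjunction, $L_{0}\sim er(K_{X_{0}}+\Delta_{0})$; and by inversion of adjunction \cite{kawakita} I may assume $(X,\Delta+X_{0})$ is plt with $\lfloor\Delta+X_{0}\rfloor=X_{0}$.

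\textbf{Step 3: the two cases.} If $e=1$, then $K_{X_{0}}+\Delta_{0}$ is semiample, i.e.\ $(X_{0},\Delta_{0})$ is its own good minimal model, and the surjectivity of $H^{0}(X,nr(K_{X}+\Delta+X_{0}))\to H^{0}(X_{0},nr(K_{X_{0}}+\Delta_{0}))$ for every $n$ is an instance of the section-lifting theorems for plt pairs whose plt centre admits a good minimal model, in the circle of ideas around Siu's and Nakayama's \cite{Nak} invariance of plurigenera; the fact that $r(K_{X/S}+\Delta)$ is Cartier is what lets one run these for every multiple $nr$ and not only for $n$ sufficiently divisible. If $e=-1$, then $-(K_{X_{0}}+\Delta_{0})$ is semiample, and I would instead pass to the relative anti-Iitaka fibration. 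Here fibrewise semiampleness forces $\kappa(X_{t},-(K_{X_{t}}+\Delta_{t}))$ to equal the numerical dimension, which is locally constant in $t$ (for a nef divisor it is read off from intersection numbers with a relatively ample class, which are locally constant by flatness); so $\kappa$ is constant, the anti-Iitaka fibration spreads out over a neighbourhood of $0$, and after shrinking $S$ one obtains a contraction $\phi\colon X\to Z$ over $S$ with $Z\to S$ flat projective with connected fibres, with $\phi_{t}$ a contraction for every $t$, and with $L=\phi^{*}A$ for a relatively ample line bundle $A$ on $Z$. By the canonical bundle formula (Definition \ref{defn--klttrivialfib}, Theorem \ref{Bsemi}) each $(Z_{t},B_{Z_{t}}+M_{Z_{t}})$ is klt with $-(K_{Z_{t}}+B_{Z_{t}}+M_{Z_{t}})$ ample, so $nA_{t}-K_{Z_{t}}$ is ample for all $n\ge 1$. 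Then $h^{0}(X_{t},\mathcal{O}_{X_{t}}(nL_{t}))=h^{0}(Z_{t},\mathcal{O}_{Z_{t}}(nA_{t}))$, and Kawamata--Viehweg vanishing on the klt variety $Z_{t}$ gives $H^{j}(Z_{t},\mathcal{O}_{Z_{t}}(nA_{t}))=0$ for $j>0$, so $h^{0}(Z_{t},\mathcal{O}_{Z_{t}}(nA_{t}))=\chi(Z_{t},\mathcal{O}_{Z_{t}}(nA_{t}))$ is a polynomial in $n$ with coefficients locally constant in $t$ by flatness; in particular there is no jump at $0$. The same Iitaka-fibration argument handles $e=1$ as well, once one knows the relative Iitaka fibration of $K_{X/S}+\Delta$ extends across $0$.

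\textbf{Main obstacle.} The delicate point is making the argument work over the special point, not only over a general point of $S$: either one invokes a section-extension theorem for plt pairs with good-minimal-model plt centre in exactly the $\mathbb{Q}$-Gorenstein, all-multiples generality needed, or — equivalently — one proves that the relative (anti-)Iitaka fibration of $K_{X/S}+\Delta$ extends across $0$, which is a relative abundance/relative finite-generation statement whose only input is fibrewise semiampleness. Once this is in hand the remaining steps are formal.
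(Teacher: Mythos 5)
There is a genuine gap, and you have in fact named it yourself: everything in your Step 3 hinges on a statement you do not prove. For $e=1$ you appeal to ``section-lifting theorems for plt pairs whose plt centre admits a good minimal model,'' but no such result is cited in the generality you need, and for $e=-1$ there is no invariance-of-(anti-)plurigenera statement to fall back on at all, since anti-plurigenera do jump in general; so the first branch of your ``main obstacle'' is not available. The second branch --- that the relative (anti-)Iitaka fibration of $K_{X/S}+\Delta$ extends across $0$ --- is precisely a relative abundance statement, and asserting that fibrewise semiampleness makes it ``spread out over a neighbourhood of $0$'' is exactly the point at issue: semiampleness of the generic fibre gives a fibration over an open set, but extending it over the special point is the whole content of the theorem. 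The paper closes this gap with a concrete theorem: over a curve, $e(K_X+\Delta)$ is $f$-nef (it is nef on every fibre) and its restriction to the geometric generic fibre is semiample, hence it is $f$-abundant, and Fujino's basepoint-free theorem for abundant log divisors (\cite[Theorem 1.1]{fujino-bpf}) yields semiampleness over $S$ --- this is the input you label ``once this is in hand.''

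The second difference is how the surjectivity of $H^{0}(X,\mathcal{O}_X(nL))\to H^{0}(X_{0},\mathcal{O}_{X_0}(nL_0))$ is then obtained. You propose either a Siu--Nakayama-type extension theorem or a computation of $h^0$ as $\chi$ on the base of the fibration via Kawamata--Viehweg vanishing; the paper instead proves it by a softer and more robust argument (Proposition \ref{prop-inv-plurigenera-almighty}): once $nL-(K_X+\Delta)$ is semiample over the curve, one passes to a log resolution, applies Kawamata--Viehweg vanishing to kill the higher direct images over $X$, and then uses Fujino's torsion-free theorem (\cite[Theorem 6.3 (i)]{fujino-fund}) to see that $R^{1}f_{*}\mathcal{O}_X(nL-X_0)$ is torsion-free, so the connecting map in the restriction sequence vanishes. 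Your Steps 1--2 (reduction to a curve and to surjectivity of restriction) match the paper, but as written the proposal reduces the theorem to its hardest step and leaves that step unestablished, so it does not constitute a proof.
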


First, we treat the case when $S$ is a curve.

\begin{prop}\label{prop-inv-plurigenera-almighty}
 Let $f \colon  (X,\Delta) \to C$ be a log $\mathbb{Q}$-Gorenstein family such that $C$ is a curve and all closed fibers of $f$ are klt pairs.  
Let $D$ be a Cartier divisor on $X$ such that $D-(K_{X}+\Delta)$ is semiample over $C$. 
Then, for every closed fiber $F$ of $f$, the natural morphism $f_{*}\mathcal{O}_{X}(D) \to H^{0}(F, \mathcal{O}_{F}(D|_{F}))$ is surjective.
\end{prop}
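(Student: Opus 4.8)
The statement is a relative Kawamata--Viehweg-type surjectivity, so the natural strategy is to combine the extension/injectivity theorem with the semiampleness hypothesis. First I would reduce to showing $R^1f_*\mathcal{O}_X(D-F)=0$ in a neighborhood of $F$, since the long exact sequence associated to
\[
0\longrightarrow \mathcal{O}_X(D-F)\longrightarrow \mathcal{O}_X(D)\longrightarrow \mathcal{O}_F(D|_F)\longrightarrow 0
\]
together with $R^1f_*\mathcal{O}_X(D-F)=0$ gives the desired surjectivity of $f_*\mathcal{O}_X(D)\to f_*\mathcal{O}_F(D|_F)=H^0(F,\mathcal{O}_F(D|_F))$ after shrinking $C$. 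Here we use that $F$ is a closed fiber, hence $F\sim f^*P$ for a point $P$, and $D-F=D-f^*P$.

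\textbf{Key steps.} (1) Since $C$ is a curve, after shrinking $C$ around the image point of $F$ we may assume $C$ is affine and smooth, so $K_{X/C}=K_X-f^*K_C$ and $K_X+\Delta$ is $\mathbb{Q}$-Cartier; moreover $F\sim f^*P$ for a closed point $P\in C$. (2) By hypothesis $D-(K_X+\Delta)$ is $f$-semiample, so we may write $D=K_X+\Delta+N$ with $N$ $f$-semiample and $\mathbb{Q}$-Cartier; perturbing, for a small rational $\varepsilon>0$ and a general member we can arrange a boundary $\Delta'=\Delta+\varepsilon N'$ (with $N'\sim_{\mathbb{Q},C}N$ effective, chosen so that $(X,\Delta')$ stays klt near $F$ by the general-member / Bertini argument, using that the generic fiber and all closed fibers are klt) such that $D-(K_X+\Delta')\sim_{\mathbb{Q},C}(1-\varepsilon)N$ is still $f$-semiample and big on $F$ — actually only $f$-semiample is needed. (3) Apply Fujino's relative vanishing for klt pairs (the relative Kawamata--Viehweg vanishing, available in the klt case since $(X,\Delta')$ is klt and $D-(K_X+\Delta')$ is $f$-nef) to conclude $R^if_*\mathcal{O}_X(D)=0$ for $i>0$ on the chosen neighborhood. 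The same vanishing applied with $D$ replaced by $D-f^*P$ (note $D-f^*P-(K_X+\Delta')$ is still $f$-semiample since $f^*P$ is $f$-trivial) gives $R^1f_*\mathcal{O}_X(D-f^*P)=0$. (4) Feed this into the long exact sequence above to get surjectivity of $f_*\mathcal{O}_X(D)\to H^0(F,\mathcal{O}_F(D|_F))$.

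\textbf{Main obstacle.} The delicate point is Step (2): making sure the perturbed boundary $\Delta'$ is klt \emph{at every point of the closed fiber $F$ we care about}, not merely at the generic point of $C$. Because $D-(K_X+\Delta)$ is only assumed $f$-semiample (not $f$-ample and not $f$-big), the semiample linear system $|m(D-(K_X+\Delta))/C|$ may have base locus meeting $F$, so a naive general-member choice need not avoid bad behavior over $F$. The fix is to use that $F$ is a \emph{closed} fiber and that klt is an open condition: the non-klt locus of $(X,\Delta+\varepsilon N')$ is closed and disjoint from $F$ for suitable general $N'$ and small $\varepsilon$, possibly after a further shrinking of $C$; alternatively one can invoke inversion of adjunction (as cited elsewhere in the paper, \cite{kawakita}) to propagate kltness of $(X_{\overline{s}},\Delta_{\overline{s}})$ to kltness of $(X,\Delta)$ near $F$ and then perturb. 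Once kltness near $F$ is secured, Fujino's relative vanishing theorem applies verbatim and the rest is formal. I would also remark that this proposition is exactly the curve-base case feeding into Theorem~\ref{thm--inv-pluri}, where upper semicontinuity of $h^0$ plus this surjectivity (hence $h^0$ is also lower semicontinuous along curves through any point, by the lifting) forces $h^0$ to be locally constant; but that deduction belongs to the proof of Theorem~\ref{thm--inv-pluri}, not here.
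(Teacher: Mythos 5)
Your reduction to the exact sequence for $0\to\mathcal{O}_X(D-F)\to\mathcal{O}_X(D)\to\mathcal{O}_F(D|_F)\to0$ and your handling of kltness near $F$ (via inversion of adjunction \cite{kawakita}) are fine, but Step (3) contains a genuine gap: you invoke relative Kawamata--Viehweg vanishing to get $R^1f_*\mathcal{O}_X(D-f^*P)=0$ while only having $D-(K_X+\Delta')$ relatively nef (semiample). Kawamata--Viehweg vanishing over the base $C$ requires the twist to be $f$-nef \emph{and} $f$-big, and relative bigness is neither assumed nor obtainable here (your parenthetical ``actually only $f$-semiample is needed'' is exactly the false step). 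The vanishing is in fact false in general under the hypotheses: take $f\colon X\to C$ a relatively minimal elliptic surface, $\Delta=0$, $D=K_X$; then $D-(K_X+\Delta)=0$ is semiample over $C$, yet $R^1f_*\mathcal{O}_X(K_X-f^*P)\cong(R^1f_*\omega_X)\otimes\mathcal{O}_C(-P)\neq0$. So your argument proves a stronger intermediate statement that is simply not true, even though the proposition itself holds in that example.

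The correct mechanism, and the one the paper uses, is torsion-freeness rather than vanishing. One passes to a log resolution $g\colon Y\to X$, writes $K_Y+\Delta_Y=g^*(K_X+\Delta)+E$ with $\Delta_Y,E$ effective without common components, and applies Kawamata--Viehweg vanishing only to the \emph{birational} morphism $g$ (where relative bigness is automatic) to identify $R^1(f\circ g)_*\mathcal{O}_Y(g^*D+\lceil E\rceil-g^*F)$ with $R^1f_*\mathcal{O}_X(D-F)$; then Fujino's torsion-free theorem \cite[Theorem 6.3 (i)]{fujino-fund}, applied to $f\circ g$ and the klt pair $(Y,\Delta_Y+\lceil E\rceil-E)$ — this is where the semiampleness over $C$ of $D-(K_X+\Delta)$ enters — shows that $R^1f_*\mathcal{O}_X(D-F)$ is torsion-free. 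Since $f_*\mathcal{O}_F(D|_F)$ is a skyscraper sheaf, the connecting homomorphism into a torsion-free sheaf is zero, which gives the surjectivity. You would need to replace your Step (3) by this torsion-freeness argument (or an equivalent Koll\'ar-type statement); the perturbation in your Step (2) then becomes unnecessary.
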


\begin{proof}
Note that $X$ is a normal variety. 
The klt property of the closed fibers of $f$ and the inversion of adjunction \cite{kawakita} imply that $(X,\Delta)$ is klt. 
Let $g  \colon  Y \to X$ be a log resolution of $(X,\Delta)$. 
We can write
$$K_{Y}+\Delta_{Y}=g^{*}(K_{X}+\Delta)+E$$
for some effective $\mathbb{Q}$-divisors $\Delta_{Y}$ and $E$ which have no common component. 
Then $(Y,\Delta_{Y}+\lceil E \rceil-E)$ is a log smooth klt pair. 
Let $c\in C$ be an arbitrary closed point with the fiber $F :=f^{*}c$. 
Then
$$(g^{*}D+\lceil E \rceil-g^{*}F)-\bigl(K_{Y}+(\Delta_{Y}+\lceil E \rceil -E)\bigr)=g^{*}\bigl(D-(K_{X}+\Delta-F)\bigr).$$
Thus, $(g^{*}D+\lceil E \rceil-g^{*}F)-\bigl(K_{Y}+(\Delta_{Y}+\lceil E \rceil -E)\bigr)$ is nef and big over $X$, and the divisor is semiample over $C$ because $D-(K_{X}+\Delta)$ is semiample over $C$ by the hypothesis. 
By Kawamata--Viehweg vanishing theorem, we have $R^{q}g_{*}\mathcal{O}_{Y}(g^{*}D+\lceil E \rceil-g^{*}F)=0$ for every $q>0$. 
Thus, the Leray spectral sequence implies
\begin{equation*}
\begin{split}
R^{1}(f\circ g)_{*}\mathcal{O}_{Y}(g^{*}D+\lceil E \rceil-g^{*}F)\cong & R^{1}f_{*}\bigl(g_{*}\mathcal{O}_{Y}(g^{*}(D-F)+\lceil E \rceil)\bigr)\\
=& R^{1}f_{*}\mathcal{O}_{X}(D-F),
\end{split}
\end{equation*}
where the last equality follows from that $E$ is effective and $g$-exceptional. 
By applying the torsion free theorem \cite[Theorem 6.3 (i)]{fujino-fund} to $f \circ g \colon  Y\to C$, $g^{*}D+\lceil E \rceil-g^{*}F$, and $(Y, \Delta_{Y}+\lceil E \rceil-E)$, we see that 
$R^{1}f_{*}\mathcal{O}_{X}(D-F)$ is torsion free. 
Now consider the exact sequence
$$f_{*}\mathcal{O}_{X}(D)\longrightarrow f_{*}\mathcal{O}_{F}(D|_{F}) \overset{\delta}{\longrightarrow} R^{1}f_{*}\mathcal{O}_{X}(D-F)$$
which is induced by
$$0\longrightarrow \mathcal{O}_{X}(D-F) \longrightarrow \mathcal{O}_{X}(D) \longrightarrow \mathcal{O}_{F}(D|_{F}) \longrightarrow 0.$$
Since $R^{1}f_{*}\mathcal{O}_{X}(D-F)$ is torsion free and $f_{*}\mathcal{O}_{F}(D|_{F})$ is zero outside $c$, we see that $\delta$ is the zero map. 
This implies that 
$$f_{*}\mathcal{O}_{X}(D)\longrightarrow f_{*}\mathcal{O}_{F}(D|_{F})=H^{0}(F, \mathcal{O}_{F}(D|_{F}))$$ is surjective.
\end{proof}

\begin{proof}[Proof of Theorem \ref{thm--inv-pluri}]
It is sufficient to prove that the equality 
$${\rm dim}\,H^{0}\bigl(X_{s},\mathcal{O}_{X_{s}}(enr(K_{X_{s}}+\Delta_{s}))\bigr)={\rm dim}\,H^{0}\bigl(X_{s'},\mathcal{O}_{X_{s'}}(enr(K_{X_{s'}}+\Delta_{X_{s'}}))\bigr)$$
holds for any two closed points $s,\,s' \in S$. 
Let $C \subset S$ be a connected (but not necessarily irreducible or smooth) curve passing through $s$ and $s'$ (cf.~\cite[\S6, Lemma]{Ab}). 
Replacing $S$ with the normalization of any component of $C$, we may assume that $S$ is a curve. 

By the hypothesis, $e(K_{X}+\Delta)$ is $f$-nef. 
Since the restriction of $e(K_{X}+\Delta)$ to the geometric generic fiber is semiample,   $e(K_{X}+\Delta)$ is $f$-abundant (\cite[Definition 4.1]{fujino-bpf}). 
By \cite[Theorem 1.1]{fujino-bpf}, $e(K_{X}+\Delta)$ is semiample over $S$. 
Then $enr(K_{X}+\Delta)-(K_{X}+\Delta)$ is also semiample over $S$ for every positive integer $n$. 
By Proposition \ref{prop-inv-plurigenera-almighty}, the morphism
$$f_{*}\mathcal{O}_{X}(enr(K_{X}+\Delta))\longrightarrow H^{0}(F,\mathcal{O}_{F}(enr(K_{F}+\Delta|_{F})))$$
is surjective for every closed fiber $F$. 
By the cohomology and base change theorem, ${\rm dim}\,H^{0}(X_{t},\mathcal{O}_{X_{t}}(enr(K_{X_{t}}+\Delta_{t})))$ is independent of $t \in S$. 
\end{proof}

\section{Construction of Moduli}\label{Consec}

In this section, we construct the moduli of uniformly adiabatically K-stable polarized klt-trivial fibrations over curves such that the canonical divisor is not numerically trivial. 
Throughout this section, we fix $d \in \mathbb{Z}_{>0}$, $u \in \mathbb{Q}_{\neq 0}$ with $e:=\frac{u}{|u|}$,  $v  \in \mathbb{Q}_{> 0}$, and $w\in\mathbb{Q}_{>0}$. 
We define
$$\mathfrak{Z}_{d, v,u,w}:=\left\{
\begin{array}{l}
f\colon (X,\Delta=0,A) \to C
\end{array}
\;\middle|
\begin{array}{rl}
(i)&\text{$f$ is a uniformly adiabatically}\\
&\text{K-stable polarized klt-trivial} \\
&\text{fibration over a curve $C$,}\\
(ii)&\text{${\rm dim}X=d$,}\\
(iii)&\text{$K_X\equiv uf^*H$ for some line bundle} \\
&\text{$H$ on $C$ such that $\mathrm{deg}\,H=1$,} \\
(iv)&\text{$A$ is an ample line bundle on $X$}\\
&\text{such that $(K_X\cdot A^{d-1})=uv$ and}\\
&\text{${\rm vol}(A)\le w$.}
\end{array}\right\}.$$  

Then it is not difficult to check that if $f\colon (X,\Delta=0,A) \to C$ is an element of $\mathfrak{Z}_{d, v,u,w}$ then $(X,0) \to C \in \mathfrak{G}_{d, \{0\}, v, u}$, where $\mathfrak{G}_{d, \{0\}, v, u}$ is the set $\mathfrak{G}_{d, \Theta, v, u}$ in \S\ref{Bousec} with $\Theta=\{0\}$. 
By Lemma \ref{lem--Cartierindex}, there exists an $r\in\mathbb{Z}_{>0}$, depending only on $d$, $u$, and $v$, such that for any element $f\colon (X,0) \to C$ of $\mathfrak{G}_{d, \{0\}, v, u}$, we have $erK_X\sim f^{*}D$ for some very ample Cartier divisor $D$ on $C$. 

The following theorem is the main result of this paper.

\begin{thm}\label{mod2}
We fix $d\in\mathbb{Z}_{>0}$, $u\in\mathbb{Q}_{\ne0}$ with $e:=\frac{u}{|u|}$, $v\in\mathbb{Q}_{>0}$, $w\in\mathbb{Q}_{>0}$, and $r \in\mathbb{Z}_{>0}$ in Lemma \ref{lem--Cartierindex} for $\mathfrak{G}_{d, \{0\}, v, u}$. 
Let $\mathscr{M}_{d,v,u,w,r}$ be a full subcategory of $\mathfrak{Pol}$ such that for any locally Noetherian scheme $S$ over $\mathbbm{k}$, we define $\mathscr{M}_{d,v,u,w,r}(S)$ to be a groupoid whose objects are
 $$\left\{
 \vcenter{
 \xymatrix@C=12pt{
(\mathcal{X},\mathscr{A})\ar[rr]^-{f}\ar[dr]_{\pi_{\mathcal{X}}}&& \mathcal{C} \ar[dl]\\
&S
}
}
\;\middle|
\begin{array}{rl}
(i)&\text{$\pi_{\mathcal{X}}$ is a flat projective morphism and $\mathcal{X}$ is a scheme,}\\
(ii)&\text{$\mathscr{A}\in\mathrm{Pic}_{\mathcal{X}/S}(S)$ such that $\mathscr{A}_{\bar{s}}$ is ample for any}\\
&\text{geometric point $\bar{s}\in S$,}\\
(iii)&\text{$\omega_{\mathcal{X}/S}^{[r]}$ exists as a line bundle,}\\
(iv)&\text{$\pi_{\mathcal{X}*}\omega_{\mathcal{X}/S}^{[ler]}$ is locally free and generates}\\
&\text{$H^0(\mathcal{X}_{s}, \mathcal{O}_{\mathcal{X}_{s}}(lerK_{\mathcal{X}_{s}}))$ for any point $s\in S$ and any}\\
&\text{$l\in\mathbb{Z}_{>0}$,}\\
(v)&\text{$f$ is the ample model of $\omega_{\mathcal{X}/S}^{[er]}$ over $S$ and}\\
&\text{$f_{\bar{s}}\colon(\mathcal{X}_{\overline{s}},0,\mathscr{A}_{\overline{s}})\to \mathcal{C}_{\overline{s}} \in \mathfrak{Z}_{d, v,u,w}$ for any geometric}\\
&\text{point $\overline{s}\in S$,}
\end{array}\right\}.$$

Then $\mathscr{M}_{d,v,u,w,r}$ is a separated Deligne-Mumford stack of finite type over $\mathbbm{k}$. 
Furthermore, there exists a coarse moduli space of $\mathscr{M}_{d,v,u,w,r}$.
 \end{thm}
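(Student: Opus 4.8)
The plan is to verify the hypotheses of Theorem \ref{dm}: that is, to show $\mathscr{M}_{d,v,u,w,r}$ is an Artin stack of finite type over $\mathbbm{k}$ with finite diagonal, and then to invoke Theorem \ref{dm} for both the Deligne--Mumford separated property and the existence of a coarse moduli space. First I would observe that $\mathscr{M}_{d,v,u,w,r}$ is a full subcategory of $\mathfrak{Pol}$, which is a stack by Lemma \ref{lem--descent}; one checks that the defining conditions (i)--(v) are stable under base change and \'etale descent, so that $\mathscr{M}_{d,v,u,w,r}$ is itself a stack over $\mathbbm{k}$. Conditions (iii) and (iv) are descent-local by the compatibility of the universal hull $\omega_{\mathcal{X}/S}^{[r]}$ and of pushforwards with flat base change (from \S\ref{subsec2.5} and \cite[III, Proposition 9.3]{Ha}), and condition (v) descends because forming $\mathbf{Proj}$ of the relative section algebra is compatible with flat base change.

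Next I would construct an atlas. Using Lemma \ref{lem--Cartierindex} and Corollary \ref{cor--hilbertpolynomial}, there is a positive integer $I$ (depending only on $d,u,v,w$, applied with $\Theta=\{0\}$) such that $IA$ is very ample and all higher cohomology of its powers vanishes for every member of $\mathfrak{Z}_{d,v,u,w}$, and only finitely many Hilbert polynomials $p$ of $\mathcal{O}_X(IA)$ with respect to itself occur; similarly $erK_X\sim f^*D$ with $D$ very ample of bounded degree gives finitely many Hilbert polynomials of $\mathcal{O}_{\mathcal{C}}(D)$. Fixing embeddings $X\hookrightarrow\mathbb{P}^{d_1}$ by $|IA|$, $X\hookrightarrow\mathbb{P}^{d_2}$ by $|erK_X|$, and $\mathcal{C}\hookrightarrow\mathbb{P}^{d_3}$ by $|D|$, one assembles a locally closed subscheme $N$ of a product of relevant Hilbert schemes (using Theorem \ref{hullsdecomp} and Corollary \ref{cor--hako} to cut out the locus where $\omega^{[r]}$ is a line bundle, Theorem \ref{thm--inv-pluri} and Proposition \ref{prop-inv-plurigenera-almighty} to ensure the plurigenera conditions are open, and Theorem \ref{op} to cut out the uniformly adiabatically K-stable locus, which is open by that theorem). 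This $N$ carries a universal family, the group $G:=PGL(d_1)\times PGL(d_2)\times PGL(d_3)$ acts on $N$, and one gets a smooth surjection $N\to\mathscr{M}_{d,v,u,w,r}$, so $\mathscr{M}_{d,v,u,w,r}\cong[N/G]$ is an Artin stack of finite type over $\mathbbm{k}$ (cf.~Example \ref{ex--quotient-stack}). Care is needed because the objects carry $\mathscr{A}$ only up to twist by $f^*\mathscr{B}$ with $\mathscr{B}\in\mathrm{Pic}_{\mathcal{C}/S}$; one handles this by rigidifying along the embedding by $|IA|$ and the relative linear equivalence class, exactly as in the definition of the isomorphisms in $\mathscr{M}_{d,v,u,w,r}$, so that $N$ parametrizes genuinely embedded data.

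It then remains to prove that the diagonal $\Delta\colon\mathscr{M}_{d,v,u,w,r}\to\mathscr{M}_{d,v,u,w,r}\times\mathscr{M}_{d,v,u,w,r}$ is finite. Representability and separatedness of the diagonal follow from the representability of the relevant $\mathfrak{Isom}$ functors by quasi-projective (indeed, closed-in-$\mathrm{Hilb}$) schemes, as already established inside the proof of Lemma \ref{lem--descent}; concretely, for two families the $\mathrm{Isom}$ scheme over the base sits inside $\mathrm{Isom}_S((\mathcal{X},\mathscr{A}),(\mathcal{X}',\mathscr{A}'))$, which is quasi-projective, and one imposes the extra condition that the isomorphism respects the contractions to $\mathcal{C},\mathcal{C}'$. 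Quasi-finiteness of the diagonal is equivalent to finiteness of $\mathrm{Aut}(f\colon(X,0,A)\to(\mathbb{P}^1,\mathcal{O}(1)))$ for each member, which is Corollary \ref{fin2} (note $-K_X$ is nef and not numerically trivial since $u<0$ forces $\deg(K_{\mathbb{P}^1}+B_C+M_C)<0$, and similarly for $u>0$ one uses $K_X$ nef). Properness of the diagonal — equivalently, the valuative criterion — is precisely the separatedness statement of Theorem \ref{sep2}: given two families over a punctured curve that become isomorphic, the isomorphism extends over the puncture, using that the special fiber of one is uniformly adiabatically K-stable (hence $\delta\ge 1$) and that of the other is klt. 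Combining properness and quasi-finiteness gives that the diagonal is finite, so Theorem \ref{dm} applies and yields that $\mathscr{M}_{d,v,u,w,r}$ is a separated Deligne--Mumford stack of finite type over $\mathbbm{k}$ admitting a separated coarse moduli space. The main obstacle, and the step most worth spelling out carefully, is the bookkeeping in the atlas construction: ensuring that the various locally closed loci cut out by $\mathbb{Q}$-Gorenstein conditions, invariance of (anti-)plurigenera, ampleness of $\mathscr{A}_{\bar s}$, the ample-model condition (v), and adiabatic K-stability fit together into a single finite-type scheme $N$ with a $G$-action whose quotient stack is $\mathscr{M}_{d,v,u,w,r}$, and that the rigidification correctly accounts for the twist ambiguity in $\mathscr{A}$; the rest reduces to invoking Theorems \ref{op}, \ref{sep2}, \ref{thm--inv-pluri}, Corollaries \ref{cor--hilbertpolynomial}, \ref{fin2}, and \ref{cor--hako}, and Lemma \ref{lem--Cartierindex}.
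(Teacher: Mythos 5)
Your overall skeleton is the same as the paper's (check the stack conditions inside $\mathfrak{Pol}$, decompose by numerical invariants, build a rigidified Hilbert-scheme parameter space $N$ with a $PGL$-action, identify $\mathscr{M}\cong[N/G]$, and get finiteness of the diagonal from Corollaries \ref{plusfin} and \ref{fin2} together with Proposition \ref{plussep}/Theorem \ref{sep2}, then apply Theorem \ref{dm}). However, there is a genuine gap exactly at the step you yourself flag as the delicate one, and your proposed fix does not work as stated. First, you rigidify $X$ only by the single embedding $|IA|$ (plus the map given by $|erK_X|$ and an embedding of $\mathcal{C}$). The universal family over such an $N$ carries only $\mathcal{O}(1)|_X\cong\mathcal{O}_X(IA)$, and from $I\mathscr{A}$ one cannot in general recover $\mathscr{A}$ itself: an $I$-th root need not exist as a line bundle on the universal family, and two objects with $\mathscr{A}\not\cong\mathscr{A}'$ but $I\mathscr{A}\cong I\mathscr{A}'$ (differing by $I$-torsion) would be conflated. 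Since the isomorphisms in $\mathscr{M}_{d,v,u,w,r}$ require $\alpha^*\mathscr{A}'=\mathscr{A}$ as elements of $\mathrm{Pic}_{\mathcal{X}/S}(S)$ (it is a full subcategory of $\mathfrak{Pol}$), your $[N/G]$ would neither carry the universal polarization nor have the correct isomorphism classes, so the equivalence $\mathscr{M}_{d,v,u,w,r}\cong[N/G]$ is not established. The paper's device is precisely to embed by \emph{both} $|I\mathscr{A}|$ and $|(I+1)\mathscr{A}|$ into $\mathbb{P}^{d_1-1}\times\mathbb{P}^{d_2-1}$ and to recover $\mathscr{A}=p_2^*\mathcal{O}(1)\otimes p_1^*\mathcal{O}(-1)$ on the universal family (see conditions (c) and (d) in the Claim inside Proposition \ref{lem-N-univ}); some such second rigidification is indispensable.

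Second, your stated reason for the rigidification is off: the ambiguity ``$\mathscr{A}$ only up to twist by $f^*\mathscr{B}$ with $\mathscr{B}\in\mathrm{Pic}_{\mathcal{C}/S}$'' is the isomorphism relation of $\mathscr{M}_{d,v,u,r}$ in Theorem \ref{quesmain}, not of $\mathscr{M}_{d,v,u,w,r}$; in the present theorem the class $\mathscr{A}$ must be matched on the nose in $\mathrm{Pic}_{\mathcal{X}/S}(S)$, which is exactly why pinning down $\mathscr{A}$ (and not merely $I\mathscr{A}$) on the atlas is the real issue. Two smaller points: for properness of the diagonal you invoke only Theorem \ref{sep2}, which applies when $u<0$ (after noting, via Tsen's theorem as in the paper, that the base family is a $\mathbb{P}^1$-bundle over the curve); for $u>0$ you need Proposition \ref{plussep} instead. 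And the reduction of properness to the extension statement over curves is not literally the valuative criterion, since the Isom schemes are only quasi-projective; the paper routes this through Lemma \ref{const--lem} and a curve-lifting/compactification argument, which your sketch glosses over but which is fixable along the same lines.
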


 \begin{rem}
     Note that for any $S$-isomorphism $g\colon\mathcal{X}\to\mathcal{X'}$ as above, we have a unique $S$-isomorphism $h\colon\mathcal{C}\to\mathcal{C}'$ such that $f'\circ g=h\circ f$.
     This is the reason why we do not consider morphisms between $\mathcal{C}$ and $\mathcal{C}'$.
 \end{rem}
 
In this section, for every object $(\mathcal{X},\mathscr{A})\to\mathcal{C}\in\mathscr{M}_{d,v,u,w,r}(S)$, the structure morphism $(\mathcal{X},\mathscr{A}) \to S$ is denoted by $\pi_{\mathcal{X}}$ unless otherwise stated. 
When an object $(\mathcal{X}_{T},\mathscr{A}_{T})\to\mathcal{C}_{T}$ of $\mathscr{M}_{d,v,u,w,r}(T)$ is the base change of $(\mathcal{X},\mathscr{A})\to\mathcal{C}$ by $T \to S$, the morphism $\pi_{\mathcal{X}_{T}}$ is nothing but $(\pi_{\mathcal{X}})_{T}$ as in (\ref{notation-(11)}) in Notation and Convention.

\begin{lem}\label{lem-stack}
$\mathscr{M}_{d,v,u,w,r}$ is a stack.
\end{lem}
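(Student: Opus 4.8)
The goal is to verify that the full subcategory $\mathscr{M}_{d,v,u,w,r}\subset\mathfrak{Pol}$ is a stack over $\mathbbm{k}$. Since $\mathfrak{Pol}$ is already a stack by Lemma \ref{lem--descent}, and $\mathscr{M}_{d,v,u,w,r}$ is defined as the full subcategory of objects $(\mathcal{X},\mathscr{A})\to\mathcal{C}$ over $S$ satisfying the fiberwise/relative conditions (i)--(v), the strategy is simply to check that $\mathscr{M}_{d,v,u,w,r}$ is closed under the two operations that matter for the stack axioms: (a) the $\mathrm{Isom}$-sheaf condition (Definition \ref{defn--stacks} (\ref{defn--stack-(1)})), which is automatic because $\mathrm{Isom}$ inside $\mathscr{M}_{d,v,u,w,r}$ is exactly $\mathrm{Isom}$ inside $\mathfrak{Pol}$ for a full subcategory, and that is already an étale sheaf; and (b) effectivity of descent data (Definition \ref{defn--stacks} (\ref{defn--stack-(2)})), which requires showing that if a descent datum $(f'\colon(\mathcal{X}',\mathscr{A}')\to S',\sigma)$ with respect to an étale covering $S'\to S$ has the property that $(\mathcal{X}',\mathscr{A}')\to\mathcal{C}'$ lies in $\mathscr{M}_{d,v,u,w,r}(S')$, then the object $(\mathcal{X},\mathscr{A})\to S$ produced by descent in $\mathfrak{Pol}$ already satisfies conditions (i)--(v), hence lies in $\mathscr{M}_{d,v,u,w,r}(S)$.

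First I would dispose of axiom (\ref{defn--stack-(1)}): for objects $x,y\in\mathscr{M}_{d,v,u,w,r}(S)$, the presheaf $\mathfrak{Isom}_S(x,y)$ is literally the same as the one computed in $\mathfrak{Pol}$ (a full subcategory does not change the Hom/Isom sets), and the latter is represented by a scheme in the proof of Lemma \ref{lem--descent}, hence is an étale sheaf. I would also note the condition in Remark \ref{rem-descent} (the functor $\mathscr{M}_{d,v,u,w,r}(\bigsqcup X_i)\to\prod\mathscr{M}_{d,v,u,w,r}(X_i)$ is an equivalence), which is inherited from $\mathfrak{Pol}$ together with the observation that conditions (i)--(v) are checked on each $X_i$ separately.

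The main content is axiom (\ref{defn--stack-(2)}). Given an étale covering $g\colon S'\to S$ and a descent datum whose underlying $S'$-object lies in $\mathscr{M}_{d,v,u,w,r}(S')$, Lemma \ref{lem--descent} already gives an object $(\mathcal{X},\mathscr{A})\to S$ in $\mathfrak{Pol}$ with $\mathcal{X}\times_SS'\cong\mathcal{X}'$ compatibly. I would then check (i)--(v) for this object by descent along the faithfully flat (indeed étale surjective) morphism $g$: flatness and projectivity of $\pi_{\mathcal{X}}$ descend; that $\mathcal{X}$ is a scheme is part of the $\mathfrak{Pol}$ construction; ampleness of $\mathscr{A}_{\bar s}$ on geometric fibers is a condition on geometric points, and every geometric point of $S$ lifts to $S'$, so (ii) descends; for (iii)--(iv), the relative dualizing sheaf and its hulls commute with the flat base change $g$ (as recalled before Definition \ref{defn-q-gor}: $\omega_{\mathcal{X}_{S'}/S'}^{[m]}=(g\times_S\mathrm{id})^*\omega_{\mathcal{X}/S}^{[m]}$), so "$\omega^{[r]}_{\mathcal{X}/S}$ is a line bundle", "$\pi_{\mathcal{X}*}\omega^{[ler]}_{\mathcal{X}/S}$ is locally free", and the generation statement are all étale-local on $S$ and hence descend; similarly (v), the formation of the ample model $\mathbf{Proj}_S\bigl(\bigoplus_{l\ge0}\pi_{\mathcal{X}*}\omega^{[ler]}_{\mathcal{X}/S}\bigr)$ commutes with flat base change once the pushforwards are locally free, and the fiberwise membership $f_{\bar s}\in\mathfrak{Z}_{d,v,u,w}$ is again a condition on geometric points which lift to $S'$. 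The one point needing a word of care — and what I'd flag as the only mild obstacle — is that $\mathcal{C}$ itself must be recovered over $S$: but since $f$ is defined as the ample model of $\omega_{\mathcal{X}/S}^{[er]}$, the base $\mathcal{C}=\mathbf{Proj}_S(\bigoplus_{l\ge0}\pi_{\mathcal{X}*}\omega_{\mathcal{X}/S}^{[ler]})$ is canonically determined by $(\mathcal{X},\mathscr{A})\to S$, so no separate descent datum on the base is needed (cf.\ the Remark following the statement of Theorem \ref{mod2}), and its base change to $S'$ agrees with $\mathcal{C}'$ by flat base change for $\mathbf{Proj}$ of the (locally free, base-change-compatible) sheaves $\pi_{\mathcal{X}*}\omega^{[ler]}_{\mathcal{X}/S}$. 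This completes the verification that the descent datum is effective in $\mathscr{M}_{d,v,u,w,r}$, and hence that $\mathscr{M}_{d,v,u,w,r}$ is a stack.
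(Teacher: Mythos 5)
Your handling of the two stack axioms is essentially the paper's argument: axiom (\ref{defn--stack-(1)}) is inherited from Lemma \ref{lem--descent} because $\mathscr{M}_{d,v,u,w,r}$ is a full subcategory of $\mathfrak{Pol}$, and for axiom (\ref{defn--stack-(2)}) you descend in $\mathfrak{Pol}$ and then check (i)--(v) \'etale-locally, recovering $\mathcal{C}$ as the canonical $\mathbf{Proj}$ so that no descent datum on the base is needed. One presentational caveat: your appeal to ``$\omega^{[m]}_{\mathcal{X}_T/T}=(h\times_S\mathrm{id})^*\omega^{[m]}_{\mathcal{X}/S}$'' presupposes that the hull already exists over $S$, whereas in the descent step you must \emph{produce} $\omega^{[r]}_{\mathcal{X}/S}$ from its existence over $S'$; the paper does this by descending $\omega^{[r]}_{\mathcal{X}'/S'}$ as a quasi-coherent sheaf together with the map $\omega^{\otimes r}_{\mathcal{X}/S}\to\mathscr{L}$ and then identifying $\mathscr{L}=\omega^{[r]}_{\mathcal{X}/S}$, and your write-up should make that construction explicit rather than quoting the base-change identity.

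There is, however, a genuine gap: you never verify that $\mathscr{M}_{d,v,u,w,r}$ is a category fibered in groupoids, i.e.\ that it is closed under pullback along an \emph{arbitrary} morphism $T\to S$, not merely an \'etale covering. Definition \ref{defn--stacks} presupposes a CFG, and a full subcategory of the stack $\mathfrak{Pol}$ is a CFG only if the defining conditions (i)--(v) are stable under every base change; this is not automatic, since $\omega^{[r]}_{\mathcal{X}/S}$, the pushforwards $\pi_{\mathcal{X}*}\omega^{[ler]}_{\mathcal{X}/S}$, and the ample model need not commute with non-flat base change in general. This is exactly what the first paragraph of the paper's proof establishes: condition (iv) is a cohomology-and-base-change condition, so together with the base-change property of universal hulls from Subsection \ref{subsec2.5} one gets $\pi_{\mathcal{X}_T*}\omega^{[ler]}_{\mathcal{X}_T/T}\cong h^*\pi_{\mathcal{X}*}\omega^{[ler]}_{\mathcal{X}/S}$ for every $h\colon T\to S$, hence $\mathcal{C}_T\cong\mathbf{Proj}_T\bigl(\bigoplus_{l\ge0}\pi_{\mathcal{X}_T*}\omega^{[ler]}_{\mathcal{X}_T/T}\bigr)$ and conditions (iii)--(v) for the pulled-back family. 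Your proposal needs this step added; once it is, the remainder goes through as you describe.
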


\begin{proof}
We first check that $\mathscr{M}_{d,v,u,w,r}$ is a category fibered in groupoids.
It suffices to show that for any $\pi_{\mathcal{X}} \colon (\mathcal{X},\mathscr{A})\to\mathcal{C}\to S \in\mathscr{M}_{d,v,u,w,r}(S)$ and any morphism $h\colon T\to S$ of schemes, the base change $\pi_{\mathcal{X}_{T}} \colon (\mathcal{X}_{T},\mathscr{A}_T)\to\mathcal{C}_{T} \to T$ is the pullback of $\pi$ along $h$ in the sense of \cite[Definition 3.1.1]{Ols}. 
By the conditions (iv) and (v) in the definition of $\mathscr{M}_{d,v,u,w,r}$ and the theorem of cohomology and base change, we see that  
    \[
    \mathcal{C}_{T}:=\mathcal{C}\times_ST=\mathbf{Proj}_S(\bigoplus_{l\ge0}\pi_{\mathcal{X}*}\omega_{\mathcal{X}/S}^{[ler]})\times_ST\cong\mathbf{Proj}_T(\bigoplus_{l\ge0}\pi_{\mathcal{X}_{T}*}\omega_{\mathcal{X}_T/T}^{[ler]}).
    \]
 This shows $(\mathcal{X}_{T},\mathscr{A}_T)\to\mathcal{C}_{T}\in\mathscr{M}_{d,v,u,w,r}(T)$.
 Hence, $\mathscr{M}_{d,v,u,w,r}$ is indeed a category fibered in groupoids.

From now on, we check that $\mathscr{M}_{d,v,u,w,r}$ is a stack. 
Since Definition \ref{defn--stacks} (\ref{defn--stack-(1)}) has been already checked in Lemma \ref{lem--descent}, it suffices to show the condition of Definition \ref{defn--stacks}  (\ref{defn--stack-(2)}) for $\mathscr{M}_{d,v,u,w,r}$.
We note that $\mathscr{M}_{d,v,u,w,r}$ satisfies the condition of Remark \ref{rem-descent}.
Let $g\colon S'\to S$ be an \'{e}tale covering and $(f'\colon(\mathcal{X}',\mathscr{A}')\to\mathcal{C}',\sigma)$ a descent datum with the structure morphism $\pi_{\mathcal{X}'} \colon (\mathcal{X}',\mathscr{A}') \to S'$. 
We will show that $(f',\sigma)$ is effective.
By Lemma \ref{lem--descent}, $(\pi_{\mathcal{X}'}\colon (\mathcal{X}',\mathscr{A}') \to S',\sigma)$ is a descent datum in $\mathfrak{Pol}$. Therefore, the datum comes from some element $\pi\colon(\mathcal{X},\mathscr{A})\to S\in\mathfrak{Pol}(S)$. 
By the functoriality of $\omega_{\mathcal{X}'/S'}^{[r]}$ and \cite[Theorem 4.23]{FGA}, there exists a line bundle $\mathscr{L}$ on $\mathcal{X}$ such that $g_{\mathcal{X}}^*\mathscr{L}=\omega_{\mathcal{X}'/S'}^{[r]}$, and there exists a morphism $\omega_{\mathcal{X}/S}^{\otimes r}\to\mathscr{L}$ whose pullback $g_{\mathcal{X}}^*\omega_{\mathcal{X}/S}^{\otimes r} \to g_{\mathcal{X}}^*\mathscr{L}$ coincides with the natural morphism $\omega_{\mathcal{X}'/S'}^{\otimes r}\to\omega_{\mathcal{X}'/S'}^{[r]}$.
From these facts, we have $\mathscr{L}=\omega^{[r]}_{\mathcal{X}/S}$.
By the faithfully flatness of $g$ and the flat base change theorem \cite[III, Proposition 9.3]{Ha}, the condition (iv) of $\mathscr{M}_{d,v,u,w,r}$ holds for $\pi$.
Thus, $\omega_{\mathcal{X}'/S'}^{[er]}$ is relatively semiample, and if we set $\mathcal{C}:=\mathbf{Proj}_S(\bigoplus_{l\ge0}\pi_{*}\omega^{[ler]}_{\mathcal{X}/S})$, then 
$$\mathcal{C}\times_SS'\cong\mathbf{Proj}_{S'}(\bigoplus_{l\ge0}\pi_{\mathcal{X}'*}\omega^{[ler]}_{\mathcal{X}'/S'})=\mathcal{C}'$$
by \cite[III, Theorem 12.11]{Ha}.
Let $f\colon\mathcal{X}\to\mathcal{C}$ be the canonical morphism. 
Then the base change of $f$ by $S'\to S$ is isomorphic to $f'$.
From this, (v) of $\mathscr{M}_{d,v,u,w,r}$ holds for $f\colon(\mathcal{X},\mathscr{A})\to \mathcal{C}$. 
This shows $f\colon(\mathcal{X},\mathscr{A})\to \mathcal{C}\in\mathscr{M}_{d,v,u,w,r}(S)$ and hence $(f',\sigma)$ is an effective descent datum. 
\end{proof}

Note that the set of all klt-trivial fibrations over $\mathbbm{k}$ belonging to $\mathfrak{Z}_{d, v,u,w}$ coincides with the set of isomorphic classes of $\mathscr{M}_{d,v,u,w,r}({\rm Spec}\,\mathbbm{k})$.
From now on, we fix $I\in\mathbb{Z}_{>0}$ as Corollary \ref{cor--hilbertpolynomial} for $\mathfrak{G}_{d,\{0\},v,u,w}$.
Note that $\mathfrak{Z}_{d, v,u,w}\subset \mathfrak{G}_{d,\{0\},v,u,w}$.

\begin{lem}\label{lem--stack-decomp}
For any $d_{1}, \, d_{2}, \, d_{3} \in \mathbb{Z}_{>0}$, and $h \in \mathbb{Q}[t]$, let $\mathscr{M}_{d_1,d_2,d_3,h}$ be a full subcategory of $\mathscr{M}_{d,v,u,w,r}$ such that for any locally Noetherian scheme $S$ over $\mathbbm{k}$, we define a groupoid $\mathscr{M}_{d_1,d_2,d_3,h}(S)$ whose objects are
$$\left\{
f\colon(\mathcal{X},\mathscr{A})\to\mathcal{C} \in \mathscr{M}_{d,v,u,w,r}(S)
\;\middle|
\begin{array}{rl}
&\text{\!\!\!\!\!\!\!\!\!\! for every geometric point $\overline{s} \in S$,}\\
\bullet&h^0(\mathcal{X}_{\bar{s}},\mathcal{O}_{\mathcal{X}_{\bar{s}}}(I\mathscr{A}_{\bar{s}}))=d_1,\\
\bullet&h^0(\mathcal{X}_{\bar{s}},\mathcal{O}_{\mathcal{X}_{\bar{s}}}((I+1)\mathscr{A}_{\bar{s}}))=d_2,\\
\bullet&h^0(\mathcal{X}_{\bar{s}},\mathcal{O}_{\mathcal{X}_{\bar{s}}}(erK_{\mathcal{X}_{\bar{s}}}))=d_3,\\
\bullet&\text{the Hilbert polynomial of $\mathcal{X}_{\bar{s}}$ with}\\
&\text{respect to $(2I+1)\mathscr{A}_{\bar{s}}+erK_{\mathcal{X}_{\bar{s}}}$ is $h$.}
\end{array}\right\}.$$ 
Then $\mathscr{M}_{d_1,d_2,d_3,h}$ is an open and closed substack of $\mathscr{M}_{d,v,u,w,r}$. 
Furthermore, there are only finitely many $d_{1}, \, d_{2}, \, d_{3} \in \mathbb{Z}_{>0}$, and $h \in \mathbb{Q}[t]$ such that $\mathscr{M}_{d_1,d_2,d_3,h}$ is not an empty stack. 
\end{lem}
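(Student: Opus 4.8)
The plan is to reduce both assertions to statements about the numerical invariants of a single family. Given any object $f\colon(\mathcal{X},\mathscr{A})\to\mathcal{C}$ of $\mathscr{M}_{d,v,u,w,r}(S)$, I would first show that the four quantities appearing in the definition of $\mathscr{M}_{d_1,d_2,d_3,h}$ assemble into a locally constant function on $S$ with values in $\mathbb{Z}_{>0}^{3}\times\mathbb{Q}[t]$. Granting this, $\mathscr{M}_{d_1,d_2,d_3,h}(S)$ is exactly the full subcategory of objects lying over the open and closed subscheme of $S$ on which this function is constantly equal to $(d_1,d_2,d_3,h)$; since a morphism from a scheme $T$ to $\mathscr{M}_{d,v,u,w,r}$ is by definition an object of $\mathscr{M}_{d,v,u,w,r}(T)$, this is precisely the assertion that $T\times_{\mathscr{M}_{d,v,u,w,r}}\mathscr{M}_{d_1,d_2,d_3,h}$ is an open and closed subscheme of $T$, i.e.~that $\mathscr{M}_{d_1,d_2,d_3,h}$ is an open and closed substack (and, in particular, $\mathscr{M}_{d,v,u,w,r}$ is the disjoint union of the $\mathscr{M}_{d_1,d_2,d_3,h}$).

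For the local constancy, I would work \'etale locally on $S$ so that $\mathscr{A}$ is represented by a line bundle $A$, and observe that $A$, $erK_{\mathcal{X}/S}$, $(I+1)A+erK_{\mathcal{X}/S}$ and $(2I+1)A+erK_{\mathcal{X}/S}$ are then line bundles on $\mathcal{X}$, flat over $S$ (using condition (iii) of $\mathscr{M}_{d,v,u,w,r}$ for $\omega_{\mathcal{X}/S}^{[r]}$). Each fibre $f_{\bar s}\colon(\mathcal{X}_{\bar s},0,A_{\bar s})\to\mathcal{C}_{\bar s}$ lies in $\mathfrak{Z}_{d,v,u,w}\subset\mathfrak{G}_{d,\{0\},v,u,w}$, and $erK_{\mathcal{X}_{\bar s}}$ is a nef Cartier divisor (Lemma \ref{lem--Cartierindex}, together with $eu=|u|>0$). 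Applying Corollary \ref{cor--hilbertpolynomial} fibrewise with $M=0$, $M=A_{\bar s}$ and $M=(I+1)A_{\bar s}+erK_{\mathcal{X}_{\bar s}}$ respectively kills the higher cohomology of $IA_{\bar s}$, $(I+1)A_{\bar s}$ and $(2I+1)A_{\bar s}+erK_{\mathcal{X}_{\bar s}}$, so the first two invariants equal the corresponding fibrewise Euler characteristics and the Hilbert polynomial with respect to $(2I+1)\mathscr{A}_{\bar s}+erK_{\mathcal{X}_{\bar s}}$ equals the Euler-characteristic polynomial of $(2I+1)A+erK_{\mathcal{X}/S}$ on fibres; these vary locally constantly because Euler characteristics of $S$-flat coherent sheaves under a proper map do. The third invariant $h^0(\mathcal{X}_{\bar s},\mathcal{O}(erK_{\mathcal{X}_{\bar s}}))$ is, by condition (iv) of $\mathscr{M}_{d,v,u,w,r}$ (which is where the invariance of plurigenera, Theorem \ref{thm--inv-pluri}, is built in), the rank of the locally free sheaf $\pi_{\mathcal{X}*}\omega_{\mathcal{X}/S}^{[er]}$ whose formation commutes with base change, hence also locally constant.

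For finiteness, I would take, for each non-empty $\mathscr{M}_{d_1,d_2,d_3,h}$, a representative $f\colon(X,0,A)\to C\in\mathfrak{Z}_{d,v,u,w}\subset\mathfrak{G}_{d,\{0\},v,u,w}$ and apply Corollary \ref{cor--hilbertpolynomial} (for this class, with the fixed $I$) with the nef Cartier divisors $M=0$, $M=A$ and $M=(I+1)A+erK_X$. Here $\mathrm{vol}(IA)=I^{d}\mathrm{vol}(A)\le I^{d}w$, $\mathrm{vol}((I+1)A)\le(I+1)^{d}w$, and $\mathrm{vol}((2I+1)A+erK_X)$ is universally bounded: since $(f^{*}H)^{2}=0$ on a variety fibred over a curve, $erK_X\equiv r|u|f^{*}H$, and $(f^{*}H\cdot A^{d-1})=v$, this volume equals $(2I+1)^{d}\mathrm{vol}(A)+d(2I+1)^{d-1}r|u|v\le(2I+1)^{d}w+d(2I+1)^{d-1}r|u|v$. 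Choosing $w'$ to be the maximum of these three bounds, Corollary \ref{cor--hilbertpolynomial} forces $d_1$, $d_2$ and $h$ to lie among its finitely many output polynomials (evaluated at $1$ in the first two cases). The remaining invariant $d_3=\dim H^{0}(X,\mathcal{O}(erK_X))$ already takes only finitely many values by the last assertion of Lemma \ref{lem--Cartierindex}. Hence only finitely many tuples $(d_1,d_2,d_3,h)$ can occur.

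The one point that requires genuine input, rather than bookkeeping, is the local constancy of the (anti-)pluricanonical dimension $h^0(\mathcal{X}_{\bar s},\mathcal{O}(erK_{\mathcal{X}_{\bar s}}))$: flatness of $\mathcal{X}\to S$ alone only gives upper semicontinuity, and one genuinely needs condition (iv) of $\mathscr{M}_{d,v,u,w,r}$ --- equivalently the base-change compatibility underlying Theorem \ref{thm--inv-pluri} --- to upgrade it to local constancy. The vanishing statements in Corollary \ref{cor--hilbertpolynomial} are invoked precisely so that all the other fibrewise $h^0$'s and Hilbert polynomials coincide with Euler characteristics and therefore vary locally constantly in the flat family; everything after that is formal manipulation with the notion of open and closed substack.
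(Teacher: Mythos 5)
Your proposal is correct and follows essentially the same route as the paper: local constancy of the four invariants (via flatness, the vanishing in Corollary \ref{cor--hilbertpolynomial}, and the base-change content of condition (iv), i.e.\ Theorem \ref{thm--inv-pluri}) gives the open-and-closed decomposition, and finiteness comes from Lemma \ref{lem--Cartierindex} together with Corollary \ref{cor--hilbertpolynomial}, exactly as in the paper. Your version merely spells out the volume bounds and the choice of nef divisors $M$ explicitly, which the paper leaves implicit.
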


\begin{proof}
By Theorem \ref{thm--inv-pluri}, any scheme $S$ and $f\colon(\mathcal{X},\mathscr{A})\to\mathcal{C} \in \mathscr{M}_{d,v,u,w,r}(S)$ satisfy the property that $h^0(\mathcal{X}_{\bar{s}},\mathcal{O}_{\mathcal{X}_{\bar{s}}}(I\mathscr{A}_{\bar{s}}))$, $h^0(\mathcal{X}_{\bar{s}},\mathcal{O}_{\mathcal{X}_{\bar{s}}}((I+1)\mathscr{A}_{\bar{s}}))$, $h^0(\mathcal{X}_{\bar{s}},\mathcal{O}_{\mathcal{X}_{\bar{s}}}(erK_{\mathcal{X}_{\bar{s}}}))$, and
the Hilbert polynomial of $\mathcal{X}_{\bar{s}}$ with
respect to $(2I+1)\mathscr{A}_{\bar{s}}+erK_{\mathcal{X}_{\bar{s}}}$ are locally constant on $s\in S$. 
The first assertion follows from this fact. 
The second assertion follows from Lemma \ref{lem--Cartierindex} and Corollary \ref{cor--hilbertpolynomial}. 
\end{proof}

The invariants $h^0(\mathcal{X}_{\bar{s}},\mathcal{O}_{\mathcal{X}_{\bar{s}}}(I\mathscr{A}_{\bar{s}}))$ and $h^0(\mathcal{X}_{\bar{s}},\mathcal{O}_{\mathcal{X}_{\bar{s}}}((I+1)\mathscr{A}_{\bar{s}}))$ in Lemma \ref{lem--stack-decomp} are used to determine $\mathscr{A}\in\mathrm{Pic}_{\mathcal{X}/S}(S)$. 

\begin{note}
For each $d_{1}, \, d_{2}, \, d_{3} \in \mathbb{Z}_{>0}$, and $h \in \mathbb{Q}[t]$, we set 
$$H:=\mathrm{Hilb}_{\mathbb{P}^{d_1-1}\times \mathbb{P}^{d_2-1}\times\mathbb{P}^{d_{3}-1}}^{h,\,p_1^*\mathcal{O}(1)\otimes p_2^*\mathcal{O}(1) \otimes  p_3^*\mathcal{O}(1)}.$$ 
Let $\tilde{\pi} \colon \mathcal{U} \to H$ be the morphism from the universal family $\mathcal{U}$. 
We set $p_{i} \colon  \mathcal{U} \to \mathbb{P}_H^{d_i-1}$ as the morphism induced by the projections $\mathbb{P}_H^{d_1-1}\times_H \mathbb{P}_H^{d_2-1}\times_H\mathbb{P}_H^{d_{3}-1} \to \mathbb{P}_H^{d_i-1}$. 
We remark that $H$ is of finite type over $\mathbbm{k}$.
\end{note}

For any morphism $T\to H$, the morphism $\tilde{\pi}_{T}\colon\mathcal{U}_T\to T$ denotes the base change of $\tilde{\pi}$ by $T\to H$. 

\begin{prop}\label{lem-N-univ}
Fix $I \in \mathbb{Z}_{>0}$ of Corollary \ref{cor--hilbertpolynomial}. 
For all $d_1,\, d_2, \, d_3\in\mathbb{Z}_{>0}$ and $h\in\mathbb{Q}[t]$, the following $\mathfrak{H}\colon(\mathbf{Sch}_{/\mathbbm{k}})^{\mathrm{op}}\to \mathbf{Sets}$ is a well-defined functor and $\mathfrak{H}$ is represented by a locally closed subscheme $N_{d_1,d_2,d_3,h}\subset H${\rm :} 
For a scheme $S$,
\[
\mathfrak{H}(S):=\left\{ \begin{array}{l}
(f\colon(\mathcal{X},\mathscr{A})\to\mathcal{C},\rho_1,\rho_2,\rho_3)
\end{array}
\;\middle|
\begin{array}{rl}
&\text{$f \in \mathscr{M}_{d_1,d_2,d_3,h}(S)$ such that $\mathscr{A}$ is}\\
& \text{represented by a line bundle, and}\\
& \text{$\rho_{1}\colon\mathbb{P}_S(\pi_{\mathcal{X}*}\mathscr{A}^{\otimes I})\to\mathbb{P}_S^{d_1-1}$,}\\
& \text{$\rho_{2}\colon\mathbb{P}_S(\pi_{\mathcal{X}*}\mathscr{A}^{\otimes I+1})\to\mathbb{P}_S^{d_2-1}$, and}\\
& \text{$\rho_{3}\colon\mathbb{P}_S(\pi_{\mathcal{X}*}\omega_{\mathcal{X}/S}^{[er]})\to\mathbb{P}_S^{d_3-1}$}\\
& \text{are isomorphisms.}
\end{array}\right\}/\sim,
\]
where $(f\colon(\mathcal{X},\mathscr{A})\to\mathcal{C},\rho_1,\rho_2,\rho_3)\sim (f'\colon(\mathcal{X}',\mathscr{A}')\to\mathcal{C}',\rho'_1,\rho'_2,\rho'_3)$ if and only if there exists an isomorphism $\alpha\colon(\mathcal{X},\mathscr{A}) \to(\mathcal{X}',\mathscr{A}')$ of $\mathscr{M}_{d,v,u,w,r}(S)$ (see the definition of $\mathfrak{Pol}$) such that the induced isomorphisms
\begin{equation*}
\begin{split} 
\alpha_1&\colon\mathbb{P}_S(\pi_{\mathcal{X}*}\mathscr{A}^{\otimes I})\to \mathbb{P}_S(\pi_{\mathcal{X}'*}\mathscr{A}'^{\otimes I}),\quad  \alpha_2\colon\mathbb{P}_S(\pi_{\mathcal{X}*}\mathscr{A}^{\otimes I+1})\to \mathbb{P}_S(\pi_{\mathcal{X}'*}\mathscr{A}'^{\otimes I+1}), \quad {\rm and} \\
 \alpha_3&\colon\mathbb{P}_S(\pi_{\mathcal{X}*}\omega_{\mathcal{X}/S}^{[er]})\to \mathbb{P}_S(\pi_{\mathcal{X}'*}\omega_{\mathcal{X}'/S}^{[er]})
\end{split}
\end{equation*}
satisfy $\rho'_i\circ\alpha_i=\rho_i$ $(i =1,2,3)$. 
Here, the structure morphisms $\mathcal{X}\to S$ and $\mathcal{X}'\to S$ are denoted by $\pi_{\mathcal{X}}$ and $\pi_{\mathcal{X}'}$ respectively, and the line bundle representing $\mathscr{A}$ is denoted by $\mathscr{A}$ by abuse of notations.

In particular, $N_{d_1,d_2,d_3,h}$ inherits the $ PGL(d_1)\times PGL(d_2)\times PGL(d_3)$ action on $H$.
\end{prop}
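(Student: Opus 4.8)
The plan is to realise $N_{d_1,d_2,d_3,h}$ as an explicitly carved‑out locally closed subscheme of $H$, using that an object of $\mathscr{M}_{d_1,d_2,d_3,h}$ equipped with the bases $\rho_1,\rho_2,\rho_3$ determines a canonical triple‑embedding into $\mathbb{P}^{d_1-1}\times\mathbb{P}^{d_2-1}\times\mathbb{P}^{d_3-1}$. One first checks that $\mathfrak{H}$ is a well‑defined presheaf ($\sim$ is an equivalence relation compatible with base change, which is routine), and then constructs a natural transformation $\mathfrak{H}\to H$ and identifies its image.

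First I would send $(f\colon(\mathcal{X},\mathscr{A})\to\mathcal{C},\rho_1,\rho_2,\rho_3)\in\mathfrak{H}(S)$ to a morphism $S\to H$. By Corollary \ref{cor--hilbertpolynomial} applied to $\mathfrak{G}_{d,\{0\},v,u,w}\supset\mathfrak{Z}_{d,v,u,w}$ (using that $erK_{\mathcal{X}_{\bar s}}$ is nef by Lemma \ref{lem--Cartierindex}), the line bundles $\mathscr{A}^{\otimes I}$, $\mathscr{A}^{\otimes(I+1)}$ and $\mathscr{A}^{\otimes(2I+1)}\otimes\omega^{[er]}_{\mathcal{X}/S}$ are fibrewise very ample; by cohomology and base change together with Theorem \ref{thm--inv-pluri} and condition (iv) of $\mathscr{M}_{d,v,u,w,r}$, the sheaves $\pi_{\mathcal{X}*}\mathscr{A}^{\otimes I}$, $\pi_{\mathcal{X}*}\mathscr{A}^{\otimes(I+1)}$ and $\pi_{\mathcal{X}*}\omega^{[er]}_{\mathcal{X}/S}$ are locally free of ranks $d_1,d_2,d_3$ and commute with base change. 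Composing the three evaluation morphisms with $\rho_1,\rho_2,\rho_3$ yields a morphism $\mathcal{X}\to\mathbb{P}^{d_1-1}_S\times_S\mathbb{P}^{d_2-1}_S\times_S\mathbb{P}^{d_3-1}_S$ that is a closed immersion (already so on the first factor, hence a proper monomorphism), flat over $S$, whose geometric fibres carry Hilbert polynomial $h$ with respect to $p_1^*\mathcal{O}(1)\otimes p_2^*\mathcal{O}(1)\otimes p_3^*\mathcal{O}(1)$, since this line bundle restricts to $\mathscr{A}^{\otimes(2I+1)}_{\bar s}\otimes\mathcal{O}(erK_{\mathcal{X}_{\bar s}})$ on each fibre. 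This gives the wanted morphism $S\to H$. It is independent of the representative (an isomorphism $\alpha$ as in the definition of $\sim$ identifies the two subschemes), and it is injective because $\mathscr{A}^{\otimes I}$ and $\mathscr{A}^{\otimes(I+1)}$ — hence $\mathscr{A}$ itself, since $\gcd(I,I+1)=1$ — are recovered from the subscheme up to twist by a line bundle pulled back from $S$, which is exactly the ambiguity built into $\mathrm{Pic}_{\mathcal{X}/S}(S)$ in the definition of morphisms of $\mathfrak{Pol}$.

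Next I would cut out the image. With $\tilde\pi\colon\mathcal{U}\to H$ the universal family and $\mathcal{O}_i:=p_i^*\mathcal{O}(1)|_{\mathcal{U}}$, I pass successively to: (a) the open locus where $\mathcal{U}\to H$ has geometrically normal and connected fibres of pure dimension $d$ (openness of geometric normality for projective flat morphisms, local constancy of $h^0(\mathcal{O})$, semicontinuity of fibre dimension); (b) the locally closed locus, cut out from a stratum of the hull decomposition of Theorem \ref{hullsdecomp} by the open condition that the universal hull be invertible, where $\omega^{[r]}_{\mathcal{U}/H}$ exists as a line bundle with $\omega^{[r]}_{\mathcal{U}/H}|_{\mathcal{U}_{\bar s}}=\mathcal{O}(rK_{\mathcal{U}_{\bar s}})$; (c) the closed locus where $\mathscr{A}:=\mathcal{O}_2\otimes\mathcal{O}_1^{\vee}$ satisfies $\mathscr{A}^{\otimes I}\cong\mathcal{O}_1$ and $\omega^{[er]}_{\mathcal{U}/H}\cong\mathcal{O}_3$ modulo pullback from the base — closed by separatedness of $\mathrm{Pic}_{\mathcal{U}/H}$ and by Corollary \ref{cor--hako}; (d) the locally closed locus where $p_3$ is fibrewise the ample model of $\omega^{[er]}$ with the generation property of condition (iv), and where $K_{\mathcal{U}_{\bar s}}\equiv uf^*H$, $(H\cdot\mathscr{A}_{\bar s}^{d-1})=v$, $\mathrm{vol}(\mathscr{A}_{\bar s})\le w$; and (e) the open locus where the geometric fibres are uniformly adiabatically K-stable, open by Theorem \ref{op}. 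Let $N_{d_1,d_2,d_3,h}$ be the resulting locally closed subscheme, and put $N:=N_{d_1,d_2,d_3,h}$. Over $N$ there is a tautological object $(\mathcal{U}_N\to\mathcal{C}_N,\mathscr{A}_N:=\mathcal{O}_2|_{\mathcal{U}_N}\otimes\mathcal{O}_1|_{\mathcal{U}_N}^{\vee})$, with $\mathcal{C}_N$ the relative ample model of $\omega^{[er]}_{\mathcal{U}_N/N}$ and the $\rho_i$ arising from the canonical identifications $\mathbb{P}_N(\tilde\pi_{N*}(\cdot))\cong\mathbb{P}^{d_i-1}_N$; pulling it back along a morphism $S\to N$ is inverse to the construction of the previous paragraph, the mismatch only by twists from $S$ being absorbed by $\sim$. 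Hence $N$ represents $\mathfrak{H}$, and since all the conditions defining $N$ inside $H$ are intrinsic to the subscheme and its restricted line bundles, $N$ is stable under the $PGL(d_1)\times PGL(d_2)\times PGL(d_3)$-action on $H$.

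The main obstacle will be to make steps (b)--(d) genuinely scheme‑theoretic: showing that ``$\mathbb{Q}$-Gorenstein with $\omega^{[r]}$ invertible, $\mathcal{O}_3=\omega^{[er]}$ up to twist, and $p_3$ the relative ample model of $\omega^{[er]}$'' defines a locally closed subscheme that pulls back correctly, which requires carefully threading Theorem \ref{hullsdecomp}, Corollary \ref{cor--hako}, cohomology and base change, and Theorem \ref{thm--inv-pluri}. The set‑theoretic description of these conditions is routine; the scheme structure and its universality are the real content.
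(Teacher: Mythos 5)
Your overall strategy is the same as the paper's: rigidify by $\rho_1,\rho_2,\rho_3$, embed into $\mathbb{P}^{d_1-1}\times\mathbb{P}^{d_2-1}\times\mathbb{P}^{d_3-1}$, and carve $N_{d_1,d_2,d_3,h}$ out of $H$ by locally closed conditions, with Corollary \ref{cor--hako} giving the identifications $\mathscr{A}^{\otimes I}\sim_T p_1^*\mathcal{O}(1)$, $\mathscr{A}^{\otimes I+1}\sim_T p_2^*\mathcal{O}(1)$, $\omega^{[er]}\sim_T p_3^*\mathcal{O}(1)$ up to twists from the base (your step (b) is in fact absorbed by this, and for $e=-1$ one recovers $\omega^{[r]}$ as a line bundle from the invertibility of $\omega^{[-r]}$), and Theorem \ref{op} giving the K-stability locus (note that this theorem needs a normal base, so one applies it to the normalization of the stratum and descends the open set along the finite surjection, a point you skip). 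However, there is a genuine gap exactly where you defer: condition (iv) of $\mathscr{M}_{d,v,u,w,r}$ --- that $\pi_{\mathcal{X}*}\omega_{\mathcal{X}/S}^{[ler]}$ is locally free and generates $H^0(\mathcal{X}_{s},\mathcal{O}_{\mathcal{X}_{s}}(lerK_{\mathcal{X}_{s}}))$ at every point $s$ and for \emph{every} $l\in\mathbb{Z}_{>0}$ --- must be realized as a locally closed condition on $H$ with the correct universal property for arbitrary, possibly non-reduced, test schemes $T$. The tools you list do not deliver this: cohomology and base change handles one $l$ at a time and gives no uniform control over the infinite family of conditions, while Theorem \ref{thm--inv-pluri} is only available over a normal (in particular reduced) base, so it can at most tell you the condition holds on $T_{\mathrm{red}}$.

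The paper closes this gap in Step 3 of its proof (condition (g) of Claim \ref{cl2}): locally on the base one chooses a Grothendieck complex $(K^{\bullet},d^{\bullet})$ for $\omega^{[ler]}$, uses Theorem \ref{thm--inv-pluri} together with \cite[\S5, Corollary 2]{Ab} to see that the condition holds after restriction to the reduction, and then applies the flattening stratification to $W^{1}=\mathrm{Coker}(d^{0})$ to produce, for each $l$, a closed subscheme $U_{7}^{(l)}$ through which $T\to U_{6}$ factors if and only if base change holds in degree $0$ for that $l$; finally the Noetherian property makes the descending chain of the $U_{7}^{(l)}$ stabilize, yielding a single closed subscheme handling all $l$ simultaneously. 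Without this (or an equivalent device) your step (d) is only a set-theoretic description, and the resulting $N$ need not represent $\mathfrak{H}$ on non-reduced bases --- which is precisely what is required later, since $\mathscr{M}_{d_1,d_2,d_3,h}$ is identified with $[N/PGL(d_1)\times PGL(d_2)\times PGL(d_3)]$ as a stack on all locally Noetherian schemes, not just reduced ones. Apart from this missing argument (and the minor normalization point for Theorem \ref{op}, and the fact that Corollary \ref{cor--hako} yields locally closed rather than closed loci), your reductions and the identification of $\mathrm{Hom}(-,N)$ with $\mathfrak{H}$ run parallel to the paper's Steps 1--3.
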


\begin{proof}
We first note that $\mathbb{P}_S(\pi_{\mathcal{X}*}\mathscr{A}^{\otimes I})$ and $\mathbb{P}_S(\pi_{\mathcal{X}*}\mathscr{A}^{\otimes I+1})$ are independent of a representative of $\mathscr{A}$ (cf.~Claim in the proof of Lemma \ref{lem--descent}).
The well-definedness of $\mathfrak{H}$ follows from the fact that we can define the pullback of $(f,\rho_1,\rho_2,\rho_3)\in\mathfrak{H}(S)$ by any morphism $S'\to S$ by using \cite[III, Theorem 12.11]{Ha} and the condition (iv) of $\mathscr{M}_{d,v,u,w,r}$.
Indeed, by the properties of $I$ (see Corollary \ref{cor--hilbertpolynomial}), we have 
$$h^i(\mathcal{X}_{\bar{s}},\mathcal{O}_{\mathcal{X}_{\bar{s}}}(I\mathscr{A}_{\bar{s}}))=h^i(\mathcal{X}_{\bar{s}},\mathcal{O}_{\mathcal{X}_{\bar{s}}}((I+1)\mathscr{A}_{\bar{s}}))=0$$
for every $i>0$ and geometric point $\bar{s}\in S$.
Thus, 
\begin{equation*}
\begin{split} 
&\mathbb{P}_{S'}(\pi_{\mathcal{X_{S'}}*}\mathscr{A}_{S'}^{\otimes I})\cong \mathbb{P}_S(\pi_{\mathcal{X}*}\mathscr{A}^{\otimes I})\times_SS',\;\;  \mathbb{P}_{S'}(\pi_{\mathcal{X_{S'}}*}\mathscr{A}_{S'}^{\otimes I+1})\cong \mathbb{P}_S(\pi_{\mathcal{X}*}\mathscr{A}^{\otimes I+1})\times_SS', \;\;{\rm and}\\ &\mathbb{P}_{S'}(\pi_{\mathcal{X}_{S'}*}\omega_{\mathcal{X}_{S'}/S'}^{[er]})\cong \mathbb{P}_S(\pi_{\mathcal{X}*}\omega_{\mathcal{X}/S}^{[er]})\times_SS'.
\end{split}
\end{equation*}

We will prove the proposition in several steps. 

\begin{step}\label{keyprop-step1}
In this step, we introduce a claim and give an explanation of the claim. 

We will consider the following claim, which will be proved in Step \ref{keyprop-step3}. 
\begin{claim}\label{cl2}
There exists a locally closed subscheme $N$ of $H$ such that a morphism $T\to H$ factors through $N\hookrightarrow H$ if and only if there exists a $\tilde{\pi}_{T}$-ample line bundle $\mathscr{A}'$ on $\mathcal{U}_T$ such that $\mathscr{A}'$ and
$\tilde{\pi}_{T}\colon\mathcal{U}_T\to T$ satisfy the following. 
\begin{enumerate}[\rm{(}a\rm{)}]
\item\label{claim1-(a)}
any geometric fiber of $\tilde{\pi}_{T}$ is connected and normal,
\item\label{claim1-(b)}
 $p_{1,\bar{t}}$ and $p_{2,\bar{t}}$ are closed immersions for any geometric point $\bar{t}\in T$
\item\label{claim1-(c)}
$\mathscr{A}'^{\otimes I}\sim_Tp_{1,T}^*\mathcal{O}_{\mathbb{P}_T^{d_1-1}}(1)$ and $\mathscr{A}'^{\otimes I+1}\sim_Tp_{2,T}^*\mathcal{O}_{\mathbb{P}^{d_2-1}_T}(1)$,
\item
\label{claim1-(d)}
 the morphisms $\mathcal{O}_{T}^{\oplus d_1}\to H^0(\mathcal{U}_t,\mathscr{A}'^{\otimes I}_{t})$ and $\mathcal{O}_{T}^{\oplus d_2}\to H^0(\mathcal{U}_t,\mathscr{A}'^{\otimes I+1}_{t})$  are surjective, $h^0(\mathcal{U}_t,\mathscr{A}'^{\otimes I}_{t})=d_1$ and $h^0(\mathcal{U}_t,\mathscr{A}'^{\otimes I+1}_{t})=d_2$ for any point $t\in T$,
\item\label{claim1-(e)}
 $\omega_{\mathcal{U}_T/T}^{[er]}\sim_Tp_{3,T}^*\mathcal{O}(1)$,
\item\label{claim1-(f)} 
$\mathcal{U}_{\bar{t}}$ is a klt variety for any geometric point $\bar{t}\in T$,
\item\label{claim1-(g)}
 $\tilde{\pi}_{T*}\omega_{\mathcal{U}_T/T}^{[ler]}\to H^0(\mathcal{U}_{t}, \mathcal{O}_{\mathcal{U}_{t}}(lerK_{\mathcal{U}_{t}}))$ is surjective for any point $t\in T$ and any $l\in\mathbb{Z}_{>0}$,
\item\label{claim1-(h)}
 $\mathcal{O}_{T}^{\oplus d_3}\to H^0(\mathcal{U}_{t}, \mathcal{O}_{\mathcal{U}_{t}}(erK_{\mathcal{U}_{t}}))$ is surjective and $h^0(\mathcal{U}_{t}, \mathcal{O}_{\mathcal{U}_{t}}(erK_{\mathcal{U}_{t}}))=d_3$ for any point $t\in T$, 
\item\label{claim1-(i)}
$(\mathcal{U}_{\bar{t}},0,\mathscr{A}'_{\bar{t}}) \to \widetilde{\mathcal{C}}_{\bar{t}}\in\mathfrak{Z}_{d,v,u,w} $ for any geometric point ${\bar{t}}\in T$, where $\mathcal{U}_T\to\widetilde{\mathcal{C}_T}$ is the ample model of $\omega_{\mathcal{U}_T/T}^{[er]}$.
\end{enumerate}
Here, the morphism $\mathcal{O}_T^{\oplus d_{1}}\to H^0(\mathcal{U}_t,\mathscr{A}'^{\otimes I}_{t})$ in (\ref{claim1-(d)}) is defined to be the composition
$$\mathcal{O}_T^{\oplus d_{1}}\longrightarrow \tilde{\pi}_{T*}p_{3,T}^*\mathcal{O}_{\mathbb{P}_T^{d_3-1}}(1)\longrightarrow H^0(\mathcal{U}_t,p_{3,t}^*\mathcal{O}_{\mathbb{P}^{d_3-1}}(1))\overset{\cong}{\longrightarrow} H^0(\mathcal{U}_t,\mathscr{A}'^{\otimes I}_{t}),$$ 
where the last isomorphism is induced by $\mathscr{A}'^{\otimes I}\sim_Tp_{1,T}^*\mathcal{O}_{\mathbb{P}_T^{d_1-1}}(1)$ in (\ref{claim1-(c)}), and the other morphisms in (\ref{claim1-(d)}) and (\ref{claim1-(h)}) are defined similarly.
\end{claim}

We give a few words about the conditions (\ref{claim1-(a)})--(\ref{claim1-(i)}). 
The roles of these conditions are as follows.
\begin{itemize}
\item
(\ref{claim1-(e)}), (\ref{claim1-(g)}), and (\ref{claim1-(i)}) are related to the conditions of $\mathscr{M}_{d,v,u,w,r}(T)$,
\item (\ref{claim1-(c)}), (\ref{claim1-(d)}), (\ref{claim1-(e)}), and (\ref{claim1-(h)}) are utilized to prove the representability of $\mathfrak{H}$, and
\item
(\ref{claim1-(a)}), (\ref{claim1-(b)}), 
and (\ref{claim1-(f)}) are extra and written just for the convenience of the proof. 
\end{itemize}
More precisely, (\ref{claim1-(a)}), (\ref{claim1-(b)}), and (\ref{claim1-(f)}) immediately follow from (\ref{claim1-(d)}), (\ref{claim1-(i)}), and the properties of $I$ in Corollary \ref{cor--hilbertpolynomial}, and there are following correspondences.
\begin{itemize}
\item
(\ref{claim1-(e)}) implies (iii) of $\mathscr{M}_{d,v,u,w,r}(T)$,
\item
(\ref{claim1-(g)}) corresponds to (iv) of $\mathscr{M}_{d,v,u,w,r}(T)$,
\item
(\ref{claim1-(i)}) corresponds to (v) of $\mathscr{M}_{d,v,u,w,r}(T)$.
\end{itemize}
Thus, every morphism $(\mathcal{U}_T,\mathscr{A}')\to\widetilde{\mathcal{C}_T}$  satisfying (\ref{claim1-(a)})--(\ref{claim1-(i)}) is an object of $\mathscr{M}_{d,v,u,w,r}(T)$.
\end{step}

\begin{step}\label{keyprop-step2}
In this step, we prove Proposition \ref{lem-N-univ} assuming the existence of $N$ in Claim \ref{cl2}. 

 Let $N$ be the scheme in Claim \ref{cl2} and let $\mathcal{U}_N\subset \mathbb{P}_N^{d_1-1}\times_N\mathbb{P}_N^{d_2-1}\times_N\mathbb{P}_N^{d_3-1}$ be the universal subscheme. 
We fix $\tilde{\mathscr{A}}_N$ as in Claim \ref{cl2}. 
By (\ref{claim1-(c)}) and (\ref{claim1-(e)}), we can find line bundles $\mathcal{M}_1$, $\mathcal{M}_2$, and $\mathcal{M}_3$ on $N$ such that 
\begin{equation*}
\begin{split} 
p_{1,N}^*\mathcal{O}(1)&\sim\tilde{\pi}_{N}^*\mathcal{M}_1\otimes\tilde{\mathscr{A}}_N^{\otimes I},\qquad  p_{2,N}^*\mathcal{O}(1)\sim\tilde{\pi}_{N}^*\mathcal{M}_2\otimes\tilde{\mathscr{A}}_N^{\otimes I+1}, \qquad {\rm and} \\
 p_{3,N}^*\mathcal{O}(1)&\sim\tilde{\pi}_{N}^*\mathcal{M}_3\otimes\omega^{[er]}_{\mathcal{U}_N/N}.
\end{split}
\end{equation*}
By (\ref{claim1-(d)}) and (\ref{claim1-(h)}) and applying \cite[Lecture 7, Corollary 2]{mumford-lecture} to the natural morphisms $\mathcal{O}_{N}^{\oplus d_1}\to\tilde{\pi}_{{N}*}p_{1,N}^*\mathcal{O}(1)\cong \mathcal{M}_1\otimes\tilde{\pi}_{N*}\tilde{\mathscr{A}}_N^{\otimes I}$, $\mathcal{O}_{N}^{\oplus d_2}\to\tilde{\pi}_{{N}*}p_{2,N}^*\mathcal{O}(1)\cong \mathcal{M}_2\otimes\tilde{\pi}_{N*}\tilde{\mathscr{A}}_N^{\otimes I+1}$, and $\mathcal{O}_{N}^{\oplus d_3}\to \tilde{\pi}_{{N}*}p_{3,N}^*\mathcal{O}(1) \cong \mathcal{M}_3\otimes\tilde{\pi}_{N*}\omega^{[er]}_{\mathcal{U}_N/N}$, we see that 
\begin{equation*}
\begin{split} 
\mathcal{O}_N^{\oplus d_1}&\cong \mathcal{M}_1\otimes\tilde{\pi}_{N*}\tilde{\mathscr{A}}_N^{\otimes I},\qquad  \mathcal{O}_N^{\oplus d_2}\cong \mathcal{M}_2\otimes\tilde{\pi}_{N*}\tilde{\mathscr{A}}_N^{\otimes I+1}, \qquad {\rm and} \\
\mathcal{O}_N^{\oplus d_3}&\cong \mathcal{M}_3\otimes\tilde{\pi}_{N*}\omega^{[er]}_{\mathcal{U}_N/N}.
\end{split}
\end{equation*}
From these relations, we obtain isomorphisms
\begin{equation*}
\begin{split} 
\tilde{\rho}_1\colon&\mathbb{P}_N(\tilde{\pi}_{N*}\tilde{\mathscr{A}}_N^{\otimes I})\overset{\cong}{\longrightarrow}\mathbb{P}_N^{ d_1-1} ,\qquad  \tilde{\rho}_2\colon\mathbb{P}_N(\tilde{\pi}_{N*}\tilde{\mathscr{A}}_N^{\otimes I+1})\overset{\cong}{\longrightarrow}\mathbb{P}_N^{ d_2-1},  \qquad {\rm and} \\
\tilde{\rho}_3\colon&\mathbb{P}_N(\tilde{\pi}_{N*}\omega^{[er]}_{\mathcal{U}_N/N})\overset{\cong}{\longrightarrow}\mathbb{P}_N^{ d_3-1}. 
\end{split}
\end{equation*}  
This fact and the universal property of $N$ show that there exists an injective map 
$$\eta(S)\colon\mathrm{Hom}(S,N)\hookrightarrow\mathfrak{H}(S),$$
which maps $\gamma\colon S\to N$ to $(f_S\colon(\mathcal{U}_S,\tilde{\mathscr{A}}_S)\to\widetilde{\mathcal{C}_S},\tilde{\rho}_{1,S},\tilde{\rho}_{2,S},\tilde{\rho}_{3,S})$, where $\tilde{\rho}_{i,S}$ is the base change of $\tilde{\rho}_{i}$ by $S$. 
Therefore we obtain a morphism $\eta\colon\mathrm{Hom}(\bullet,N)\to\mathfrak{H}$. 

It suffices to prove the surjectivity of $\eta$. 
In general, for two locally free sheaves $\mathcal{E}$ and $\mathcal{E}'$ on $S$ with an $S$-isomorphism $g\colon\mathbb{P}_S(\mathcal{E})\to \mathbb{P}_S(\mathcal{E}')$, we have $g^*\mathcal{O}_{\mathbb{P}_S(\mathcal{E}')}(1)\sim_S\mathcal{O}_{\mathbb{P}_S(\mathcal{E})}(1)$. 
Indeed, we put $\mathcal{F}:=g^*\mathcal{O}_{\mathbb{P}_S(\mathcal{E}')}(1)\otimes\mathcal{O}_{\mathbb{P}_S(\mathcal{E})}(-1)$. Then $\mathcal{F}$ is locally trivial over $S$ by \cite[\S0.5 (b)]{GIT}. 
Thus, the pushforward of $\mathcal{F}$ to $S$ is an invertible sheaf. 
From this fact and the global generation of $\mathcal{F}$ over $S$, we have $g^*\mathcal{O}_{\mathbb{P}_S(\mathcal{E}')}(1)\sim_S\mathcal{O}_{\mathbb{P}_S(\mathcal{E})}(1)$. 
By using this fact, for any object $(f\colon(\mathcal{X},\mathscr{A})\to\mathcal{C},\rho_1,\rho_2,\rho_3)$ of $\mathfrak{H}(S)$ with the canonical morphisms $f_1\colon\mathcal{X}\to\mathbb{P}_S(\pi_{\mathcal{X}*}\mathscr{A}^{\otimes I})$, $f_2\colon\mathcal{X}\to\mathbb{P}_S(\pi_{\mathcal{X}*}\mathscr{A}^{\otimes I+1})$, and $f_{3}\colon\mathcal{X}\to\mathbb{P}_S(\pi_{\mathcal{X}*}\omega_{\mathcal{X}/S}^{[er]})$, we have 
\begin{equation*}\tag{I}\label{eq-c-g}
\begin{split}
(\rho_1\circ f_{1})^*\mathcal{O}(1)&\sim_S\mathscr{A}^{\otimes I},\qquad  
(\rho_2\circ f_{2})^*\mathcal{O}(1)\sim_S\mathscr{A}^{\otimes I+1},\qquad  {\rm and} \\(\rho_3\circ f_{3})^*\mathcal{O}(1)&\sim_S\omega_{\mathcal{X}/S}^{[er]},\end{split}
\end{equation*}
By the properties of $I$ in Corollary \ref{cor--hilbertpolynomial} and the condition (iv) of $\mathscr{M}_{d,v,u,w,r}$ with the aid of \cite[III, Theorem 12.11]{Ha}, we see that the fibers of $\pi_{\mathcal{X}*}\mathscr{A}^{\otimes I}$, $\pi_{\mathcal{X}*}\mathscr{A}^{\otimes I+1}$, and $\pi_{\mathcal{X}*}\omega_{\mathcal{X}/S}^{[er]}$ coincide with $H^0(\mathcal{X}_{\bar{s}},\mathscr{A}^{\otimes I}_{\bar{s}})$, $H^0(\mathcal{X}_{\bar{s}},\mathscr{A}^{\otimes I+1}_{\bar{s}})$, and $H^0(\mathcal{X}_{\bar{s}},\mathcal{O}_{\mathcal{X}_{\bar{s}}}(erK_{\mathcal{X}_{\bar{s}}}))$, respectively, over every geometric point ${\bar{s}}\in S$.
Then the three linear equivalences in (\ref{eq-c-g}) induce the following surjective morphisms  
\begin{equation*}\tag{II}\label{eq-e-k}
\begin{split}
 \mathcal{O}_{S}^{\oplus d_1}&\to H^0(\mathcal{X}_s,\mathscr{A}^{\otimes I}_{s}),\qquad  \mathcal{O}_{S}^{\oplus d_2}\to H^0(\mathcal{X}_s,\mathscr{A}^{\otimes I+1}_{s}),\qquad {\rm and}\\
 \mathcal{O}_S^{\oplus d_3}&\to H^0(\mathcal{X}_s,\mathcal{O}_{\mathcal{X}_s}(erK_{\mathcal{X}_s})).\nonumber
\end{split}
\end{equation*}
  for any point $s \in S$. 
  
We set $p_i:=\rho_i\circ f_i$. 
By the properties of $I$ in Corollary \ref{cor--hilbertpolynomial}, $p_{1,\bar{s}}$ is a closed immersion for every geometric point $\bar{s}\in S$.
 Thus, 
 $$p_1\times p_2\times p_3\colon\mathcal{X}\hookrightarrow \mathbb{P}_S^{d_1-1}\times_S\mathbb{P}_S^{d_2-1}\times_{S}\mathbb{P}_S^{d_3-1}$$ is a closed immersion.
 The morphism $\gamma\colon S\to H$ corresponding to $p_1\times p_2\times p_3$ factors through $N$ since (\ref{eq-c-g}) (resp.~(\ref{eq-e-k})) corresponds to (\ref{claim1-(c)}) and (\ref{claim1-(e)}) (resp.~(\ref{claim1-(d)}) and (\ref{claim1-(h)})).
Then it immediately follows that $\eta(S)$ is surjective and hence $\eta$ is an isomorphism. 

Therefore, $\mathfrak{H}$ is represented by $N$ and hence Proposition \ref{lem-N-univ} holds if Claim \ref{cl2} holds. 
We finish this step. 
\end{step}

\begin{step}\label{keyprop-step3}
In this final step, we prove Claim \ref{cl2}. 
To prove Claim \ref{cl2}, it suffices to check that (\ref{claim1-(a)})--(\ref{claim1-(i)}) are locally closed conditions.

We first deal with (\ref{claim1-(a)}) and (\ref{claim1-(b)}). 
By \cite[Th\'eor\`eme (12.2.1) and (12.2.4)]{EGA}, the subset 
$$U_1:=\{ s\in H|\text{ $\mathcal{U}_s$ is geometrically connected and geometrically normal}\}$$
is open. 
By \cite[Proposition 12.93]{gortz-wedhorn}, the subset 
$$U_2:=\{s\in U_1\,|\, \text{$p_{1,s} \colon \mathcal{U}_s\to \mathbb{P}^{d_1-1}_s$ and $p_{2,s} \colon \mathcal{U}_s\to \mathbb{P}^{d_2-1}_s$ are closed immersions}\}\subset U_{1}$$ is also open.  

Next, we treat (\ref{claim1-(c)}). 
We put 
$$\tilde{\mathscr{A}}=p_{2,U_2}^*\mathcal{O}_{\mathbb{P}^{d_2-1}_{U_2}}(1)\otimes p_{1,U_2}^*\mathcal{O}_{\mathbb{P}^{d_1-1}_{U_2}}(-1).$$ 
Then the condition (\ref{claim1-(c)}) implies that 
$$\mathscr{A}'\sim_{T}p_{1,T}^*\mathcal{O}_{\mathbb{P}^{d_1-1}_T}(-1)\otimes p_{2,T}^*\mathcal{O}_{\mathbb{P}^{d_2-1}_T}(1)=\tilde{\mathscr{A}}_{T}.$$
Hence, the existence of $\mathscr{A}'$ satisfying (\ref{claim1-(c)}) is equivalent to the $\tilde{\pi}_{T}$-ampleness of $\tilde{\mathscr{A}}_T$ and the relations $\tilde{\mathscr{A}}_T^{\otimes I}\sim_Tp_{1,T}^*\mathcal{O}_{\mathbb{P}^{d_1-1}_{T}}(1)$ and $\tilde{\mathscr{A}}_T^{\otimes I+1}\sim_Tp_{2,T}^*\mathcal{O}_{\mathbb{P}^{d_2-1}_{T}}(1)$. By Corollary \ref{cor--hako}, there exists a locally closed subscheme 
$$U_3\subset U_2$$
 such that a morphism $T\to U_2$  factors through $U_3 \hookrightarrow U_{2}$ if and only if the relations $\tilde{\mathscr{A}}_T^{\otimes I}\sim_Tp_{1,T}^*\mathcal{O}_{\mathbb{P}^{d_1-1}_{T}}(1)$ and $\tilde{\mathscr{A}}_T^{\otimes I+1}\sim_Tp_{2,T}^*\mathcal{O}_{\mathbb{P}^{d_2-1}_{T}}(1)$ hold true. 
 Since $p_{1,U_3}$ is a closed immersion, $\tilde{\mathscr{A}}_{U_3}$ is $\tilde{\pi}_{{U_3}}$-ample.

For (\ref{claim1-(d)}), set 
$$U_4:=\left\{ \begin{array}{l} s \in U_3\end{array} \;\middle| \begin{array}{rl} 
\bullet&\text{$h^0(\mathcal{U}_{s},\tilde{\mathscr{A}}_{s}^{\otimes I})=d_1$,}\\
\bullet&\text{$h^0(\mathcal{U}_{s}, \tilde{\mathscr{A}}_{s}^{\otimes I+1})=d_2$, and}\\
\bullet&\text{both $\mathcal{O}_{U_3}^{\oplus d_1}\to H^0(\mathcal{U}_s,\tilde{\mathscr{A}}^{\otimes I}_{s})$ and} \\
&\text{$\mathcal{O}_{U_3}^{\oplus d_2}\to H^0(\mathcal{U}_s,\tilde{\mathscr{A}}^{\otimes I+1}_{s})$ are surjective}
\end{array}\right\}.$$
Then $U_4$ is open. 
Indeed, pick a point $s \in U_{4}$.
We take a line bundle $\mathcal{M}$ on $U_3$ such that $\tilde{\pi}_{{U_3}}^{*}\mathcal{M}\otimes\tilde{\mathscr{A}}_{{U_3}}^{\otimes I}\sim p_{1,\mathcal{U}_{U_3}}^*\mathcal{O}_{\mathbb{P}}^{d_1-1}(1)$. 
By the third condition in $U_{4}$ and the construction of $\mathcal{O}_{U_3}^{\oplus d_1}\to H^0(\mathcal{U}_s,\tilde{\mathscr{A}}^{\otimes I}_{s})$, we have
$$\tilde{\pi}_{{U_3}*}p_{1,U_3}^*\mathcal{O}(1)\cong \mathcal{M}\otimes_{\mathcal{O}_{U_3}} \tilde{\pi}_{{U_3}*} \tilde{\mathscr{A}}_{{U_3}}^{\otimes I} \longrightarrow H^0(\mathcal{U}_s,\tilde{\mathscr{A}}^{\otimes I}_{s})$$ is surjective. 
By \cite[III, Theorem 12.11]{Ha}, $\mathcal{M}\otimes_{\mathcal{O}_{U_3}} \tilde{\pi}_{{U_3}*} \tilde{\mathscr{A}}_{{U_3}}^{\otimes I}$ is locally free  near $s$ and  
\begin{equation*}\begin{split}\mathcal{M}\otimes_{\mathcal{O}_{U_3}} \tilde{\pi}_{{U_3}*} \tilde{\mathscr{A}}_{{U_3}}^{\otimes I}\otimes_{\mathcal{O}_{U_3}} \mathcal{O}_{U_3,s'}/\mathfrak{m}_{s'} \longrightarrow H^0(\mathcal{U}_{s'},\tilde{\mathscr{A}}^{\otimes I}_{s'})\end{split}\end{equation*} is an isomorphism for every point $s' \in U_{3}$ on some neighborhood of $s$, where $\mathfrak{m}_{s'}$ is the maximal ideal of $\mathcal{O}_{U_3,s'}$. 
Therefore we have $h^0(\mathcal{U}_{s'},\tilde{\mathscr{A}}_{s'}^{\otimes I})=d_1$ for every point $s'$ on some neighborhood of $s$, and the third condition in $U_{4}$ implies that $\mathcal{O}_{U_3}^{\oplus d_1} \to \mathcal{M}\otimes_{\mathcal{O}_{U_3}} \tilde{\pi}_{{U_3}*} \tilde{\mathscr{A}}_{{U_3}}^{\otimes I}$ is surjective at $s$. 
Then the morphism
\begin{equation*}\begin{split}\mathcal{O}_{U_3}^{\oplus d_1} \longrightarrow \tilde{\pi}_{{U_3}*}p_{1,U_3}^*\mathcal{O}(1)\cong\mathcal{M}\otimes_{\mathcal{O}_{U_3}} \tilde{\pi}_{{U_3}*} \tilde{\mathscr{A}}_{{U_3}}^{\otimes I}\longrightarrow H^0(\mathcal{U}_{s'},\tilde{\mathscr{A}}^{\otimes I}_{s'})\end{split}\end{equation*}
 is surjective for every point $s'$ on some neighborhood of $s$. 
 By the same argument, we see that $h^0(\mathcal{U}_{s'},\tilde{\mathscr{A}}_{s'}^{\otimes I})=d_2$ and $\mathcal{O}_{U_3}^{\oplus d_2}\to H^0(\mathcal{U}_{s'},\tilde{\mathscr{A}}^{\otimes I+1}_{s'})$  is surjective for every point $s'$ on some neighborhood of $s$. 
 In this way, if $s \in U_{4}$ then a neighborhood of $s$ is contained in $U_{4}$, which implies the openness of $U_{4}$.  

In this paragraph, we discuss the conditions (\ref{claim1-(e)}) and (\ref{claim1-(f)}). 
By Corollary \ref{cor--hako}, we may find a locally closed subscheme 
$$U_5\subset U_4$$ such that a morphism $T\to U_4$ factors through 
$U_5$ if and only if $\omega_{\mathcal{U}_T/T}^{[er]}\sim_Tp_{3,T}^*\mathcal{O}(1)$. 
Furthermore, the following subset 
\[
U_6=\{t\in U_5\,|\,\mathcal{U}_{\bar{t}}\text{ is klt}\}
\]
is open since $\omega_{\mathcal{U}_T/T}^{[r]}$ is a line bundle (see \cite[Corollary 4.10]{kollar-mmp}).

We will discuss (\ref{claim1-(g)}) for three paragraphs.  
We fix an $l \in \mathbb{Z}_{>0}$, and we  will discuss the surjectivity of  $\tilde{\pi}_{T*}\omega_{\mathcal{U}_T/T}^{[ler]}\to H^0(\mathcal{U}_{t}, \mathcal{O}_{\mathcal{U}_{t}}(lerK_{\mathcal{U}_{t}}))$ for every $t\in T$, which we call ``(\ref{claim1-(g)}$_l$)". 
For any  Noetherian affine scheme $\bar{U}$ with a morphism $\bar{U} \to U_{8}$, we define functors $F^{0}_{\bar{U}}$ and $F^{1}_{\bar{U}}$ that send an affine scheme $U'$ over $\bar{U}$ to $\tilde{\pi}_{{U'}*}\omega_{\mathcal{U}_{U'}/U'}^{[ler]}$ and $R^{1}\tilde{\pi}_{{U'}*}\omega_{\mathcal{U}_{U'}/U'}^{[ler]}$, respectively. 
These are the same functors as discussed in \cite[III, Section 12]{Ha}, and $F^{0}_{\bar{U}}$ is always left exact by the flatness of $\tilde{\pi}_{{U_{6}}*}\omega_{\mathcal{U}_{U_{6}}/U_{6}}^{[ler]}$. 
Pick an affine open subset $U\subset U_{6}$. 
We pick a Grothendieck complex $(K^{\bullet},d^{\bullet})$ for $\omega_{\mathcal{U}_U/U}^{[ler]}$. 
This is the same complex as in \cite[\S5, Lemma 1]{Ab} (see also \cite[III, Proposition 12.2]{Ha}). 
We define a coherent sheaf 
$$W^{1}:=\mathrm{Coker}\,(d^{0}:K^{0}\to K^{1})$$
 on $U$. For any affine morphism $g \colon T\to U$, the pullback of the complex $(g^*K^\bullet,g^*d^{\bullet})$ is a Grothendieck complex with respect to $\omega_{\mathcal{U}_T/T}^{[ler]}$ and 
$$g^{*}W^{1}=\mathrm{Coker}\,(g^{*}d^{0} \colon g^*K^{0}\to g^*K^{1}).$$
By applying Theorem \ref{thm--inv-pluri} and \cite[\S5, Corollary 2]{Ab} to the normalization of $U_{6}$, we see that $\tilde{\pi}_{{U_{6,\mathrm{red}}}}$ satisfies (\ref{claim1-(g)}$_{l}$). 
By \cite[III, Corollary 12.6 and Proposition 12,10]{Ha}, $F^{0}_{U_{\rm red}}$ is exact and hence $F^{1}_{U_{\rm red}}$ is left exact. 
By \cite[III, Proposition 12,4]{Ha}, $W^{1}\otimes_{\mathcal{O}_{U}} \mathcal{O}_{U_{\rm red}}$ is flat for any choice of $(K^{\bullet},d^{\bullet})$. 
By this fact and the flattening stratification for $W^{1}$, we get a closed subscheme $Z_{U}\subset U$ such that a morphism $g \colon  T \to U$ factors through $Z_{U}\hookrightarrow U$ if and only if $g^{*}W^{1}$ is flat, which is 
equivalent to the exactness of $F^{0}_{T}$ by \cite[III, Proposition 12.4]{Ha}. 
From this, we can check that $Z_U$ is independent of the choice of $(K^{\bullet},d^{\bullet})$ and hence $Z_{U}|_{U'}=Z_{U'}$ for any affine open embedding $U'\hookrightarrow U$. 
By these facts, we can construct a closed subscheme 
$$U^{(l)}_{7}\subset U_{6}$$ by gluing all $Z_{U}$ for affine open subsets $U\subset U_{6}$. 

By construction, a morphism $g'  \colon  T'\to U_{6}$ factors through $U^{(l)}_{7}\hookrightarrow U_{6}$ if and only if $g'^{*}_{V}W^{1}$ is flat for any affine open subsets $V\subset T'$ and $U\subset U_{6}$ with the induced morphism $g'_{V} \colon  V\to U$. 
By \cite[III, Proposition 12.4]{Ha}, $g'^{*}_{V}W^{1}$ is flat if and only if $F^{1}_{V}$ is left exact. 
By recalling that $F^{0}_{V}$ is always left exact, we see that $F^{1}_{V}$ is left exact if and only if $F^{0}_{V}$ is exact. 
Thus, $g'  \colon  T'\to U_{6}$ factors through $U^{(l)}_{7}\hookrightarrow U_{6}$ if and only if $F^{0}_{V}$ is exact for any affine open subset $V\subset T'$. By the argument of cohomology and base change \cite[III, 12.5, 12.6, and 12.10]{Ha}, the exactness of $F^{0}_{V}$ for every $V\subset T'$ is equivalent to the condition (\ref{claim1-(g)}$_l$). 
In this way, $g'  \colon  T'\to U_{6}$ factors through $U^{(l)}_{7}\hookrightarrow U_{6}$ if and only if the morphism $\tilde{\pi}_{{T'}} \colon  \mathcal{U}_{T'}\to T'$ satisfies (\ref{claim1-(g)}$_l$). 

By the above argument, we have a sequence of closed subschemes of $U_{6}$
$$U_{6} \supset U^{(1)}_{7} \supset U^{(1)}_{7} \cap U^{(2)}_{7}:=U^{(1)}_{7} \times_{U_{6}} U^{(2)}_{7} \supset \cdots,$$
and the Noetherian property of $U_{6}$ implies that the above sequence is stationary and
$$U_{7}:=\bigcap_{l\in \mathbb{Z}_{>0}}U^{(l)}_{7}\subset U_{6}$$
is well-defined as a closed subscheme. 
By the construction of $U^{(l)}_{7}$, a morphism $T \to U_{7}$ factors through $U_{7}\hookrightarrow U_{6}$ if and only if $\mathcal{U}_T\to T$ satisfies (\ref{claim1-(g)}). 
We finish to discuss (\ref{claim1-(g)}).

By Theorem \ref{thm--inv-pluri} and applying the same argument as in the construction of $U_4$, we see that
\[
U_{8}:=\{s\in U_{7}\,|\,\text{$\mathcal{O}_{U_{7}}^{\oplus d_3}\to H^0(\mathcal{U}_s,\mathcal{O}_{\mathcal{U}_s}(erK_{\mathcal{U}_s}))$ is surjective, $h^0(\mathcal{U}_{\overline{s}},\mathcal{O}_{\mathcal{U}_{\overline{s}}}(erK_{\mathcal{U}_{\overline{s}}}))=d_3$}\}
\]
is open. 
This corresponds to (\ref{claim1-(h)}).

Finally, we discuss the condition (\ref{claim1-(i)}). 
We put 
 $$\widetilde{\mathcal{C}} :=\mathbf{Proj}_{U_{8}}(\bigoplus_{l\ge0}\tilde{\pi}_{{U_{8}*}}\omega_{\mathcal{U}_{U_{8}/U_{8}}}^{[ler]}).$$ 
 Let $f\colon\mathcal{U}_{U_8}\to\widetilde{\mathcal{C}} $ be the induced morphism.
Now the sheaf $\omega_{\mathcal{U}_{T}/T}^{[er]}$ is $\tilde{\pi}_{T}$-semiample by the construction of $U_5$.  
Furthermore, 
$$\widetilde{\mathcal{C}}_{T}=\mathbf{Proj}_{T}(\bigoplus_{l\ge0}\tilde{\pi}_{{T*}}\omega_{\mathcal{U}_{T}/T}^{[ler]})$$
for any morphism $T \to U_{8}$ by the condition (\ref{claim1-(g)}) and \cite[Theorem 12.11]{Ha}. 
Thus, $f_{\bar{s}} \colon \mathcal{U}_{\bar{s}}\to \widetilde{\mathcal{C}}_{\bar{s}}$ is the contraction induced by $eK_{\mathcal{U}_{\bar{s}}}$ for any geometric point $\bar{s}\in U_{8}$.
We consider the following set
$$U_9:=\left\{ \begin{array}{l} s \in U_8\end{array} \;\middle| \begin{array}{rl} &\bigl(p_{3,\overline{s}}^*\mathcal{O}(1)^2\cdot \tilde{\mathscr{A}}_{\overline{s}}^{d-2}\bigr)=0,\,\bigl(p_{3,\overline{s}}^*\mathcal{O}(1)\cdot \tilde{\mathscr{A}}_{\overline{s}}^{d-1}\bigr)=eruv,\\
& \mathrm{vol}(\tilde{\mathscr{A}}_{\overline{s}})\le w\text{ and }\mathrm{Ivol}(erK_{\mathcal{U}_{\overline{s}}})=eru
\end{array}\right\}.$$ 
We note that if the Iitaka dimension of $eK_{\mathcal{U}_{\overline{s}}}$ is one, we have $\mathrm{Ivol}(erK_{\mathcal{U}_{\overline{s}}})=r \cdot \mathrm{Ivol}(eK_{\mathcal{U}_{\overline{s}}})$.
This fact and (\ref{claim1-(e)}) show that a point $s \in U_{8}$ is contained $U_{9}$ if and only if $(\mathcal{U}_{\overline{s}},\tilde{\mathscr{A}}_{\overline{s}})\to \widetilde{\mathcal{C}}_{\overline{s}}$ satisfies (ii)--(iv) of $\mathfrak{Z}_{d, v,u,w}$. 
We will check that $U_{9}$ is open. 
By applying Theorem \ref{thm--inv-pluri} to the normalization of $U_{8}$, we see that the function 
$$ U_8\ni s \mapsto h^0(\mathcal{U}_{\overline{s}},\mathcal{O}_{\mathcal{U}_{\overline{s}}}(emrK_{\mathcal{U}_{\overline{s}}})) $$ is locally constant for every $m \in \mathbb{Z}_{>0}$.  
We also see that 
$$U_8\ni s\mapsto \Bigl((p_{3,s}^*\mathcal{O}(1)^2\cdot \tilde{\mathscr{A}}_{s}^{d-2}),\,(p_{3,s}^*\mathcal{O}(1)\cdot \tilde{\mathscr{A}}_{s}^{d-1}),\mathrm{vol}(\tilde{\mathscr{A}}_s)\Bigr)\in\mathbb{Q}^3$$
is locally constant by the flatness. 
Therefore, we see that $U_{9}$ is open. 
Now it suffices to show the uniform adiabatic K-stability of $f_{\bar{s}}$ for any geometric point $\bar{s}\in U_9$.
If $u>0$, every $f_{\bar{s}}$ is uniformly adiabatically K-stable and hence we may set $N:=U_{9}$.
If $u<0$, then we apply Theorem \ref{op} and Example \ref{ex--calculation} to the normalization $U_{9}^{\nu}$ of $U_{9}$ and $\mathcal{U}_{U_{9}^{\nu}} \to \widetilde{\mathcal{C}}_{U_{9}^{\nu}}$, and we obtain an open subset
$$
W\subset U_{9}^{\nu}
$$ 
such that $s \in U_{9}^{\nu}$ is contained in $W$ if and only if $f_{\overline{s}}$ is uniformly adiabatically K-stable.
Let $\nu \colon U_{9}^{\nu}\to U_{9}$ be the morphism of the normalization. 
Since $\nu$ is surjective and closed and $W=\nu^{-1}(\nu(W))$, the set
\[
N=\nu(W)
\]
is open. Moreover, a geometric point $\overline{s}\in U_{9}$ is a point of $N$ if and only if $f_{\overline{s}}$ is uniformly adiabatically K-stable.
Thus, we finish discussing the condition (\ref{claim1-(i)}).

By the above argument, a morphism $T\to H$ factors through $N\hookrightarrow H$, if and only if there exists $\mathscr{A}'$ as in Claim \ref{cl2} such that $\tilde{\pi}_{T}\colon\mathcal{U}_T\to T$ and $\mathscr{A}'$ satisfy (\ref{claim1-(a)})--(\ref{claim1-(i)}). 
We finish the proof of Claim \ref{cl2}. 
 \end{step}
We complete the proof of Proposition \ref{lem-N-univ}.
\end{proof}

Now we are ready to show Theorem \ref{mod2}.  

\begin{proof}[Proof of Theorem \ref{mod2}] 
By Lemma \ref{lem-stack}, $\mathscr{M}_{d,v,u,w,r}$ is a stack. 
We have 
$$\mathscr{M}_{d,v,u,w,r}=\underset{d_1, \, d_2, \, d_3, \, h}{\bigsqcup} \mathscr{M}_{d_1,d_2,d_3,h}$$ as stacks. 
Thanks to Lemma \ref{lem--stack-decomp}, it suffices to check that $\mathscr{M}_{d_1,d_2,d_3,h}$ is a separated Deligne-Mumford stack of finite type over $\mathbbm{k}$ for the fixed $d_1$, $d_2$, $d_3$, and $h$. 

Fix $d_1$, $d_2$, $d_3$, and $h$. 
We put $N:=N_{d_1,d_2,d_3,h}$, where $N_{d_1,d_2,d_3,h}$ is in Lemma \ref{lem-N-univ}, and let $\pi_{N}\colon (\mathcal{U}_{N}, \tilde{\mathscr{A}}_{N})\to N$ be the universal family in Lemma \ref{lem-N-univ}. 
We check that
 $$\mathscr{M}_{d_1,d_2,d_3,h}\cong [N/PGL(d_1)\times PGL(d_2)\times PGL(d_3)].$$
We first construct a morphism from $\mathscr{M}_{d_1,d_2,d_3,h}$ to $[N/PGL(d_1)\times PGL(d_2)\times PGL(d_3)]$. 
By regarding $N$ and $\mathscr{M}_{d_1,d_2,d_3,h}$ as stacks and using $(\mathcal{U}_{N}, \tilde{\mathscr{A}}_{N})\to N$, we get a morphism $N\to \mathscr{M}_{d_1,d_2,d_3,h}$  between stacks.
Take a scheme $S$ and $g \colon (\mathcal{X},\mathscr{A})\to \mathcal{C}\in\mathscr{M}_{d_1,d_2,d_3,h}(S)$. 
By the $2$-Yoneda lemma (cf.~\cite[Proposition 3.2.2]{Ols}) and regarding $S$ and $\mathscr{M}_{d_1,d_2,d_3,h}$ as stacks, we can find a morphism $S\to\mathscr{M}_{d_1,d_2,d_3,h}$ that corresponds to $g \colon (\mathcal{X},\mathscr{A})\to \mathcal{C}$. 
For any \'{e}tale covering $S'\to S$ such that the pullback of $\mathscr{A}$ is represented by a $\pi_{\mathcal{X}_{S'}}$-ample line bundle $\mathscr{A}'$, the definition of $\mathscr{M}_{d,v,u,w,r}$ and \cite[III, Theorem 12.11]{Ha} imply that $\pi_{\mathcal{X}_{S'}*}\mathscr{A}'^{\otimes I}$, $\pi_{\mathcal{X}_{S'}*}\mathscr{A}'^{\otimes I+1}$, and $\pi_{\mathcal{X_{S'}}*}\omega_{\mathcal{X_{S'}}/S'}^{[er]}$ are locally free sheaves of rank $d_1$, $d_2$, and $d_3$, respectively.
Since $N$ is the scheme representing $\mathfrak{H}$ (Proposition \ref{lem-N-univ}), $S'\times_{\mathscr{M}_{d_1,d_2,d_3,h}}N$ is represented by
\begin{equation*}\label{eq-isom-princ.fiber-bundle}
 \begin{split}
{\rm V}_{(\mathcal{X}_{S'},\mathscr{A}_{S'})}:=\mathrm{Isom}_{S'}\bigl(\mathbb{P}_{S'}(\pi_{\mathcal{X}_{S'}*}\mathscr{A}'^{\otimes I}),\mathbb{P}^{d_1-1}_{S'}\bigr)&\times_{S'}\mathrm{Isom}_{S'}\bigl(\mathbb{P}_{S'}(\pi_{\mathcal{X}_{S'}*}\mathscr{A}'^{\otimes I+1}),\mathbb{P}_{S'}^{d_2-1}\bigr)\\ 
&\times_{S'}\mathrm{Isom}_{S'}\bigl(\mathbb{P}_{S'}(\pi_{\mathcal{X}_{S'}*}\omega_{\mathcal{X}_{S'}/{S'}}^{[er]}),\mathbb{P}_{S'}^{d_3-1}\bigr). \end{split} \end{equation*}
Thus, we can think $S'\times_{\mathscr{M}_{d_1,d_2,d_3,h}}N$ of a principal $PGL(d_1)\times PGL(d_2)\times PGL(d_3)$-bundle over $S'$.
In particular, $S'\times_{\mathscr{M}_{d_1,d_2,d_3,h}}N$ is affine over $S'$. Hence, $S\times_{\mathscr{M}_{d_1,d_2,d_3,h}}N$ is represented by an affine scheme over $S$ (\cite[Proposition 4.4.9]{Ols}).
Then $S\times_{\mathscr{M}_{d_1,d_2,d_3,h}}N$ is a principal $PGL(d_1)\times PGL(d_2)\times PGL(d_3)$-bundle over $S$ (cf.~\cite[Proposition 2.36]{FGA}), and the natural morphism $S\times_{\mathscr{M}_{d_1,d_2,d_3,h}}N\to N$ is $PGL(d_1)\times PGL(d_2)\times PGL(d_3)$-equivariant by Proposition \ref{lem-N-univ}. 
For each scheme $S$, by considering the map 
$$(S\to \mathscr{M}_{d_1,d_2,d_3,h})\mapsto (S\times_{\mathscr{M}_{d_1,d_2,d_3,h}}N\to N)$$ 
and using the $2$-Yoneda lemma to the left hand side, 
we obtain a morphism 
$$\xi\colon\mathscr{M}_{d_1,d_2,d_3,h}\longrightarrow [N/PGL(d_1)\times PGL(d_2)\times PGL(d_3)]$$
between stacks.

In this paragraph we prove that $\xi$ is an isomorphism. 
By \cite[Proposition 3.1.10]{Ols}, it suffices to show the full faithfulness and the essential surjectivity of 
$$\xi(S):=\xi|_{\mathscr{M}_{d_1,d_2,d_3,h}(S)}\colon\mathscr{M}_{d_1,d_2,d_3,h}(S)\longrightarrow [N/PGL(d_1)\times PGL(d_2)\times PGL(d_3)](S)$$
 for a fixed scheme $S$. 
To prove the full faithfulness, we pick objects $g \colon (\mathcal{X},\mathscr{A})\to \mathcal{C}$ and $g' \colon (\mathcal{X}',\mathscr{A}')\to \mathcal{C}'$ of $\mathscr{M}_{d_1,d_2,d_3,h}(S)$.
Taking an \'etale covering of $S$, we may assume that $\mathscr{A}$ and $\mathscr{A'}$ are line bundles. 
By construction, $\xi(S)$ defines
$$
\xymatrix
{
 \mathrm{Isom}_{S}((\mathcal{X},\mathscr{A}),(\mathcal{X}',\mathscr{A}'))\ar@{}[d]|*{\rotatebox[origin=c]{90}{$\in$}}&& {\rm Hom}({\rm V}_{(\mathcal{X},\mathscr{A})}, {\rm V}_{(\mathcal{X'},\mathscr{A'})})\ar@{}[d]|*{\rotatebox[origin=c]{90}{$\in$}}\\
\phi \ar@{|->}[rr]&&\bigl((\rho_{1},\rho_{2},\rho_{3})\mapsto (\rho_{1}\circ \phi^{-1},\rho_{2}\circ \phi^{-1},\rho_{3}\circ \phi^{-1})\bigr).
}
$$
From this, the full faithfulness of $\xi(S)$ follows.
For the essential surjectivity, we pick any object $\alpha\colon\mathcal{P}\to N$ of $[N/PGL(d_1)\times PGL(d_2)\times PGL(d_3)](S)$. 
Here, $\mathcal{P}$ is a principal $PGL(d_1)\times PGL(d_2)\times PGL(d_3)$-bundle over $S$ and $\alpha$ is $PGL(d_1)\times PGL(d_2)\times PGL(d_3)$-equivariant. 
By \cite[Corollary 1.3.10]{Ols}, there exists an \'{e}tale covering $\beta\colon S'\to S$ such that $S'\times_S\mathcal{P}$ has a section $S'\to  S'\times_S\mathcal{P}$. 
Let $\sigma\colon S'\to  S'\times_S\mathcal{P}\to N$ be the composition of the section and the natural morphism, and let $(\mathcal{U}_{S'},\tilde{\mathscr{A}}_{S'})\to\widetilde{\mathcal{C}}_{S'}$ be an object of $\mathscr{M}_{d_1,d_2,d_3,h}(S')$ defined by the pullback of the universal family $(\mathcal{U}_N,\tilde{\mathscr{A}}_N)\to\widetilde{\mathcal{C}_N}$ via $\sigma$.
Then there exists an $S'\times N$-isomorphism $S'\times_S\mathcal{P}\to S'\times_{\mathscr{M}_{d_1,d_2,d_3,h}}N$, where $S'\to\mathscr{M}_{d_1,d_2,d_3,h}$ is the morphism corresponding to $(\mathcal{U}_{S'},\tilde{\mathscr{A}}_{S'})\to\widetilde{\mathcal{C}}_{S'}$.
By this discussion and the full faithfulness, $\xi(S)$ is essentially surjective.
Thus $\xi$ is an isomorphism. 
In this way, $\mathscr{M}_{d_1,d_2,d_3,h}$ is categorically equivalent to $[N/PGL(d_1)\times PGL(d_2)\times PGL(d_3)]$.
Thus, $\mathscr{M}_{d_1,d_2,d_3,h}$ is an Artin stack of finite type over $\mathbbm{k}$. 

In the rest of the proof, we will prove that $\mathscr{M}_{d_1,d_2,d_3,h}$ is a separated Deligne-Mumford stack with a coarse moduli space. 
By Theorem \ref{dm}, it suffices to prove that the diagonal morphism is finite. 
By Corollaries \ref{plusfin} and \ref{fin2}, we only need to prove that the diagonal morphism is proper. For any scheme $S$ and $g_i \colon (\mathcal{X}_i,\mathscr{A}_i)\to \mathcal{C}_i\in\mathscr{M}_{d_1,d_2,d_3,h}(S)$ for $i=1,2$, it suffices to show that the scheme 
$\mathscr{I} :=\mathrm{Isom}_S((\mathcal{X}_{1},\mathscr{A}_{1}),(\mathcal{X}_{2},\mathscr{A}_{2}))$ 
is proper over $S$.   
By \cite[Proposition 2.36]{FGA} and taking an \'{e}tale covering of $S$, we may assume that $\mathscr{A}_1$ and $\mathscr{A}_2$ are represented by line bundles. 
By abuse of notation, we denote them by $\mathscr{A}_1$ and $\mathscr{A}_2$, respectively. 
Then $\mathscr{I}$ is locally quasi-projective over $S$ (Subsection \ref{subsec-Hilb}).

Since the problem is local, by shrinking $S$, we may assume that {{$\mathscr{I}$ is quasi-projective over $S$}} and there exist morphisms $\gamma_i \colon S\to N$ ($i=1,2$) such that $\gamma_i^*\bigl(f_N\colon(\mathcal{U}_N,\tilde{\mathscr{A}}_{N})\to\widetilde{\mathcal{C}}_{N}\bigr)=g_i$, where $f_N$ is the canonical morphism of the ample model of $\omega_{\mathcal{U}_{T}/T}^{[er]}$. 
By the morphism $S\to N\times N$ naturally induced from $\gamma_{1}$ and $\gamma_{2}$, we obtain
 \begin{align*}
 \mathscr{I}=\mathrm{Isom}_{N\times N}\bigl((\mathcal{U}_1,\mathscr{A}_1),(\mathcal{U}_2,\mathscr{A}_2)\bigr)\times_{N\times N}S,
 \end{align*}
where $\mathcal{U}_1$ (resp.~$\mathcal{U}_2$) is the base change $\mathcal{U}_N\times_N(N\times N)$ by the first (resp.~second) projection $N\times N\to N$ and $\mathscr{A}_1$ (resp.~$\mathscr{A}_2$) is the pullback of $\tilde{\mathscr{A}}$.
From this, we may replace $S$ by $N \times N$.
Hence, we may assume that $S$ is of finite type over $\mathbbm{k}$.
By \cite[Corollary 13.101]{gortz-wedhorn}, it suffices to prove that the natural morphism $\mathscr{I}\times_{S}(S\times \mathbb{A}^n)\to S\times\mathbb{A}^n$, which we denote by $\varphi$, is a closed map for every $n\in\mathbb{Z}_{>0}$. We pick a closed subset $Z\subset \mathscr{I}\times\mathbb{A}^n$. 
Note that $\varphi(Z)$ is constructible.
By Lemma \ref{const--lem}, to prove the closedness of $\varphi(Z)$ it suffices to show that for any morphism $q \colon C\to \overline{\varphi(Z)}$ from a curve $C$ such that $q^{-1}(\varphi(Z))$ is dense in $C$, we have $q(C) \subset \varphi(Z)$. 
Consider the scheme 
$$\mathscr{J} :=(\mathscr{I}\times \mathbb{A}^n)\times_{S\times \mathbb{A}^n}C\cong \mathscr{I}\times_{S}C.$$
 Since $\mathscr{J}$ is quasi-projective, we can find a curve $D$ with a morphism $D \to \mathscr{J}$ such that the composition $D \to \mathscr{J} \to C$ is a dominant morphism. 
By considering a compactification of $D$ over $C$, we obtain a curve $\overline{D}$ such that $\overline{D} \to C$ is surjective and $D$ is an open subset of $\overline{D}$. 
Then $\mathscr{I} \times_{S}D$ is an open subset of $\mathscr{I} \times_{S}\overline{D}$, and $\mathscr{I} \times_{S}D \cong \mathscr{J} \times_{C}D \to D$ has a section $D' \subset \mathscr{I} \times_{S}D$. 
This section can be extended to a section $\overline{D}'$ of 
$(\mathscr{I}\times \mathbb{A}^n)\times_{S\times \mathbb{A}^n}\overline{D}\cong \mathscr{I} \times_{S}\overline{D} \to \overline{D}$. 
Indeed, this fact follows from Proposition \ref{plussep} if $u>0$.
If $u<0$, then $\mathcal{C}_{\overline{D}}$ of any object $f \colon (X,A)\to \mathcal{C}_{\overline{D}}$ of $ \mathscr{M}_{d_1,d_2,d_3,h}(\overline{D})$ is a $\mathbb{P}^1$-bundle over $\overline{D}$ by Tsen's theorem (cf.~\cite[V \S2]{Ha}). 
Thus, we can apply Theorem \ref{sep2} and obtain $\overline{D}'$.
Then 
\begin{equation*}
\begin{split}
p(C)=&{\rm Im}(\overline{D} \to C \to S\times \mathbb{A}^n)\\
=&{\rm Im}(\overline{D}'\hookrightarrow (\mathscr{I}\times \mathbb{A}^n)\times_{S\times \mathbb{A}^n}\overline{D}\to \mathscr{I}\times \mathbb{A}^n \overset{\varphi}{\to}S\times \mathbb{A}^n)\subset \varphi(Z).
\end{split}
\end{equation*}
Hence, $\varphi$ is a closed map, which implies that the diagonal morphism is proper. 
It follows from this that $\mathscr{M}_{d_1,d_2,d_3,h}$ is separated.

By the above argument, $\mathscr{M}_{d_1,d_2,d_3,h}$ is a separated Deligne-Mumford stack of finite type over $\mathbbm{k}$ with the coarse moduli space. 
We complete the proof.
 \end{proof}

 Now we prove Theorem \ref{quesmain}. 
More specifically, we prove that $\mathscr{M}_{d,v,u,r}$ is an open and closed substack of $\mathscr{M}_{d,v,u,r,w}$ for some $w\in\mathbb{Z}_{>0}$.

\begin{proof}[Proof of Theorem \ref{quesmain}]
We will freely use the notations $\mathfrak{Z}_{d,v,u}$ and $\mathfrak{Z}_{d,v,u,w}$ in \S\ref{intro}. 

We first check that for any scheme $S$ and $f\colon(\mathcal{X},\mathscr{A})\to \mathcal{C} \in \mathscr{M}_{d,v,u,r}(S)$, there exists $\mathscr{L}\in\mathrm{Pic}_{\mathcal{C}/S}(S)$ such that $\mathscr{L}_{\bar{s}}=\mathcal{O}_{\mathbb{P}^1}(1)$ for any geometric point $\bar{s}\in S$.  
Since $u<0$, the morphism $\mathcal{C}\to S$ is a smooth morphism whose geometric fibers are $\mathbb{P}^1$ (cf.~Theorem \ref{Bsemi}). 
Thus, there is an \'{e}tale covering $S'\to S$ such that $\mathcal{C}\times_SS'\cong\mathbb{P}^1_{S'}$ (cf.~the first paragraph of the proof of Theorem \ref{op}).
 Then $\mathcal{O}_{\mathbb{P}^1_{S'}}(1)$ satisfies $p_{1,\mathbb{P}^1_{S'}}^*\mathcal{O}_{\mathbb{P}^1_{S'}}(1)\cong p_{2,\mathbb{P}^1_{S'}}^*\mathcal{O}_{\mathbb{P}^1_{S'}}(1)$, where $p_{1}\colon S'\times_SS'\to S'$ (resp.~$p_{2}\colon S'\times_SS'\to S'$) is the first (resp.~second) projection.
Since $\mathrm{Pic}_{\mathcal{C}/S}$ is an \'{e}tale sheaf, there exists $\mathscr{L}\in\mathrm{Pic}_{\mathcal{C}/S}(S)$ that corresponds to $\mathcal{O}_{\mathbb{P}^1_{S'}}(1)$ under the canonical injection $\mathrm{Pic}_{\mathcal{C}/S}(S)\to\mathrm{Pic}_{\mathcal{C}/S}(S')$.
By definition, it is easy to check that $\mathscr{L}_{\bar{s}}=\mathcal{O}_{\mathbb{P}^1}(1)$ for any geometric point $\bar{s}\in S$.  

By the same argument as in the proof of Lemma \ref{lem-stack}, it follows that $\mathscr{M}_{d,v,u,r}$ is a category fibered in groupoids. 
By Proposition \ref{prop--nefthreshold-volume}, we can find a positive integer $w'$, depending only on $d$, $v$, and $u$, such that for any $f\colon(X,0,A)\to C\in\mathfrak{Z}_{d, v,u}$ together with a general fiber $F$ of $f$, the divisor $A+t_{A}F$ is ample and ${\rm vol}(A+t_{A}F) \leq w'$ for some integer $t_A$. 
Then 
$$f\colon(X,0,A+t_{A}F)\to C\in\mathfrak{Z}_{d, v,u,w'}.$$ 
Pick integers $w_1,\,\cdots ,\,w_k\in(w',w'+dv]$ such that for each $1 \leq i \leq k$, there is an object $f_{i}\colon(X_{i},0,A_{i})\to C_{i}\in\mathfrak{Z}_{d, v,u,w'+dv}$ such that ${\rm vol}(A_{i})=w_{i}$. 
Note that ${\rm vol}(A') \in \mathbb{Z}$ for every $f'\colon(X',0,A')\to C'\in\mathfrak{Z}_{d, v,u,w'+dv}$ because $A'$ is a line bundle.
Hence, we have 
$${\rm vol}(A') =w_{i}$$
for some $i$ if ${\rm vol}(A') >w'$. 

For each $1 \leq i \leq k$, we set $\mathscr{M}_{w_i}$ as the open and closed substack of $\mathscr{M}_{d,v,u,w'+dv,r}$ that parametrizes $f\colon(\mathcal{X},\mathscr{A})\to\mathcal{C}$ such that $\mathrm{vol}(\mathscr{A}_{\overline{s}})=w_i$ for all geometric points $\overline{s}\in S$.
Then there is a natural morphism
$$\gamma\colon \mathscr{M}_{w_1}\sqcup\ldots\sqcup\mathscr{M}_{w_k} \longrightarrow \mathscr{M}_{d,v,u,r}$$
between categories fibered in groupoids. 
If $\gamma$ is an isomorphism, then $\mathscr{M}_{d,v,u,r}$ is an open and closed substack of $\mathscr{M}_{d,v,u,w'+dv,r}$, and Theorem \ref{quesmain} immediately follows from Theorem \ref{mod2}. 
Therefore, it suffices to prove that $\gamma$ is an isomorphism. 

For a scheme $S$, we regard $f\colon(\mathcal{X},\mathscr{A})\to\mathcal{C}\in\mathscr{M}_{w_j}(S)$ as an object of $\mathscr{M}_{d,v,u,r}(S)$.
It suffices to show the full faithfulness and the essential surjectivity of 
$$\gamma(S):=\gamma|_{\mathscr{M}_{w_1}\sqcup\ldots\sqcup\mathscr{M}_{w_k}(S)}$$ for any scheme $S$.
Firstly, we prove the full faithfulness.
By the definitions of $\mathscr{M}_{d,v,u,r}$ and $\mathscr{M}_{d,v,u,w'+dv,r}$, we see that $\gamma(S)$ is faithful.
To show the fullness, take two objects $f\colon(\mathcal{X},\mathscr{A})\to \mathcal{C}$ and $f'\colon(\mathcal{X}',\mathscr{A}')\to \mathcal{C}'$ of $\mathscr{M}_{w_1}\sqcup\ldots\sqcup\mathscr{M}_{w_k}(S)$ and an isomorphism $\alpha\colon f\to f'$ in $\mathscr{M}_{d,v,u,r}(S)$.
This means that $\alpha\colon\mathcal{X}\to \mathcal{X}'$ is an $S$-isomorphism and there exists an element $\mathscr{B}\in\mathrm{Pic}_{\mathcal{C}/S}(S)$ such that $\alpha^*\mathscr{A}'=\mathscr{A}\otimes f^*\mathscr{B}$.
To show that $\alpha$ comes from an isomorphism in $\mathscr{M}_{w_1}\sqcup\ldots\sqcup\mathscr{M}_{w_k}$, it suffices to prove $\mathscr{B}=0$ as an element of $\mathrm{Pic}_{\mathcal{C}/S}(S)$.
For any geometric point $\bar{s}\in S$, we have $\mathscr{B}_{\bar{s}}\sim\mathcal{O}_{\mathbb{P}^1}(m)$ for some $m\in\mathbb{Z}$ and 
\[
\mathrm{vol}(\mathscr{A}'_{\bar{s}})=\mathrm{vol}(\mathscr{A}_{\bar{s}})+dmv.
\]
By the property of $w_1,\,\cdots ,\,w_k$, there exist two indices $i$ and $j$ such that $w_i=\mathrm{vol}(\mathscr{A}_{\bar{s}})$ and $w_j=\mathrm{vol}(\mathscr{A}'_{\bar{s}})$.
Since $|w_i-w_j|<dv$, we have that $m=0$.
This implies $\mathscr{B}_{\bar{s}}\sim\mathcal{O}_{\mathbb{P}^1}$. 
By the proof of \cite[\S0.5, (b)]{GIT}, we see that $\mathscr{B}=0$ as an element of $\mathrm{Pic}_{\mathcal{C}/S}(S)$.
From this, it follows that $\gamma(S)$ is a fully faithful functor.

Secondly, we prove the essential surjectivity of $\gamma(S)$. 
We fix an object $f\colon(\mathcal{X},\mathscr{A})\to \mathcal{C}$ of $\mathscr{M}_{d,v,u,r}(S)$ and a geometric point $\bar{s}\in S$.
As in the third paragraph of this proof, there is $p\in\mathbb{Z}$ such that $\mathscr{A}_{\bar{s}}\otimes f_{\bar{s}}^*\mathscr{L}_{\bar{s}}^{\otimes p}$ is ample and
$$\mathrm{vol}(\mathscr{A}_{\bar{s}}\otimes f_{\bar{s}}^*\mathscr{L}_{\bar{s}}^{\otimes p})\le w'$$
for every geometric point $\overline{s} \in S$. 
Then there exists an open neighborhood $U$ of $\bar{s}$ such that $\mathscr{A}_{\bar{t}}\otimes f_{\bar{t}}^*\mathscr{L}_{\bar{t}}^{\otimes p}$ is ample for any geometric point $\bar{t}\in U$.
By shrinking $U$, we may assume that 
$\mathrm{vol}(\mathscr{A}_{\bar{t}}\otimes f_{\bar{t}}^*\mathscr{L}_{\bar{t}}^{\otimes p})$
is independent of $\bar{t}\in U$. 
This is because the function 
 \[
 U\ni t\mapsto\mathrm{vol}(\mathscr{A}_{\bar{t}}\otimes f_{\bar{t}}^*\mathscr{L}_{\bar{t}}^{\otimes p})\in\mathbb{Z}_{>0}
 \]
 is locally constant.
Take a positive integer $q$ such that $\mathscr{A}_{\bar{t}}\otimes f_{\bar{t}}^*\mathscr{L}_{\bar{t}}^{\otimes p+q}$ is ample and 
$$w'<\mathrm{vol}(\mathscr{A}_{\bar{t}}\otimes f_{\bar{t}}^*\mathscr{L}_{\bar{t}}^{\otimes p+q})=\mathrm{vol}(\mathscr{A}_{\bar{t}}\otimes f_{\bar{t}}^*\mathscr{L}_{\bar{t}}^{\otimes p})+dqv\le w'+dv$$
for any geometric point $\bar{t}\in U$. 
Then we see that $\mathrm{vol}(\mathscr{A}_{\bar{t}}\otimes f_{\bar{t}}^*\mathscr{L}_{\bar{t}}^{\otimes p+q})=w_i$ for some $i$.
The above argument shows that for any geometric point $\bar{s}\in S$, there exists an open neighborhood $U$ of $\bar{s}$ such that $f|_U$ comes from $\mathscr{M}_{w_1}\sqcup\ldots\sqcup\mathscr{M}_{w_k}(U)$.
More precisely, there exists a set of open subsets $\{U_\lambda\subset S\}_{\lambda\in \Lambda}$ with $q_{\lambda}\in\mathbb{Z}$ such that there exists an integer $i\in [1,k]$ such that $\mathscr{A}_{\bar{s}}\otimes f_{\bar{s}}^*\mathscr{L}_{\bar{s}}^{\otimes q_{\lambda}}$ is ample and $\mathrm{vol}(\mathscr{A}_{\bar{s}}\otimes f_{\bar{s}}^*\mathscr{L}_{\bar{s}}^{\otimes q_{\lambda}})=w_i$ for any geometric point $\bar{s}\in U_{\lambda}$.
For any $\lambda,\lambda'\in\Lambda$, if $U_{\lambda}\cap U_{\lambda'}\ne\emptyset$, then
\[
\mathrm{vol}(\mathscr{A}_{\bar{s}}\otimes f_{\bar{s}}^*\mathscr{L}_{\bar{s}}^{\otimes q_{\lambda}})=\mathrm{vol}(\mathscr{A}_{\bar{s}}\otimes f_{\bar{s}}^*\mathscr{L}_{\bar{s}}^{\otimes q_{\lambda'}})+d(q_{\lambda}-q_{\lambda'})v
\]
for any geometric point $\bar{s}\in U_{\lambda}\cap U_{\lambda'}$.
Since we have $|d(q_{\lambda}-q_{\lambda'})v|<dv$ by construction, we have $q_{\lambda}=q_{\lambda'}$.
Then we can glue $\mathscr{A}\otimes f^*\mathscr{L}^{\otimes q_{\lambda}}|_{\pi_{\mathcal{X}}^{-1}(U_{\lambda})}$, and we obtain $\mathscr{A}'\in\mathrm{Pic}_{\mathcal{X}/S}(S)$.
By construction, there exists an element $\mathscr{B}\in\mathrm{Pic}_{\mathcal{C}/S}(S)$ such that $\mathscr{A}\otimes f^*\mathscr{B}=\mathscr{A}'$ and $f\colon(\mathcal{X},\mathscr{A}')\to \mathcal{C}\in\mathscr{M}_{w_1}\sqcup\ldots\sqcup\mathscr{M}_{w_k}(S)$.
This means that $\gamma(S)$ is essentially surjective.
Thus, we conclude that $\gamma$ is an isomorphism. 
We complete the proof.
\end{proof}

\begin{rem}\label{rem-samemoduli}    
In Theorem \ref{mod2}, we have constructed the moduli spaces depending on the choice of $r$ of Lemma \ref{lem--Cartierindex}. However, the reduced structures of these moduli stacks are independent of $r$. 
To see this, it suffices to show the following. 
\begin{itemize}
    \item[($*$)] Fix $r$ as in Lemma \ref{lem--Cartierindex} and $l\in\mathbb{Z}_{>0}$. 
    For every reduced scheme $S$, every object $f\colon (\mathcal{X},\mathscr{A})\to\mathcal{C} \in \mathscr{M}_{d,v,u,w,lr}(S)$ is an object of $\mathscr{M}_{d,v,u,w,r}(S)$.
\end{itemize}
To show ($*$), we only need to check (iii) and (iv) of $\mathscr{M}_{d,v,u,w,r}(S)$ for $f\colon (\mathcal{X},\mathscr{A})\to\mathcal{C} \in \mathscr{M}_{d,v,u,w,lr}(S)$ as above. 

We first check (iii).
Note that the function $S\ni s\mapsto \chi(\mathcal{X}_s,\mathcal{O}_{\mathcal{X}_s}(lerK_{\mathcal{X}_s}+m\mathscr{A}_s))$ is locally constant for every $m \in \mathbb{Z}$. 
By the base point freeness of $erK_{\mathcal{X}_{\overline{s}}}$ and considering the exact sequence
$$0 \longrightarrow \mathcal{O}_{\mathcal{X}_{\overline{s}}}(kerK_{\mathcal{X}_{\overline{s}}}+m\mathscr{A}_{\overline{s}}) \longrightarrow \mathcal{O}_{\mathcal{X}_{\overline{s}}}((k+1)erK_{\mathcal{X}_{\overline{s}}}+m\mathscr{A}_{\overline{s}}) \longrightarrow \mathcal{O}_{D}(m\mathscr{A}_{\overline{s}}|_{D}) \longrightarrow 0$$
for every $k\ge0$, where $D \in |erK_{\mathcal{X}_{\overline{s}}}|$ is a general member, we have
$$\chi(\mathcal{X}_s,\mathcal{O}_{\mathcal{X}_s}(kerK_{\mathcal{X}_s}+m\mathscr{A}_s))=\chi(\mathcal{X}_s,\mathcal{O}_{\mathcal{X}_s}(m\mathscr{A}_s))+k\chi(D,\mathcal{O}_{D}(m\mathscr{A}_s|_D))$$
for every $m \in \mathbb{Z}$. 
Considering the case $k=l$, we see that $\chi(D,\mathcal{O}_{D}(m\mathscr{A}_s|_D))$ is locally constant on $s\in S$.
Therefore, the function
$$S\ni s\mapsto \chi(\mathcal{X}_s,\mathcal{O}_{\mathcal{X}_s}(kerK_{\mathcal{X}_s}+m\mathscr{A}_s))$$
is locally constant for every $m \in \mathbb{Z}$ and $k\in\mathbb{Z}_{\ge0}$. 
By Theorem \ref{hullsdecomp}, there is the universal hull $\omega_{\mathcal{X}/S}^{[r]}$. 
By the definition of $r$ in Lemma \ref{lem--Cartierindex}, the sheaf $\omega_{\mathcal{X}_{\bar{s}}}^{[r]}$ is invertible for any geometric point $\bar{s}\in S$.  
From this, $\omega_{\mathcal{X}/S}^{[r]}$ exists as a line bundle. 
Therefore (iii) of $\mathscr{M}_{d,v,u,w,r}(S)$ is satisfied.

Next, we check (iv) of $\mathscr{M}_{d,v,u,w,r}(S)$. 
By applying Theorem \ref{thm--inv-pluri} to the normalization of $S$, the function 
$$S\ni s\mapsto {\rm dim}\,H^{0}(\mathcal{X}_s,\mathcal{O}_{\mathcal{X}_s}(kerK_{\mathcal{X}_s}))$$
is locally constant for every $k\ge0$. 
By \cite[\S5, Corollary 2]{Ab} and the reducedness of $S$, we see that (iv) of $\mathscr{M}_{d,v,u,w,r}(S)$ is satisfied. 

From the above discussion, we have $f\colon (\mathcal{X},\mathscr{A})\to\mathcal{C} \in \mathscr{M}_{d,v,u,w,r}(S)$.
Thus, we see that ($*$) holds and the reduced structures of $\mathscr{M}_{d,v,u,w,lr}$ and $\mathscr{M}_{d,v,u,w,r}$ are the same.
As we saw in the proof of Theorem \ref{quesmain}, $\mathscr{M}_{d,v,u,r}$ is an open and closed substack of $\mathscr{M}_{d,v,u,w,r}$ and hence its reduced structure does not depend on the choice of $r$. 
\end{rem}

  \section{Uniformity of adiabatic K-stability}\label{seckst}
  
  This section is devoted to show Theorem \ref{mainunif} and Corollary \ref{cor-main}.
 Throughout this section, we work over the field of complex numbers $\mathbb{C}$, and we will use $\mathfrak{F}_{d,n,v}$ and $\mathfrak{G}_{d, n, v, u}$ in \S \ref{Bousec}.
 We fix $d, \, n \in \mathbb{Z}_{>0}$, $u\in\mathbb{Q}$, and $v\in \mathbb{Q}_{>0}$. 
  We consider the following set for any $w \in \mathbb{Q}_{>0}$. 
  \begin{equation*}
\begin{split}
\mathfrak{G}_{d, n, v, u,w}^{\mathrm{Car}}:=&\left\{ f\colon (X,\Delta,A) \to C \;\middle|
\begin{array}{l}
\text{$f\colon (X,\Delta) \to C \in \mathfrak{G}_{d, n, v, u}$ and $A$ is an}\\
\text{ample Cartier divisor satisfying (iv) of}\\
\text{$\mathfrak{F}_{d,n,v}$ such that $\mathrm{vol}(A)\le w$.}
\end{array}\right\}.
\end{split}
\end{equation*}
Recall from by Theorem \ref{thm--eff-veryample} and Remark \ref{rem--ell-bound} that there exists $m\in\mathbb{Z}_{>0}$, depending only on $d, \, n, \, u$, and $v$, such that for any element $f\colon(X,\Delta,A)\to C$ of $\mathfrak{G}_{d, \frac{1}{n}\mathbb{Z}\cap[0,1], v, u, w}$, $mA$ is Cartier. 
 Thus, we can regard $f\colon(X,\Delta,mA)\to C$ as an element of $\mathfrak{G}_{d, n, m^{d-1}v, u, m^dw}^{\mathrm{Car}}$.

 First, we parametrize all elements of $\mathfrak{G}_{d, n, v, u,w}^{\mathrm{Car}}$ when $u\neq 0$.

\begin{prop}\label{bou6}
   Fix $d$, $n$, $u$, $v$, and $w$ as above such that $u\ne0$.
   Then there exist a log $\mathbb{Q}$-Gorenstein family $f\colon(\mathcal{X},\mathcal{D})\to S$, an $f$-ample Carter divisor $\mathscr{A}$ on $\mathcal{X}$, and an $S$-morphism $g\colon\mathcal{X}\to \mathcal{C}$ such that $S$ is a normal scheme of finite type over $\mathbb{C}$, $\mathcal{C}$ is a normal scheme which is smooth and projective over $S$, and moreover the following hold.
   \begin{itemize}
      \item
        We have $g_s \colon (\mathcal{X}_s,\mathcal{D}_s,\mathscr{A}_s)\to\mathcal{C}_s\in\mathfrak{G}_{d,n, v, u, w}^{\mathrm{Car}}$ for any closed point $s\in S$,
       and
\item
 for any element $h \colon (X,\Delta,A)\to C\in\mathfrak{G}_{d,n, v, u, w}^{\mathrm{Car}}$, there exist a closed point $s\in S$ and isomorphisms $\alpha\colon(\mathcal{X}_s,\mathcal{D}_s) \to (X,\Delta)$ and $\beta\colon \mathcal{C}_s\to C $ satisfying $h\circ\alpha=\beta\circ g_s$ and $\alpha^*A\sim \mathscr{A}_s$.
   \end{itemize}
  \end{prop}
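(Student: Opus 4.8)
\textbf{Proof proposal for Proposition \ref{bou6}.}
The plan is to adapt the construction of $\mathscr{M}_{d,v,u,w,r}$ from \S\ref{Consec}, but now allowing a nonzero boundary $\mathcal{D}$ and only asking for a family that \emph{surjects onto} $\mathfrak{G}_{d,n,v,u,w}^{\mathrm{Car}}$ (rather than a fine moduli parametrization). First I would invoke Theorem \ref{mainbound} together with Lemma \ref{lem--Cartierindex} to fix an integer $r$, depending only on $d$, $n$, $u$, $v$, such that $r\Delta$ is a Weil divisor and $er(K_X+\Delta)\sim f^*D$ for a very ample Cartier divisor $D$ on $C$ for all members of $\mathfrak{G}_{d,n,v,u,w}^{\mathrm{Car}}$. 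By Theorem \ref{mainbound}(\ref{bound-(1)}) the set of couples $(X,\operatorname{Supp}\Delta)$ is bounded, and $\mathrm{vol}(A)\le w$ together with Corollary \ref{cor--hilbertpolynomial} (applied with $\Theta=\frac1n\mathbb{Z}\cap[0,1]$) gives a single integer $I$ and finitely many Hilbert polynomials so that $IA$ is very ample with vanishing higher cohomology and bounded Hilbert polynomial. Thus every member of $\mathfrak{G}_{d,n,v,u,w}^{\mathrm{Car}}$ embeds, via $|IA|$ and $|er(K_X+\Delta)|$, into a fixed product $\mathbb{P}^{N_1}\times\mathbb{P}^{N_2}$ with the images of $\Delta$ embedded as well; hence all these data are parametrized by finitely many locally closed subschemes of a suitable Hilbert scheme (of $\mathcal{X}$ together with the Weil divisor $r\mathcal{D}$ regarded as a closed subscheme flat over the base).

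Next I would carve out, inside this Hilbert scheme, the locus over which the fibers have the correct structure. This is essentially the argument of Step \ref{keyprop-step3} in the proof of Proposition \ref{lem-N-univ}, with the following modifications: (a) one imposes that the divisor $\mathcal{D}$ (say each irreducible component, after a finite stratification making components flat with geometrically integral fibers — using the remarks after Definition \ref{defn-q-gor}) satisfies $r(K_{\mathcal{X}/S}+\mathcal{D})$ Cartier, by applying Corollary \ref{cor--hako} to $\mathcal{O}_{\mathcal{X}}(r(K_{\mathcal{X}/S}+\mathcal{D}))$; (b) the klt condition of fibers is an open condition by \cite[Corollary 4.10]{kollar-mmp} once $\omega^{[r]}_{\mathcal{X}/S}(r\mathcal{D})$ is a line bundle; (c) the condition $K_{\mathcal{X}_s}+\mathcal{D}_s\equiv u g_s^*H$ for a degree-one $H$, the numerical conditions $(H\cdot A^{d-1})=v$, $(H^2\cdot A^{d-2})=0$, $\mathrm{vol}(A_s)\le w$, and the coefficient condition on $\mathcal{D}_s$ (coefficients in $\frac1n\mathbb{Z}\cap[0,1]$), are all constructible/locally closed by flatness and by the invariance of plurigenera (Theorem \ref{thm--inv-pluri}); (d) the ample model $g\colon\mathcal{X}\to\mathcal{C}$ is built as $\mathbf{Proj}_S$ of $\bigoplus_{l\ge0}f_*\omega^{[ler]}_{\mathcal{X}/S}(ler\mathcal{D})$, using that the relevant pushforwards commute with base change on the locally closed stratum (Proposition \ref{prop-inv-plurigenera-almighty} and the construction of $U_7$ in Step \ref{keyprop-step3}), and when $u<0$ the base $\mathcal{C}\to S$ is a smooth $\mathbb{P}^1$-fibration by Theorem \ref{Bsemi} so it is normal, smooth and projective over $S$. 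Taking $S$ to be the (disjoint union of the normalizations of the) resulting locally closed subschemes, with $\mathcal{X}$, $\mathcal{D}$, $\mathscr{A}=I$-th pullback of $\mathcal{O}(1)$ (rescaled so that $\mathscr{A}_s\sim A_s$, absorbing a twist by a line bundle from the base), and $g$ the ample model, yields the asserted family: surjectivity onto $\mathfrak{G}_{d,n,v,u,w}^{\mathrm{Car}}$ holds by construction of the embedding, and the fiberwise conditions were built into the stratification.

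The one genuinely new point compared to \S\ref{Consec} is handling the non-zero boundary $\mathcal{D}$ in the $\mathbb{Q}$-Gorenstein formalism: one must ensure that $\mathcal{D}$ spreads out as an honest $\mathbb{Q}$-divisor whose components are flat with integral fibers, so that $K_{\mathcal{X}/S}+\mathcal{D}$ is defined and its reflexive powers behave well under base change. I would deal with this exactly as in the third bullet of the Remark after Definition \ref{defn-q-gor}: after a further locally closed decomposition of the parameter space one may assume each component of $r\mathcal{D}$ is flat over $S$ with geometrically integral fibers, whence $\mathcal{O}_{\mathcal{X}}(r(K_{\mathcal{X}/S}+\mathcal{D}))$ has a universal hull restricting to the line bundle $\mathcal{O}_{\mathcal{X}_s}(r(K_{\mathcal{X}_s}+\mathcal{D}_s))$ on fibers, and Corollary \ref{cor--hako} produces the locally closed subscheme over which it is actually a line bundle. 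I expect this bookkeeping — rather than any deep new idea — to be the main obstacle; once it is in place the rest is a routine repetition of the openness/closedness verifications of Step \ref{keyprop-step3}, now with $(\mathcal{X}_s,\mathcal{D}_s)$ in place of $\mathcal{X}_s$ and $K+\mathcal{D}$ in place of $K$ throughout.
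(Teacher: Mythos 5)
Your overall strategy is the same as the paper's proof of Proposition \ref{bou6}: use Lemma \ref{lem--Cartierindex} and Corollary \ref{cor--hilbertpolynomial} to get $r$, $I$ and finitely many cohomological invariants, embed the bounded data into a fixed product of projective spaces and parametrize by Hilbert schemes, cut out the good locus by the locally closed conditions of Step \ref{keyprop-step3} in the proof of Proposition \ref{lem-N-univ} (Corollary \ref{cor--hako} for $er(K+\mathcal{D})$, kltness of fibers via \cite[Corollary 4.10]{kollar-mmp}, Theorem \ref{thm--inv-pluri} and \cite[\S5, Corollary 2]{Ab} for base change of the pluri(anti)genera, then take the relative ample model), and handle the boundary by a stratification making the components of $\mathrm{Supp}\,\mathcal{D}$ flat with geometrically integral fibers followed by normalization. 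This is exactly what the paper does; the only cosmetic difference is in how the coefficients of $\Delta$ are recovered (the paper parametrizes only the reduced support and then replaces the parameter space by $(n+1)^k$ copies indexed by coefficient vectors in $(\tfrac1n\mathbb{Z}\cap[0,1])^k$, whereas you track the non-reduced divisor/impose the coefficient condition; either can be made to work, the paper's enumeration being the simpler bookkeeping).

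There is, however, one concrete gap. You embed only by $|IA|$ and $|er(K_X+\Delta)|$, so the only polarization-type line bundle your universal family carries restricts to $IA_s$ on fibers, and your remedy, ``$\mathscr{A}=$ pullback of $\mathcal{O}(1)$, rescaled so that $\mathscr{A}_s\sim A_s$, absorbing a twist by a line bundle from the base,'' is not a legitimate operation: an $I$-th root of a line bundle need not exist on the total space, while the proposition requires an $f$-ample Cartier divisor $\mathscr{A}$ with $\alpha^*A\sim\mathscr{A}_s$ itself, not a multiple. The paper resolves this precisely as in Step \ref{keyprop-step3} of the proof of Proposition \ref{lem-N-univ}: one embeds by both $|IA|$ and $|(I+1)A|$ (in addition to $|er(K_X+\Delta)|$), sets $\mathcal{A}:=p_1^*\mathcal{O}(-1)\otimes p_2^*\mathcal{O}(1)$ on the universal family, and imposes via Corollary \ref{cor--hako} the locally closed conditions $\mathcal{A}^{\otimes I}\sim_T p_1^*\mathcal{O}(1)$ and $\mathcal{A}^{\otimes I+1}\sim_T p_2^*\mathcal{O}(1)$, which yields a genuine relative polarization restricting fiberwise to $A$ up to linear equivalence. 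You cite Step \ref{keyprop-step3} but your described setup with a two-factor product does not make this trick available, so you should modify the embedding accordingly. A minor additional point: the smoothness and projectivity of $\mathcal{C}\to S$ must also be checked when $u>0$ (your remark only covers $u<0$); as in the paper, it follows because the fibers of the ample model are one-dimensional normal projective varieties, hence smooth curves, with flatness coming from the base-change compatibility of the pushforwards.
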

  
\begin{proof}
The case when the boundary $\Delta$ is zero in the proposition easily follows from Proposition \ref{lem-N-univ}. 
In the general case, the proposition holds true by the standard argument of the boundedness and the idea in the proof of Proposition \ref{lem-N-univ}. 
\end{proof}

  The following is the key step to show Theorem \ref{mainunif} for the case when $u>0$.

  \begin{thm}\label{unif+}
 Let $f \colon (\mathcal{X},\mathcal{D})\to S$ be a log $\mathbb{Q}$-Gorenstein family and $\mathcal{A}$ an $f$-ample line bundle on $\mathcal{X}$. 
 Suppose that $K_{\mathcal{X}/S}+\mathcal{D}$ is nef over $S$ and the fiber $(\mathcal{X}_s,\mathcal{D}_s)$ over any closed point $s\in S$ is klt.
 
 Then there exists a positive rational number $\epsilon_0$ such that $(\mathcal{X}_s,\mathcal{D}_s,\epsilon\mathcal{A}_s+K_{\mathcal{X}_s}+\mathcal{D}_s)$ is specially K-stable for the fiber $(\mathcal{X}_s,\mathcal{D}_s,\mathcal{A}_s)$ over any closed point $s\in S$ and any rational number $\epsilon\in (0,\epsilon_0)$. Furthermore, there exists a positive rational number $\alpha$ such that
 \[
M^\mathrm{NA}_{\mathcal{D}_s}(\mathcal{Y},\mathcal{M})\ge\alpha (\mathcal{J}^{\epsilon\mathcal{A}_s+K_{\mathcal{X}_s}+\mathcal{D}_s})^\mathrm{NA}(\mathcal{Y},\mathcal{M})
\]
 for any rational number $\epsilon\in (0,\epsilon_0)$, closed point $s\in S$, and normal semiample test configuration $(\mathcal{Y},\mathcal{M})$ for $(\mathcal{X}_s,\epsilon\mathcal{A}_s+K_{\mathcal{X}_s}+\mathcal{D}_s)$. 
 \end{thm}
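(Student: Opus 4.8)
The plan is to deduce the statement fibrewise from Theorem \ref{k-plus} and then to make the resulting constants uniform in $s$, using that Theorem \ref{k-plus} produces constants depending only on finitely many intersection numbers, which are locally constant on the Noetherian base $S$. Since $S$ is normal, its connected components are irreducible; treating each separately and afterwards taking minima of the constants, I may assume $S$ is connected, so that $N:=\dim\mathcal{X}_s$ is independent of the closed point $s\in S$ (flatness of $f$). For a fixed closed point $s\in S$, I would first check that Theorem \ref{k-plus} applies to the polarized pair $(\mathcal{X}_s,\mathcal{D}_s,\mathcal{A}_s)$: the pair $(\mathcal{X}_s,\mathcal{D}_s)$ is klt by hypothesis, $\mathcal{A}_s$ is ample because $\mathcal{A}$ is $f$-ample, and $K_{\mathcal{X}_s}+\mathcal{D}_s$ is nef since it is the restriction of the relatively nef $\mathbb{Q}$-divisor $K_{\mathcal{X}/S}+\mathcal{D}$, the restriction being compatible with base change to the normal scheme $\{s\}$ (cf.\ the Remark after Definition \ref{defn-q-gor}). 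Theorem \ref{k-plus} then yields a constant $C_s>0$ depending only on the numbers $\mathcal{A}_s^i\cdot(K_{\mathcal{X}_s}+\mathcal{D}_s)^{N-i}$, $0\le i\le N$, together with $\epsilon_{0,s}>0$ and $\alpha_s>0$ such that $(\mathcal{X}_s,\mathcal{D}_s,\epsilon\mathcal{A}_s+K_{\mathcal{X}_s}+\mathcal{D}_s)$ is specially K-stable and the inequality $M^{\mathrm{NA}}_{\mathcal{D}_s}(\mathcal{Y},\mathcal{M})\ge\alpha_s(\mathcal{J}^{\epsilon\mathcal{A}_s+K_{\mathcal{X}_s}+\mathcal{D}_s})^{\mathrm{NA}}(\mathcal{Y},\mathcal{M})$ holds for all rational $\epsilon\in(0,\epsilon_{0,s})$ and all normal semiample test configurations $(\mathcal{Y},\mathcal{M})$ for $(\mathcal{X}_s,\epsilon\mathcal{A}_s+K_{\mathcal{X}_s}+\mathcal{D}_s)$.

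Next I would show that only finitely many tuples of these intersection numbers occur. Fixing $r\in\mathbb{Z}_{>0}$ with $r(K_{\mathcal{X}/S}+\mathcal{D})$ Cartier, flatness of $f$ together with cohomology and base change makes the function $s\mapsto\chi\bigl(\mathcal{X}_s,\mathcal{O}_{\mathcal{X}_s}(m_1\mathcal{A}_s+m_2 r(K_{\mathcal{X}_s}+\mathcal{D}_s))\bigr)$ locally constant on $S$ for each $(m_1,m_2)\in\mathbb{Z}^2$; hence the coefficients of this polynomial in $(m_1,m_2)$, and in particular the numbers $\mathcal{A}_s^i\cdot(K_{\mathcal{X}_s}+\mathcal{D}_s)^{N-i}$, are locally constant on $S$, so they take only finitely many values as $S$ is Noetherian. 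By the explicit dependence in the first part of Theorem \ref{k-plus} the same is true of $C_s$, and, inspecting the proof of \cite[Theorem 8.15]{Hat2} from which the ``furthermore'' part of Theorem \ref{k-plus} is extracted, of $\epsilon_{0,s}$ and $\alpha_s$ as well. Setting $\epsilon_0:=\min_s\epsilon_{0,s}>0$ and $\alpha:=\min_s\alpha_s>0$ over this finite set then gives the two assertions of the theorem.

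The main obstacle is precisely the penultimate sentence above: one must be sure that the \emph{thresholds} $\epsilon_0$ and $\alpha$ in Theorem \ref{k-plus}, and not merely the auxiliary constant $C$, can be taken to depend only on the listed intersection numbers. I would address this by unwinding \cite[Theorem 8.15]{Hat2}: there the passage from the uniform $\mathrm{J}$-stability of the first part to special K-stability goes through the adiabatic estimate $\delta_{(\mathcal{X}_s,\mathcal{D}_s)}(\epsilon\mathcal{A}_s+K_{\mathcal{X}_s}+\mathcal{D}_s)\to 1$ as $\epsilon\to 0$ combined with Theorem \ref{A.13}, and the effective bounds entering it are phrased through the numbers $\mathcal{A}_s^i\cdot(K_{\mathcal{X}_s}+\mathcal{D}_s)^{N-i}$ alone. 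If a direct appeal to the internal structure of \cite{Hat2} is to be avoided, an alternative is a constructibility-and-semicontinuity argument over $S$ in the spirit of the proof of Theorem \ref{op}, applied to the numerical $\mathrm{J}$-stability inequality of Theorem \ref{thm-jst} and to the function $s\mapsto\delta_{(\mathcal{X}_s,\mathcal{D}_s)}(\epsilon\mathcal{A}_s+K_{\mathcal{X}_s}+\mathcal{D}_s)$; this yields the uniform threshold $\epsilon_0$ directly on $S$, after which the uniform $\alpha$ is furnished by the effective first part of Theorem \ref{k-plus}. Either route, once uniformity of $\epsilon_0$ and $\alpha$ is in hand, reduces the theorem immediately to the fibrewise statement.
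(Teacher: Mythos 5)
There is a genuine gap, and it sits exactly where you locate it: the thresholds $\epsilon_{0,s}$ and $\alpha_s$ in the ``furthermore'' part of Theorem \ref{k-plus} are \emph{not} functions of the intersection numbers $\mathcal{A}_s^i\cdot(K_{\mathcal{X}_s}+\mathcal{D}_s)^{N-i}$ alone -- they depend on the $\alpha$-/$\delta$-invariant of the fibre, which is a singularity-theoretic quantity that can vary with $s$ even when all intersection numbers are constant. Neither of your two proposed fixes closes this. For the first: in the nef case there is no ``adiabatic estimate $\delta_{(\mathcal{X}_s,\mathcal{D}_s)}(\epsilon\mathcal{A}_s+K_{\mathcal{X}_s}+\mathcal{D}_s)\to 1$'' (that kind of limit statement belongs to the Fano-fibration case of Theorem \ref{k-minus}), and the effective input in \cite[Theorem 8.15]{Hat2} beyond the constant $C$ is precisely a lower bound on $\delta$, not a numerical expression in the $\mathcal{A}_s^i\cdot(K_{\mathcal{X}_s}+\mathcal{D}_s)^{N-i}$. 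For the second: the proof of Theorem \ref{op} is an argument about discriminant divisors of klt-trivial fibrations over $\mathbb{P}^1$, where $\delta$ of the log-twisted base is computed by the elementary formula of Example \ref{ex--calculation}; it does not adapt in any routine way to $\delta_{(\mathcal{X}_s,\mathcal{D}_s)}(\epsilon\mathcal{A}_s+K_{\mathcal{X}_s}+\mathcal{D}_s)$, and even a semicontinuity statement for fixed $\epsilon$ would not give a bound uniform in both $s$ and $\epsilon$, which is what the final estimate requires.

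The paper closes this gap with a different (and simpler) ingredient: by \cite[Proposition 5.3]{BL} there is a single $\delta_0>0$ with $\alpha_{(\mathcal{X}_s,\mathcal{D}_s)}(\mathcal{A}_s+K_{\mathcal{X}_s}+\mathcal{D}_s)\ge\delta_0$ for all closed $s\in S$; the monotonicity $\alpha_{(\mathcal{X}_s,\mathcal{D}_s)}(\mathcal{A}_s+K_{\mathcal{X}_s}+\mathcal{D}_s)\le\alpha_{(\mathcal{X}_s,\mathcal{D}_s)}(\epsilon\mathcal{A}_s+K_{\mathcal{X}_s}+\mathcal{D}_s)$ for $\epsilon\in(0,1)$ together with Lemma \ref{lem-delta-alpha} then gives $\delta_{(\mathcal{X}_s,\mathcal{D}_s)}(\epsilon\mathcal{A}_s+K_{\mathcal{X}_s}+\mathcal{D}_s)\ge\tfrac{d+1}{d}\delta_0$ uniformly in $s$ and $\epsilon$. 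After that, only the \emph{first} (effective) part of Theorem \ref{k-plus} is used -- its constant $C$ depends on intersection numbers, which are constant by flatness, as you correctly argue -- and the displayed inequality is obtained by combining Theorem \ref{A.13} with the decomposition of the $\mathcal{J}$-functional, taking $\epsilon_0<\min\{\tfrac{(d+1)\delta_0}{dC},1\}$ and $\alpha=\tfrac{(d+1)\delta_0}{d}-C\epsilon_0$. So your reduction to a fibrewise statement plus finiteness of Hilbert polynomials is fine as far as it goes, but the uniform lower bound on the $\alpha$-invariant in the family (via \cite{BL}) is the missing idea, and without it the minimum $\min_s\epsilon_{0,s}$ you take is not known to be positive.
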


 \begin{proof}
 First, we note that $\mathcal{A}+K_{\mathcal{X}/S}+\mathcal{D}$ is $f$-ample. 
By \cite[Proposition 5.3]{BL}, there exists $\delta_0>0$ such that $\alpha_{(\mathcal{X}_s,\mathcal{D}_s)}(\mathcal{A}_s+K_{\mathcal{X}_s}+\mathcal{D}_s)\ge\delta_0$ for any closed point $s\in S$.  
 We also have $$\alpha_{(\mathcal{X}_s,\mathcal{D}_s)}(\mathcal{A}_s+K_{\mathcal{X}_s}+\mathcal{D}_s)\le \alpha_{(\mathcal{X}_s,\mathcal{D}_s)}(\epsilon\mathcal{A}_s+K_{\mathcal{X}_s}+\mathcal{D}_s)$$
 for $\epsilon \in (0, 1)$.
We put $d$ as the relative dimension of $f$. 
By Lemma \ref{lem-delta-alpha}, we have 
\begin{equation*}\tag{$2$}
\begin{split} 
 \delta_{(\mathcal{X}_s,\mathcal{D}_s)}(\epsilon \mathcal{A}_s+K_{\mathcal{X}_s}+\mathcal{D}_s)&\ge\frac{d+1}{d}\alpha_{(\mathcal{X}_s,\mathcal{D}_s)}(\epsilon\mathcal{A}_s+K_{\mathcal{X}_s}+\mathcal{D}_s)\label{eq-alpha-calculate}\\
 &\ge\frac{d+1}{d}\alpha_{(\mathcal{X}_s,\mathcal{D}_s)}(\mathcal{A}_s+K_{\mathcal{X}_s}+\mathcal{D}_s) \ge\frac{d+1}{d}\delta_0.\nonumber
 \end{split}
 \end{equation*}
 By Theorem \ref{k-plus}, there exists a positive rational number $C$, depending only on the numbers $(K_X+\Delta)^{d-i}\cdot A^{i}$ for $0\le i\le d$, such that $(X,\epsilon A+K_X+\Delta)$ is uniformly J$^{K_X+\Delta+C\epsilon(\epsilon A+K_X+\Delta)}$-stable for any $\epsilon>0$ and $d$-dimensional polarized klt pair $(X,\Delta,L)$ such that $K_X+\Delta$ is nef.
 By the flatness of $f$, $(K_{\mathcal{X}_s}+\mathcal{D}_s)^{d-i}\cdot \mathcal{A}_s^{i}$ are independent of $s$, hence we can choose $C$ so that $(\mathcal{X}_s,\epsilon\mathcal{A}_s+K_{\mathcal{X}_s}+\mathcal{D}_s)$ is J$^{K_{\mathcal{X}_s}+\mathcal{D}_s+C\epsilon(\epsilon \mathcal{A}_s+K_{\mathcal{X}_s}+\mathcal{D}_s)}$-stable for any $s\in S$. 
 By taking $\epsilon_{0}$ such that $0<\epsilon_0<\min\{\frac{(d+1)\delta_0}{dC},1\}$, we have 
  \[
M^\mathrm{NA}_{\mathcal{D}_s}(\mathcal{Y},\mathcal{M})\ge\left(\frac{(d+1)\delta_0}{d}-C\epsilon_0\right)(\mathcal{J}^{\epsilon\mathcal{A}_s+K_{\mathcal{X}_s}+\mathcal{D}_s})^\mathrm{NA}(\mathcal{Y},\mathcal{M})
\]
 for any rational number $\epsilon\in (0,\epsilon_0)$, closed point $s\in S$ and normal semiample test configuration $(\mathcal{Y},\mathcal{M})$ for $(\mathcal{X}_s,\epsilon\mathcal{A}_s+K_{\mathcal{X}_s}+\mathcal{D}_s)$.
 \end{proof}

Now we assume $u <0$. In this case, we need to show that the uniform ``convergence of the $\delta$-invariant" (cf.~\cite[Theorem D]{Hat}) holds for all polarized klt-trivial fibrations belonging to one family. 

\begin{prop}\label{lem-delta-conv}
Let $S$ be a normal variety and $f\colon\mathcal{X}\to \mathbb{P}^1_S$ be a contraction of normal varieties over $S$. 
Suppose that $\pi\colon(\mathcal{X},\mathcal{D})\to S$ is a log $\mathbb{Q}$-Gorenstein family such that any geometric fiber is a klt pair. 
Let $\mathcal{H}$ be a $\pi$-ample Cartier divisor on $\mathcal{X}$. 
Suppose further that there exists a positive real number $u$ such that
\[
\lim_{\epsilon\to0}\delta_{(\mathcal{X}_{\bar{s}},\mathcal{D}_{\bar{s}})}(\epsilon\mathcal{H}_{\bar{s}}+f_{\bar{s}}^*\mathcal{O}(1))\ge u
\]
for any geometric fiber $f_{\bar{s}}\colon(\mathcal{X}_{\bar{s}},\mathcal{D}_{\bar{s}},\mathcal{H}_{\bar{s}})\to \mathbb{P}^1$ over ${\bar{s}}\in S$. 

Then for any $\delta_0>0$, there exists a positive real number $\epsilon_0$ such that 
\[
\delta_{(\mathcal{X}_{\bar{s}},\mathcal{D}_{\bar{s}})}(\epsilon\mathcal{H}_{\bar{s}}+f_{\bar{s}}^*\mathcal{O}(1))\ge u-\delta_0
\]
for any rational number $\epsilon\in(0,\epsilon_0)$ and geometric point $\bar{s}\in S$.
\end{prop}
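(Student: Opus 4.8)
The plan is to deduce the statement from the following effective uniform estimate: there exist a positive rational $\epsilon_1$ and a positive real number $C$, depending only on $\pi\colon(\mathcal{X},\mathcal{D})\to S$, $\mathcal{H}$, and $u$, such that
\[
\delta_{(\mathcal{X}_{\bar{s}},\mathcal{D}_{\bar{s}})}(\epsilon\mathcal{H}_{\bar{s}}+f_{\bar{s}}^*\mathcal{O}(1))\ \ge\ u-C\epsilon
\]
for every rational $\epsilon\in(0,\epsilon_1)$ and every geometric point $\bar{s}\in S$; then $\epsilon_0:=\min\{\epsilon_1,\delta_0/C\}$ works. Since $S$ is of finite type over $\mathbb{C}$, I would prove this estimate by Noetherian induction on $S$: assuming it with suitable constants on every proper closed subscheme, it suffices to prove it over a dense open $U\subseteq S$ and then take the minimum of the two values of $\epsilon_1$ and the larger of the two values of $C$. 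So, replacing $S$ by a dense open, I may assume $S$ is smooth, $K_{\mathcal{X}/S}+\mathcal{D}=K_{\mathcal{X}}-\pi^*K_S+\mathcal{D}$ is $\mathbb{Q}$-Cartier, $\mathcal{X}$ is klt (by inversion of adjunction \cite{kawakita} and the klt geometric fibres), and --- after shrinking $S$ further and taking a fibrewise log resolution as in Lemma \ref{lem--generic} --- the following data are independent of $\bar{s}$: the maximal coefficient $\max_p\mathrm{ord}_p(B_{\bar{s}})$ of the discriminant $\mathbb{Q}$-divisor $B_{\bar{s}}$ of $f_{\bar{s}}\colon(\mathcal{X}_{\bar{s}},\mathcal{D}_{\bar{s}})\to\mathbb{P}^1$ (Lemma \ref{lem--generic}) and all the intersection numbers $(\mathcal{H}_{\bar{s}}^{\,i}\cdot(f_{\bar{s}}^*\mathcal{O}(1))^{\,d-i})$ (flatness of $\pi$). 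Finally, the general fibres of $f_{\bar{s}}$, with the restricted klt boundary and the restricted ample polarization $\mathcal{H}_{\bar{s}}$, form a single bounded family of polarized klt pairs, so by \cite[Proposition 5.3]{BL} and Lemma \ref{lem-delta-alpha} their $\delta$-invariants are bounded below by a uniform constant $c_0>0$.

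With these reductions in place, I would run, at a fixed geometric point $\bar{s}$, the computation of \cite[Theorem D]{Hat} while tracking constants. Write $L_\epsilon=\epsilon\mathcal{H}_{\bar{s}}+f_{\bar{s}}^*\mathcal{O}(1)$ and use the Blum--Jonsson description $\delta_{(\mathcal{X}_{\bar{s}},\mathcal{D}_{\bar{s}})}(L_\epsilon)=\inf_F A_{(\mathcal{X}_{\bar{s}},\mathcal{D}_{\bar{s}})}(F)/S_{L_\epsilon}(F)$ (\cite{BlJ}). Split the infimum into two classes. For the places $F$ computing the discriminant of $(\mathcal{X}_{\bar{s}},\mathcal{D}_{\bar{s}})$ over points of $\mathbb{P}^1$ (in particular the components of the fibres of $f_{\bar{s}}$), one gets $S_{L_\epsilon}(F)=\tfrac12+O(\epsilon)$, where the $O(\epsilon)$-term is bounded in terms of the intersection numbers above; hence by the curve computation of Example \ref{ex--calculation} the infimum over this class equals $\delta_{(\mathbb{P}^1,B_{\bar{s}})}(-K_{\mathbb{P}^1}-B_{\bar{s}}-M_{\bar{s}})+O(\epsilon)$, with the error uniform. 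Since \cite[Theorem D]{Hat} identifies this base $\delta$-invariant with $\lim_{\epsilon\to0}\delta_{(\mathcal{X}_{\bar{s}},\mathcal{D}_{\bar{s}})}(\epsilon\mathcal{H}_{\bar{s}}+f_{\bar{s}}^*\mathcal{O}(1))$, which is $\ge u$ by hypothesis, this infimum is $\ge u-C_1\epsilon$ for a uniform $C_1$. For every other divisor $F$ over $\mathcal{X}_{\bar{s}}$, restricting $L_\epsilon-tF$ to a general fibre of $f_{\bar{s}}$ (respectively, to a suitable fibre over a point of $\mathbb{P}^1$) shows $\mathrm{vol}(L_\epsilon-tF)$ vanishes for $t=O(\epsilon)$, so a volume estimate gives $S_{L_\epsilon}(F)\le C_2\,\epsilon\cdot S_{\mathcal{H}_{\bar{s}}|_{F_{\bar{s}}}}(F|_{F_{\bar{s}}})$ with $C_2$ uniform ($F_{\bar{s}}$ the relevant fibre), and adjunction $A_{(\mathcal{X}_{\bar{s}},\mathcal{D}_{\bar{s}})}(F)\ge A_{(F_{\bar{s}},\mathcal{D}_{\bar{s}}|_{F_{\bar{s}}})}(F|_{F_{\bar{s}}})$ yields $A_{(\mathcal{X}_{\bar{s}},\mathcal{D}_{\bar{s}})}(F)/S_{L_\epsilon}(F)\ge c_0/(C_2\epsilon)$. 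As the first-class infimum is bounded above uniformly (it is a fixed positive multiple of $1-\max_p\mathrm{ord}_p B_{\bar{s}}\le1$ plus $O(\epsilon)$), there is a uniform $\epsilon_1>0$ such that for $\epsilon<\epsilon_1$ the second class does not attain the infimum; hence $\delta_{(\mathcal{X}_{\bar{s}},\mathcal{D}_{\bar{s}})}(L_\epsilon)\ge u-C_1\epsilon$. All constants being uniform on the dense open subset, Noetherian induction finishes the argument.

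The main obstacle is the second class of divisors: obtaining, uniformly in $\bar{s}$ and in $F$, the bound $A_{(\mathcal{X}_{\bar{s}},\mathcal{D}_{\bar{s}})}(F)/S_{L_\epsilon}(F)\ge c/\epsilon$ for every prime divisor $F$ over $\mathcal{X}_{\bar{s}}$ that does not compute the discriminant --- including the horizontal divisors, the fibre-direction divisors, and the mixed ones whose centre lies over a point of $\mathbb{P}^1$ but in codimension $\ge2$. This is precisely the delicate part of \cite[Theorem D]{Hat}; the geometric content is that to leading order in $\epsilon$ the volumes $\mathrm{vol}(L_\epsilon-tF)$ are governed by volumes on a single fibre of $f_{\bar{s}}$, so $S_{L_\epsilon}(F)$ is of order $\epsilon$ relative to the $\epsilon$-free quantity $S_{\mathcal{H}_{\bar{s}}|_{F_{\bar{s}}}}(F|_{F_{\bar{s}}})$, and the ratio blows up. Turning this into a bound with a constant $c$ depending only on the discrete invariants of the family --- rather than on the individual fibre --- is where boundedness of the fibres over $S$, the lower-semicontinuity input \cite[Proposition 5.3]{BL} for the fibrewise $\alpha$-invariant (via Lemma \ref{lem-delta-alpha}), and a careful treatment of the error terms (so that $C_1$, $C_2$, $\epsilon_1$ are genuinely uniform after shrinking $S$, which feeds the Noetherian induction) must be combined with some care.

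A further technical point to be handled is the comparison, for the discriminant-computing places, between the $\delta$-invariant of the base curve pair $(\mathbb{P}^1,B_{\bar{s}})$ and the adiabatic limit of $\delta_{(\mathcal{X}_{\bar{s}},\mathcal{D}_{\bar{s}})}$; I would import this from \cite[Theorem D]{Hat} applied fibrewise, after the reductions above have ensured that the canonical bundle formula data $(B_{\bar{s}},M_{\bar{s}})$ and the relevant intersection numbers vary in a controlled way over the open subset of $S$ we have passed to.
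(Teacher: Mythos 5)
Your overall strategy (uniformize the adiabatic convergence over $S$ by Noetherian induction, shrink to a log smooth fibrewise resolution, apply Lemma \ref{lem--generic}, import \cite[Theorem D]{Hat} fibrewise, and use a uniform fibrewise $\alpha$-invariant bound from \cite[Proposition 5.3]{BL}) is in the same spirit as the paper's proof, but the step you yourself flag as ``the main obstacle'' is a genuine gap, and the device you propose to fill it does not work as stated. For a divisorial valuation $F$ over $\mathcal{X}_{\bar{s}}$ that does not compute the discriminant --- in particular a horizontal or a ``mixed'' one --- there is no well-defined restriction $F|_{F_{\bar{s}}}$ to a fibre of $f_{\bar{s}}$, and neither the claimed comparison $S_{L_\epsilon}(F)\le C_2\,\epsilon\,S_{\mathcal{H}_{\bar{s}}|_{F_{\bar{s}}}}(F|_{F_{\bar{s}}})$ nor the ``adjunction'' $A_{(\mathcal{X}_{\bar{s}},\mathcal{D}_{\bar{s}})}(F)\ge A_{(F_{\bar{s}},\mathcal{D}_{\bar{s}}|_{F_{\bar{s}}})}(F|_{F_{\bar{s}}})$ is a statement you can simply invoke; this is exactly the content that has to be proved, uniformly in $\bar{s}$, and your proposal reduces it to a restatement of \cite[Theorem D]{Hat} ``with constants tracked'' without providing the mechanism. (A second, smaller, inaccuracy: for components $G$ of singular fibres the leading term of $S_{L_\epsilon}(G)$ is $\tfrac12 T_{f_{\bar s}^*\mathcal{O}(1)}(G)$, not $\tfrac12$; this is harmless only because \cite[Theorem D]{Hat} gives $A(G)\ge \tfrac u2 T(G)$, which your write-up does not make explicit.)

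The paper's proof avoids splitting the Blum--Jonsson infimum over valuations altogether. It works with $m$-basis type divisors $D_m$ of $\epsilon\mathcal{H}_s+f_s^*\mathcal{O}(1)$ and proves two uniform estimates (Claim \ref{lem-cl3}): (i) a multiplicity bound $\mathrm{mult}_p\bigl((g_s^*D)_{\mathrm{hor}}\bigr)\le c\epsilon$ at every point, obtained by intersecting with curves cut out by uniformly positive divisors on blowups (effective base point freeness), which via Skoda's theorem yields $\mathrm{ord}_E\bigl((g_s^*D_m)_{\mathrm{hor}}\bigr)\le c\epsilon\,A_{\mathcal{Y}_s}(E)$ for \emph{every} divisor $E$ over $\mathcal{X}_s$ --- this is what disposes of the horizontal and mixed valuations uniformly; and (ii) an estimate $S_{m,\epsilon\mathcal{H}_s+f_s^*\mathcal{O}(1)}(G_s)\le\tfrac12 T(G_s)+c\epsilon+c(m\epsilon)^{-1}$ for fibre components, proved by $h^0$-counting, Kawamata--Viehweg vanishing and a fixed-part argument using the negative definiteness of the intersection matrix of fibre components. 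The vertical contribution is then propagated from fibre components to all valuations through the log smooth snc structure (the comparison with the reduced divisor as in \cite[Proof of 9.14, Step 2]{BHJ}), and klt-ness of $(\mathcal{X}_s,\mathcal{D}_s+(u-\delta_0)D_m)$ follows, giving the desired $\delta$-bound. If you want to complete your approach you would need to supply an analogue of this uniform control of arbitrary valuations; as written, the argument only covers the discriminant-computing class and asserts the rest.
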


\begin{proof}
 By \cite[Theorem 6.6]{BL}, if 
 \[
\delta_{(\mathcal{X}_{s},\mathcal{D}_{s})}(\epsilon\mathcal{H}_{s}+f_{s}^*\mathcal{O}(1))\ge u-\delta_0
\]
holds for any closed point $s\in S$, then
 \[
\delta_{(\mathcal{X}_{\bar{s}},\mathcal{D}_{\bar{s}})}(\epsilon\mathcal{H}_{\bar{s}}+f_{\bar{s}}^*\mathcal{O}(1))\ge u-\delta_0
\]
also holds for any geometric point ${\bar{s}}\in S$ since the set of all closed points is Zariski dense. 
Thus, it suffices to show the assertion for all closed points of $S$.

First, we note that to show the assertion, we may freely shrink $S$ or replace $S$ by $S'$ with an \'{e}tale morphism $S'\to S$.
 Indeed, if we can prove Proposition \ref{lem-delta-conv} for a non-empty open subset $U\subset S$, then the assertion for $S$ follows from Noetherian induction. 
 Thus, we may assume as in the proof of Lemma \ref{lem--generic} that $S$ is smooth and there exists a diagram \begin{equation}\label{diag2}
\xymatrix{
\mathcal{Y}\ar[r]^{g}\ar[d]_{f'}&\mathcal{X}\ar[d]^{f}\\
\mathcal{W}\ar[r]_{h}&\mathbb{P}^1_S
}
\end{equation}
of projective morphisms, where $\mathcal{Y}$ and $\mathcal{W}$ are smooth varieties, with snc divisors $\Sigma$ on $\mathcal{W}$ and $\Xi$ on $\mathcal{Y}$ respectively
such that 
\begin{itemize}
\item
$h$ is birational and $g$ is a log resolution of $(\mathcal{X},\mathcal{D})$, 
\item
$f'$ is a contraction, 
\item
$\Xi\supset f'^{*}\Sigma \cup \mathrm{Supp}(g_{*}^{-1}\mathcal{D})\cup {\rm Ex}(g)$ and the vertical part of $\Sigma_{\mathcal{Y}}$ with respect to $f'$ maps into $\Sigma$,
\item
the restriction of $f':(\mathcal{Y},\Xi) \to \mathcal{W}$ over $\mathcal{W}\setminus \Sigma$ is log smooth.
\end{itemize}
Since $\mathcal{W}$ is isomorphic to $\mathbb{P}^1_S$ over any codimension one point of $\mathbb{P}^1_S$, we may shrink $S$ and assume that $\mathcal{W}\cong\mathbb{P}^1_S$. 
Taking a suitable \'etale morphism $T\to S$ and replacing $S$ by $T$, we may further assume that 
\begin{itemize}
    \item $(\mathcal{Y},\Xi)$ and $(\mathbb{P}^1_S,\Sigma)$ are log smooth over $S$ and any stratum of $\Xi$ or $\Sigma$ has geometrically integral fibers over $S$. 
\end{itemize}
Then we apply Lemma \ref{lem--generic} to $S$ and we conclude by shrinking $S$ that $\mathcal{B}_s=B_s$ for any closed point $s\in S$, where $\mathcal{B}$ (resp.~$B_s$) is the discriminant divisor with respect to $f$ (resp.~$f_s$).
We may also assume by shrinking $S$ that there exists a section $S'$ of $\mathbb{P}^1_S\to S$ disjoint from $\Sigma$.
Then, we show the following claim.

 \begin{claim}\label{lem-cl3}
   There exist positive real numbers $c$ and $\epsilon'_0$ that satisfy the following for any closed point $s\in S$ and rational number $\epsilon\in(0,\epsilon'_0)$.
\begin{enumerate}[{\rm (}i{\rm )}]
\item \label{claim-2-(1)}$\sup_{p}\mathrm{mult}_p(g_{s}^*D)_{\mathrm{hor}}\le c\epsilon$ for any effective $\mathbb{Q}$-divisor $D$ that is $\mathbb{Q}$-lineraly equivalent to $\epsilon\mathcal{H}_{s}+f_{s}^*\mathcal{O}(1)$ (cf.~(\ref{defn-hor-vert}) in Notation and Convention), where $p\in \mathcal{Y}_{s}$ runs over all closed points. Here, we set $\mathrm{mult}_p(D')=\mathrm{ord}_E(\mu^*D'-\mu_*^{-1}D')$ for any $\mathbb{Q}$-divisor $D'$, where $\mu$ is the blow up of $\mathcal{Y}_{s}$ at $p$ and $E$ is the exceptional divisor,
\item \label{claim-2-(2)} there exists a positive integer $m_{\epsilon}$ such that $m_\epsilon\epsilon\in\mathbb{Z}$ and it holds that for any irreducible component $G_{s}$ of a fiber $F_{s}$ of $f_{s}\circ g_{s}$ and $m\in\mathbb{Z}_{>0}$ such that $m_\epsilon|m$,
$$S_{m,\epsilon\mathcal{H}_{s}+f_{s}^*\mathcal{O}(1)}(G_{s})\le \frac{1}{2}T_{f_{s}^*\mathcal{O}(1)}(G_{s})+c\epsilon+c(m\epsilon)^{-1}.$$
Here, we set $T_{f_{s}^*\mathcal{O}(1)}(G_{s}):=\mathrm{ord}_{G_s}(F_{s})$.
\end{enumerate}
 \end{claim}

\begin{proof}[Proof of Claim \ref{lem-cl3}]
Let $\Sigma=\sum_jF_j$ and $(f\circ g)^*F_j=\sum a_{ij} G_i^{(j)}$ be the irreducible decompositions. 
We note that $G_i^{(j)}$ has only smooth and irreducible fibers over $S$ by the fifth condition of the diagram (\ref{diag2}).
Note also that we may shrink $S$ freely by the same reason as in the second paragraph of the proof of Proposition \ref{lem-delta-conv}.
We may further assume that $S$ is quasi-projective and hence there exists a very ample line bundle $\mathcal{A}$ on $\mathcal{Y}$. 
Set $d$ as the relative dimension of $f$ throughout this proof.

First, we deal with (\ref{claim-2-(1)}). 
We take $m'_1\in\mathbb{Z}_{>0}$ such that $m'_1\mathcal{A}-K_{\mathcal{Y}/S}-G'$ is ample, where $G'$ is an irreducible component of $(f\circ g)^*F_j$ or $(f\circ g)^*(S')$.
Since all smooth fibers of $f_{s}\circ g_{s}$ are linearly equivalent to $((f\circ g)^*(S'))|_{\mathcal{Y}_s}$, $m'_1\mathcal{A}_{s}-K_{\mathcal{Y}_{s}}-G_{s}$ is also ample for any irreducible component $G_{s}$ of a fiber of $f_{s}\circ g_{s}$ and for any $s\in S$.
On the other hand, we obtain $m''_1\in\mathbb{Z}_{>0}$ such that for any closed point $p\in\mathcal{Y}$ and the blow up $\mu:\tilde{\mathcal{Y}}\to\mathcal{Y}$ at $p$ with the exceptional divisor $E$, $\mu^*(m''_1\mathcal{A})-E$ is ample by applying \cite[1.4.14]{Laz} to a suitable projective compactification of $(\mathcal{Y},\mathcal{A})$.
Let $s=\pi\circ g(p)$ and $\mathcal{Z}$ be an irreducible component of $\widetilde{\mathcal{Y}_s}$ such that $\mu|_{\mathcal{Z}}:\mathcal{Z}\to\mathcal{Y}_s$ is the blow up at $p\in\mathcal{Y}_s$. Then $E\cap\mathcal{Z}$ is the $\mu|_{\mathcal{Z}}$-exceptional divisor.
Now we fix an irreducible component $G_{s}$ of a fiber of $f_{s}\circ g_{s}$ such that $p\in G_s$.
Restricting divisors to $\mathcal{Z}$, we can check that $m_1:=m_1'+(d-1)m_1''$ satisfies that for any closed points $s\in S$ and $p\in\mathcal{Y}_s$, $\mu|_{\mathcal{Z}}^*(m_1\mathcal{A}_s-K_{\mathcal{Y}_s}-G_s)-(d-1)(E\cap\mathcal{Z})$ and $\mu|_{\mathcal{Z}}^*(m_1\mathcal{A}_s)-(E\cap\mathcal{Z})$ are ample. 
Let $\widetilde{G_s}:=(\mu|_{\mathcal{Z}})_*^{-1}G_s$ and note that $\widetilde{G_s}$ is smooth.
Hence, there exists a positive integer $m_2>0$ depending only on $d$ such that $m_2(m_1\mu|_{\mathcal{Z}}^*\mathcal{A}_{s}-E\cap\mathcal{Z})|_{\widetilde{G_{s}}}$ is globally generated by applying the effective base point freeness \cite[Theorem 1.1]{kollar-eff-basepoint-free} to $\widetilde{G_{s}}$ since $$(\mu|_{\mathcal{Z}}^*(m_1\mathcal{A}_{s}-K_{\mathcal{Y}_{s}}-G_{s})-(d-1)(E\cap\mathcal{Z}))|_{\widetilde{G}_{s}}=(m_1\mu|_{\mathcal{Z}}^*\mathcal{A}_{s}-(E\cap\mathcal{Z}))|_{\widetilde{G_{s}}}-K_{\widetilde{G_{s}}}$$ is ample. 
Then for any $D$ effective $\mathbb{Q}$-divisor $\mathbb{Q}$-linearly equivalent to $\epsilon\mathcal{H}_{s}+f_{s}^*\mathcal{O}(1)$, let $\gamma':=\cap_{i=1}^{d-2}D_{i}$ and $\gamma:=(\mu|_{\mathcal{Z}})_*\gamma'$ for sufficiently general $D_i\in|m_2(m_1\mu|_{\mathcal{Z}}^*\mathcal{A}_{s}-(E\cap\mathcal{Z}))|_{\widetilde{G_{s}}}|$ such that $\gamma\not \subset(g_s^*D)_{\mathrm{hor}}$ and $\gamma'\not\subset E\cap\mathcal{Z}$.
Since $m_1\mu|_{\mathcal{Z}}^*\mathcal{A}_{s}-E\cap\mathcal{Z}$ is ample, $\gamma'$ intersects with $E\cap\mathcal{Z}$.
Hence, $\gamma$ passes through $p$.
 On the other hand, we see that
\begin{align*}
\gamma\cdot (g_{s}^*D)_{\mathrm{hor}}&=\gamma'\cdot(\mu|_{\mathcal{Z}})^*((g_{s}^*D)_{\mathrm{hor}})\\
&=(m_2(m_1(\mu|_{\mathcal{Z}})^*\mathcal{A}_{s}-(E\cap\mathcal{Z})))^{d-2}\cdot\widetilde{G_{s}}\cdot(\mu|_{\mathcal{Z}})^*((g_{s}^*D)_{\mathrm{hor}} )\\
&=(m_2m_1\mathcal{A}_{s})^{d-2}\cdot G_{s}\cdot (g_{s}^*D)_{\mathrm{hor}} \\
&\le(m_2m_1\mathcal{A}_{s})^{d-2}\cdot(T_{f_{s}^*\mathcal{O}(1)}(G_{s})g_{s}^*f^*_{s}\mathcal{O}(1))\cdot (g_{s}^*D)_{\mathrm{hor}} \\
&\le(m_2m_1\mathcal{A}_{s})^{d-2}\cdot(T_{f_{s}^*\mathcal{O}(1)}(G_{s})g_{s}^*f^*_{s}\mathcal{O}(1))\cdot g_{s}^*D\\
&=\epsilon(m_1m_2)^{d-2}T_{f_{s}^*\mathcal{O}(1)}(G_{s})(\mathcal{A}_{s}^{d-2}\cdot g_{s}^*\mathcal{H}_{s}\cdot g_{s}^*f^*_{s}\mathcal{O}(1)).
\end{align*}
Let $T=\max_{G_{s}}T_{f_{s}^*\mathcal{O}(1)}(G_{s})$. 
Then we see that $T$ is independent of $s\in S$ by the conditions of the diagram (\ref{diag2}).
Let $M:=(m_1m_2)^{d-2}T(\mathcal{A}_{s}^{d-2}\cdot g_{s}^*\mathcal{H}_{s}\cdot g_{s}^*f^*_{s}\mathcal{O}(1))>0$.
Then, we see that this $M$ is independent of $s\in S$ and that $\mathrm{mult}_p((g_{s}^*D)_{\mathrm{hor}})\le M\epsilon$ for any closed points $s\in S$ and $p\in \mathcal{Y}_{s}$, and any effective $\mathbb{Q}$-divisor $D$ that is $\mathbb{Q}$-linearly equivalent to $\epsilon\mathcal{H}_{s}+f_{s}^*\mathcal{O}(1)$. 
Indeed, we saw in the above argument that there exists a curve $\gamma\not\subset (g_{s}^*D)_{\mathrm{hor}}$ passing through $p$ such that $\gamma\cdot (g_{s}^*D)_{\mathrm{hor}}\le M\epsilon$.
Then, we have $$\mathrm{mult}_p((g_s^*D)_{\mathrm{hor}})\le\gamma'\cdot\mu^*( (g_{s}^*D)_{\mathrm{hor}})=\gamma\cdot (g_{s}^*D)_{\mathrm{hor}}\le M\epsilon
.$$ 
Thus, we obtain the assertion (\ref{claim-2-(1)}).
\vspace{3mm}

Next, we deal with (\ref{claim-2-(2)}). 
Fix $\tau>0$ so that $\mathcal{A}_s^{d-1}\cdot g_s^*(\mathcal{H}_s-\tau f_s^*\mathcal{O}(1))<0$ for some closed point $s\in S$. 
Then, $\mathcal{H}_s-\tau f_s^*\mathcal{O}(1)$ is not pseudoeffective for any $s\in S$. 
Furthermore, take $m_0\in\mathbb{Z}_{>0}$ such that $m_0\mathcal{H}-(K_{\mathcal{X}/S}+\mathcal{D})$ is $\pi$-ample.

We first treat the case when $G_s$ is a smooth fiber of $f_s\circ g_s$ for any closed point $s\in S$.
Then, it follows from \cite[Lemma 2.2]{FO} that
\begin{align*}
&mh^0(\mathcal{X}_s,m\epsilon \mathcal{H}_s+mf_s^*\mathcal{O}(1))S_{m,\epsilon\mathcal{H}_s+f_s^*\mathcal{O}(1)}(G_s)\\
&\le\sum_{k=1}^{\infty} h^0(\mathcal{Y}_s,g_s^*(m\epsilon \mathcal{H}_s+mf_s^*\mathcal{O}(1))-kG_s)\\
&\le \sum_{k=1}^mh^0(\mathcal{Y}_s,g_s^*(m\epsilon \mathcal{H}_s+(m-k)f_s^*\mathcal{O}(1)))+ \sum_{k=1}^{\lceil m\epsilon\tau\rceil}h^0(\mathcal{Y}_s,g_s^*(m\epsilon \mathcal{H}_s)).
\end{align*}
Recall that $m\epsilon\mathcal{H}-K_{\mathcal{X}/S}-\mathcal{D}$ is $\pi$-ample for any $m\in\mathbb{Z}_{>0}$ such that $m\epsilon\in\mathbb{Z}$ and $m\epsilon\ge m_0$. 
Then
\begin{equation}
h^0(\mathcal{Y}_s,g_s^*(m\epsilon \mathcal{H}_s+kf_s^*\mathcal{O}(1)))=\chi(\mathcal{X}_s,m\epsilon \mathcal{H}_s+kf_s^*\mathcal{O}(1))\nonumber
\end{equation}
by the Kawamata-Viehweg vanishing theorem for any $k\ge0$ and any closed point $s\in S$. 
Hence, $h^0(\mathcal{Y}_s,g_s^*(m\epsilon \mathcal{H}_s+kf_s^*\mathcal{O}(1)))$ is independent of $s\in S$.
We further have that 
\begin{equation*}
\chi(\mathcal{X}_s,m\epsilon \mathcal{H}_s+kf_s^*\mathcal{O}(1))=\chi(\mathcal{X}_s,m\epsilon \mathcal{H}_s)+k\chi(G_s,m\epsilon \mathcal{H}_s|_{G_s}).
\end{equation*}
Here, we note that $\chi(G_s,m\epsilon \mathcal{H}_s|_{G_s})$ is also independent of $s\in S$. Thus,
\begin{equation}
h^0(\mathcal{Y}_s,g_s^*(m\epsilon \mathcal{H}_s+mf_s^*\mathcal{O}(1)))=\frac{m^{d}\epsilon^{d-1}}{(d-1)!}((\mathcal{H}_s^{d-1}\cdot f_s^*\mathcal{O}(1))+O(\epsilon)+O((m\epsilon)^{-1})).
\label{eq-chi}
\end{equation}
Furthermore, we have by \cite[Theorem 1.36]{KM} that for sufficiently large $m\epsilon$ and $\epsilon^{-1}$,
\begin{align}
 &\sum_{k=1}^mh^0(\mathcal{Y}_s,g_s^*(m\epsilon \mathcal{H}_s+(m-k)f_s^*\mathcal{O}(1)))+ \sum_{k=1}^{\lceil m\epsilon\tau\rceil}h^0(\mathcal{Y}_s,g_s^*(m\epsilon \mathcal{H}_s))\label{eq-volume}\\ 
 &=\frac{m(m-1)}{2}\chi(G_s,m\epsilon \mathcal{H}_s|_{G_s})+(m+\lceil m\epsilon\tau\rceil)\chi(\mathcal{X}_s,m\epsilon \mathcal{H}_s)\nonumber\\
 &=\frac{m^{d+1}\epsilon^{d-1}}{2(d-1)!}((\mathcal{H}_s^{d-1}\cdot f_s^*\mathcal{O}(1))+O(\epsilon)+O((m\epsilon)^{-1})).\nonumber
\end{align}
By (\ref{eq-chi}) and (\ref{eq-volume}), we see that there exist positive real numbers $C_0$, $C'_0$ and $C''_0$ such that 
\begin{equation}
S_{m,\epsilon\mathcal{H}_s+f_s^*\mathcal{O}(1)}(G_s)\le\frac{1}{2}+C'_0\epsilon+C''_0(m\epsilon)^{-1}\label{eq-smooth-fib}
\end{equation}
for any rational number $0<\epsilon<C_0^{-1}$, positive integer $m$ such that $m\epsilon\in\mathbb{Z}$ and $m\epsilon>\max\{C_0,m_0\}$, closed point $s\in S$ and smooth fiber $G_s$ of $f_s\circ g_s$. 

Next, we deal with the case when $G_s$ is an irreducible component of a singular fiber of $f_s\circ g_s$ for three paragraphs. 
Recall that $(f\circ g)^*F_j=\sum_{i=0}^{r_j} a_{ij}G_i^{(j)}$ is the irreducible decomposition. 
Then, we see that $G_s=(G^{(j)}_i)|_{\mathcal{Y}_s}$ for some $j$ and $i$.
By renumbering $G^{(j)}_i$, we may assume that $G=G^{(j)}_0$.
We note that a matrix $(G_{k,s}^{(j)}\cdot G_{l,s}^{(j)}\cdot \mathcal{A}_s^{d-2})_{1\le k,l\le r_j}$ is negative definite (cf.~\cite[Lemma 1]{LX}). 
Thus, there exists a Cartier divisor $F'=\sum _{i=1}^{r_j}e_i G_{i}^{(j)}$ such that 
\begin{equation*}
G_{i,s}^{(j)}\cdot \mathcal{A}_s^{d-2}\cdot(g_s^*\mathcal{H}_s+F'_s)<0
\end{equation*}
for $i>0$ and for some closed point $s\in S$. 
For some $b\in\mathbb{Z}_{>0}$, $F'':=F'+b(f\circ g)^*F_j$ is effective. 
Then the inequality 
\begin{equation}\label{eq-hodge}
G_{i,s}^{(j)}\cdot \mathcal{A}_s^{d-2}\cdot(g_s^*\mathcal{H}_s+F''_s)<0
\end{equation}
also holds for $F''$ and for any closed point $s\in S$.
Fix a positive integer $a$ such that $a(f\circ g)^*F_j-F''$ is effective.
We claim that for any closed point $s\in S$, $m\in\mathbb{Z}_{>0}$ such that $m\epsilon\in\mathbb{Z}$ and $k\ge 0$,
\begin{align}
&h^0(\mathcal{Y}_s,m\epsilon F''_s+g_s^*(m\epsilon \mathcal{H}_s+mf_s^*\mathcal{O}(1))-ka_{0j}G_s)\label{eq-baseloci} \\ 
&=h^0(\mathcal{Y}_s,m\epsilon F''_s+g_s^*(m\epsilon \mathcal{H}_s+(m-k)f_s^*\mathcal{O}(1))).\nonumber
\end{align}
Indeed, note that 
$$m(f_s\circ g_s)^*F_{j,s}-ka_{0j}G_s= (m-k)(f_s\circ g_s)^*F_{j,s}+k\sum_{i>0}a_{ij}G^{(j)}_{i,s}$$
and we claim that $k\sum_{i>0}a_{ij}G^{(j)}_{i,s}$ is contained in the fixed part of the linear system $|m\epsilon F''_s+g_s^*(m\epsilon \mathcal{H}_s+mf_s^*\mathcal{O}(1))-ka_{0j}G_s|$.
For this, it suffices to show for any non-zero effective divisor $M=\sum_{i>0} m_iG_{i,s}^{(j)}$ and $l\in\mathbb{Z}_{\ge0}$ that the fixed part of the linear system $|m\epsilon F''_s+g_s^*(m\epsilon \mathcal{H}_s+lf_s^*\mathcal{O}(1))+M|$ contains some $G_{i,s}^{(j)}$ such that $m_i>0$.
Here, we see that
\[
G_{i,s}^{(j)}\cdot \mathcal{A}_s^{d-2}\cdot(m\epsilon F''_s+g_s^*(m\epsilon \mathcal{H}_s+lf_s^*\mathcal{O}(1))+M)<G_{i,s}^{(j)}\cdot \mathcal{A}_s^{d-2}\cdot M
\]
by (\ref{eq-hodge}), and $G_{i,s}^{(j)}\cdot \mathcal{A}_s^{d-2}\cdot M<0$ for some $i>0$ such that $m_i>0$ by \cite[Lemma 1]{LX}.
This means that $G_{i,s}^{(j)}$ is contained in the fixed part and thus we obtain the equality (\ref{eq-baseloci}).

By the equation (\ref{eq-baseloci}) and the fact that $a(f\circ g)^*F_j-F''$ is effective, we have
\begin{align*}
m&h^0(\mathcal{X}_s,m\epsilon \mathcal{H}_s+mf_s^*\mathcal{O}(1))S_{m,\epsilon\mathcal{H}_s+f_s^*\mathcal{O}(1)}(G_s)\\
\le&\sum_{k=1}^\infty\sum_{l=0}^{a_{0j}-1} h^0(\mathcal{Y}_s,g_s^*(m\epsilon \mathcal{H}_s+mf_s^*\mathcal{O}(1))-(ka_{0j}-l)G_s)\\
\le&a_{0j}\sum_{k=1}^\infty h^0(\mathcal{Y}_s,g_s^*(m\epsilon \mathcal{H}_s+mf_s^*\mathcal{O}(1))-ka_{0j}G_s)\\
\le& a_{0j}\sum_{k=0}^{\infty} h^0(\mathcal{Y}_s,m\epsilon F''_s+g_s^*(m\epsilon \mathcal{H}_s+(m-k)f_s^*\mathcal{O}(1)))\\
\le& a_{0j}\sum_{k=0}^mh^0(\mathcal{Y}_s,g_s^*(m\epsilon \mathcal{H}_s+(m-k+m\epsilon a)f_s^*\mathcal{O}(1)))\\
+& a_{0j}\sum_{k=1}^{\lceil m\epsilon(\tau+a)\rceil}h^0(\mathcal{Y}_s,g_s^*(m\epsilon (\mathcal{H}_s+af_s^*\mathcal{O}(1)))-kg_s^*f_s^*\mathcal{O}(1)).
\end{align*}
By (\ref{eq-chi}) and estimating the right hand side of the above inequality as (\ref{eq-volume}), we see that there exist positive real numbers $C_{G_0^{(j)}},C_{G_0^{(j)}}'$ and $C_{G_0^{(j)}}''>0$ such that
\begin{equation}
S_{m,\epsilon\mathcal{H}_s+f_s^*\mathcal{O}(1)}(G_{0,s}^{(j)})\le\frac{a_{0j}}{2}+C_{G_0^{(j)}}'\epsilon+C_{G_0^{(j)}}''(m\epsilon)^{-1}\label{eq-g_0,j-fib}
\end{equation}
for any rational number $0<\epsilon<C_{G_0^{(j)}}^{-1}$, positive integer $m$ such that $m\epsilon\in\mathbb{Z}$ and $m\epsilon>\max\{C_{G_0^{(j)}},m_0\}$ and closed point $s\in S$.

Since there exist only finitely many possibilities of $G_i^{(j)}$, we see by the inequalities (\ref{eq-smooth-fib}) and (\ref{eq-g_0,j-fib}) that there exist positive real numbers $\epsilon'_0$ and $c$ such that
$$S_{m,\epsilon\mathcal{H}_s+f_s^*\mathcal{O}(1)}(G_s)\le \frac{1}{2}T_{f_s^*\mathcal{O}(1)}(G_s)+c\epsilon+c(m\epsilon)^{-1}$$
for any rational number $0<\epsilon<\epsilon'_0$, closed point $s\in S$, irreducible component $G_s$ of a fiber of $f_s\circ g_s$ and $m\in\mathbb{Z}_{>0}$ such that $m\epsilon>\epsilon_0'^{-1}$ and $m\epsilon\in\mathbb{Z}$.
Thus, we obtain the assertion (\ref{claim-2-(2)}) by taking $m_{\epsilon}\in\mathbb{Z}$ for any rational number $0<\epsilon<\epsilon'_0$ such that $m_\epsilon\epsilon>\epsilon_0'^{-1}$ and $m_\epsilon\epsilon\in\mathbb{Z}$.
 \end{proof}
 
Finally, we show that Claim \ref{lem-cl3} implies Proposition \ref{lem-delta-conv}. 
Take an arbitrary constant $0<\delta_0<u$.
Let $c$ and $\epsilon_0'$ be as in Claim \ref{lem-cl3} and take $m_{\epsilon}\in\mathbb{Z}_{>0}$ for any rational number $0<\epsilon<\epsilon_0'$ as (\ref{claim-2-(2)}). 
Let $D_m$ be an $m$-basis type divisor of $\epsilon \mathcal{H}_s+f_s^*\mathcal{O}(1)$ for any $m\in\mathbb{Z}_{>0}$ such that $m_\epsilon|m$ and $\Gamma$ be a $\mathbb{Q}$-divisor such that $g_{s*}\Gamma=\mathcal{D}_s$ and
\[
K_{\mathcal{Y}_s}+\Gamma=g_s^*(K_{\mathcal{X}_s}+\mathcal{D}_s),
\]
 for any closed point $s\in S$. 
 Now, we have 
$$A_{(\mathcal{X}_s,\mathcal{D}_s)}(G_s)\ge \frac{u}{2}T_{f_s^*\mathcal{O}(1)}(G_s)$$
 by \cite[Theorem D]{Hat} for any irreducible component $G_s$ of any fiber $F_s$ of $f_s\circ g_s$. 
 We note that there exists a positive integer $r$ such that $r(K_{\mathcal{X}/S}+\mathcal{D})$ is Cartier.
 Then we see that for any geometric point $\bar{s}\in S$, $(\mathcal{X}_{\bar{s}},\mathcal{D}_{\bar{s}})$ is $\frac{1}{r}$-lc. 
 Let $\epsilon_0'':=\min\{\epsilon_0',\frac{\delta_0}{8c(u-\delta_0)}\}$ and $\delta_0':=\min\{\frac{\delta_0}{4},\frac{1}{r}\}$.
 Then, we claim that for any $0<\epsilon<\epsilon''_0$ and $m\in\mathbb{Z}_{>0}$ such that $m_\epsilon|m$ and $m\epsilon>\epsilon''^{-1}_0$, $(\mathcal{Y}_s,\Gamma+(u-\delta_0)(g_s^*D_m)_{\rm{vert}})$ is log smooth and $\delta_0'$-sublc. 
 Indeed, this follows from the conditions of the diagram (\ref{diag2}), \cite[Corollary 2.31]{KM} and 
 \begin{align*}
 &A_{\mathcal{Y}_s}(G_s)-\mathrm{ord}_{G_s}\left(\Gamma+\left(u-\delta_0\right)(g_s^*D_m)_{\rm{vert}}\right)\\
 &\ge A_{\mathcal{Y}_s}(G_s)-\mathrm{ord}_{G_s}(\Gamma)-\left(u-\delta_0\right)S_{m,\epsilon\mathcal{H}_s+f_s^*\mathcal{O}(1)}(G_s)
 \\ 
 &\ge A_{(\mathcal{X}_s,\mathcal{D}_s)}(G_s)-\left(u-\delta_0\right)\left(\frac{T_{f_s^*\mathcal{O}(1)}(G_s)}{2}+c\epsilon+c(m\epsilon)^{-1}\right)
 \\
 &\ge\frac{\delta_0}{2}T_{f_s^*\mathcal{O}(1)}(G_s)-\left(u-\delta_0\right)(c\epsilon+c(m\epsilon)^{-1})>\frac{\delta_0}{4}.
 \end{align*}
 Let $\Theta=(\Gamma+(g_s^*D_m)_{\mathrm{vert}})_{\mathrm{red}}$. 
 Since $(1-\delta_0')\Theta\ge\Gamma+(u-\delta_0)(g_s^*D_m)_{\rm{vert}}$ and $(\mathcal{Y}_s,\Theta)$ is a log smooth pair, we further have as the argument of \cite[Step 2 in Proof of 9.14]{BHJ} that 
 $$ A_{\mathcal{Y}_s}(E)-\mathrm{ord}_E\left(\Gamma+\left(u-\delta_0\right)(g_s^*D_m)_{\rm{vert}}\right)\ge \delta'_0\,A_{\mathcal{Y}_s}(E)$$ for any prime divisor $E$ over $\mathcal{X}_s$. 
 On the other hand, Claim \ref{lem-cl3} (\ref{claim-2-(1)}) and Skoda's theorem (cf., [{\it loc.cit}, Step 1.])~show that 
 $$c\epsilon\,A_{\mathcal{Y}_s}(E)\ge \mathrm{ord}_E((g_s^*D_m)_{\rm{hor}}).$$
 Let $\epsilon_0:=\min\{\frac{\delta'_0}{(u-\delta_0)c},\epsilon_0''\}$.
 Then we have for any prime divisor $E$ over $\mathcal{X}_s$ and $0<\epsilon<\epsilon_0$,
\begin{align*}
A_{(\mathcal{X}_s,\mathcal{D}_s+(u-\delta_0)D_m)}(E)&=A_{\mathcal{Y}_s}(E)-\mathrm{ord}_E\left(\Gamma+\left(u-\delta_0\right)((g_s^*D_m)_{\mathrm{vert}}+(g_s^*D_m)_{\mathrm{hor}})\right)\\
&\ge(\delta_0'-(u-\delta_0)c\epsilon)A_{\mathcal{Y}_s}(E)>0.
\end{align*}
In other words, we conclude that $(\mathcal{X}_s,\mathcal{D}_s+(u-\delta_0)D_m)$ is klt for any closed point $s\in S$, rational number $\epsilon\in(0,\epsilon_0)$, $m\in\mathbb{Z}$ such that $m_{\epsilon}|m$ and $m>(\epsilon\epsilon''_0)^{-1}$, and $m$-basis type divisor $D_m$ of $\epsilon\mathcal{H}_s+f_s^*\mathcal{O}(1)$. 
Thus, we obtain that 
\[
\delta_{(\mathcal{X}_s,\mathcal{D}_s)}(\epsilon\mathcal{H}_s+f_s^*\mathcal{O}(1))=\lim_{l\to\infty}\delta_{lm_\epsilon,(\mathcal{X}_s,\mathcal{D}_s)}(\epsilon\mathcal{H}_s+f_s^*\mathcal{O}(1))\ge u-\delta_0
\]
 for any $\epsilon\in\mathbb{Q}\cap(0,\epsilon_0)$ and closed point $s\in S$. 
 We complete the proof.
 \end{proof}

 Now, we are ready to show Theorem \ref{mainunif} for the case when $u<0$.
 
 \begin{thm}\label{unifKst}
 Let $\pi \colon  (\mathcal{X},\mathcal{D}) \to S$ be a log $\mathbb{Q}$-Gorenstein family and $f \colon \mathcal{X}\to \mathcal{P}$ a contraction over $S$, where $\mathcal{P}$ is a scheme that is projective and smooth over $S$. 
Let $\mathcal{H}$ be a $\pi$-ample $\mathbb{Q}$-divisor on $\mathcal{X}$ and $\mathcal{L}$ a Cartier divisor on $\mathcal{P}$. Suppose that there exists an $m\in\mathbb{Z}_{>0}$ such that $(\mathcal{P}_{\bar{s}},\mathcal{L}_{\bar{s}})=(\mathbb{P}^1,\mathcal{O}(m))$ for any geometric point $\bar{s}\in S$. 
Assume that $-(K_{\mathcal{X}/S}+\mathcal{D})\sim_{\mathbb{Q},S}\frac{u}{m}f^*\mathcal{L}$ for some $u\in\mathbb{Q}_{>0}$ and that $f_s \colon (\mathcal{X}_s,\mathcal{D}_s,\mathcal{H}_s)\to(\mathbb{P}^1,\mathcal{O}(1))$ is uniformly adiabatically K-stable for any closed point $s\in S$.
 
 Then there exists a positive rational number $\epsilon_0$ such that $(\mathcal{X}_s,\mathcal{D}_s,\epsilon\mathcal{H}_s+f_s^*\mathcal{O}(1))$ is specially K-stable for any closed point $s\in S$ and rational number $\epsilon\in (0,\epsilon_0)$.  Furthermore, there exists a positive rational number $\alpha$ such that
 \[
M^\mathrm{NA}_{\mathcal{D}_s}(\mathcal{Y},\mathcal{M})\ge\alpha (\mathcal{J}^{\epsilon\mathcal{H}_s+f_s^*\mathcal{O}(1)})^\mathrm{NA}(\mathcal{Y},\mathcal{M})
\]
 for any rational number $\epsilon\in (0,\epsilon_0)$, closed point $s\in S$, and normal semiample test configuration $(\mathcal{Y},\mathcal{M})$ for $(\mathcal{X}_s,\epsilon\mathcal{H}_s+f_s^*\mathcal{O}(1))$. 
 \end{thm}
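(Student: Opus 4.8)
The plan is to assemble the statement from four ingredients already available: the openness of uniform adiabatic K-stability (Theorem~\ref{op}), the uniform convergence of $\delta$-invariants along a family (Proposition~\ref{lem-delta-conv}), the unconditional Mabuchi-versus-J bound of Theorem~\ref{A.13}, and the intrinsic subvariety criterion for uniform J-stability (Theorem~\ref{thm-jst}).

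First I would perform the étale-local reduction from the opening paragraph of the proof of Theorem~\ref{op} to assume $\mathcal{P}=\mathbb{P}^1_S$, $\mathcal{L}=\mathcal{O}_{\mathbb{P}^1_S}(m)$, and hence $-(K_{\mathcal{X}/S}+\mathcal{D})\sim_{\mathbb{Q},S}uf^*\mathcal{O}(1)$; since the conclusion concerns only closed fibres, which are untouched by étale base change, this is harmless. Write $L_s:=\epsilon\mathcal{H}_s+f_s^*\mathcal{O}(1)$ and $\delta_s:=\delta_{(\mathcal{X}_s,\mathcal{D}_s)}(L_s)$ for a closed point $s$. Applying Theorem~\ref{op} with $W=S$ makes $s\mapsto\delta_{(\mathbb{P}^1,B_{\bar s})}(-K_{\mathbb{P}^1}-B_{\bar s}-M_{\bar s})$ constructible, so there is a single $v>0$ with $\delta_{(\mathbb{P}^1,B_{\bar s})}(-K_{\mathbb{P}^1}-B_{\bar s}-M_{\bar s})\ge 1+v$ for all geometric points. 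Combining the key limit formula behind Theorem~\ref{k-minus} (i.e.\ \cite[Theorem~D]{Hat}), Example~\ref{ex--calculation}, and the scaling $\delta_{(X,\Delta)}(cM)=c^{-1}\delta_{(X,\Delta)}(M)$, one gets $\lim_{\epsilon\to0}\delta_{(\mathcal{X}_{\bar s},\mathcal{D}_{\bar s})}(\epsilon\mathcal{H}_{\bar s}+f_{\bar s}^*\mathcal{O}(1))=u\cdot\delta_{(\mathbb{P}^1,B_{\bar s})}(-K_{\mathbb{P}^1}-B_{\bar s}-M_{\bar s})\ge u(1+v)$ for every geometric point $\bar s$. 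Feeding $u'=u(1+v)$ and $\delta_0=uv/2$ into Proposition~\ref{lem-delta-conv} yields $\epsilon_1>0$ with $\delta_s\ge u+uv/2$ for all closed $s$ and all rational $\epsilon\in(0,\epsilon_1)$; testing with a general fibre of $f_s$ (for which $A=1$ and $S_{L_s}\ge\tfrac14$ once $\epsilon$ is small, uniformly) gives a uniform upper bound $\delta_s\le 4$ after shrinking $\epsilon_1$.

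Next I would produce special K-stability with a uniform constant. Since $K_{\mathcal{X}_s}+\mathcal{D}_s\sim_{\mathbb{Q}}-uf_s^*\mathcal{O}(1)$, the divisor $H_s:=\delta_sL_s+K_{\mathcal{X}_s}+\mathcal{D}_s=\delta_s\epsilon\mathcal{H}_s+(\delta_s-u)f_s^*\mathcal{O}(1)$ has $f_s^*\mathcal{O}(1)$-coefficient $\delta_s-u\ge uv/2>0$; as $\mathcal{H}+N_0f^*\mathcal{O}(1)$ is $\pi$-ample for one integer $N_0$ ($\mathcal{H}$ being $f$-ample and $\mathcal{O}(1)$ relatively ample over $S$), both $H_s$ and $L_s$ are ample once $\epsilon$ is below a uniform threshold, so $H_s=\delta_sL_s+K_{\mathcal{X}_s}+\mathcal{D}_s$ is exactly the divisor of Definition~\ref{defn-special}. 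To check uniform $\mathrm{J}^{H_s}$-stability via Theorem~\ref{thm-jst} I set $\alpha:=uv/4$ and, for $0<p<d$, consider
\[
N_{\epsilon,p}:=\Bigl(d\tfrac{H_s\cdot L_s^{d-1}}{L_s^d}-\alpha(d-p)\Bigr)L_s-pH_s,
\]
so that $N_{\epsilon,p}\cdot L_s^{p-1}\cdot V\ge 0$ rearranges to the inequality of Theorem~\ref{thm-jst} for every $p$-dimensional $V\subset\mathcal{X}_s$. Using $(f_s^*\mathcal{O}(1))^2=0$ and $d\tfrac{H_s\cdot L_s^{d-1}}{L_s^d}=d\delta_s-u+O(\epsilon)$ — the coefficients being the intersection numbers $\mathcal{H}_s^i\cdot f_s^*\mathcal{O}(1)^{d-i}$, constant over $S$ by flatness — one finds $N_{\epsilon,p}=\bigl[(d-p)(\delta_s-\alpha)-u+O(\epsilon)\bigr]\epsilon\mathcal{H}_s+\bigl[(d-p)(\delta_s-\alpha)+(p-1)u+O(\epsilon)\bigr]f_s^*\mathcal{O}(1)$; both brackets are $\ge uv/4+O(\epsilon)$ by $\delta_s\ge u+uv/2$, hence for $\epsilon$ below a uniform $\epsilon_0$ they are positive and, using $\delta_s\le 4$, the ratio of the $f_s^*\mathcal{O}(1)$-coefficient to the $\epsilon\mathcal{H}_s$-coefficient exceeds $N_0$. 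Therefore $N_{\epsilon,p}$ is nef and pairs non-negatively with the psef class $L_s^{p-1}\cdot[V]$, proving uniform $\mathrm{J}^{H_s}$-stability and thus the special K-stability of $(\mathcal{X}_s,\mathcal{D}_s,L_s)$. Finally, Theorem~\ref{A.13} gives $M^{\mathrm{NA}}_{\mathcal{D}_s}(\mathcal{Y},\mathcal{M})\ge(\mathcal{J}^{H_s})^{\mathrm{NA}}(\mathcal{Y},\mathcal{M})$, and the quantitative form of Theorem~\ref{thm-jst} (\cite[Theorem~8.12]{Hat2}) upgrades the subvariety estimate to $(\mathcal{J}^{H_s})^{\mathrm{NA}}(\mathcal{Y},\mathcal{M})\ge\alpha'(\mathcal{J}^{L_s})^{\mathrm{NA}}(\mathcal{Y},\mathcal{M})$ for a uniform $\alpha'>0$, which is the desired estimate since $\epsilon\mathcal{H}_s+f_s^*\mathcal{O}(1)=L_s$.

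The main obstacle is the uniformity of the lower bound for $\delta_s$ across all closed points \emph{and} all small $\epsilon$ simultaneously — precisely what Proposition~\ref{lem-delta-conv} delivers — after which everything reduces to the elementary $\epsilon$-expansion above, whose uniformity is automatic from flatness of $\pi$. The only remaining care is that the thresholds $\epsilon_0,\epsilon_1$ and the nef bound $N_0$ be independent of $s$, which holds because $\pi$-ampleness of $\mathcal{H}+N_0f^*\mathcal{O}(1)$ and constancy of the relevant intersection numbers are absolute statements on the family rather than fibrewise ones.
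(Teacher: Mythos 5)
Your proposal follows the paper's skeleton almost exactly for the first three quarters of the argument: the \'etale reduction to $\mathcal{P}=\mathbb{P}^1_S$, the use of Theorem \ref{op} to extract a uniform lower bound $1+v$ on $\delta_{(\mathbb{P}^1,B_{\bar s})}(-K_{\mathbb{P}^1}-B_{\bar s}-M_{\bar s})$, the limit formula from \cite[Theorem D]{Hat} (the paper cites it as \cite[Theorem 4.4]{Hat}) converting this into $\lim_{\epsilon\to 0}\delta_{(\mathcal{X}_s,\mathcal{D}_s)}(\epsilon\mathcal{H}_s+f_s^*\mathcal{O}(1))\ge u+\delta_0$, Proposition \ref{lem-delta-conv} for uniformity in $(s,\epsilon)$, and Theorem \ref{A.13} for the Mabuchi bound. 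Where you diverge is the final step, bounding $(\mathcal{J}^{\delta_sL_s+K_{\mathcal{X}_s}+\mathcal{D}_s})^{\mathrm{NA}}$ from below by a multiple of $(\mathcal{J}^{L_s})^{\mathrm{NA}}$. The paper's route is shorter and purely formal: it writes $K_{\mathcal{X}_s}+\mathcal{D}_s+(u+\tfrac{\delta_0}{2})L_s\sim_{\mathbb{Q}}\tfrac{\delta_0}{2}L_s+u\epsilon\mathcal{H}_s$, invokes J$^{\mathcal{H}_s}$-semistability of $(\mathcal{X}_s,L_s)$ (via the argument of \cite[Lemma 4.5]{Hat}), and uses linearity and non-negativity of $H\mapsto(\mathcal{J}^{H})^{\mathrm{NA}}$ to conclude with $\alpha=\delta_0/2$; no upper bound on $\delta_s$ and no nefness bookkeeping is needed. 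Your route instead verifies the intrinsic subvariety criterion of Theorem \ref{thm-jst} by an explicit $\epsilon$-expansion of the test class $N_{\epsilon,p}$. That is workable, but note two costs: (i) you need the auxiliary uniform upper bound $\delta_s\le 4$ and the nef threshold $N_0$, which the paper's linearity argument avoids entirely; and (ii) the last sentence of your argument leans on a \emph{quantitative} version of Theorem \ref{thm-jst}, i.e.\ that a uniform constant in the subvariety inequality yields a uniform constant in $(\mathcal{J}^{H_s})^{\mathrm{NA}}\ge\alpha'(\mathcal{J}^{L_s})^{\mathrm{NA}}$. The theorem as stated in the paper is only an existence equivalence, so you are importing an effectivity statement from \cite[Theorem 8.12]{Hat2} that the paper itself never needs; if you want to stay within the paper's toolkit, replace this last step by the decomposition $\tfrac{\delta_0}{2}L_s+u\epsilon\mathcal{H}_s$ plus J$^{\mathcal{H}_s}$-semistability as above.
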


\begin{proof}
As the argument in the second paragraph of the proof of Proposition \ref{lem-delta-conv}, we may freely shrink or replacing $S$ by $S'$ with an \'{e}tale morphism $S'\to S$.
Thus, by the argument as in the first paragraph of the proof of Theorem \ref{op}, we may assume that $\mathcal{P}=\mathbb{P}^1_S$, $\mathcal{L}=\mathcal{O}(1)$, and $m=1$.

By Theorem \ref{op} and \cite[Theorem 4.4]{Hat}, there exists a positive rational number $\delta_0$ such that $$\delta_{(\mathbb{P}^{1},B_{s})}(\mathcal{O}(1))=\lim_{\epsilon\to0}\delta_{(\mathcal{X}_s,\mathcal{D}_s)}(\epsilon\mathcal{H}_s+f_s^*\mathcal{O}(1))\ge u+\delta_0$$
for any closed point $s\in S$, where $B_s$ is the discriminant divisor of $f_s\colon(\mathcal{X}_s,\mathcal{D}_s)\to\mathbb{P}^1$.
From now on, we set $\mathscr{L}_{\epsilon, s}:=\epsilon\mathcal{H}_s+f_s^*\mathcal{O}(1)$ and $\delta(\epsilon,s):=\delta_{(\mathcal{X}_s,\mathcal{D}_s)}(\mathscr{L}_{\epsilon, s})$ for any closed point $s\in S$.
By Proposition \ref{lem-delta-conv}, there exists a positive rational number $\epsilon_0$, which is independent of $s\in S$, such that 
$$\delta(\epsilon,s)\ge u+\frac{\delta_0}{2}$$
for any $\epsilon\in (0,\epsilon_0)$. 
Then this $\epsilon_0$ satisfies the assertion of Theorem \ref{unifKst}.
Indeed, for any normal semiample test configuration $(\mathcal{Y},\mathcal{M})$ for $(\mathcal{X}_s,\mathscr{L}_{\epsilon, s})$, we have
 \[
 M^{\mathrm{NA}}_{\mathcal{D}_s}(\mathcal{Y},\mathcal{M})\ge (\mathcal{J}^{K_{\mathcal{X}_s}+\mathcal{D}_s+\delta(\epsilon,s)\mathscr{L}_{\epsilon,s}})^{\mathrm{NA}}(\mathcal{Y},\mathcal{M})
 \]
 by 
 Theorem \ref{A.13}. We also have
 \[
 K_{\mathcal{X}_s}+\mathcal{D}_s+\left(u+\frac{\delta_0}{2}\right)\mathscr{L}_{\epsilon,s}\sim_{\mathbb{Q}}\frac{\delta_0}{2}\mathscr{L}_{\epsilon,s}+u\epsilon\mathcal{H}_s.
 \]
 By  the ampleness of $\mathcal{H}_s$ and the argument in the proof of \cite[Lemma 4.5]{Hat}, we see that $(\mathcal{X}_s,\mathscr{L}_{\epsilon,s})$ is J$^{\mathcal{H}_s}$-semistable. 
 Thus, we obtain
  \[
(\mathcal{J}^{K_{\mathcal{X}_s}+\mathcal{D}_s+{\delta(\epsilon,s)\mathscr{L}_{\epsilon,s}}})^{\mathrm{NA}}(\mathcal{Y},\mathcal{M})\ge\frac{\delta_0}{2}(\mathcal{J}^{\mathscr{L}_{\epsilon,s}})^\mathrm{NA}(\mathcal{Y},\mathcal{M})
\]
for any rational number $\epsilon\in (0,\epsilon_0)$.
We complete the proof.
\end{proof}

\begin{proof}[Proof of Theorem \ref{mainunif}]
We first treat the case when $u\ne0$.
By  Lemma \ref{lem--Cartierindex}, Theorem \ref{thm--eff-veryample}, and Remark \ref{rem--ell-bound}, it is sufficient to discuss the assertion of Theorem \ref{mainunif} for $\mathfrak{G}_{d, n, m^{d-1}v, u,m^d w}^{\mathrm{Car}}$ for some $n$ and $m$ instead of $\mathfrak{G}_{d, \Theta, v, u,w}$. 
In this situation, the assertion of Theorem \ref{mainunif} immediately follows from Theorem \ref{op}, Proposition \ref{bou6}, and Theorems \ref{unif+} and \ref{unifKst}. 

Next, we deal with the case when $u=0$.
As in the previous paragraph, it is enough to show the assertion for $\mathfrak{G}_{d, \Theta, v,0,w}^{\mathrm{Car}}$.
There exist $I\in\mathbb{Z}_{>0}$ and $r\in\mathbb{Z}_{>0}$ in Corollary \ref{cor--hilbertpolynomial} and Remark \ref{rem--ell-bound} respectively, such that for any element $f\colon(X,\Delta,A)\to C\in\mathfrak{G}_{d, \Theta, v,0,w}^{\mathrm{Car}}$ and any line bundle $H$ on $C$ of degree one, $r(K_X+\Delta)$ is Cartier and $I(A+f^{*}H)$ is very ample.
In particular, $(X,\Delta)$ is $\frac{1}{r}$-lc.
Taking a small $\mathbb{Q}$-factorialization of $X$ and applying the length of extremal rays, we see that $K_{X}+3dI(A+f^{*}H)$ is the pushdown of a big divisor (cf.~\cite[Lemma 2.46]{birkar-compl}). 
Because we have
$$3dI(A+f^{*}H)-\Delta\sim_{\mathbb{Q}}3dI(A+f^{*}H)+K_{X},$$
we see that $3dI(A+f^{*}H)-\Delta$ is the pushdown of a big divisor. 
We also have 
$${\rm vol}(3dI(A+f^{*}H))\le(3dI)^{d}(w+dv).$$ 
Thus, we may apply \cite[Theorem 1.8]{birkar-bab} to $(X,\Delta)$ and $3dI(A+f^{*}H)$, and we obtain $\delta_0>0$, depending only on $d, \, \Theta, \, v$, and $w$, such that 
$$\alpha_{(X,\Delta)}(A+f^{*}H)\ge \delta_0.$$
Here, we use the fact that $\alpha_{(X,\Delta)}(3dI(A+f^{*}H))=(3dI)^{-1}\alpha_{(X,\Delta)}(A+f^{*}H)$.
By the inequality (\ref{eq-alpha-calculate}) in the proof of Theorem \ref{unif+}, we obtain 
\[
\delta_{(X,\Delta)}(\epsilon A+f^{*}H)\ge\frac{d+1}{d}\delta_0
\]
for any $0<\epsilon<1$.
By Theorem \ref{A.13} and the fact that $K_X+\Delta\sim_{\mathbb{Q}}0$, for any element $f\colon(X,\Delta,A)\to C\in\mathfrak{G}_{d, \Theta, v,0,w}^{\mathrm{Car}}$,  $(X,\Delta,\epsilon A+f^*H)$ is specially K-stable and
\[
M^{\mathrm{NA}}_{\Delta}(\mathcal{X},\mathcal{M})\ge\frac{d+1}{d}\delta_0(\mathcal{J}^{\epsilon A+f^*H})^{\mathrm{NA}}(\mathcal{X},\mathcal{M})
\]
for any normal semiample test configuration $(\mathcal{X},\mathcal{M})$ for $(X,\epsilon A+f^*H)$, line bundle $H$ on $C$ of degree one, and rational number $\epsilon \in (0,1)$.
We complete the proof.
\end{proof}

\begin{proof}[Proof of Corollary \ref{cor-main}]
By Theorem \ref{mainbound} (\ref{bound-(2)}), there exists a positive integer $w'$, depending only on $d$, $v$, and $u$, such that for any $f\colon(X,\Delta,A)\to C$ as assumption and the general fiber $F$ of $f$, $A+t'_{A}F$ is ample and ${\rm vol}(A+t'_{A}F) \leq w'$ for some integer $t'_A$. 
For some positive rational number $t''_A$, we have ${\rm vol}(A+(t'_{A}+t''_A)F) = w'$.
We set 
$$t_A:=t'_{A}+t''_A\in\mathbb{Q}_{>0}.$$
Note that $A+t'_AF$ is a $\mathbb{Q}$-Cartier Weil divisor.
Then Theorem \ref{mainunif} shows that there exists $\epsilon_0>0$, depending only on $d$, $v$, and $u$, such that $(X,\Delta,\frac{\epsilon}{1+\epsilon t''_A} (A+t'_{A}F)+F)$ is specially K-stable for any $0<\epsilon\le\epsilon_0$ and any $f$.
Then so is $(X,\Delta,A+(t_A+\epsilon^{-1})F)$. 
Here, we use the fact that $$\frac{\epsilon}{1+\epsilon t''_A} (A+t'_{A}F)+F=\frac{\epsilon}{1+\epsilon t''_A} (A+(t_A+\epsilon^{-1})F).$$
Since
 $$w:=\mathrm{vol}\left(A+(t_A+\epsilon_0^{-1})F\right)=w'+d\epsilon_0^{-1}v$$ is independent of $f$, it follows that $(X,\Delta,A+tF)$ is specially K-stable if $\mathrm{vol}(A+tF)\ge w$.
 
As noted in the proof of \cite[Theorem 4.6]{Hat}, the last assertion follows from the special K-stability, \cite[Theorem 1.1]{G}, \cite[Corollary 5.2]{Z}, and \cite[Theorem 1.3]{Ch}.
\end{proof}

Finally, we remark that Corollary \ref{cor-main} states that $\mathscr{M}_{d,v,u,r}$ has a family whose geometric fibers are specially K-stable in the following sense.

\begin{rem}\label{rem-final}
As \cite[\S5]{deligne-mumford}, we can construct the {\it universal family} $(\mathscr{U},\mathscr{A}_{\mathscr{U}})$ over $\mathscr{M}_{d,v,u,w,r}$ as follows.
In the proof of Theorem \ref{mod2}, $\mathscr{M}_{d,v,u,w,r}$ is the disjoint union of finitely many $\mathscr{M}_{d_1,d_2,d_3,h}$.
Thus, we can construct a (Deligne-Mumford) stack
\[
\mathscr{U}:=\bigsqcup_{d_1,d_2,d_3,h}[\mathcal{U}_{N_{d_1,d_2,d_3,h}}/PGL(d_1)\times PGL(d_2)\times PGL(d_3)],
\]
where $\mathcal{U}_{N_{d_1,d_2,d_3,h}}$ is the universal family of $N_{d_1,d_2,d_3,h}$.
Let $\tilde{\mathscr{A}}_{N_{d_1,d_2,d_3,h}}$ be the line bundle on $\mathcal{U}_{N_{d_1,d_2,d_3,h}}$ as in the proof of Claim \ref{cl2}. 
Note that $\tilde{\mathscr{A}}_{N_{d_1,d_2,d_3,h}}$ is ample over $N_{d_1,d_2,d_3,h}$.
By construction, we see that $\tilde{\mathscr{A}}_{N_{d_1,d_2,d_3,h}}$ is $PGL(d_1)\times PGL(d_2)\times PGL(d_3)$-equivariant.
By Example \ref{ex--quotient-stack}, we can construct $\mathscr{A}_{\mathscr{U}}$ as the line bundle on $\mathscr{U}$ whose restriction to $[\mathcal{U}_{N_{d_1,d_2,d_3,h}}/PGL(d_1)\times PGL(d_2)\times PGL(d_3)]$ corresponds to $\tilde{\mathscr{A}}_{N_{d_1,d_2,d_3,h}}$.

    In the proof of Theorem \ref{quesmain}, we have proved that $\mathscr{M}_{d,v,u,r}$ is an open and closed substack of $\mathscr{M}_{d,v,u,w'+dv,r}$ for some $w'>0$.
    Using this fact, we obtain a family 
    $$(\mathscr{U}',\mathscr{A}_{\mathscr{U}'}):=(\mathscr{U}\times_{\mathscr{M}_{d,v,u,w'+dv,r}}\mathscr{M}_{d,v,u,r},\mathscr{A}_{\mathscr{U}}|_{\mathscr{U}'})$$ 
    over $\mathscr{M}_{d,v,u,r}$.
    If we take $w'$ larger than $w$ in Corollary \ref{cor-main}, then all geometric fibers of $(\mathscr{U}',\mathscr{A}_{\mathscr{U}'})$ over $\mathscr{M}_{d,v,u,r}$ are specially K-stable. 
\end{rem}


\end{document}